\documentclass[12pt]{amsart}
\usepackage{graphicx}
\usepackage[nocompress]{cite}
\usepackage{amssymb, comment, color}
\usepackage[toc,page]{appendix}
\usepackage{amssymb,times}
\usepackage[utf8]{inputenc}
\usepackage[english]{babel}
\usepackage{amsmath}
\usepackage{amsthm}
\usepackage{amsfonts}
\usepackage{amssymb}
\usepackage{amscd}
\usepackage{enumerate}
\usepackage{mathtools}
\usepackage{tikz-cd}
\usepackage{graphicx}
\usepackage{enumitem}
\usepackage{bbm}

\usepackage[colorlinks=true,urlcolor=green,bookmarks=true,bookmarksopen=true,citecolor=green]{hyperref}

\usepackage{parskip}

\newcommand{\id}{{\mathbbm{1}}}

\newcommand{\N}{\mathbbm{N}}                   
\newcommand{\Z}{\mathbbm{Z}}                    
\newcommand{\ZZ}{\mathbbm{Z}}                  
\newcommand{\R}{\mathbbm{R}}                     
\newcommand{\RR}{\mathbbm{R}}                  
\newcommand{\C}{\mathbbm{C}}                     
\newcommand{\D}{\mathbb{D}}                     
\newcommand{\BB}{\mathcal{B}}                    
\newcommand{\M}{\mathcal{M}}                    
\renewcommand{\P}{\mathcal{P}}                  

\newcommand{\set}[1]{\left\{ #1\right\}}        
\newcommand{\supp}{\mathrm{supp\,}}             
\newcommand{\Diff}{\mathrm{Diff}}               
\newcommand{\CP}{\mathbbm{CP}}                  
\newcommand{\crit}{\mathrm{Crit\,}}             
\newcommand{\Area}{\mathrm{Area}}               
\newcommand{\CZ}{\mu_{\mathrm{CZ}}}             
\newcommand{\Morse}{\mu_{\mathrm{Morse}}}       
\newcommand{\Ham}{\mathrm{Ham}}                 
\newcommand{\Symp}{\mathrm{Symp}}               
\newcommand{\QH}{{\mathrm{QH}}}                 
\newcommand{\CF}{{\mathrm{CF}}}                 
\newcommand{\HF}{{\mathrm{HF}}}                 
\newcommand{\CM}{{\mathrm{CM}}}                 
\newcommand{\HM}{{\mathrm{HM}}}                 
\newcommand{\im}{\mathrm{im}}

\newcommand{\PP}{{*_{\mathrm{PP}}}}
\renewcommand{\top}{{\mathrm{top}}}

\newcommand{\Int}{\mathrm{int}}             

\newcommand{\std}{\mathrm{std}}

\newcommand{\topo}{\mathrm{top}}

\newcommand{\dvol}{\mathrm{dvol}}

\newcommand{\norm}[1]{\left\lVert#1\right\rVert}    
\newcommand{\overbar}[1]{\mkern 2.8mu\overline{\mkern-2.8mu#1\mkern-1.0mu}\mkern 1.0mu}

\newcommand{\Hofer}{\mathrm{Hofer}}
\newcommand{\Spec}{\mathrm{Spec}}
\newcommand{\cont}{{[.]}}									

\newcommand{\red}[1]{{\color{red} #1}}

\newtheorem{TheoremX}{Theorem}

\newtheorem{thm}{Theorem}[section]               
\newtheorem*{thm*}{Theorem}               
\newtheorem{cor}[thm]{Corollary}        
\newtheorem*{cor*}{Corollary}        
\newtheorem{lem}[thm]{Lemma}  
\newtheorem*{lem*}{Lemma}
\newtheorem{prop}[thm]{Proposition}     
\newtheorem{ass}[thm]{Assumption}       
\newtheorem*{choice}{Standing choice}  

\newtheorem{defn}[thm]{Definition}             

\newtheorem{rem}[thm]{Remark}           
\newtheorem{question}{Question}
\newtheorem*{acknowledgement*}{\protect\acknowledgementname}
\newcounter{claim}

\newcommand{\blue}[1]{{\color{blue} #1}}

\providecommand{\acknowledgementname}{Acknowledgement}

\author{Marcelo R.R. Alves}
\thanks{M.R.R. Alves was supported by the Senior Postdoctoral fellowship of the Research Foundation - Flanders (FWO) in fundamental research 1286921N and by the Deutsche
Forschungsgemeinschaft (DFG, German Research Foundation) via the grant “Himmelsmechanik, Hydrodynamik und Turing-Maschinen” - 541525489.}
\address{Marcelo R.R. Alves, 
    Institute of Mathematics ,\\
	University of Augsburg,
	Chair Analysis and Geometry ,
	Universitätsstraße 14,
    DE-86159 Augsburg
	Germany.}
\email{\texttt{marcelorralves@gmail.com}}

\author{Matthias Meiwes}
\thanks{M. Meiwes was supported by  the ERC Starting Grant 757585 and the Israel Science Foundation Grant 938/22}
\address{Matthias Meiwes,
	School of Mathematical Sciences, Tel Aviv University, Ramat Aviv, Tel Aviv 69978, Israel.}
\email{\texttt{matthias.meiwes@live.de}}

\author{Beomjun Sohn}
\thanks{B. Sohn was supported by National Research Foundation of Korea (NRF) grant NRF-2020R1A5A1016126, RS-2023-00211186, and RS-2025-02317642.}
\address{Chair for Geometry and Analysis, RWTH Aachen University, Pontdriesch 10-12, DE-52062 Aachen, Germany}
\email{\textit{bsohn95@gmail.com}}

\title[Robustness of $h_{\rm top}$ under small area deformations]{Robustness of topological entropy under small area deformations}
\begin{document}

\begin{abstract}
In this paper, we establish a new type of stability phenomenon for the topological entropy of Hamiltonian diffeomorphisms of closed surfaces.
For a closed surface endowed with an area form $(\Sigma,\omega)$ and a Hamiltonian diffeomorphism $\phi$ of $(\Sigma,\omega)$, we show that for every $\varepsilon>0$ there exists $A=A(\phi,\varepsilon)>0$ such that
\[
h_{\mathrm{top}}(\phi') > h_{\mathrm{top}}(\phi)-\varepsilon
\]
for every Hamiltonian diffeomorphism $\phi'$ obtained from $\phi$ by a deformation supported in a disjoint union of disks 
each of area less than $A$. In particular, if $h_{\mathrm{top}}(\phi)>0$, then $\phi$ cannot be made to have zero entropy by an area-preserving deformation supported in disks of small area. This follows from the new braid stability result established in this paper with respect to the spectral distance recently introduced by Connery-Grigg.

\end{abstract}

\maketitle
\tableofcontents

\vspace{-2em}

\section{Introduction} \label{sec:mainresults}

In this article we study the stability of the topological entropy of Hamiltonian diffeomorphisms under perturbations. Our main result establishes that positivity of the topological entropy $h_{\rm top}$ cannot be destroyed if the region on which the perturbation is supported is a disjoint union of disks with sufficiently small area. In order to give the precise statement of our result we recall some necessary notions. 


Let $\Sigma$ be a closed surface and $\omega$ be an area form on $\Sigma$. As $\omega$ is a symplectic form on $\Sigma$, we will refer to the pair $(\Sigma,\omega)$ as a symplectic surface. As a normalization, we assume that the area of $(\Sigma,\omega)$ is $1$, i.e. $\int_\Sigma \omega = 1$.

A time-dependent Hamiltonian $H:S^1 \times \Sigma \to \mathbbm{R}$ gives rise to a time-dependent vector field $X_H$ on $\Sigma$, called the Hamiltonian vector field of $H$, given by the formula 
$	\iota_{X_{H(t,\cdot)}} \omega = -d_\Sigma H(t,\cdot)$,
where $d_\Sigma H(t,\cdot)$ is the differential of $H(t, \cdot) : \Sigma \to \R$ in $\Sigma$. The flow $\phi^t_H$ of $X_H$ is called the Hamiltonian flow of $H$. A Hamiltonian diffeomorphism $\phi$ is a diffeomorphism of $\Sigma$ which is the time $1$-map of the Hamiltonian flow of some time-depedent Hamiltonian $H$. If $\phi$ is a Hamiltonian diffeomorphism and $H$ is a Hamiltonian such that $\phi$ is the time $1$-map of $\phi_H$, we say that $H$ generates $\phi$.

The main result of the present article is the following:

\begin{TheoremX}\label{Main theorem: Topological entropy after perturbation supported on a disk}
	Let $(\Sigma,\omega)$ be a closed symplectic surface, and let $\phi$ be a Hamiltonian diffeomorphism of $(\Sigma,\omega)$. Then, for every $\varepsilon>0$, there exists a real number $A(\phi,\varepsilon)>0$ such that 
	\[h_{\top}(\psi \circ \phi)>h_{\top}(\phi)-\varepsilon\]
	for every area preserving diffeomorphism $\psi$ whose support is contained in a disjoint union $\sqcup^{n}_{i=1}D_{i}$ of open disks $D_i\subset \Sigma$ with $\mathrm{Area}(D_i)<A(\phi,\epsilon)$.
\end{TheoremX}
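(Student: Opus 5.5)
We reduce the entropy bound to a braid-stability statement for periodic orbits, measured in Connery-Grigg's spectral distance.

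The first step is to realise the entropy of $\phi$ by braids. By the Le Calvez–Tal/Katok-type results on surfaces (the version for Hamiltonian diffeomorphisms used in earlier work on braid stability), for every $\varepsilon>0$ we find a finite set $P$ of periodic points of $\phi$, all of some common period $k$, with the following property. The topological entropy of the mapping class of $\phi^k$ relative to $P$, equivalently of the braid type of $P$, satisfies $h_{\top}(\phi^k, P)/k > h_{\top}(\phi)-\varepsilon$. We may further take these orbits to be non-degenerate, hyperbolic or otherwise robust, after first replacing $\phi$ by a $C^\infty$-small perturbation if needed. By Boyland–Handel/Thurston theory, any homeomorphism $\phi'$ having periodic points $P'$ of period $k$ with the same braid type (the same isotopy class rel $P'$) satisfies $h_{\top}(\phi')\ge h_{\top}(\phi^k,P)/k$. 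So it suffices to show that $\psi\circ\phi$ has a collection of $k$-periodic orbits whose braid, as a braid in $S^1\times\Sigma$ generated by a suitable Hamiltonian isotopy, is conjugate to the braid of $P$ for $\phi$.

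The second step is the spectral-distance estimate. If $\psi$ is supported in $\sqcup D_i$ with $\mathrm{Area}(D_i)<A$, then the Connery-Grigg spectral distance (or the relevant spectral norm) between $\phi$ and $\psi\circ\phi$ should be bounded by a constant times $A$. The reason is that a Hamiltonian diffeomorphism supported in a disk of area $a$ has spectral norm at most about $a$, via the energy–capacity inequality for spectral invariants of displaceable sets. Disjoint supports in disks should not add up: the spectral norm of $\psi$ is controlled by the maximal disk area rather than the total area. We would get this from the locality/max formula for spectral invariants of disjointly supported Hamiltonians on surfaces (Humilière–Le Roux–Seyfaddini type), and we need the same bound for all iterates $(\psi\circ\phi)^k$ versus $\phi^k$.

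The third and main step is the braid stability theorem: if $\phi'$ is $\delta$-close to $\phi$ in spectral distance with $\delta$ small compared with the action gaps of the orbits in $P$, then $\phi'$ has periodic orbits forming a braid of the same type. We would prove it by filtered Floer homology for $\phi^k$ in the complement of $P$, or rather for the braid class. We would pick a Hamiltonian $G$ that creates an action window isolating the local Floer homology of the braid class of $P$, which is nonzero since the orbits are robust. We would then use continuation maps, whose action shift is controlled by the spectral distance, to show that this class survives for $(\psi\circ\phi)^k$. The obstacle is that braid-type-filtered Floer homology needs a spectral distance adapted to braids rather than the Hofer norm, because Hofer energy of $\psi$ is not small in general while the spectral distance is. Invariance of braid-class-filtered homology under small spectral perturbations is the crux. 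We would first attempt it via Connery-Grigg's characterisation of the spectral distance through persistence modules and interleavings, transferring them to homology restricted to a fixed free homotopy class of braids in $\Sigma\setminus P$. Choosing $A$ less than the resulting threshold gives the theorem.
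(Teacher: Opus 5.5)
Your outer reduction matches the paper's. Theorem~B.1 of Alves--Meiwes gives a hyperbolic $k$-periodic orbit $\mathcal{P}$ with $\Gamma_{\pi_1}([\BB(\mathcal{P},\phi)])>h_{\top}(\phi)-\varepsilon$, so no perturbation of $\phi$ is needed. The estimate $d_{\mathrm{im}}(\psi\circ\phi,\phi)=\gamma_{\mathrm{im}}(\psi)<2A$ is proved essentially as you indicate: the contraction principle plus the max property in positive genus, and on $S^2$ a single rotation that displaces every component after the disks are moved into disjoint lunes. Iterates cost only a factor $k$ by bi-invariance, and this is absorbed by taking $A=\Delta(\mathcal{P},\phi^k)/(2k)$.

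The genuine gap is your third step. It is the real content of the result (braid stability under $d_{\mathrm{im}}$, with threshold the Floer-theoretic action gap $\Delta$). You leave it as an unresolved crux, and the mechanism you sketch would not work as stated.

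\textbf{Problem 1: ordinary continuation maps are the wrong tool.} They shift action by $E^-$ of the homotopy, which is a Hofer-type quantity. So the claim that their action shift is controlled by the spectral distance is false; you have to build different chain maps. The missing idea is the paper's enhanced continuation data. One glues a PSS plane for a non-degenerate $G\approx\overbar{H}\#K$, whose PSS cycle has action at most $c_{\mathrm{im}}([\Sigma];\overbar{H}\#K)+2\varepsilon$, to a pair of pants with $E^-<\varepsilon$. The result is a Hamiltonian-valued $1$-form on the cylinder, with $ds$-components, with two properties:
\begin{itemize}
\item its chain map shifts action by about $c_{\mathrm{im}}$;
\item its round trip is filtered chain homotopic to the inclusion with shift $\gamma_{\mathrm{im}}(\overbar{H}\#K)+6\varepsilon$, in the spirit of Kislev--Shelukhin.
\end{itemize}

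\textbf{Problem 2: a surviving Floer class gives periodic orbits, not a braid.} A braid-class-filtered Floer theory on configuration spaces that is stable under $\gamma_{\mathrm{im}}$ is not available. $\gamma_{\mathrm{im}}$ is defined through the PSS image of $[\Sigma]$ in ordinary Floer homology, and there is no evident reason it controls such a theory.

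The paper never needs one. It works in the ordinary Floer complex of a single free homotopy class; the points of $\mathcal{P}$, as fixed points of $\phi^k$, lie in one class. The argument then runs as follows.
\begin{enumerate}
\item Using $\gamma_{\mathrm{im}}(\overbar{H}\#K)+8\varepsilon<\Delta(\mathcal{Y},\phi^1_H)$ and Floer isolation, it shows the round-trip matrix on $V(\mathcal{Y})$ is unipotent (identity plus $\Lambda_+$-entries on $T^2$).
\item Cauchy--Binet then gives an injection $\mathcal{Y}\to\mathcal{P}(K;\alpha)$ and a permutation of $\mathcal{Y}$, both realised by index-$0$ enhanced cylinders.
\item By Gromov's trick, the graphs of these cylinders are pseudoholomorphic in $\R\times S^1\times\Sigma$, so any intersections are positive.
\item Concatenating the cylinders into tori $T_y$ gives graphs with homological intersection $\mathrm{Area}(T_y)+\mathrm{Area}(T_{y'})+[\gamma_y]\circ[y]+[\gamma_{y'}]\circ[y']=0$.
\item This forces the graphs to be disjoint, and slicing them yields the free braid isotopy.
\end{enumerate}

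You also omit the reduction to non-degenerate $\psi\circ\phi$. The paper handles this by approximating with strongly non-degenerate maps and using $C^\infty$-continuity of $h_{\top}$ and $d_{\mathrm{im}}$.
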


\begin{rem}
Notice that there is no restriction on the diameter of the disks $D_i$ appearing in the statement of Theorem~\ref{Main theorem: Topological entropy after perturbation supported on a disk}, but only on $\mathrm{Area}(D_i)$. Indeed, the disks $D_i$ may have the same diameter as $\Sigma$. 
For this reason, Theorem~\ref{Main theorem: Topological entropy after perturbation supported on a disk} cannot be obtained from the results of Nitecki \cite{nitecki} on the lower-semicontinuity of $h_\top$ with respect to the $C^0$-topology. 
\end{rem}

Hamiltonian diffeomorphisms form a group with respect to the composition of diffeomorphisms. We denote the group of Hamiltonian diffeomorphisms of $(\Sigma, \omega)$ by $\mathrm{Ham}(\Sigma,\omega)$. 
On a symplectic surface, $\Ham(\Sigma,\omega)$ is a subgroup of the group of orientation- and area-preserving diffeomorphisms. In the cases of the disk and the sphere, these groups coincide. Moreover, every area-preserving diffeomorphism of $S^2$ is either Hamiltonian or the composition of a Hamiltonian diffeomorphism with the reflection across the equator. We thus obtain the following corollary of Theorem~\ref{Main theorem: Topological entropy after perturbation supported on a disk}.



\begin{cor}
Let $\omega$ be an area form on $S^2$, and let $\phi$ be a diffeomorphism of $S^2$ preserving the measure $|\omega|$. Then, for every $\varepsilon>0$, there exists a real number $A(\phi,\varepsilon)>0$ such that
\[
h_{\top}(\psi \circ \phi)>h_{\top}(\phi)-\epsilon
\]
for every area preserving diffeomorphism $\psi$ whose support is contained in a disjoint union $\sqcup^{n}_{i=1}D_{i}$ of open disks $D_i\subset S^2$ with $\mathrm{Area}(D_i)<A(\phi,\epsilon)$.
\end{cor}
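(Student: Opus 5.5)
The plan is to reduce the Corollary to Theorem~\ref{Main theorem: Topological entropy after perturbation supported on a disk}, using the structure of area-preserving diffeomorphisms of $S^2$ recalled just before its statement. Let $\phi$ preserve $|\omega|$. Then either $\phi$ preserves orientation, and hence $\omega$, or $\phi$ reverses orientation, so $\phi^*\omega=-\omega$. In the first case $\phi\in\Ham(S^2,\omega)$, since on the sphere the Hamiltonian group is the whole group of orientation- and area-preserving diffeomorphisms. The Corollary then follows directly from Theorem~\ref{Main theorem: Topological entropy after perturbation supported on a disk}, once we check the perturbations allowed there. A diffeomorphism $\psi$ supported in a union of disks and preserving $|\omega|$ is the identity off its support, so it preserves orientation (assuming the support is a proper subset; if not, the disk areas force $n\ge 2$ and we still work componentwise). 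On each disk it preserves $\omega$, so $\psi$ is a legitimate perturbation in the Theorem.

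In the orientation-reversing case, write $\phi=\rho\circ\phi_0$ with $\phi_0\in\Ham(S^2,\omega)$ and $\rho$ the reflection across the equator. The trick is to pass to the square. Since $h_{\top}(f^2)=2h_{\top}(f)$, it suffices to bound $h_{\top}((\psi\circ\phi)^2)$ from below. We have
\[(\psi\circ\phi)^2=\psi\circ(\phi\psi\phi^{-1})\circ\phi^2.\]
Here $\phi^2$ preserves orientation and area, so it is Hamiltonian. The map $\phi\psi\phi^{-1}$ is supported in $\sqcup\phi(D_i)$. These are disks of the same areas, and $\phi\psi\phi^{-1}$ is again area- and orientation-preserving.

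The difficulty is that the combined support $\sqcup D_i\cup\sqcup\phi(D_i)$ need not be a disjoint union of small disks, because the $D_i$ and the $\phi(D_j)$ may overlap. The first thing to try is to apply Theorem~\ref{Main theorem: Topological entropy after perturbation supported on a disk} twice, to the Hamiltonian diffeomorphism $\phi^2$ with $\varepsilon/2$ at each step:
\begin{itemize}
\item perturbing $\phi^2$ by $\phi\psi\phi^{-1}$ gives the map $\phi'$, with entropy $>h_{\top}(\phi^2)-\varepsilon/2$;
\item perturbing $\phi'$ by $\psi$ then needs a constant $A(\phi',\varepsilon/2)$ that is uniform in $\phi'$.
\end{itemize}
This uniformity is not supplied by the statement of the Theorem, so this step is the main obstacle.

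I would therefore try instead to use directly the braid-stability mechanism behind the Theorem, namely stability with respect to Connery-Grigg's spectral distance. That argument should only need the perturbation to be small in the spectral sense. A composition of two maps, each supported in disks of area $<A$, should have spectral distance to the identity bounded by roughly $2A$, regardless of overlaps. If so, apply the underlying braid-stability result to $\phi^2$ with threshold $A(\phi^2,2\varepsilon)/2$. This gives $2h_{\top}(\psi\circ\phi)>2h_{\top}(\phi)-2\varepsilon$, which is the claim.

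If the Theorem has to be used as a black box, an alternative is to conjugate by $\rho$. Then $\rho\circ\psi\circ\phi$ is a perturbation of $\rho\phi=\phi_0$ by $\rho\psi\rho$, supported in the disks $\rho(D_i)$ of the same areas. But the entropy of $\psi\circ\phi$ differs from that of $\rho\psi\phi$ in general, so this only works when combined with the squaring trick above.
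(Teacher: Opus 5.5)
Your argument is correct. It also spells out something the paper leaves implicit. The paper only notes that an $|\omega|$-preserving $\phi$ is either Hamiltonian or $\rho\circ\phi_0$ with $\phi_0\in\mathrm{Ham}(S^2,\omega)$, and calls the corollary a consequence of Theorem~\ref{Main theorem: Topological entropy after perturbation supported on a disk}. It never explains the orientation-reversing case.

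As you observe, the \emph{statement} of Theorem~\ref{Main theorem: Topological entropy after perturbation supported on a disk} cannot be used as a black box here. The supports of $\psi$ and $\phi\psi\phi^{-1}$ may overlap, and moving $\rho$ past $\psi$ changes the entropy. Your route is to square, then use the metric result, Theorem~\ref{Main Theorem: Lower semicontinuity of topological entropy} (more precisely the claim in its proof), together with the spectral bound from the proof of Theorem~\ref{Main theorem: Topological entropy after perturbation supported on a disk}. This is the natural way to make the corollary rigorous, and it is insensitive to overlaps. Concretely, bi-invariance and the triangle inequality give
\[
d_{\mathrm{im}}\big((\psi\phi)^2,\phi^2\big)=\gamma_{\mathrm{im}}\big(\psi\circ\phi\psi\phi^{-1}\big)\le\gamma_{\mathrm{im}}(\psi)+\gamma_{\mathrm{im}}(\phi\psi\phi^{-1}).
\]
The $S^2$ estimate in the proof of Theorem~\ref{Main theorem: Topological entropy after perturbation supported on a disk} applies to both terms. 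This is because the $\phi(D_i)$ are disjoint disks with the same areas as the $D_i$, and $\phi\psi\phi^{-1}$ preserves $\omega$.

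One correction. That estimate gives $\gamma_{\mathrm{im}}<2A$ for \emph{each} map supported in disks of area $<A\le 1/2$. The composition is therefore bounded by $4A$, not ``roughly $2A$''. Your threshold $A=A(\phi^2,2\varepsilon)/2$ still works exactly. With $A(\phi^2,2\varepsilon)=\Delta(\mathcal{P},\phi^{2k})/(2k)$, the radius in the claim of the proof of Theorem~\ref{Main Theorem: Lower semicontinuity of topological entropy} is $\Delta(\mathcal{P},\phi^{2k})/k=4A$, and your bound is strict.

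The closing remark about conjugating by $\rho$ can be dropped. So can the parenthetical about supports. The support of $\psi$ is a compact subset of $\sqcup D_i\neq S^2$, so $\psi$ is the identity on a nonempty open set and hence automatically orientation-preserving.
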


We note that Theorem~\ref{Main theorem: Topological entropy after perturbation supported on a disk} can also be viewed as a rigidity theorem in Hamiltonian dynamics. In contrast, such a statement does not hold among surface diffeomorphisms: the topological entropy generated by a Smale horseshoe can be removed by a perturbation supported in a single fork-shaped disk of arbitrarily small area as in Figure \ref{fig:Fork-shape disk} that pushes the whole invariant set, and eventually some isolating neighborhood, into a sink outside the horseshoe. Note that maps with sinks cannot be area-preserving diffeomorphisms.

\begin{figure}
    \centering
    \includegraphics[width=.59\linewidth]{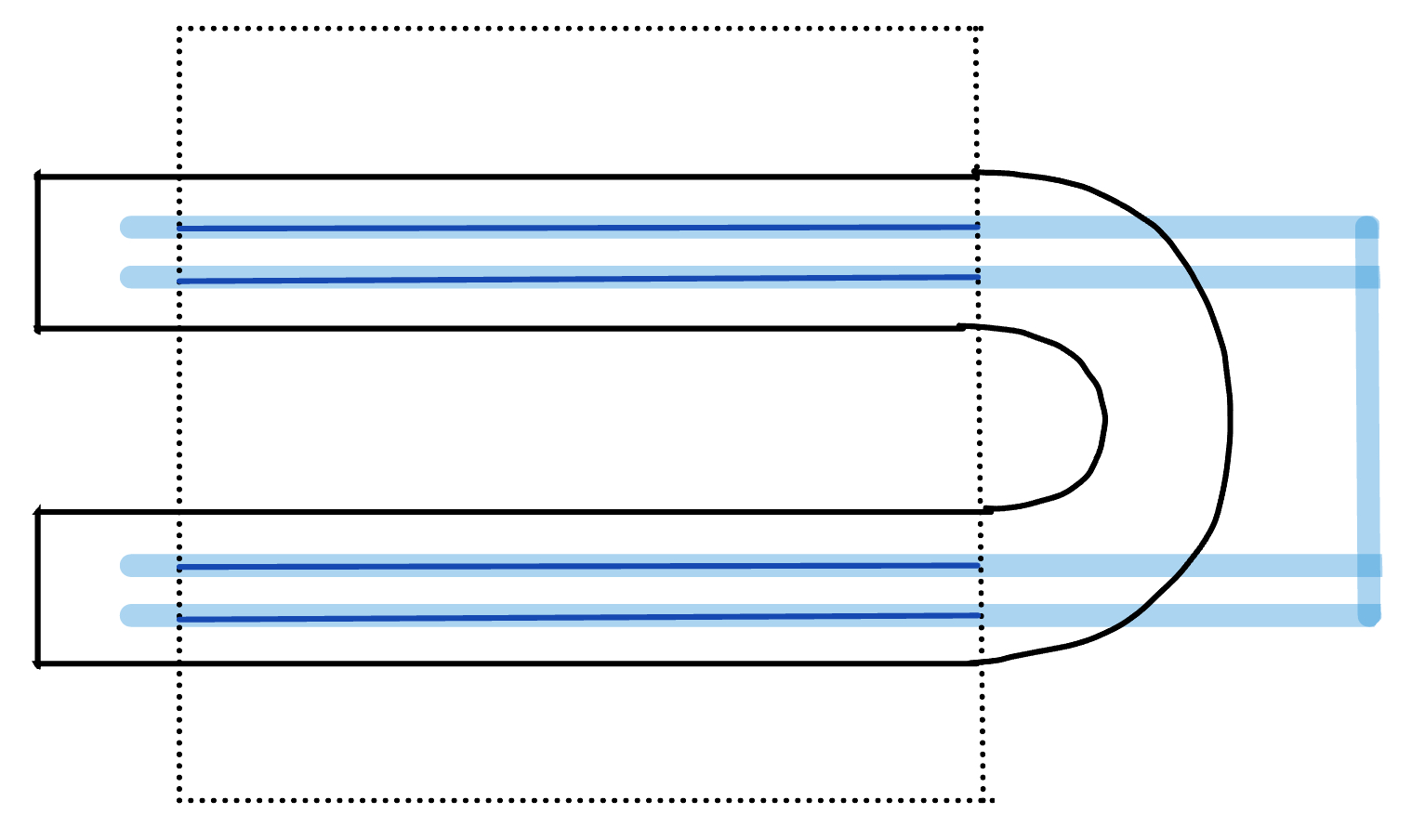}
    \caption{Disk containing the invariant set of a horseshoe}
    \label{fig:Fork-shape disk}
\end{figure}

\newpage

Theorem~\ref{Main theorem: Topological entropy after perturbation supported on a disk} is obtained by combining methods from symplectic topology with the theory of braids on surfaces. It is a consequence of a lower semicontinuity result for $h_{\rm top}$ with respect to the so-called PSS-image spectral distance on the group $\Ham(\Sigma,\omega)$, which follows from the new braid stability result respect to the same distance. In order to explain this result in more detail, we recall some necessary notions.

\subsection{Braids and Hamiltonian diffeomorphisms}
\label{Section: Braids and Hamiltonian diffeomorphisms}

Let $\mathcal{Y}$ be a finite subset of $\Sigma$. We call a subset $\BB$ of the product $\Sigma \times [0,1]$ a \textbf{geometric $n$-braid} based at $\mathcal{Y}$ if the following conditions hold:
\begin{enumerate}
	\item $\BB$ is the union of mutually disjoint $n$ embedded arcs $s_1, \ldots, s_n$.
	\item Each arc $s_i$ connects points $(0,x_i) \in \set{0}\times \mathcal{Y}$ and $(1,y_i)\in \set{1} \times \mathcal{Y}$. 
	\item Each arc intersects $\set{t}\times \Sigma$ exactly once for $0\le t \le 1$.
\end{enumerate}
Combining (1) and (2) we obtain that for $i\neq j \in \{1,...,n\}$, we have $x_i \neq x_j$ and $y_i \neq y_j$.   It follows that a braid induces a bijection $\mu_\BB:\mathcal{Y}\to \mathcal{Y}$ defined by $\mu_\BB(x_i)=y_i$. Two geometric braids based at $\mathcal{Y}$ are said to be \textbf{isotopic} if one can be continuously deformed to the other through geometric braids all of which are based at $\mathcal{Y}$. Each isotopy class of geometric braids based in $\mathcal{Y}$ is called \textbf{$n$-braid} in $\Sigma$ based at $\mathcal{Y}$. The set of $n$-braids in $\Sigma$ based at $\mathcal{Y}$ forms a group called the braid group, and is denoted by $B_n(\Sigma, \mathcal{Y})$. The product operation of the group $B_n(\Sigma, \mathcal{Y})$ is given by the concatenation. The $n$-braid $\BB$ is said to be \textbf{pure} if the bijection $\mu_\BB$ is the identity. The set of pure braids forms a subgroup of $B_n(\Sigma, \mathcal{Y})$ which we denote by $P_n(\Sigma, \mathcal{Y})$.

Two geometric $n$-braids possibly based at different sets are said to be \textbf{freely isotopic} if one can continuously {deform} one to the other through geometric $n$-braids, allowing base points to vary during the {deformation}. The number of connected components remains constant along the {deformation}. Pure geometric braids can only be freely isotopic to pure geometric braids.

When studying the fixed points or periodic orbits of Hamiltonian diffeomorphisms, it is better to understand braids as a subset of $S^1\times \Sigma$ by identifying $\set{0}\times\Sigma$ and $\set{1}\times \Sigma$. By conditions (2) and (3), braids are in one-to-one correspondence with links that are positively transverse to each $\set{t}\times \Sigma$. 

Let $\phi$ be a homeomorphism on $\Sigma$ isotopic to the identity $\id_\Sigma$, and let \linebreak $\Phi = \set{\phi^t}_{t\in S^1}$ be a path of homeomorphisms connecting $\id_\Sigma$ and $\phi$. Given a finite $\phi$-invariant set $\mathcal{Y}$ of $\phi$ (i.e.\  $\phi(\mathcal{Y})=\mathcal{Y}$), we define the geometric \textbf{braid} $\BB(\mathcal{Y},\Phi)$ by
\[\BB(\mathcal{Y},\Phi):=\bigcup_{t\in S^1} \set{t}\times \phi^t(\mathcal{Y}).\]

A priori, the braid $\BB(\mathcal{Y},\Phi)$ may depend on the choice of isotopy from $\id_\Sigma$ to~$\phi$. Let $\Phi_0=\set{\phi^t_0}_{t\in S^1}$ and $\Phi_1= \set{\phi^t_1}_{t\in S^1}$ be two paths of homeomorphisms connecting $\id_\Sigma$ and $\phi$. If  $\Phi_0$ and $\Phi_1$ are isotopic relative to their endpoints,
then it is not hard to see that the braids $\BB(\mathcal{Y},\Phi_0)$ and $\BB(\mathcal{Y},\Phi_1)$, defined respectively by $\Phi_0$ and $\Phi_1$, are isotopic as geometric $n$-braids based at $\mathcal{Y}$.
It follows that the braid $\BB(\mathcal{Y},\Phi)$ is well-defined up to the choice of the isotopy class of $\Phi$ relative to its endpoints. In particular, it is well-defined when $\Sigma$ has genus at least two.


We denote by $\mathcal{P}(H)$ the set of $1$-periodic orbits of $\phi^t_H$. 
There is a bijective correspondence between $1$-periodic orbits of the Hamiltonian flow $\{\phi^t_H\}$ and fixed points of the Hamiltonian diffeomorphism $\phi_H^1$. We can thus regard the elements in $\mathcal{P}(H)$ as fixed points of the Hamiltonian diffeomorphism $\phi$, and think of  $\mathcal{P}(H)$ as the set of fixed points of $\phi$. We will use both characterizations of elements of $\mathcal{P}(H)$ interchangeably in what follows.

Recall that a fixed point $y$ of $\phi_H^1$ is called non-degenerate if it does not admit $1$ as an eigenvalue for the linearization of $\phi_H^1$ at $y$. We say that $H$ is  non-degenerate if every fixed point of $\phi_H^1$ is non-degenerate. In that case, $\mathcal{P}(H)$ is a finite set.

Hamiltonian diffeomorphisms are, by definition, isotopic to the identity. Indeed, let $\phi \in \Ham(\Sigma,\omega)$, and let $H:S^1 \times \Sigma \to \R$ be a Hamiltonian generating $\phi$. The flow $\phi^t_H$ of $H$ for $t\in[0,1]$ gives a path of diffeomorphisms starting at $\id_\Sigma$ and ending at $\phi$. In particular, for every finite subset $\mathcal{Y}$ of fixed points of $\phi$ we define a geometric braid $\BB(\mathcal{Y},H)$ using the path $\phi^t_H$. We remark that since $\mathcal{Y}$ consists of fixed points of $\phi$, $\BB(\mathcal{Y},H)$ is a pure geometric braid.

\begin{figure}
    \centering
    \includegraphics[width=0.6\linewidth]{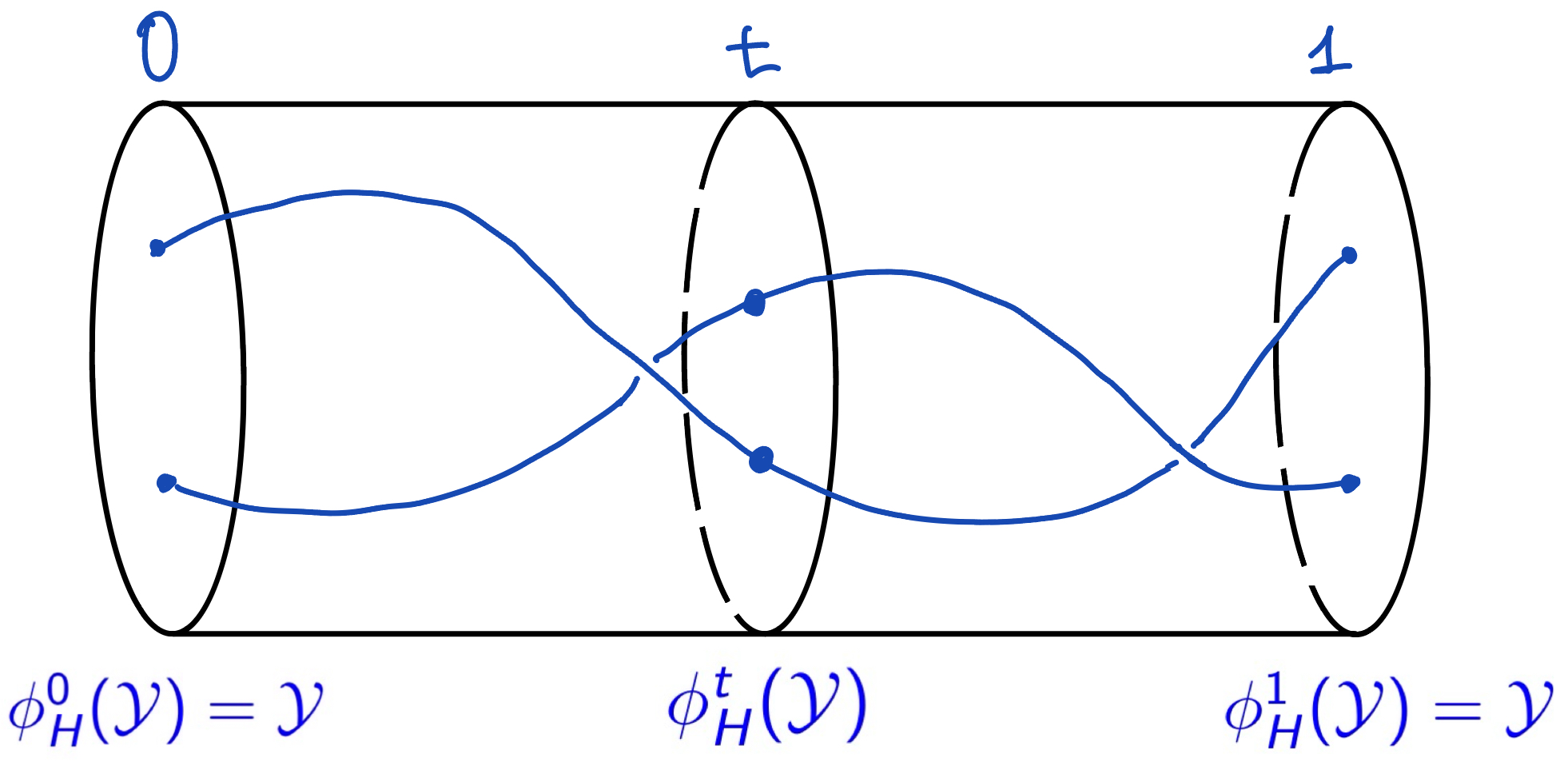}
    \caption{Geometric Braid $\mathcal{B}(\mathcal{Y},H)$ of $1$-periodic orbits}
    \label{fig:placeholder}
\end{figure}


Clearly, the braid $\BB(\mathcal{Y},H)$ depends on the choice of Hamiltonian $H$. 
{Nevertheless, the \textbf{braid type} of $\mathcal{Y}$ is well-defined for $\phi$; see Section \ref{Subsubsection: Proof of Theorem C} for the  precise definition of {braid type}}. For the moment, it will be sufficient to know that freely isotopic braids have the same braid type.

We remark that given $\phi^1_H \in \Ham(\Sigma,\omega)$ and $\mathcal{Y} \subset \mathcal{P}(H)$ the braid $\BB(\mathcal{Y},H)$ does not depend on the choice of a Hamiltonian $H$ when $\Sigma \ne S^2$. Indeed, this follows from the fact that the fundamental group $\pi_1(\Ham(\Sigma,\omega),\id_\Sigma)$ is trivial in this case.


In order to investigate the stability of the $h_{\rm top}$, we consider the following braid stability question:
\begin{center}
	\textit{Does the braid $\BB(\mathcal{Y}, H)$ persists under a small perturbation of $\phi$?}
\end{center}
The question above can be reformulated more precisely as follows:
\begin{question} 
Let $\mathcal{Y}$ be a finite invariant set of the Hamiltonian diffeomorphism $\phi$ and $H$ be a Hamiltonian generating $\phi$. Does there exist a small neighbourhood $\mathcal{U}$ of $\phi$, such that every Hamiltonian diffeomorphism $\phi' \in \mathcal{U}$ has a finite invariant set $\mathcal{Y}'$ and a Hamiltonian $H'$ generating $\phi'$ such that $\BB(\mathcal{Y}, H)$ and $\BB(\mathcal{Y}', H')$ are freely isotopic as braids?
\end{question}

The answer to this question clearly depends on the choice of topology on $\Ham(\Sigma,\omega)$. For example, the answer is positive for the $C^1$-topology by the implicit function theorem. The answer is still positive for the $C^0$-topology. Recently, it was proved by the first two authors \cite[Theorem 1]{Alves-Meiwes24} that the answer is again positive for the topology induced by the Hofer-distance $d_{\rm Hofer}$ on $\Ham(\Sigma,\omega)$ under the hypothesis that $\mathcal{Y}$ is a finite set of fixed points of $\phi$. In \cite{Hutchings23}, Hutchings generalized this for any finite invariant set $\mathcal{Y}$. We recall the precise statement of the braid stability result in \cite{Alves-Meiwes24}, as it will be important for us later on.

\begin{thm}[Alves-Meiwes 24] 
Let $(\Sigma,\omega)$ be a closed symplectic surface. Let $\phi \in \Ham(\Sigma,\omega)$, $H$ a Hamiltonian generating $\phi$ and $\mathcal{Y}$ be a finite collection of non-degenerate fixed points of $\phi$ in the same free homotopy class of loops $\alpha \in \tilde{\pi}_1(\Sigma)$.  Then, there exists $\epsilon>0$, such that for any non-degenerate Hamiltonian diffeomorphism $\psi$ whose Hofer distance to $\phi$ satisfies $d_{\Hofer}(\phi,\psi)<\epsilon$, there exist
	\begin{itemize}
		\item Hamiltonians $H,K$ generating $\phi, \psi$ respectively,
		\item and a finite collection $\mathcal{Z}$ of fixed points of $\psi$
	\end{itemize}
such that $\BB(\mathcal{Y},H)$ is freely isotopic as a braid to $\BB(\mathcal{Z},K)$. 
\end{thm}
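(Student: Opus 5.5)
The plan is to compare the filtered Floer theories of $\phi$ and $\psi$ through continuation maps, and to follow a single Floer cylinder ending at each orbit of $\mathcal{Y}$. The point is that the energy of such cylinders is controlled by the Hofer distance, so the braid type cannot change.

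\textbf{Setup.} Fix $H$ generating $\phi$. Since the points of $\mathcal{Y}$ are non-degenerate, each $y\in\mathcal{Y}$ is isolated and contributes a nonzero local Floer homology in the class $\alpha$. For $\psi$ with $d_{\Hofer}(\phi,\psi)<\epsilon$, choose $K$ generating $\psi$ with $\int_0^1 \|H_t-K_t\|_{\mathrm{osc}}\,dt<\epsilon$. Consider the continuation chain maps
\[
\Phi: \CF(H)\to\CF(K), \qquad \Psi: \CF(K)\to\CF(H),
\]
defined by monotone homotopies. Their composition is chain homotopic to the identity, with action shift at most $2\epsilon$.

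\textbf{Isolating the orbits of $\mathcal{Y}$.} I would use a filtered version of the local picture. Choose a small action window around $\mathcal{A}_H(y)$, or work with Floer homology in the class $\alpha$ relative to the complement of the other orbits. To keep the argument uniform, I would first perturb $H$ near $\mathcal{Y}$ without changing the braid. The goal is that, for $\epsilon$ smaller than the gaps in the action spectrum near the actions of $\mathcal{Y}$, each $y$ is connected to some orbit $z$ of $K$ by a pair of continuation cylinders $y\to z\to y$ whose energies sum to less than $2\epsilon$.

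\textbf{Main step and main obstacle.} The key point is that a Floer continuation cylinder $u$ with energy less than $2\epsilon$ gives a homotopy between the loops $y$ and $z$ that stays disjoint from the other strands. The strands are the graphs of the remaining orbits, which are themselves solutions of the same equation. I would prove this by positivity of intersections, viewing $u$ and the trivial cylinders over the other orbits $y'\in\mathcal{Y}$ as pseudoholomorphic curves in $\R\times S^1\times\Sigma$ for the graph construction. Under a homotopy of the continuation data these intersections are preserved in the limit by an asymptotic winding argument in the spirit of Hofer--Wysocki--Zehnder. Alternatively, I would argue by compactness: as $\epsilon\to0$, low-energy cylinders converge to the trivial cylinders over $y$, so for small $\epsilon$ they stay in a small tubular neighbourhood of the strand of $y$.

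\textbf{Conclusion.} The compactness route seems cleaner, and I would try it first. By non-degeneracy, Gromov compactness for sequences $\psi_k\to\phi$ in Hofer distance with energy tending to $0$ forces $C^0$-convergence of the continuation cylinders to the trivial cylinder. This must hold uniformly in $t$ and in the $s$-direction, including the $s$-ends, and that uniformity is exactly the main difficulty: bubbling is excluded by the small energy, but the ends must be controlled. Once every $z_y$ lies in a thin tube around the strand of $y$, with the tubes pairwise disjoint, the braid $\BB(\{z_y\},K)$ is freely isotopic to $\BB(\mathcal{Y},H)$ by straight-line isotopy inside the tubes. Distinctness of the $z_y$ follows from the disjointness of the tubes.
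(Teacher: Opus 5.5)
\textbf{The compactness route fails.} This is the route you say you would try first, and it breaks at its foundation. Convergence $\psi_k\to\phi$ in Hofer distance gives no $C^0$ control, let alone $C^\infty$ control, on $K_k-H$ or on $X_{K_k}$. So there is no Gromov--Floer compactness for the continuation cylinders. Small energy also does not confine a continuation cylinder $u$ near $y$: each loop $u(s,\cdot)$ only approximately follows $X_{H_s}$, which may be arbitrarily large.

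The conclusion you aim for is in fact false in general. $\psi$ may differ from $\phi$ by a Hamiltonian diffeomorphism of arbitrarily small Hofer norm supported in a long thin strip of small area (compare the remark following Theorem~1). Such a map moves points a macroscopic distance. Then $y$ may cease to be fixed, and the orbits $z$ realizing the braid can lie far from $\mathcal{Y}$. Hence there are no disjoint tubes around the strands and no straight-line isotopy, and your argument for the distinctness of the $z_y$ collapses. For the same reason, the local Floer homology of $y$ is not preserved under Hofer-small perturbations, so only a filtered, global argument can work.

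\textbf{The positivity route is set up incorrectly.} This route is the right idea, but it lacks its two essential inputs. The trivial cylinders over $y'\in\mathcal{Y}$ solve the Floer equation of $H$, not the $s$-dependent continuation equation. Their graphs are therefore not holomorphic for the Gromov-trick almost complex structure of the continuation data. Positivity must be applied to the graphs $\tilde u_y,\tilde u_{y'}$ of two continuation cylinders for the \emph{same} data, and even then it only yields $m_u(y,y')\ge 0$. You still need two further facts:
\begin{enumerate}
\item[(a)] the positive ends $u_y(+\infty)$ are pairwise distinct; finding, for each $y$ separately, a pair $y\to z_y\to y$ does not exclude $z_y=z_{y'}$;
\item[(b)] the total intersection number vanishes.
\end{enumerate}

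In the approach the paper sketches for this theorem (and carries out for $d_\im$ in Section~3), both are supplied as follows.
\begin{enumerate}
\item[(a)] This step is algebraic. The filtered chain homotopy $\Psi\circ\Phi\simeq\id$ has action shift below the Floer-theoretic action gap of $\mathcal{Y}$, which makes the matrix of $\Psi\circ\Phi$ on the span of $\mathcal{Y}$ unipotent. Cauchy--Binet then gives an injection $\mathfrak{R}:\mathcal{Y}\to\mathcal{P}(K;\alpha)$ and a permutation $\mathfrak{S}$ of $\mathcal{Y}$, with continuation cylinders $y\to\mathfrak{R}(y)\to\mathfrak{S}(y)$.
\item[(b)] This step is topological. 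One concatenates these cylinders along the $\mathfrak{S}$-orbits into closed tori $T_y$. Via K\"unneth and the identity $\mathrm{Area}(T_y)+[\gamma_y]\circ[y]=0$, one gets $[\widetilde T_y]\cdot[\widetilde T_{y'}]=0$. Together with positivity, this forces every intersection to vanish.
\end{enumerate}
Your appeal to asymptotic winding numbers does not substitute for either step.
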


The proof uses continuation cylinders between the Floer complexes $\CF(H)$ to $\CF(K)$ to construct the desired isotopy. More precisely, with a finite collection of disjoint continuation cylinders $\set{u_y}_{y\in\mathcal{Y}}$ such that
\begin{itemize}
	\item the negative end of $u_y$ is $y$, and
	\item the positive ends of $u_y$ are mutually distinct,
\end{itemize}
we construct a braid isotopy through a braid $\BB_s$ defined by
\[\BB_s:=\bigcup_{y\in\mathcal{Y}} \set{(t,u_y(s,t)):t\in S^1}.\]

\begin{figure}
    \centering
    \includegraphics[width=0.8\linewidth]{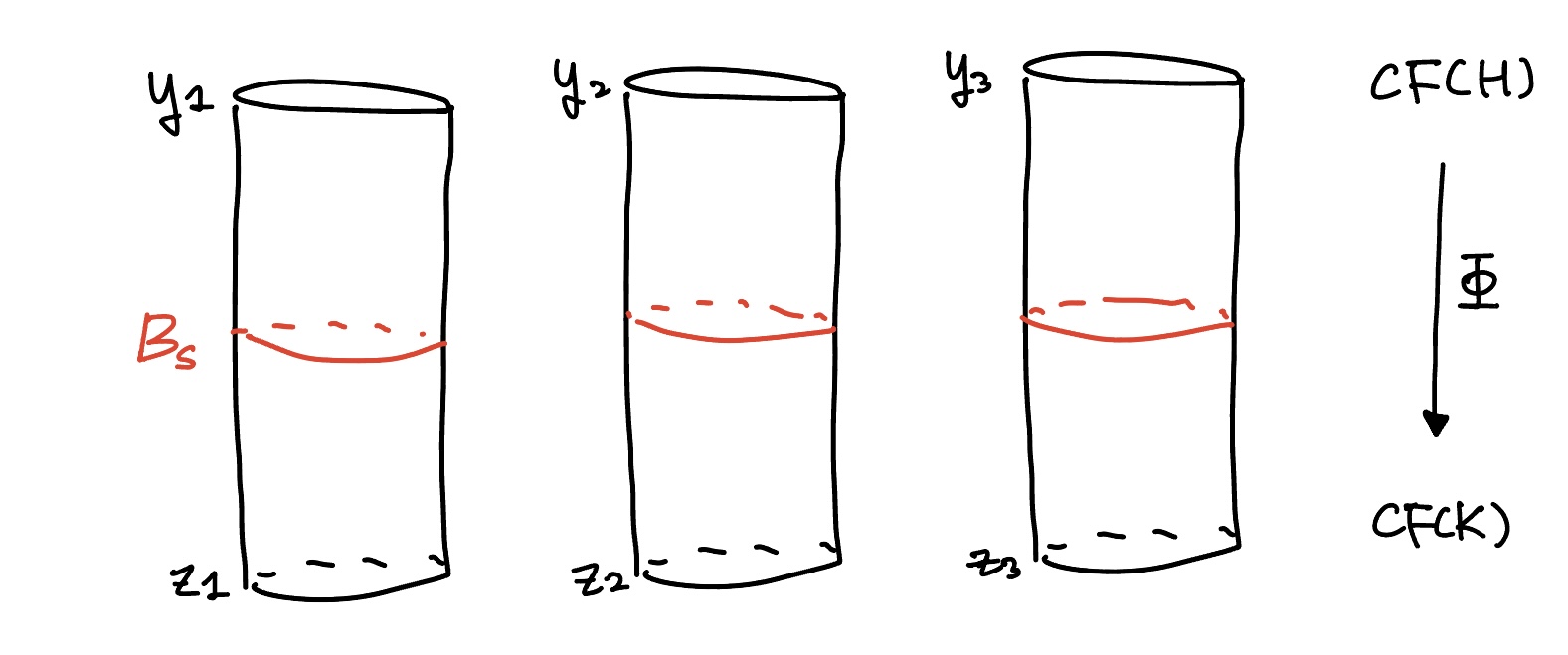}
    \caption{Free isotopy of braids along Floer continuation cylinders}
    \label{fig:plac1eholder}
\end{figure}

To prove that each $\BB_s$ is a braid, or equivalently $u_y(s,t)\ne u_{y'}(s,t)$ for $y \ne y'\in \mathcal{Y}$, we consider the graphs $\tilde{u}_\gamma$ on $\RR\times S^1\times \Sigma$ of the continuation cylinders. Using positivity of intersections for holomorphic curves and the fact that the graphs $\tilde{u}_y$ are holomorphic curves, one can show that $\BB_s$ is a braid for every $s \in \R$. Hence, the finite collection $\mathcal{Z}=\set{u_y(+\infty)}_{y\in\mathcal{Y}}$ of fixed points of $\psi$ satisfies:
\[\BB(\mathcal{Y}, H)\text{ is  freely isotopic as a braid to }\BB(\mathcal{Z},K).\]

The PSS-image spectral distance $d_\im$  on the group $\Ham(\Sigma,\omega)$ was recently introduced and studied by Connery-Grigg \cite{Connery-Grigg24}; it is induced by the PSS-image spectral norm $\gamma_{\im}$, see section \ref{Section:PSS-image-spectral-norm} for the definition and properties of $\gamma_{\im}$ and $d_{\im}$. It was inspired by the spectral distance on $\Ham(\Sigma,\omega)$, which was introduced by Schwarz in \cite{Schwarz00}, and which plays an important role in symplectic topology and dynamics; see also \cite{Viterbo92}. The distance $d_\im$ shares many properties with spectral distance. For example, it is $C^0$-continuous and Hofer-Lipschitz. Since braid stability holds for the $C^0$-topology and the Hofer-distance, one can ask if it also holds for the $d_\im$-distance. One of the goals of this paper is to give a positive answer to this question.

In our main braid stability result, which we will formulate now, we will additionally quantify the size of the allowed perturbations. To a finite collection of non-degenerate fixed points $\mathcal{Y}$ of $\phi$ 
we associate a positive number, its \textbf{Floer theoretic action gap} $\Delta(\mathcal{Y},\phi)$, see Section~\ref{sec:Actiongap} for the definition.

\begin{TheoremX}
\label{Main Theorem: Braid Stability under PSS-image spectral distance}
Let $(\Sigma,\omega)$ be a closed symplectic surface. Let $\phi \in \Ham(\Sigma,\omega)$, $H$ a Hamiltonian generating $\phi$ and $\mathcal{Y}$ be a finite collection of non-degenerate fixed points of $\phi$ in the same free homotopy class of loops $\alpha \in \tilde{\pi}_1(\Sigma)$. Then for any non-degenerate Hamiltonian diffeomorphism $\phi^\prime$ satisfying \[d_{\im}(\phi,\phi^\prime)<\Delta(\mathcal{Y}, \phi),\]
there exists
	\begin{itemize}
		\item $K$ generating $\phi^{\prime}$,
		\item and a finite collection $\mathcal{Z}$ of fixed points of $\phi^\prime$
	\end{itemize}
such that $\BB(\mathcal{Y},H)$ is freely isotopic as a braid to $\BB(\mathcal{Z},K)$.  

\end{TheoremX}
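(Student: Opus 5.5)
We adapt the continuation-cylinder argument of Alves--Meiwes, replacing the Hofer-norm energy bound by a bound coming from the PSS-image spectral norm. The plan has four steps.

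\textbf{Step 1: choose a good generating Hamiltonian.} Since $d_{\im}(\phi,\phi')<\Delta(\mathcal{Y},\phi)$, the definition of $\gamma_{\im}$ as an infimum over Hamiltonians generating $\phi^{-1}\phi'$ lets us pick $K$ generating $\phi'$ with $\gamma_{\im}(H,K)<\Delta(\mathcal{Y},\phi)$. We may need to modify $H$ by a loop, which is only relevant on $S^2$ and is absorbed by the free isotopy of braids. Unwinding $\gamma_{\im}$, which is defined via the filtration levels at which the PSS-image of the relevant quantum homology classes is represented, gives the following. There are Floer chains representing continuation-type classes whose action shift is at most $\gamma_{\im}(H,K)$. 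This holds even though the Hofer energy of a direct continuation from $H$ to $K$ may be large.

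\textbf{Step 2: detect the orbits in $\mathcal{Y}$ Floer-theoretically.} The action gap $\Delta(\mathcal{Y},\phi)$ should be defined so that each $y\in\mathcal{Y}$ is detected by $\CF(H)$ in the class $\alpha$, for instance in a local or filtered Floer homology window. The requirement is that any chain of action drop less than $\Delta$ passing through $y$ cannot cancel against other generators. We then factor a path through $H$ and $K$ as a composition of continuation maps: from $H$ to $K$ and back, or via PSS maps through a small Morse function. The pieces should be arranged so that the total energy deficit, bounded by $\gamma_{\im}$, is below $\Delta$. An energy-filtration argument then shows the following. For each $y$ the composite $\CF(H)\to\CF(K)\to\CF(H)$ has a nonzero coefficient $\langle y\rangle\mapsto\langle y\rangle$ modulo terms of action drop. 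Hence there is a broken chain of Floer cylinders $y\to z_y\to y$ with $z_y\in\mathcal{P}(K)$ in class $\alpha$. Choosing the continuations generically gives finitely many such cylinders $u_y$ from $y$ to $z_y$.

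\textbf{Step 3: show the endpoints $z_y$ are distinct.} I would use the intersection-theoretic argument of \cite{Alves-Meiwes24}. The graphs $\tilde u_y\subset\R\times S^1\times\Sigma$ are holomorphic, so the intersection numbers between $\tilde u_y$ and $\tilde u_{y'}$ are nonnegative. These numbers are determined asymptotically by winding numbers at the ends. Because the composite trajectory returns to $y$, the total intersection of the glued curves $y\to z\to y$ and $y'\to z\to y'$ is computed by linking data of the braid $\BB(\mathcal{Y},H)$ with itself, which is zero relative intersection. Positivity then forces all intersections to vanish, so the graphs are disjoint. In particular $z_y\neq z_{y'}$, since otherwise the asymptotic contributions would be positive.

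\textbf{Step 4: build the isotopy.} Setting $\BB_s=\bigcup_y\{(t,u_y(s,t))\}$ then gives the free isotopy from $\BB(\mathcal{Y},H)$ to $\BB(\mathcal{Z},K)$ with $\mathcal{Z}=\{z_y\}$.

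\textbf{Main obstacle.} The hardest step is Step 2. The PSS-image norm controls only the spectral invariants of the fundamental and point classes, not arbitrary continuation energies. One must therefore show that the filtered PSS maps still produce, for each individual $y$, a cylinder through $y$ with action loss below $\Delta(\mathcal{Y},\phi)$. I would first try to define $\Delta$ precisely as the minimal action gap around $\mathcal{Y}$ in the filtered complex. I would then use the triangle inequality for spectral invariants, together with the pairs-of-pants product with the PSS images, to route each $y$ through $K$.
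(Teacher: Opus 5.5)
Your Step 2 outline is essentially the paper's route: you compare $H$ and $K$ through the PSS image of $[\Sigma]$ using pair-of-pants products and associativity, which the paper packages as enhanced continuation data. The gap is in how you extract cylinders from it. You choose, for each $y\in\mathcal{Y}$ separately, a broken trajectory $y\to z_y\to y$, and this gives no control over whether $y\mapsto z_y$ is injective. Without injectivity, the endpoint of your family in Step 4 is not a braid on $|\mathcal{Y}|$ strands.

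Your Step 3 justification does not close this. You claim that $z_y=z_{y'}$ would create a positive asymptotic contribution, but at a shared nondegenerate orbit that contribution is only nonnegative. It is measured by the difference between the windings of the leading eigenvectors of $u_y-u_{y'}$ at $+\infty$ and of $v_y-v_{y'}$ at $-\infty$. It can be zero, for instance when $z$ is positive hyperbolic and both windings are extremal.

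Nor can the filtration information you use exclude collisions. Take the pattern $L_{\mathcal{HK}}(y_1)=z_1+z_2$, $L_{\mathcal{HK}}(y_2)=z_1$, $L_{\mathcal{KH}}(z_1)=y_1+y_2$, $L_{\mathcal{KH}}(z_2)=y_2$. The composite is $y_1\mapsto y_1$, $y_2\mapsto y_1+y_2$, which is exactly ``identity plus action-decreasing terms'' when $\mathcal{A}_H(y_1)<\mathcal{A}_H(y_2)$. Yet the only self-returning route detected by these counts for $y_1$, and the only one for $y_2$, both pass through $z_1$.

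The missing idea is a global matching, which the paper obtains as follows.
\begin{enumerate}
\item It restricts $\Phi_{\mathcal{KH}}\circ\Phi_{\mathcal{HK}}$ to the span of $\mathcal{Y}$. This restriction is unipotent; on $T^2$ with non-contractible $\alpha$ it is the identity plus a matrix with entries in $\Lambda_+$. The proof uses the chain homotopy to the identity with action shift at most $\gamma_{\mathrm{im}}(\overline{H}\#K)+6\varepsilon$, together with Floer $\Delta$-isolation of $\mathcal{Y}$.
\item It applies Cauchy--Binet to the resulting nonzero determinant. This yields an injection $\mathfrak{R}:\mathcal{Y}\to\mathcal{P}(K;\alpha)$ and a permutation $\mathfrak{S}$ of $\mathcal{Y}$, with cylinders $y\to\mathfrak{R}(y)\to\mathfrak{S}(y)$. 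In the example above these are $y_1\to z_2\to y_2$ and $y_2\to z_1\to y_1$.
\item Since $\mathfrak{S}$ need not be the identity, the closed tori $T_y$ are built by concatenating $u_y,v_y,u_{\mathfrak{S}(y)},v_{\mathfrak{S}(y)},\dots$ until $\mathfrak{S}^M=\mathrm{id}$.
\item The vanishing of $[\widetilde T_y]\cdot[\widetilde T_{y'}]$ is then an actual computation, not a statement about ``linking of $\mathcal{B}(\mathcal{Y},H)$ with itself''. It uses the K\"unneth decomposition, $\deg T_y=\mathrm{Area}(T_y)$, and crucially $[y]=[y']$ because all of $\mathcal{Y}$ lies in the class $\alpha$. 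Separate arguments are needed for $S^2$, genus $\ge 2$, and $T^2$.
\end{enumerate}

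Two smaller points. On $S^2$ you should change $K$, not $H$, by the rotation loop. Changing $H$ multiplies $\mathcal{B}(\mathcal{Y},H)$ by a full twist, which in general is not undone by free isotopy. Also, $\phi$ may be degenerate away from $\mathcal{Y}$, so you need to reduce to a non-degenerate $H$ by a perturbation supported away from $\mathcal{B}(\mathcal{Y},H)$. That reduction uses lower semicontinuity of $\Delta$ and continuity of $\gamma_{\mathrm{im}}$.
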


\begin{rem}
In the case that $\phi$ in the statement of Theorem \ref{Main Theorem: Braid Stability under PSS-image spectral distance} is non-degenerate, one can replace the Floer-theoretic action gap $\Delta(\mathcal{Y},\phi)$ by the usual action gap, namely, the minimal positive real number that can be expressed as the difference between the Hamiltonian actions of a fixed point $x \in \P(H)$ and a fixed point $y\in\mathcal{Y}$. This follows from the fact that usual action gap is always at most $\Delta(\mathcal{Y},\phi)$; see section \ref{sec:Actiongap} for precise definitions.   
\end{rem}


Our strategy to prove this is the following: we first take $H$ and $K$ generating $\phi$ and $\psi$ that satisfy \[d_\im(\phi, \psi) = \gamma_\im([\Phi_H]^{-1} \circ [\Phi_K])=\gamma_\im([\Phi_{\overbar{H}\#K}]).\footnote{See Definition \ref{def:concatenation-and-inverse-Hamiltonians} in Section \ref{Section: Product structures on Hamiltonian Floer homology} for the definitions of $\overbar{H}$ and $\overbar{H}\#K$.}\]
Next, we construct two Hamiltonian $1$-forms on a cylinder with Hofer norms bounded above by $c_\im([\Sigma], \overbar{H}\#K)$  and $c_\im([\Sigma], \overbar{K}\#H)$ each\footnote{For the definition of the PSS-image spectral invariant $c_{\im}$ see Definition \ref{Defintion: PSS-image spectral invariants}.}. The key difference, in comparison to \cite{Alves-Meiwes24}, is that we allow Hamiltonian $1$-forms possibly with non-zero $ds$ component: these cylinders will be obtained by gluing certain Floer disks (which appear in the PSS-map) and Floer pair-of-pants (which appear in the pair-of-pants product). This construction can be understood as the quantitative version of
\begin{center}
	Continuation map $=$ Product with the continuation element,
\end{center}
in the spirit of the work of Kislev and Shelukhin \cite{Kislev-Shelukhin21}.
For any Hamiltonian $G$, $c_\im([\Sigma],G)$ is bounded above by $\norm{G}_{\Hofer}$. Since every Hamiltonian $1$-form on a cylinder induces the same map on the level of Hamiltonian Floer homology, we call the filtered maps defined by such Hamiltonian $1$-forms `enhanced Floer continuation maps'. The construction of such Hamiltonian $1$-forms is presented in detail in Section \ref{Section: Proof of main Theorem subsection}. 

We are then in a position to adapt the arguments in \cite{Alves-Meiwes24} and construct a braid isotopy using the enhanced Floer continuation cylinders. More precisely, we show that $\gamma_\im(\phi,\psi) < \Delta(\mathcal{Y},\phi)$ implies that there exist a finite collection $\mathcal{Z}$ of fixed points of $\psi$ and bijections $\alpha:\mathcal{Y}\to \mathcal{Z}$ and $\beta:\mathcal{Z}\to\mathcal{Y}$ with the following property: for each $y\in\mathcal{Y}$ there exist an enhanced Floer continuation cylinder ${u}_y$ from $y$ to $\alpha(y)$ and an enhanced Floer continuation cylinder ${v}_y$ from $\alpha(y)$ to $\beta(y)$. By Gromov's trick, graphs of continuation cylinders are holomorphic cylinders in $\RR\times S^1 \times \Sigma$ endowed with an appropriate almost complex structure $J$. Using positivity of intersections for holomorphic curves and the fact that $\alpha$ and $\beta$ are bijections, we conclude that the graphs of the continuation cylinders $u_y$ from $y$ to $\alpha(y)$ are mutually disjoint. The intersections of the graphs of the $u_y$ with $\set{s}\times S^1\times \Sigma$ gives the free isotopy of geometric braids between $\BB(\mathcal{Y},H)$ and $\BB(\mathcal{Z},K)$. See Section \ref{Section: Isotopy via enhanced Floer cylinders} for the detailed proof.

Theorem \ref{Main Theorem: Braid Stability under PSS-image spectral distance} is the main ingredient in the proof of Theorem \ref{Main theorem: Topological entropy after perturbation supported on a disk}.  Another consequence of  Theorem \ref{Main Theorem: Braid Stability under PSS-image spectral distance} to the topological entropy is the following result: 
\begin{TheoremX} \label{Main Theorem: Lower semicontinuity of topological entropy}
	Let $(\Sigma,\omega)$ be a closed symplectic surface. The topological entropy of Hamiltonian diffeomorphisms of $(\Sigma,\omega)$ is lower semicontinuous with respect to the PSS-image spectral distance $d_\im$.
\end{TheoremX}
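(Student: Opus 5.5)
The plan is to derive Theorem~\ref{Main Theorem: Lower semicontinuity of topological entropy} from Theorem~\ref{Main Theorem: Braid Stability under PSS-image spectral distance}, combined with the fact that topological entropy of surface homeomorphisms is approximated from below by the entropy of braid types of periodic orbits. Fix $\phi\in\Ham(\Sigma,\omega)$ with $h_\top(\phi)>0$ (if $h_\top(\phi)=0$ there is nothing to prove) and $\varepsilon>0$.

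\textbf{Step 1: reduce to braid types.} By Katok's horseshoe theorem for $C^{1+\alpha}$ surface diffeomorphisms, there is a horseshoe for some iterate $\phi^m$ whose entropy exceeds $h_\top(\phi)-\varepsilon/2$. Inside this hyperbolic set we choose a finite collection of periodic orbits, all of some common period $p$, forming a finite $\phi^p$-invariant set $\mathcal{Y}$ of non-degenerate (hyperbolic) fixed points of $\phi^p$. We choose it so that the Thurston--Nielsen canonical representative of the braid type of $\mathcal{Y}$, i.e.\ of the mapping class of $\phi^p$ on $\Sigma\setminus\mathcal{Y}$, has entropy at least $p\,(h_\top(\phi)-\varepsilon)$; this uses the standard fact that, on surfaces, entropy equals the supremum over finite invariant sets of the entropy of their braid types. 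Since fixed points of $\phi^p$ in a common free homotopy class can be obtained by a further finite selection, we restrict $\mathcal{Y}$ to points in the same class $\alpha$. This is harmless after passing to a subcollection of still large entropy, or alternatively by treating each class separately via the lift to a suitable cover.

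\textbf{Step 2: apply braid stability to iterates.} Set $\delta=\Delta(\mathcal{Y},\phi^p)>0$. We need $d_\im(\phi^p,\phi'^p)<\delta$ whenever $d_\im(\phi,\phi')$ is small. For this we use bi-invariance of $d_\im$ and the triangle inequality: writing $\phi'^p\phi^{-p}$ as a product of $p$ conjugates of $\phi'\phi^{-1}$ gives $d_\im(\phi^p,\phi'^p)\le p\, d_\im(\phi,\phi')$. We also need non-degeneracy of $\phi'^p$. Since entropy varies only through a limit, we handle degenerate $\phi'$ by a $C^\infty$-small non-degenerate perturbation, which is $d_\im$-small. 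We then use the braid type at the limit: a periodic-orbit braid type forcing entropy persists for the limit by the Handel/Boyland forcing theory, since $\mathrm{pA}$ braid types are stable under $C^0$ limits up to collapse. Theorem~\ref{Main Theorem: Braid Stability under PSS-image spectral distance} then gives a $\phi'^p$-invariant set $\mathcal{Z}$ whose braid is freely isotopic to that of $\mathcal{Y}$, hence has the same braid type.

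\textbf{Step 3: conclude.} By the Thurston--Nielsen/Handel theory, any homeomorphism realizing a braid type has entropy at least that of its canonical representative. Hence $h_\top(\phi'^p)\ge p\,(h_\top(\phi)-\varepsilon)$, and so $h_\top(\phi')\ge h_\top(\phi)-\varepsilon$.

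\textbf{Main obstacle.} I expect the hardest step to be the non-degeneracy hypothesis on $\phi'$ in Theorem~\ref{Main Theorem: Braid Stability under PSS-image spectral distance}, where degenerate $\phi'$ must be passed to in the limit. I would try to avoid it by using that $\mathcal{Y}$ can be chosen to include an extra orbit making the braid type rigid, so that collisions in a limit are excluded by positivity-of-intersection arguments. The secondary obstacle is the same-free-homotopy-class restriction, which I would handle by working class by class.
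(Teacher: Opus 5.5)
Your overall architecture matches the paper's. You approximate $h_{\mathrm{top}}(\phi)$ by the entropy of a braid type of hyperbolic periodic points. You use $d_{\mathrm{im}}(\phi^p,\phi'^p)\le p\, d_{\mathrm{im}}(\phi,\phi')$ to apply Theorem~\ref{Main Theorem: Braid Stability under PSS-image spectral distance} to the iterates. You conclude with the Manning/Handel lower bound. However, two steps do not go through as written.

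\textbf{First gap: restricting to one free homotopy class.} This reduction is not harmless. Braid-type entropy can only decrease when you forget strands, and it can drop to zero. For example, the $3$-strand pure braid $\sigma_1^2\sigma_2^{-2}$ is pseudo-Anosov, while every $2$-strand sub-braid has zero entropy. So there is no reason a subcollection lying in a single class keeps large entropy. Treating the classes separately does not help either. Applying Theorem~\ref{Main Theorem: Braid Stability under PSS-image spectral distance} class by class gives sets $\mathcal{Z}_\alpha$, but nothing controls how cylinders from different classes wind around each other. The vanishing of the intersection number in the proof of Proposition~\ref{Proposition: No intersection between graph of enhanced cylinders} uses $[y]=[y']$. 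Hence $\bigcup_\alpha \mathcal{Z}_\alpha$ need not have the braid type of $\mathcal{Y}$. The missing idea is to take $\mathcal{Y}$ to be a \emph{single} hyperbolic periodic orbit $\{x,\phi(x),\dots,\phi^{k-1}(x)\}$. This is exactly what Theorem~\ref{Theorem: Topological entropy can be approximated by braids} (\cite[Theorem B.1]{Alves-Meiwes24}) provides. The $1$-periodic orbits of the $k$-fold iterated Hamiltonian through $x$ and through $\phi^j(x)$ are time-shifts of the same loop. So the orbit automatically lies in one free homotopy class, and the issue never arises.

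\textbf{Second gap: degenerate $\phi'$.} Suppose you perturb $\phi'$ to non-degenerate maps $\phi'_n$ and obtain invariant sets $\mathcal{Z}_n$ of the right braid type. These sets can collide in the limit, for instance all converging to one fixed point. The limiting invariant set then carries a braid type of much smaller, possibly zero, entropy. Since $h_{\mathrm{top}}$ is not upper semicontinuous in the $C^0$ topology, ``stability up to collapse'' gives no bound. Your extra-orbit/positivity idea does not address this either. Positivity of intersections controls graphs of solutions for fixed Floer data, not collisions of fixed points along a sequence of maps.

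No braid argument is needed at the limit. First prove $h_{\mathrm{top}}(\psi)>h_{\mathrm{top}}(\phi)-\epsilon$ for every strongly non-degenerate $\psi$ in the open ball $d_{\mathrm{im}}(\phi,\psi)<\Delta(\mathcal{P},\phi^k)/k$. A general $\psi$ in this ball is a $C^\infty$-limit of strongly non-degenerate $\psi_n$ that stay in the ball, by $C^0$-continuity of $d_{\mathrm{im}}$. Upper semicontinuity of $h_{\mathrm{top}}$ in the $C^\infty$ topology for surface diffeomorphisms \cite{Yomdin1987, Newhouse} then gives $h_{\mathrm{top}}(\psi)\ge \limsup_n h_{\mathrm{top}}(\psi_n)\ge h_{\mathrm{top}}(\phi)-\epsilon$. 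This suffices because $\epsilon$ is arbitrary, and it is how the paper closes the argument.
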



The lower semicontinuity of the topological entropy with respect to the Hofer distance was first established in \cite{Alves-Meiwes24} using braid stability for the Hofer distance and \cite[Theorem B.1]{Alves-Meiwes24}, which says that the topological entropy of a surface diffeomorphism of a surface can be fully recovered from the set of all free isotopy classes of braids defined by all possible finite invariant sets, i.e.,
\[\BB(\phi):=\set{[\BB(\mathcal{Y},\Phi)]: \phi(\mathcal{Y})=\mathcal{Y}, ~\# \mathcal{Y} <\infty},\]
where $\Phi$ is some choice of isotopy between $\id_\Sigma$ and $\phi$.  
It thus follows that braid stability with respect to a given topology implies the lower semicontinuity of the topological entropy in that topology. Braid stability for the Hofer distance was further studied in \cite{Hutchings23, Morabito25, Gruber}. We note that in \cite{Gruber}, braid stability was proved by establishing new Floer theory for braids without assuming that the braids lie in the same homotopy class as in \cite{Hutchings23}.


Combining Theorem \ref{Main Theorem: Lower semicontinuity of topological entropy} with the results of \cite{CGG} one obtains that the following stability result for the barcode entropy of Cineli-Ginzburg-Gurel in the setting of Hamiltonian diffeomorphisms of closed surfaces.

\begin{cor}
The barcode entropy $\hbar$ is lower semicontinuous with respect to the PSS-image spectral distance $d_{\im}$ on closed symplectic surfaces.
\end{cor}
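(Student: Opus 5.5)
The plan is to deduce the corollary from Theorem~\ref{Main Theorem: Lower semicontinuity of topological entropy} together with the results of \cite{CGG} relating barcode entropy to topological entropy on surfaces. On a closed surface, Cineli--Ginzburg--G\"urel prove that for every Hamiltonian diffeomorphism $\phi$ of $(\Sigma,\omega)$,
\[
\hbar(\phi)=h_\top(\phi).
\]
The inequality $\hbar\le h_\top$ holds in all dimensions. The reverse inequality on surfaces comes from Katok's horseshoe theorem combined with the persistence of bars generated by the hyperbolic periodic orbits of the horseshoe. I will quote this equality as a black box; it is the only external input needed.

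With this identity the argument is immediate. Fix $\phi\in\Ham(\Sigma,\omega)$ and $\varepsilon>0$. By Theorem~\ref{Main Theorem: Lower semicontinuity of topological entropy} there is $\delta>0$ such that every $\phi'\in\Ham(\Sigma,\omega)$ with $d_\im(\phi,\phi')<\delta$ satisfies $h_\top(\phi')>h_\top(\phi)-\varepsilon$. Applying the equality $\hbar=h_\top$ to both $\phi$ and $\phi'$ turns this into
\[
\hbar(\phi')>\hbar(\phi)-\varepsilon,
\]
which is exactly lower semicontinuity of $\hbar$ at $\phi$ with respect to $d_\im$. If $h_\top(\phi)=\infty$ is a concern, it does not arise: $\phi$ is smooth on a compact surface, so its entropy is finite.

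The only point needing care is that the equality from \cite{CGG} holds for every Hamiltonian diffeomorphism, including degenerate ones, since the $d_\im$-neighbourhood contains degenerate maps. I would check that their definition of $\hbar$ for general $\phi$, via limits over nondegenerate approximations or directly via filtered homology of iterates, is the one used here. I would also check that their surface theorem is stated without nondegeneracy hypotheses. This bookkeeping is the main, and essentially only, obstacle. The substantive content is already contained in Theorem~\ref{Main Theorem: Lower semicontinuity of topological entropy}.
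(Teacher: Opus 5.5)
Your proposal is correct and follows the same route as the paper: the corollary is obtained by combining Theorem~\ref{Main Theorem: Lower semicontinuity of topological entropy} with the equality $\hbar=h_{\top}$ for Hamiltonian diffeomorphisms of closed surfaces from \cite{CGG}. The paper also uses that equality as a black box for arbitrary, possibly degenerate, Hamiltonian diffeomorphisms, so the bookkeeping check you flag is the only point it likewise leaves implicit.
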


The continuity properties of $h_{\rm top}$ with respect to different topologies on spaces of dynamical systems has been much studied. The topological entropy is a measure of the complexity of a dynamical system, and it is interesting to investigate if dynamical complexity is stable under perturbation of a system. For example, the combined results of Yomdin \cite{Yomdin1987} and Newhouse \cite{Newhouse} imply that $h_{\rm top}$ is continuous with respect to the $C^\infty$-topology on the space of $C^\infty$-smooth diffeomorphisms of a closed surface. The analogous result does not hold for higher dimensional manifolds, as showed in \cite{Misiurewicz}. Using the fundamental work of Katok \cite{Katok}, Nitecki showed in \cite{nitecki} that $h_{\topo}$ is lower semicontinuous with respect to the $C^0$-topology on the space of $C^{1+\epsilon}$-diffeomorphisms of a closed surface, for any $\epsilon>0$.   


\subsection{Related developments}

The relationship between symplectic topology and topological entropy of symplectomorphisms and Reeb flows has been studied extensively and fruitfully in recent years using various methods, in a line of research initiated by \cite{Polterovich02}. For Hamiltonian diffeomorphisms, a non-exhaustive list of works investigating the relationship of Floer theory and the topological entropy of Hamiltonian diffeomorphisms includes: \cite{Alves-Meiwes24, chor-meiwes, Cineli25, Frauenfelder-Schlenk05, Hutchings23, MeiwesLagrangian, CGG, Cineli-Ginzburg-Gurel22b}, and references therein.

A closely related question is the study the topological entropy of Reeb flows on contact manifolds using symplectic topological methods.
There is an abundance of contact manifolds for which the topological entropy or the exponential orbit growth rate is positive for all Reeb flows. Examples and dynamical properties of those manifolds are investigated in \cite{AASS, Alves-Cylindrical,Alves-Anosov, A2, AlvesColinHonda2017, AlvesMeiwes2018, FrauenfelderSchlenk2006, MacariniSchlenk2011}. Some of these results generalize to positive contactomorphisms \cite{Dahinden2,Dahinden}, and results on the dependence of some lower bounds on topological entropy with respect to their positive contact Hamiltonians have been obtained in \cite{DahindenC0}. The barcode entropy introduced in \cite{CGG}, which relates the topological entropy to persistence modules appearing from Morse and Floer homology, has also been studied in the setting of Reeb and geodesic flows: see \cite{Fender-Lee-Sohn23,Fernandes24,Cineli-Ginzburg-Gurel-Mazzucchelli24,Ginzburg-Gurel-Mazzucchelli22} and references therein.


In the setting of Reeb flows, a natural metric on the space of contact forms from a symplectic perspective is the \(C^{0}\)-distance. The results of \cite{Alves-Meiwes24} and the present paper have natural analogues in this context. For geodesic flows of Riemmanian metrics on surfaces, an analogue of the braid stability phenomenon is studied in \cite{ADMM,Alves-Mazzucchelli}. In the article  \cite{ADMP} of the first two authors, together with Lucas Dahinden and Abror Pirnapasov, it is proved that the lower semicontinuity of $h_{\topo}$ for \(3\)-dimensional Reeb and geodesic flows with respect to this \(C^{0}\)-distance holds for $C^\infty$-generic contact forms. This work mixes holomorphic curve techniques alongside the topological methods introduced in \cite{Meiwes-linking,AlvesPirnapasov}.

\newpage

\textbf{Acknowledgements}: This project was initiated at the conference \textit{Symplectic Dynamics (INdAM Meeting)}, held at Istituto Nazionale di Alta Matematica ''Francesco Severi" in Rome. We thank the organizers for their hospitality and for providing a stimulating environment. We also thank the organizers of the conference \textit{From Smooth to $C^0$ Symplectic Geometry: Topological Aspects and Dynamical Implications}, held at CIRM in Marseille, as well as the Chair of Analysis and Geometry at Universität Augsburg, where this research was further developed. Part of this work is based on material from the third author's PhD thesis, developed as part of this project. The third author thanks Yonghwan Kim for inspiring and helpful discussions, and Jungsoo Kang for his support. 

\section{Background}

\subsection{Hamiltonian Floer homology on closed symplectic surfaces} \label{sec:Hamonclosedsurfaces}

In this subsection, we review Hamiltonian Floer homology on closed symplectic surfaces. We first review the setup of Hamiltonian Floer homology on a general closed symplectic manifold $(M,\omega)$ under a technical assumption and then specialize to closed symplectic surfaces. 
We also discuss Hamiltonian Floer homology of non-contractible orbits. 

Let $\tilde{\pi}_1(M)$ be the set of free homotopy classes of loops in $M$, and denote by~$\cont$ the class of contractible loops. A disk capping of a smooth contractible  loop $y$ is a smooth map $w:\D^2 \to M$ with $w(e^{2\pi i t})=y(t)$: under these conditions we refer to the pair $(y,w)$ as a loop with a capping.

For each non-contractible class $\alpha \in \tilde{\pi}_1(M)$, we fix a smooth representative $\eta_\alpha$. A cylindrical capping of a smooth loop $y$ in $\alpha$ is a smooth map \linebreak $w:[0,1]\times S^1 \to \Sigma$ with $w(0,t)=\eta_{\alpha}(t)$ and $w(1,t)=y(t)$: under these conditions we refer to the pair $(y,w)$ as a loop with a capping.

Among cappings of a loop $y$, we consider the following equivalence relation:
\begin{equation}
	\label{Equation: Equivalence of cappings}
	(y,w)\sim (y, w^\prime) \quad\Longleftrightarrow\quad \omega(w \# \bar{w}^\prime)=0,~ c_1(w \# \bar{w}^\prime)=0,
\end{equation}
where $w {\#} \bar{w}^\prime$ is the sphere or torus obtained by gluing $w$ and $w^\prime$ along their matching boundaries. An equivalence class $[y,w]$ is called a capped loop.

We define Hamiltonian action $\mathcal{A}_H(y,w)$ of a loop with a capping $(y,w)$ by
\begin{equation}
	\label{Equation: Hamiltonian action functional}
	\mathcal{A}_H(y,w):=\int_0^1 H(t,y(t))~dt - \int  w^*\omega.
\end{equation}
The critical points of $\mathcal{A}_H$ are precisely the  loops with cappings $(y,w)$ where $y$ is a $1$-periodic orbits of $X_H$. 

Recall that we denote by $\mathcal{P}(H)$ the set of $1$-periodic orbits of $X_H$. For each $\alpha \in \tilde{\pi}_1(M)$, we let $\mathcal{P}(H;\alpha)$ be the subset of elements of $\mathcal{P}(H)$ which belong to $\alpha$: this gives a partition of $\mathcal{P}(H)$. We define $\widetilde{\mathcal{P}}(H;\alpha)$ as the set of all capped loops $[y,w]$ where $y \in \mathcal{P}(H;\alpha)$.

Recall that a $1$-periodic orbit $y \in \mathcal{P}(H)$ is called non-degenerate if it does not admit $1$ as an eigenvalue for its linearized return map, and that $H$ is said to be non-degenerate if every element of $\mathcal{P}(H)$ is non-degenerate. In this case, $\mathcal{P}(H)$ is a finite set.

When $H$ is non-degenerate and $y$ is a contractible $1$-periodic orbit of $\phi^t_H$, we can associate to each capped orbit $[y,w]$ an integer-valued Conley-Zehnder index $\CZ([y,w])$ as in \cite{Robbin-Salamon93}. 

We first define $\CZ$ for pairs $(y,w)$ where $y\in \mathcal{P}(H)$ and $w$ is a capping for $y$. We first treat the case that $y$ is  contractible. In this case one can define  $\CZ(y,w)$ using the recipe presented in \cite{Robbin-Salamon93}. We adopt the following convention for the $\CZ$. For a $C^2$-small Morse function $H:M\to\RR$, its critical points $x$ are also (constant) $1$-periodic orbits of $\phi^t_H$. We adopt the convention that for each critical point $x$ of $H$ we have
\[\CZ(x, w_x)=\Morse(x),\]
where $\Morse(x)$ is the Morse index of $x$, and $w_x$ denotes the constant capping. Note that this convention differs from that in \cite{AD}, where the Conley-Zehnder index  is given by $\CZ(x,w_x)=\Morse(x)-n$.

We now recall the definition of the Conley-Zehnder index for a pair $(y,w)$, where  $y\in \mathcal{P}(H)$ is non-contractible with free homotopy class $\alpha$ and $w$ is a cylindrical capping of $y$. We first fix a symplectic trivialization $\tau_\alpha$ on $\eta_\alpha^*TM$. This choice induces a homotopically canonical symplectic trivialization $\tau_{y,w}$ on $y^*TM$ for the loop with a capping $(y,w)$. The Conley-Zehnder index of an orbit with a capping, denoted by $\CZ(y,w)$, is then defined using the trivialization $\tau_{y,w}$ and the recipe from \cite{Robbin-Salamon93}.

For two different cappings $(y,w)$ and $(y,w^\prime)$, we have
\begin{equation}
	\label{Equation: Recapping}
	\begin{aligned}
	&\mathcal{A}_H(y,w)-\mathcal{A}_H(y,w^\prime)=- \omega(w\#\bar{w}^\prime),\\
	&\CZ(y,w)-\CZ(y;w^\prime)= - 2 c_1 (w\#\bar{w}^\prime).
	\end{aligned}
\end{equation}
It follows that the Conley-Zehnder index defined in this way is the same for equivalent cappings of the loop $y$. This implies that $\CZ([y,w])$ is indeed well defined for capped $1$-periodic orbits $[y,w]$.
Equation \eqref{Equation: Recapping} also implies that the Hamiltonian action is well-defined on the elements $[y,w]$ of $\widetilde{\mathcal{P}}(H)$. 

We define the spectrum $\Spec (H;\alpha)$ as
\[\Spec(H;\alpha):=\set{\mathcal{A}_H([y,w]): [y,w]\in\widetilde{\mathcal{P}}(H;\alpha)}.\]

\begin{rem}[Choice of generating Hamiltonian]
	The set $\mathcal{P}(H)$ corresponds to the set of fixed points of $\phi_H^1$, and the non-degeneracy of $H$ corresponds to the non-degeneracy of $\phi_H^1$. We know from \cite{Schwarz00}, that for a fixed point $y(0)$ of $\phi_H^1$, the free homotopy class of an $1$-periodic orbit is independent of the choice of the Hamiltonian $H$ that generates $\phi_H^1$. Hence, there exists a canonical bijection between $\mathcal{P}(H,\alpha)$ and $\mathcal{P}(G,\alpha)$ if $\phi_H^1=\phi_G^1$.	
\end{rem}



Recall that an almost complex structure $J$ on $M$ is said to be $\omega$-compatible if  $\omega(\cdot,\cdot) = \omega(J\cdot,J\cdot) $ and $\omega(\cdot,J\cdot)$ is a Riemannian metric on~$M$.

Let $(H,J)$ be a couple consisting of a non-degenerate Hamiltonian function $H:S^1\times M \to \RR$ and a smooth $S^1$-family $J=\set{J_t}_{t\in S^1}$ of $\omega$-compatible almost complex structures on $(M,\omega)$. The Floer equation associated to $(H, J)$ for a map $u: \R \times S^1 \to M$, is given by
\begin{equation}\label{eq:defFloereq}
	\partial_s u(s,t) + J_t(u(s,t))\big(\partial_t u (s,t) - X_{{H}}(t,u(s,t)\big) = 0.
\end{equation}
A solution of the equation is called a Floer cylinder for $(H,J)$. We abbreviate the Floer equation by $\mathcal{F}_{H,J}(u)=0$.

The energy ${E}(u)$ of a Floer cylinder is defined by the formula
\begin{equation}
	\label{Equation: definition of energy 1 - Floer equation}
	\begin{aligned}
	E(u):= &\int_{\RR \times S^1}\norm{\partial_s u(s,t)}_{J_t}^2 \,ds\,dt,\\
	=&\,\frac{1}{2}\int_{\RR\times S^1} \norm{\partial_s u(s,t)}_{J_t}^2+\norm{\partial_t u(s,t) - X_{H_{t}}(u(s,t))}_{J_t}^2\,ds \, dt.
	\end{aligned}
\end{equation}
where $\norm{v}_{J_{t}}^2=\omega(v, J_{t}v)$. Floer showed~\cite{Floer} that if a Floer cylinder has finite energy, then there exist $1$-periodic orbits $y_-$ and $y_+$ in $\mathcal{P}(H)$ such that \[\lim_{s\,\to\,\pm \infty}u(s,\cdot) = y_\pm(\cdot),\]
where the convergence can be taken in the $C^\infty$-topology. Here, we think of $u(s,\cdot)$ as a one parameter family of loops in $M$.
In short, we denote the limits $y_\pm$ by $u(\pm \infty)$.

The Floer cylinder $u$ gives a free homotopy between $u(-\infty)$ and $u(+\infty)$. Hence, Floer cylinders can only exist between $1$-periodic orbits in the same free homotopy class. It is well known that the energy of a Floer cylinder satisfies $E(u) = {\mathcal{A}}_H([y_-,w_-]) - {\mathcal{A}}_H([y_+,w_-\#u])$ for any capping $w_-$ of $y_-$; see for example \cite{AD}.

The energy of a Floer cylinder is by definition non-negative. The only Floer cylinders with energy equal to $0$, are those of the form $u(s,t)= y(t)$, where $y$ is a $1$-periodic orbit of $\phi^t_H$. Such a cylinder $u_y(s,t)= y(t)$ is called the trivial cylinder over $y$.

For each $\alpha \in \tilde{\pi}_1(M)$ and each integer $k\in \ZZ$, the Floer chain group $\CF_k(H;\alpha)$ is the $\Z_2$ vector space whose generators are the capped orbits $[y,w]$ in $\widetilde{\mathcal{P}}(H;\alpha)$ with Conley-Zehnder index $k$. More precisely, when the set of such capped orbits is finite, we define $\CF_k(H;\alpha)$ as follows:
\begin{equation}
\label{Equation: Floer chain complex}
\CF_k(H;\alpha) := \bigoplus_{\substack{{[y,w]\,\in\,{\widetilde{\mathcal{P}}}(H;\alpha),} \\ {\CZ([y,w])=k}}} \Z_2 \cdot [y,w].
\end{equation}

When the set of such capped orbits is infinite, we complete the direct sum over the set $\widetilde{\mathcal{P}}(H;\alpha)$ using downward completion. A detailed construction will be considered in the cases where $M$ is $T^2$ and $\alpha \ne \cont$. Otherwise, the set of equivalence classes of capped orbits of a fixed index is finite.

We now fix a free homotopy class $\alpha\in \tilde{\pi}_1(M)$. In the discussion that follows, all the capped orbits are assumed to be in $\widetilde{\mathcal{P}}(H;\alpha)$. Given distinct two capped orbits $[y_-,w_-]$ and $[y_+, w_+]$, we let ${\mathcal{M}}([y_-,w_-], [y_+,w_+]; H,J)$ denote the moduli space of Floer cylinders $u$ that are asymptotic to $y_\pm$ at $\pm \infty$ and satisfy $[y_+, w_- \# u]  = [y_+,w_+]$. {The elements in this moduli space are equivalent classes of Floer cylinders. Here two Floer cylinders $u$ and $u'$ are considered equivalent if there exists $s_0\in \RR$ such that $u'= s_0 \cdot u $ where $s_0 \cdot u(s,t):=u(s+s_0, t)$}. Such an $\RR$-action is free except for trivial cylinders. 

As shown in \cite{FHS}, the moduli space ${\mathcal{M}}([y_-,w_-], [y_+,w_+], {H},J)$ is a smooth manifold of dimension ${\CZ}(y_-,w_-) - {\CZ}(y_+,w_+) - 1$ for a $C^\infty$-generic choice of $J$. Such a choice of $J$ is called regular. A pair $(H,J)$, consisting of a non-degenerate Hamiltonian $H$ and a regular $S^1$-family of almost complex structures $J$, will be referred to as Floer data.

Each moduli space ${\mathcal{M}}([y_-,w_-], [y_+,w_+], {H},J)$ can be compactified by the Floer-Gromov compactification, namely, by adding tuples of Floer cylinders together with possible $J_t$-holomorphic spheres. We assume the following condition on~$(M,\omega)$.

\begin{ass}\label{Assumption: no bubbling}[\textbf{No bubbling}] 
There are no non-constant $J$-holomorphic spheres $u$ with $c_1(u)\leq 1$ for any $\omega$-compatible almost complex structure $J$.
\end{ass}

\medspace


Assumption \ref{Assumption: no bubbling} implies that each $0$-dimensional moduli space \linebreak ${\mathcal{M}}([y_-,w_-], [y_+,w_+]; H, J)$ is compact and therefore a finite set for a Floer data $(H,J)$. We then define
\[
C([y_-,w_-],[y_+,w_+])= \#_2 {\mathcal{M}}([y_-,w_-], [y_+,w_+]; H, J),
\]
for capped orbits with $\CZ([y_-,w_-])=\CZ([y_+,w_+])+1$. The Floer differential $d_{H,J}:\CF_k(H;\alpha)\to \CF_{k-1}(H;\alpha)$ is then defined by
\begin{equation}
\label{Equation: Floer differential}
d_{H,J} ([y_-,w_-])=\sum_{\substack{{[y_+,w_+]\,\in\,{\widetilde{\mathcal{P}}}(H;\alpha),} \\ {\CZ(y_+,w_+)=k-1}}} C([y_-,w_-],[y_+,w_+]) \cdot [y_+,w_+]
\end{equation}
for each $[y_-,w_-]$. The differential $d_{H,J}$ is extended linearly to all of $\CF_k(H;\alpha)$.

The compactification of the $1$-dimensional moduli spaces \linebreak $\mathcal{M}([y,w], [y',w']; H, J)$ is a compact $1$-dimensional manifold with boundary.
The boundary strata of these $1$-dimensional moduli spaces ensure that $d_{H,J}^2 = 0$. We define the Hamiltonian Floer homology $\HF_*(H,J;\alpha)$ as the homology of the Floer chain complex $\left(\CF_*(H;\alpha), d_{H,J}\right)$. 

For Floer data $(H_-,J_-)$ and $(H_+, J_+)$, there exists a natural map, called the continuation homomorphism,
\[\Phi_{(H_-,J_-)\to(H_+,J_+)}:\HF_*(H_-,J_-;\alpha) \to \HF_*(H_+, J_+;\alpha).\]
The map is defined by counting Floer continuation cylinders, which are solutions to a variation of the Floer equation \eqref{eq:defFloereq}. In particular, the map  $\Phi_{(H_-,J_-)\to(H_+,J_+)}$ gives an isomorphism between the two Hamiltonian Floer homologies $\HF_*(H_-,J_-;\alpha)$ and $\HF_*(H_+, J_+;\alpha)$. The construction of this map and further discussion on its properties are postponed to Section \ref{Subsubsection: Continuation maps}.

For closed symplectic surfaces $(\Sigma,\omega)$, the Hamiltonian Floer homology $\HF_k(H,J;\alpha)$ for an integer $k \in \ZZ$ is given by
\[\HF_k(H,J;\alpha)=\begin{cases}
	H_k(\Sigma,\ZZ_2), & \alpha=\cont\\
	0, &\alpha \ne \cont\\
\end{cases}
\]
for $\Sigma \ne S^2$, and
\[\HF_k(H,J;\cont)=\begin{cases}
	\ZZ_2, & k\text{ is even}\\
	0, & k\text{ is odd}
\end{cases}
\]
for $\Sigma=S^2$.

\subsubsection{Closed surfaces different from \texorpdfstring{$S^2$}{S2} and \texorpdfstring{$T^2$}{T2}}\label{sec:Hamonhighergenus} 

\

In this case, $(\Sigma,\omega)$ is symplectically aspherical and atoroidal, i.e., spherical and toroidal homology classes in $H_2(\Sigma,\mathbb{Z}_2)$ vanish for the cohomology classes $[\omega]$ and $c_1(T\Sigma)$ in $H^2(\Sigma, \mathbb{Z}_2)$. 
It follows that the equivalence \eqref{Equation: Equivalence of cappings} holds automatically, i.e.
\[\omega(w \# \bar{w}^\prime)=0,~ c_1(w \# \bar{w}^\prime)=0\]
for any two cappings $w, w^\prime$ over $y$. Hence, the set of capped orbits $\widetilde{\mathcal{P}}(H;\alpha)$ can be identified with the set of $1$-periodic orbits $\mathcal{P}(H;\alpha)$. This allows us to simplify the notation as follows. We denote the Hamiltonian action and Conley-Zehnder index of $[y,w]$ by $\mathcal{A}_H(y)$ and $\CZ(y)$, respectively, instead of $\mathcal{A}_H([y,w])$ and $\CZ([y,w])$.

When defining Floer homology, Assumption \ref{Assumption: no bubbling} holds because the symplectic area of any non-constant pseudo-holomorphic sphere is positive, while integration of $\omega$ over spherical homology classes is always zero, since every sphere in $\Sigma$ is contractible. For Floer data $(H,J)$, the number of generators of each Floer chain group is finite, and we thus have
\[
\CF_k(H;\alpha) := \bigoplus_{\substack{{y\,\in\,{\mathcal{P}}(H;\alpha),} \\ {\CZ(y)=k}}} \Z_2 \cdot y,
\]
for each free homotopy class $\alpha \in \tilde{\pi}_1(\Sigma)$ and for each integer $k\in\ZZ$. As mentioned above, the Hamiltonian Floer homology $\HF_*(H,J)$ is given by
\[\HF_k(H,J;\alpha)=\begin{cases}
	H_k(\Sigma,\ZZ_2), & \alpha=\cont\\
	0, &\alpha \ne \cont\\
\end{cases}.
\]
We refer the reader to \cite{AD} for a proof of this result.

\subsubsection{The case \texorpdfstring{$\Sigma = S^2$}{Sigma = S2}}\label{sec:Hamonsphere}

\ 

The fundamental group of $S^2$ is trivial, so the only class in $\tilde{\pi}_1(S^2)$ is the class of contractible loops. For simplicity, we omit the class $\cont$ when discussing the Hamiltonian Floer homology on $S^2$.

In this case, $(S^2,\omega)$ is a (positively) monotone sympletic manifold, meaning that there exists a constant $N>0$ such that
\[N \cdot \omega(A)= c_1(A),\]
for $A \in \pi_2(M)$. For $(S^2,\omega)$ with area equal to $1$, the constant $N$ is $2$. It follows that two cappings $w$ and $w^\prime$ over the same loop are equivalent if and only if
$\omega(w\# \bar{w}^\prime)=0$.

The group $\pi_2(S^2)$ acts on the set $\widetilde{\mathcal{P}}(H)$ by $A\cdot[y,w]:=[y,A\#w]$, where $A\#w$ denotes the equivalence class of the connected sum $v\#w$, with $v\in A$ satisfying $v(\infty)=w(0)$. This $\pi_2(S^2)$-action is free and transitive on the set of capped orbits over the same $1$-periodic orbit.

From Equation \eqref{Equation: Recapping}, we have the following relations:
\begin{align*}
&\mathcal{A}_H([y,w])-\mathcal{A}_H(A\cdot[y,w])=\omega(A),\\
&\CZ([y,w])-\CZ(A\cdot[y, w])=2 c_1 (A)=4\,\omega(A).
\end{align*}
By the second equation, distinct equivalence classes over the same $1$-periodic orbit have different Conley-Zehnder indices. As a result, for any $k\in \ZZ$, there are finitely many capped orbits with Conley-Zehnder index $k$.

When defining Floer homology, Assumption \ref{Assumption: no bubbling} holds: there are no $A\in\pi_2(S^2)$ with $\omega(A)>0$ and $c_1(A) \le 1$. The Floer chain complex $\CF_k(H)$ is defined as in \eqref{Equation: Floer chain complex}, and the Floer differential $d_{H,J}$ is defined as in \eqref{Equation: Floer differential}. The Hamiltonian Floer homology $\HF_*(H,J)$ is given by
\[\HF_k(H,J)=\begin{cases}
	\ZZ_2, & k\text{ is even}\\
	0, & k\text{ is odd}
\end{cases}.
\]

\medskip

\subsubsection{The case \texorpdfstring{$\Sigma = T^2$}{Sigma = T2}}\label{sec:Hamontorus}
In this case, $\Sigma$ is sympletically aspherical, meaning that the equivalence \eqref{Equation: Equivalence of cappings} given by
\[\omega(w \# \bar{w}^\prime)=0,~ c_1(w \# \bar{w}^\prime)=0\]
holds automatically for any two disk cappings $w, w^\prime$ over the same contractible loop. This implies that the Assumption \ref{Assumption: no bubbling} holds, since there are no non-constant pseudo-holomorphic spheres in $T^2$. As in Section \ref{sec:Hamonhighergenus}, one obtains that the Hamiltonian Floer homology $\HF_*(H,J,\cont)$ is given by
\[\HF_k(H,J;\cont)=H_k(T^2,\ZZ_2),\]
for each Floer data $(H,J)$ and each integer $k$; see \cite{AD}.

For the remainder of this section, we treat the case of free homotopy classes $\alpha\ne \cont$. For cylindrical cappings, the condition $c_1(w \# \bar{w}^\prime)=0$ still automatically holds since $c_1(TT^2)$ vanishes. However, $\omega(w \# \bar{w}^\prime)$ may not be zero. For each $\cont \ne \alpha \in \tilde{\pi}_1(T^2)$, we define the group $\Gamma_\alpha$ by\[\Gamma_\alpha:=\frac{\pi_1(\mathcal{L}T^2, \eta_\alpha)}{\ker \omega \cap \ker c_1}=\frac{\pi_1(\mathcal{L}T^2, \eta_\alpha)}{\ker \omega};\]
recall that $\eta_\alpha$ is the reference loop in the free homotopy class $\alpha$ that we fixed previously.

The group $\Gamma_\alpha$ acts on the set $\widetilde{\mathcal{P}}(H;\alpha)$ by $A\cdot[y,w]:=[y,A\#w]$, where $A\#w$ denotes the equivalence class of the cylindrical capping $v\#w$ of $y$, with $v\in A$. The class $A\#w$ is the element of  $\Gamma_\alpha$ represented by $v\#\widetilde{w}$ where $\widetilde{w}:[0,1]\times S^1 \to T^2$ is a representative of $w\in \Gamma_\alpha$ and $v:[0,1]\times S^1 \to T^2$ is a representative of $A \in \Gamma_\alpha$. 
This $\Gamma_\alpha$-action is free and transitive on the set of cappings of a fixed $1$-periodic orbit $y \in  {\mathcal{P}}(H,\alpha)$.

From Equation \eqref{Equation: Recapping}, we have the following relations:
\begin{align*}
	&\mathcal{A}_H([y,w])-\mathcal{A}_H(A\cdot[y,w])=\omega(A),\\
	&\CZ([y,w])-\CZ(A\cdot[y, w])=0.
\end{align*}
By the second equation, distinct equivalence classes over the same $1$-periodic orbit share the same Conley-Zehnder index. As a result, for any $k\in \ZZ$, there are either infinitely many or no elements in $\widetilde{\mathcal{P}}(H,\alpha)$ with Conley-Zehnder index $k$. {Note that the capped orbits over the same $1$-periodic orbit are classified by their Hamiltonian action.}

To define the Floer chain complex $\CF_k(H;\alpha)$ for each $k\in \ZZ$, we consider formal sums of the form
\[\beta=\sum_{\substack{{[y,w]\in \widetilde{\mathcal{P}}(H;\alpha)} \\ {\CZ(y,w)=k}}} a_{[y,w]}\cdot[y,w],\quad a_{[y,w]}\in \ZZ_2.\]
We call the formal sum $\beta$ a Floer chain of degree $k$ if
\begin{equation}
\label{Equation: downward Novikov condition}
	\# \set{[y,w]:a_{[y,w]}=1\text{ and }\mathcal{A}_H ([y,w]) \ge a}<+\infty
\end{equation}
for any $a \in \RR$. For each $k\in \ZZ$, we define $\CF_k(H;\alpha)$ the set of Floer chains of degree $k$. {Hence, the Floer chain complex is defined by the downward-completion of the direct sum \eqref{Equation: Floer chain complex}.}

The Floer differential $d_{H,J}:\CF_k(H;\alpha)\to\CF_{k-1}(H;\alpha)$ is defined as in \eqref{Equation: Floer differential} for each capped orbit $[y,w]$, and extend it linearly to all of $\CF_k(H;\alpha)$; here it is used that $T^2$ satisfies Assumption \ref{Assumption: no bubbling}. The image of the differential still satisfies \eqref{Equation: downward Novikov condition} from the fact that the formal sum $d_{H,J}([y,w])$ satisfies the condition \eqref{Equation: downward Novikov condition}. Moreover, $d_{H,J}^2=0$, and we define the Hamiltonian Floer homology $\HF_k(H,J;\alpha)$ as the homology of the Floer chain complex $\left(\CF_k(H;\alpha), d_{H,J}\right)$. Then, for $\alpha \ne 0$, it is known that
\[\HF_k(H,J;\alpha)=0.\]

\subsubsection{Action filtration on Hamiltonian Floer homology}

\

The following definitions apply to Floer chains in any of the versions of Floer chain complexes considered above beyond the symplectic surfaces.

\begin{defn} \label{Definition: Support of a chain}
Let $\sigma \in CF_*(H;\alpha)$ be a Floer chain. Then there exists a unique subset $\{[y_i, w_i]\}_{i \in \mathcal{I}}$ of distinct elements of $\widetilde{\mathcal{P}}(H;\alpha)$ such that $\sigma = \sum_{i \in \mathcal{I}}[y_i, w_i]$.
Then the support $\mathrm{supp}(\sigma)$ defined to be the set $\{[y_i,w_i]\}_{i \in \mathcal{I}}$.
\end{defn}

\begin{defn} \label{Definition: Action of a chain}
Let $\sigma \in CF_k(H;\alpha)$ be a Floer chain of degree $k$. Then, the action $\mathcal{A}_H(\sigma)$ is defined as $\max_{[y,w]\in \mathrm{supp}(\sigma)} \mathcal{A}_H([y,w])$.
\end{defn}

Note that in the case of $\Sigma=T^2$, $\alpha \ne \cont$, the condition \eqref{Equation: downward Novikov condition} ensures that the maximum appearing in Definition \ref{Definition: Action of a chain} exists.

Next, we introduce a filtered version of Hamiltonian Floer homology. For any $\alpha\in \tilde{\pi}_1(M)$ and $a\in \RR$, let $\widetilde{\mathcal{P}}^{a}(H;\alpha)$ denote the set of capped orbits in $\widetilde{\mathcal{P}}(H;\alpha)$  with the Hamiltonian action less than $a$. We define the subcomplex $(\CF_k^a(H;\alpha),d_{H,J})$ as the subset of chains of action less than $a$. Equivalently,
\[
\CF^a_k(H;\alpha) := \bigoplus_{\substack{{[y,w]\,\in\,{\widetilde{\mathcal{P}}}^a(H;\alpha),} \\ {\CZ([y,w])=k}}} \Z_2 \cdot [y,w],
\]
for the case $\Sigma \ne T^2$ or $\alpha = \cont$. For the case $\Sigma=T^2$ and $\alpha \ne \cont$, we define the subcomplex $\CF_k^a(H;\alpha)$ as the set of Floer chains of degree $k$ satisfying
\[\# \set{[y,w]:a_{[y,w]}=1\text{ and }\mathcal{A}_H ([y,w]) \ge a} = \varnothing. \] The Floer differential $d_{H,J}$ descends to the subcomplex because the Hamiltonian action decreases along the differential. We then define $\HF^a_k(H,J;\alpha)$ as the homology of the chain complex $\left(\CF^a_k(H;\alpha), d_{H,J}\right)$.

For $a\leq b$ the inclusion $\iota_k^{a,b}$ of $\CF^a_k(H;\alpha)$ into $\CF^b_k(H;\alpha)$ induces a 
natural map $\iota_k^{a,b}:\HF^a_k(H,J;\alpha)\to \HF^b_k(H,J;\alpha)$. Here, by natural, we mean that $\iota_k^{b,c}\circ \iota_k^{a,b}=\iota_k^{a,c}$ for every $a \le b \le c$.

\medspace

\subsection{Action gap} \label{sec:Actiongap}

We define a positive real number associated to a set of non-degenerate fixed points which we call \textbf{Floer theoretical action gap} and discuss its 
behavior under small perturbations. 
Here, for simplicity, we consider a closed symplectic surface $(\Sigma,\omega)$ of area $1$. We note, however, that the discussion below extends to arbitrary closed symplectic manifolds. We begin with a preliminary definition.

Let $H:\Sigma \times S^1 \to \R$ be a possibly degenerate Hamiltonian, and $\phi= \phi^1_H$ be the Hamiltonian diffeomorphism that is generated by $H$.

\begin{defn}	\label{Definition: Action gap}
For any two $1$-periodic orbits $y$ and $y^\prime$ in $\mathcal{P}(H;\alpha)$, we define the {action gap} $\delta(y,y^\prime;\phi)$ of this pair of $1$-periodic orbits by
\begin{equation} \label{eq:action-gap-pair-of-orbits}
    \delta(y,y^\prime,\phi):=\inf |\mathcal{A}_H([y,w])-\mathcal{A}_H ([y^\prime, w^\prime])|\, 
\end{equation}
where the infimum is taken over all cappings $w$ and $ w^\prime$ of  $y$ and $y'$ respectively, {satisfying
$\mathcal{A}_H([y,w])\ne \mathcal{A}_H ([y^\prime, w^\prime])$.}
\end{defn}

Again, although the Hamiltonian action $\mathcal{A}_H(y,w)$ depends on the choice of a reference loop $\eta_\alpha \in \mathcal{L}_\alpha\Sigma$ for $\alpha \ne 0$, the difference in Hamiltonian action does not depend on the choice of $\eta_\alpha$. To see this, the action difference $\mathcal{A}_H([y,w])-\mathcal{A}_H ([y^\prime, w^\prime])$ can be written as 
\begin{equation} \label{eq:formula-action-difference}
    \int_{S^1} H(y(t))-H(y'(t))\,dt - \int \left( w \# \bar{w}'\right) ^*\omega,
\end{equation}
and $\delta(y,y^\prime;\phi)$ can be rewritten as the infimum of \eqref{eq:formula-action-difference} over the homotopy classes of cylinders $w \# \bar{w}'$ from $y$ to $y'$. We also note that the action gap also does not depend on the choice of generating Hamiltonian function; see \cite{Schwarz00}.

A direct computation shows that 
\[
\delta(y,y, \phi) = \inf \set{ \omega(u) > 0: u \in \pi_1(\mathcal{L}_\alpha \Sigma, \eta_\alpha)}.
\]
In particular, the action gap of $y$ and itself is given by
\begin{equation*}
\delta(y,y, \phi)=
\begin{cases}
    1 & \quad \text{if } \Sigma = S^2\\
    \mathrm{cov}(\alpha) & \quad \text{if } \Sigma = T^2 \text{ and } \alpha \ne 0\\
    \infty & \quad \text{else } 
\end{cases}
\end{equation*}
Here, $\mathrm{cov}(\alpha)$ is the covering number of $\alpha$; i.e. the largest positive integer $n$ such that $\alpha = (\beta)^n$ for some $\beta \in \pi_1(T^2)$.

\begin{defn}
Let $\mathcal{Y} $ be a finite subset of $ \mathcal{P}(H; \alpha)$. We define the action gap $\delta(\mathcal{Y},\phi)$ by
\begin{equation} \label{eq:definition-classical-action-gap}
\delta(\mathcal{Y},\phi):=\inf \set{\delta(y, y^\prime;\phi)>0: y \in \mathcal{Y}, ~y^\prime \in \mathcal{P}(H;\alpha)}.
\end{equation}
\end{defn}

In order to assume only non-degeneracy of the fixed points in $\mathcal{Y}$ rather than of $\phi$, as in Theorem \ref{Main Theorem: Braid Stability under PSS-image spectral distance}, we consider small perturbations of $\phi$ away from the set $\mathcal{Y}$. In this context, the classical action gap $\delta(\mathcal{Y}, \phi)$ presents two drawbacks: first, it is not necessarily lower semicontinuous even with respect to the $C^\infty$-topology; second, it can even vanish when $\phi$ is degenerate.

The Floer theoretical action gap below (Definition \ref{defn:FTAG}) allows us to address both issues. To define it, let $J = \{J_t\}_{t \in S^1}$ be a $S^1$-family of compatible almost complex structures on $(\Sigma, \omega)$.


\begin{defn}\label{defn:quasi-isolation}
 Let $\mathcal{Y} $ be a finite subset of non-degenerate elements of $ \mathcal{P}(H)$. We say that $\mathcal{Y}$ 
 is  \textbf{Floer $\delta$-isolated} (with respect to $J$) for a constant $\delta \ge0$  if 

\begin{itemize}
    \item There is no non-constant 
 $u:\R \times S^1 \to \Sigma$ and no $y \in \mathcal{Y}$ such that 
 \begin{itemize}
     \item $\mathcal{F}_{H,J}(u) = 0$ 
     \item $E(u) \leq \epsilon$
     \item $\lim_{s\to \infty}u(s,t)  = y(t)$ or $\lim_{s\to -\infty}u(s,t)  = y(t)$. 
     \end{itemize}
     \end{itemize}
  \end{defn}
Note that even though the given Hamiltonian $H$ is possibly degenerate, the limits  $\lim_{s\to \infty}u(s,t)  = y(t)$ or $\lim_{s\to -\infty}u(s,t)  = y(t)$ make sense since every periodic orbit $y \in \mathcal{Y}$ is nondegnerate.

\medskip

\begin{defn}\label{defn:FTAG}  
Let $\mathcal{Y} $ be a finite subset of non-degenerate elements of $ \mathcal{P}(H)$. The \textbf{Floer theoretical action gap} $\Delta({\mathcal{Y}},H)$ of $\mathcal{Y}$ is defined by
\begin{equation}
  \Delta({\mathcal{Y}},H)  := \sup_J \,\sup \{ \delta \geq 0 \ | \ \mathcal{Y} \mbox{ is Floer } \delta\mbox{-isolated with respect to } J \}, 
\end{equation}
where $J$ runs over all $S^1$-families of compatible almost complex structures. 
\end{defn}
We note that a similar idea was introduced in \cite{Alves-Meiwes24} under the term ``$\delta$-quasi-isolated'', although we work with a less restricted version here.

It is important to notice that in Definitions \ref{defn:quasi-isolation} and \ref{defn:FTAG} we do not require $(H,J)$ to be Floer data; i.e. we require neither non-degeneracy of $H$ nor regularity of $J$. In particular, these definitions make sense for degenerate Hamiltonians $H$. 
Moreover, the Floer theoretical action gap $\Delta(\mathcal{Y},H)$ depends only on the time-$1$ map $\phi_H^1$ and not on the specific Hamiltonian $H$. 
The reason for this is that if $\phi_H^1=\phi_G^1$, then there exists a bijection between the Floer cylinder of $(H, J)$ and those of $(G, J^\prime)$, where\footnote{For the definition of $G \# \overbar{H}$ see \ref{def:concatenation-and-inverse-Hamiltonians}.}
\[J^\prime_t := \left(\phi^t_{G \# \overbar{H}}\right)^* J_t\quad \text{for all }t\in S^1.\]
It then follows that the collection $\mathcal{Y}$ is automatically Floer $\Delta(\mathcal{Y}, H)$-isolated.
Because of this independence from $H$, we will from now denote by $\Delta(\mathcal{Y},\phi)$.

\begin{rem}
    For a Floer cylinder of a degenerate Hamiltonian, the limit of the loops exists,
    \[ \lim_{i\to\infty} u(s_i,-), \]
    for any sequence $s_i\to\pm\infty$, after passing to a subsequence. Although this limit may not be well-defined as an asymptotic orbit, the Hamiltonian action is still well-defined.
\end{rem}

By definition,  every finite subset $\mathcal{Y}$ is Floer $\delta(\mathcal{Y},\phi)$-isolated with respect to every $J$. Hence, we have
\begin{equation}
    \Delta(\mathcal{Y},\phi) \geq \delta(\mathcal{Y},\phi).
\end{equation}
Moreover, for any $\omega$-compatible almost complex structure $J$, the collection of non-degenerate orbits $\mathcal{Y}$ is Floer $\delta$-isolated for some $\delta > 0$.  
This is proved in Proposition \ref{Proposition: Stabillity of quasi-isolatedness} below, which also establishes the stability of $\Delta(\mathcal{Y}, \phi)$ with respect to the $C^\infty$-topology. 

\medskip

\begin{prop}
	\label{Proposition: Stabillity of quasi-isolatedness}
    Let $\mathcal{Y} $ be a finite subset of non-degenerate elements of $ \mathcal{P}(H)$. Then the following holds.
    \begin{enumerate}
        \item[a)] For any $S^1$-family of $\omega$-compatible almost complex structures $J$, there exists $\delta > 0$ such that $\mathcal{Y}$ is Floer $\delta$-isolated with respect to $J$.
        \item[b)] Suppose that $\{G_i\}_{i\in\mathbb{N}}$ is a sequence of Hamiltonians compactly supported in $(S^1\times\Sigma)\setminus\mathcal{B}(\mathcal{Y},H)$, converging to $0$ in the $C^\infty$-topology.
	Then for any given $\epsilon > 0$, there exists $N_\epsilon \in \N$ satisfying
	\[\Delta(\mathcal{Y},\phi_{H_i}^1) > \Delta(\mathcal{Y},\phi_{H}^1)-\epsilon,\]
    for every $i>N_\epsilon$, where $H_i=G_i\#H$.
    \end{enumerate}
\end{prop}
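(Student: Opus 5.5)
Both parts will be proved by contradiction combined with Floer--Gromov compactness for cylinders of uniformly small energy, together with the local behaviour of Floer cylinders near a non-degenerate orbit.

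\textbf{Part a).} Fix $J$ and suppose no $\delta>0$ works. Then there are $y\in\mathcal{Y}$ (after passing to a subsequence, the same $y$ each time) and non-constant Floer cylinders $u_k$ for $(H,J)$ with $E(u_k)\to 0$. Each $u_k$ is asymptotic to $y$ at one end, say at $+\infty$. By the standard energy--gradient estimate (no bubbling, since bubbles carry energy bounded below), small energy gives uniform $C^1$-bounds on $\partial_s u_k$. Hence the loops $u_k(s,\cdot)$ stay $C^1$-close to $1$-periodic orbits of $H$, uniformly in $s$.

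Since $y$ is non-degenerate, it is isolated in the loop space. So for $k$ large the whole cylinder $u_k$ stays inside a small neighbourhood $U$ of $y$; otherwise, by connectedness in $s$, some slice would be a loop near $\partial U$, and that loop must be close to a $1$-periodic orbit other than $y$, which is impossible for $U$ small. Inside $U$ I then use the local linear theory. The linearized operator $A=J_t(\partial_t - dX_H)$ at $y$ has trivial kernel, hence a spectral gap $\lambda>0$. For Floer cylinders contained in $U$ this gives the differential inequality $\frac{d^2}{ds^2}\|\partial_s u\|^2_{L^2}\geq \lambda^2\|\partial_s u\|^2_{L^2}$ (compare \cite{Salamon} style exponential decay). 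If $u_k$ is asymptotic to $y$ at $+\infty$ and remains in $U$ for all $s$, the function $f(s)=\|\partial_s u_k(s)\|^2$ is convex-like, tends to $0$ at $+\infty$, and is bounded as $s\to-\infty$. This forces $f\equiv 0$, so $u_k$ is constant, a contradiction.

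An alternative route for this step is to observe that the action functional restricted to the neighbourhood $U$ has $y$ as its only critical point. A gradient line that stays in $U$ and has one end at $y$ must then have its other end at $y$ as well, so its energy is $0$. I expect this localization step to be the main technical point.

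\textbf{Part b).} Let $\delta<\Delta(\mathcal{Y},\phi^1_H)$ and choose $J$ such that $\mathcal{Y}$ is Floer $\delta$-isolated for $(H,J)$. The orbits in $\mathcal{Y}$ remain non-degenerate orbits of $H_i$, because $G_i$ vanishes near the braid $\mathcal{B}(\mathcal{Y},H)$. Since $G_i\#H=H$ near that braid, $\phi^t_{H_i}=\phi^t_{G_i}\circ\phi^t_H$, and $\phi^t_{G_i}$ is the identity near $\phi^t_H(\mathcal{Y})$, the orbits are even the same, with the same linearization.

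Suppose $\mathcal{Y}$ fails to be Floer $(\delta-\epsilon/2)$-isolated for $(H_i,J)$ along a subsequence. Then there are non-constant Floer cylinders $u_i$ for $(H_i,J)$ with $E(u_i)\le\delta-\epsilon/2$, each asymptotic to some $y\in\mathcal{Y}$ at one end. Since $H_i\to H$ in $C^\infty$, Floer--Gromov compactness applies; on surfaces other than $S^2$ there is no bubbling, and on $S^2$ bubbles have energy at least $1>\delta$. So the $u_i$ converge, after $s$-shifts, to a broken Floer trajectory of $(H,J)$ whose total energy is at most $\delta-\epsilon/2$.

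To rule out the case where all the energy disappears in the limit, I use part a) uniformly. The spectral-gap argument near $y$ is stable under $C^1$-small perturbations of $H$ near $y$, and here there is in fact no perturbation near $y$. Hence there is a $\delta_0>0$ such that every non-constant $(H_i,J)$-cylinder ending at $y$ has energy at least $\delta_0$. Normalizing the shifts so that the energy on the side of the end at $y$ is at least $\delta_0/2$, the limiting broken trajectory contains a non-constant component asymptotic to $y$, and its energy is at most $\delta-\epsilon/2<\delta$. This contradicts Floer $\delta$-isolation of $\mathcal{Y}$ for $(H,J)$.

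Therefore $\Delta(\mathcal{Y},\phi^1_{H_i})\ge\delta-\epsilon/2$ for $i$ large. Choosing $\delta>\Delta(\mathcal{Y},\phi^1_H)-\epsilon/2$ gives the claim. If $\Delta=\infty$, the same argument run with arbitrarily large $\delta$ gives $\Delta(\mathcal{Y},\phi^1_{H_i})\to\infty$.
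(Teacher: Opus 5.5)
Your part a) is correct, and it takes a different route from the paper. You use that $E(u_k)\to0$ forces $\sup|\partial_s u_k|\to0$, localize the whole cylinder near $y$, and then conclude either by spectral-gap convexity of $\|\partial_s u_k(s)\|^2_{L^2}$ or by local exactness of the action. The paper instead normalizes each $u_k$ at its last exit from an isolating neighbourhood $U_y$ and passes to a $C^\infty_{\rm loc}$-limit, which has zero energy but still touches $\partial U_y$. Your route is more hands-on. The paper's is shorter and reuses the same mechanism in b). In b), your overall scheme (contradiction, compactness, a limit cylinder at $y$ contradicting isolation for $(H,J)$) is the paper's. However, the one step that carries the content is not justified.

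That step is extracting a non-constant $(H,J)$-cylinder asymptotic to $y$.

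\begin{itemize}
\item You appeal to convergence to a broken Floer trajectory of $(H,J)$. But $H$ is only assumed non-degenerate along $\mathcal{Y}$, and the proposition is needed exactly when $H$ is degenerate: it is used to pass from a degenerate $H$ to a non-degenerate perturbation in the variation of the braid stability theorem.
\item Without non-degeneracy of all orbits of $H$ there is no broken-trajectory description of limits. You only have $C^\infty_{\rm loc}$-convergence of shifted cylinders, so everything depends on how the shifts are chosen.
\item Your normalization does not do this job. Asking the energy on the $y$-side to be \emph{at least} $\delta_0/2$ does not determine the shift, and it allows local limits far from $y$. Even with energy exactly $c<\delta_0$ on $[0,\infty)\times S^1$, you give no reason why this energy cannot escape to $s=+\infty$. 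That would leave a trivial local limit, or one not asymptotic to $y$.
\end{itemize}

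The missing idea is to normalize geometrically near $y$. Fix a small $U_y=\bigcup_t\{t\}\times U_{y,t}$ on which $\omega$ is exact and whose closure contains no $1$-periodic orbit of $H$, hence of $H_i$ for $i\gg0$, other than $y$. Shift so that $u_i(s,t)\in U_{y,t}$ for $s\ge0$ and $u_i(0,t_i)\in\partial U_{y,t_i}$. Then:

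\begin{itemize}
\item the local limit stays in $\overline{U}_y$ for $s\ge0$, so it is asymptotic to $y$;
\item it is non-constant, because it meets $\partial U_y$;
\item its energy is at most $\liminf E(u_i)$.
\end{itemize}

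This is the paper's argument, and it makes your uniform constant $\delta_0$ unnecessary. Your energy normalization can only be rescued by running this exit-time argument anyway, to show that the last exit time stays bounded.

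A smaller point concerns your claim that $\phi^t_{G_i}$ is the identity near $\phi^t_H(\mathcal{Y})$. This does not follow from $G_i$ vanishing near $\mathcal{B}(\mathcal{Y},H)$. The trajectory $\tau\mapsto\phi^\tau_{G_i}(y(t))$ starts at $y(t)$, but $G_{i,\tau}$ is only known to vanish near $y(\tau)$, not near $y(t)$. What you actually need is $X_{H_i}=X_H$ near the braid, and that is how the hypothesis has to be read. The paper also takes this for granted.
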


In statement b), note that $\mathcal{Y}$ remains a finite subset of $\mathcal{P}(H_i)$ consisting of nondegenerate $1$-periodic orbit. In particular, this statement says that the Floer theoretical action gap $\Delta(\mathcal{Y},\phi_H^1)$ is lower semicontinuous with respect to the $C^\infty$-topology on $C_c^\infty\bigl(S^1\times\Sigma\setminus\mathcal{B}(\mathcal{Y},H),\mathbb{R}\bigr).$

\begin{proof}
We first prove statement b), as the proof is more involved. We then explain how to adapt it to prove a). 
Let $J$ be a $S^1$-family of $\omega$-compatible almost complex structures such that $\mathcal{Y}$ is Floer $(\Delta(\mathcal{Y},\phi_{H}^1)-\epsilon)$-isolated with respect to $J$, i.e.,
\[E_{H,J}(u) > \Delta(\mathcal{Y},\phi_{H}^1)-\epsilon.\]
holds for every nontrivial Floer cylinder $u$ asymptotic to some $y\in\mathcal{Y}$.

We prove by contradiction. Take any sequence of almost complex structures $\set{J_i}_{i \in \N}$  such that $(H_i, J_i)$ converges to $(H,J)$ in $C^\infty$-topology. 
	Assume, for contradiction, that there exists a sequence of non-constant Floer cylinders $u_i$, i.e. $\mathcal{F}_{H_i, J_i}(u_i)=0$, satisfying the energy bound 
	\[
	0 < E_{H_i, J_i}(u) \le  \Delta(\mathcal{Y},\phi_{H}^1)-\epsilon.
	\]
	for every $i$, and  having at least one of the asymptotics $u(\pm \infty)$  in $\mathcal{Y}$. 
	
	Without loss of generality, and after passing to a further subsequence if necessary, we may assume that each $u_i$ is asymptotic to a fixed $y \in \mathcal{Y}$ at $+\infty$, i.e.,
	\[u_i(+\infty)=y.\]
    The case $u_i(-\infty)=y$ for a fixed $y \in \mathcal{Y}$ at $-\infty$ is treated identically.

    Since $y$ is nondegenerate, we can take a $S^1$-family of arbitrarily small Darboux balls $U_{y,t}\subset\Sigma$ centered at $y(t)$ for each $t\in S^1$, such that no other $1$-periodic orbits intersect the closure $\overbar{U}_y$ of their union $U_y := \bigcup\set{t}\times U_{y,t}$. 
	Since each $u_i$ is not a trivial cylinder, the other asymptotic $u_i(-\infty)$ is not contained in $\overbar{U}_y$. After reparametrizing in the $\RR$-direction, we assume
	\begin{align*}
		&u_i(0,t_i)\in \partial U_{y, t_i} \quad \text{for some }t_i\in S^1,\quad\text{and}\\
		&u_i(s, t) \in U_{y,t} \hspace{.9cm} \text{for all } (s,t) \in [0, \infty)\times S^1.
	\end{align*}
	
    Taking a subsequence of $u_i$, there exists a cylinder map $u:\mathbb{R}\times S^1 \setminus \Gamma \to \Sigma$ possibly with punctures at a finite subset $\Gamma\subset\mathbb{R}\times S^1$, that $u_i$ converges to $u$ in $C^\infty_{\rm loc}(\mathbb{R}\times S^1 \setminus \Gamma,\Sigma)$-topology. The limit solves the Floer equation of $(H,J)$, and has a finite energy:
    \[E(u) \le \liminf_{i\to\infty} E(u_i) =  \Delta(\mathcal{Y},\phi_{H}^1)-\epsilon.\]
    
    As in the removal of singularity of pseudoholomorphic curves, $u$ extends to a genuine cylinder map that solves the Floer equation for $(H,J)$ which we denote again by $u$. The set $\Gamma$ is characterised by the bubbling phenomenon. Moreover, since the symplectic form is exact on each Darboux ball $U_{y,t}$, there are no bubbling points in $[0,\infty)\times S^1$.
    
    Therefore, the extension satisfies
	\begin{align*}
		&u(0,t)\in \partial U_{y, t} \quad \text{for some }t\in S^1,\quad\text{and}\\
		&u(s, t) \in \overbar{U}_{y,t} \hspace{.7cm} \text{for all } (s,t) \in [0, \infty)\times S^1.
	\end{align*}
    Since $y$ is the only $1$-periodic orbit in $\overbar{U}_y$, the limit $u$ is a nontrivial Floer cylinder asymptotic to $y$ at $+\infty$. This contradicts the assumption that every nontrivial Floer cylinder for $(H,J)$ asymptotic to $y$ has energy at least $\Delta(\mathcal{Y},\phi_H^1)-\epsilon$. Thus, no such sequence $u_i$ can exist, and we conclude that $\mathcal{Y}$ remains Floer $(\Delta(\mathcal{Y},\phi_H^1)-\epsilon)$-isolated under sufficiently small perturbations. This completes the proof of b).

    To prove a), we argue by contradiction and suppose that $u_i$ is a sequence of nontrivial Floer cylinders of $(H,J)$ having at least one of the asymptotics $u(\pm \infty)$ in $\mathcal{Y}$ and such that
    \begin{equation} \label{eq:zeroenergy}
        \lim_{i \to +\infty} E(u_i)=0.
    \end{equation}
    After passing to a further subsequence if necessary, we may assume that each $u_i$ is asymptotic to a fixed $y \in \mathcal{Y}$ at $+\infty$, i.e.,
	\[u_i(+\infty)=y,\]
    where the case $u_i(-\infty)=y$ for a fixed $y \in \mathcal{Y}$ at $-\infty$ is treated identically.

As in the proof of b), since $y$ is non-degenerate, there exists a neighbourhood $U_{y,t}\subset \Sigma$ of $y(t)$ for each $t \in S^1$, such that no other $1$-periodic orbits intersect the union $U_y := \bigcup\set{t}\times U_{y,t}$.
Because $u_i$ is not a trivial cylinder, reasoning as in the proof of b) we may assume that
	\begin{align*}
		&u_i(0,t_i)\in \partial U_{y, t_i} \quad \text{for some }t_i\in S^1,\quad\text{and}\\
		&u_i(s, t) \in U_{y,t} \hspace{.9cm} \text{for all } (s,t) \in [0, \infty)\times S^1.
	\end{align*}
We then consider the $C_{\rm loc}^\infty$-limit $u := \lim_{i \to \infty} u_i$. As in the proof of b) above this is a Floer cylinder of $(H,J)$ satisfying
\begin{align*}
		&u(0,t)\in \partial U_{y, t} \quad \text{for some }t\in S^1,\quad\text{and}\\
		&u(s, t) \in \overbar{U}_{y,t} \hspace{.7cm} \text{for all } (s,t) \in [0, \infty)\times S^1.
\end{align*}
As $y$ is the only $1$-periodic orbit in $\overbar{U}_{y, t}$, we must have $u(+\infty)=y$. This, combined with the fact that $u(0,t)\in \partial U_{y, t}$ implies that $u$ is a nontrivial Floer cylinder. But, by \eqref{eq:zeroenergy}, $E(u)=0$ which implies that $u$ is a trivial Floer cylinder. This contradiction implies a). 
\end{proof}

The previous proposition shows that a finite set $\mathcal{Y}$ of non-degenerate elements of $\mathcal{P}(H,\alpha)$ is,  for any $J$, Floer $\Delta$-isolated with respect to $J$ for some $\Delta>0$ which depends on $J$. 
In particular, it follows directly from  Definition~\ref{defn:FTAG} that for any finite set $\mathcal{Y}$ of non-degenerate elements of $\mathcal{P}(H,\alpha)$ we have \[\Delta(\mathcal{Y},\phi_H^1)>0.\]

Moreover, by a similar argument, if $\mathcal{Y}$ is Floer $\Delta$-isolated with respect to $J$, then $\mathcal{Y}$ is Floer $\Delta'$-isolated with respect to any $J'$ sufficiently close to $J$ in the $C^\infty$-topology, for any chosen $\Delta'<\Delta$. The following proposition is a direct consequence of Proposition \ref{Proposition: Stabillity of quasi-isolatedness} and this discussion.
\begin{prop}
\label{Proposition: Property of Floer-theoretic action gap}
Let $\mathcal{Y}$ be a collection of non-degenerate $1$-periodic orbits of a possibly degenerate Hamiltonian $H$. Then, the Floer theoretical action gap
\[\Delta(\mathcal{Y}, \phi^1_H)\]
is strictly positive. Moreover, if $H$ is non-degenerate, then for any $\epsilon>0$ there exists a regular $J$ such that $\mathcal{Y}$ is Floer $(\Delta(\mathcal{Y}, \phi_H^1) - \epsilon)$-isolated with respect to $J$.
\end{prop}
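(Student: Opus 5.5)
The statement has two parts: strict positivity of $\Delta(\mathcal{Y},\phi^1_H)$, and, when $H$ is non-degenerate, the existence of a regular $J$ for which $\mathcal{Y}$ is Floer $(\Delta(\mathcal{Y},\phi_H^1)-\epsilon)$-isolated. I would obtain both by combining Proposition~\ref{Proposition: Stabillity of quasi-isolatedness} with an openness property of Floer isolation in the almost complex structure, followed by genericity of regular $J$.

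\textbf{Positivity.} Fix any $S^1$-family $J$ of $\omega$-compatible almost complex structures. By Proposition~\ref{Proposition: Stabillity of quasi-isolatedness}\,a), there is $\delta>0$ such that $\mathcal{Y}$ is Floer $\delta$-isolated with respect to $J$. Definition~\ref{defn:FTAG} takes a supremum over $J$ and over admissible $\delta$, so $\Delta(\mathcal{Y},\phi^1_H)\ge\delta>0$.

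\textbf{Openness in $J$.} Let $\epsilon>0$. By the definition of the supremum there exist $J$ and $\delta>\Delta(\mathcal{Y},\phi_H^1)-\epsilon/2$ such that $\mathcal{Y}$ is Floer $\delta$-isolated with respect to $J$. I claim there is a $C^\infty$-neighbourhood $\mathcal{U}$ of $J$ such that $\mathcal{Y}$ is Floer $(\delta-\epsilon/2)$-isolated for every $J'\in\mathcal{U}$. The proof is the argument of part b) of Proposition~\ref{Proposition: Stabillity of quasi-isolatedness}, with $H$ held fixed and only $J$ varying.
\begin{enumerate}
\item Suppose not. Then there are $J_i\to J$ and non-constant Floer cylinders $u_i$ for $(H,J_i)$ with $E(u_i)\le\delta-\epsilon/2$. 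After passing to a subsequence, each $u_i$ is asymptotic at $+\infty$ (or at $-\infty$) to a fixed $y\in\mathcal{Y}$.
\item Choose a neighbourhood $U_y$ of the non-degenerate orbit $y$ containing no other $1$-periodic orbit, and contained in a Darboux family so that $\omega$ is exact there. Normalize each $u_i$ by an $\R$-shift so that it first leaves $\overline{U}_y$ at $s=0$.
\item Take a $C^\infty_{\mathrm{loc}}$ limit away from finitely many bubble points. Removal of singularities applies, and exactness of $\omega$ on the Darboux balls excludes bubbling in $[0,\infty)\times S^1$.
\item The limit is a Floer cylinder for $(H,J)$ asymptotic to $y$ and touching $\partial U_y$, hence non-constant. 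Its energy is at most $\delta-\epsilon/2<\delta$, contradicting Floer $\delta$-isolation with respect to $J$.
\end{enumerate}

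\textbf{Genericity.} Since $H$ is non-degenerate, regular $J$ form a $C^\infty$-generic, in particular dense, set by \cite{FHS}. Hence $\mathcal{U}$ contains a regular $J'$. For this $J'$, $\mathcal{Y}$ is Floer $(\delta-\epsilon/2)$-isolated, and $\delta-\epsilon/2>\Delta(\mathcal{Y},\phi_H^1)-\epsilon$, so it is Floer $(\Delta(\mathcal{Y},\phi_H^1)-\epsilon)$-isolated, as required.

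The main obstacle is step 3 of the compactness argument. Energy alone bounds the limit, but one must make sure that the nontriviality normalization survives the limit and is not absorbed into a bubble. This is what the Darboux-ball exactness near $y$ guarantees, exactly as in the proof of Proposition~\ref{Proposition: Stabillity of quasi-isolatedness}.
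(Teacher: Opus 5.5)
Your proposal is correct and takes the same route as the paper. Positivity comes from part a) of Proposition~\ref{Proposition: Stabillity of quasi-isolatedness}, openness of Floer isolation under $C^\infty$-small changes of $J$ comes from the compactness argument of part b), and density of regular $J$ for non-degenerate $H$ finishes it; the paper only sketches the openness step (``by a similar argument'') and leaves the density of regular $J$ implicit, so your write-up is more explicit than the original.
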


\subsection{Structures on Hamiltonian Floer homologies}
In this section, we recall the construction of continuation maps and of the pair of pants product for Hamiltonian Floer homology.

\subsubsection{Continuation maps}
\label{Subsubsection: Continuation maps}
We begin by introducing the continuation maps previously mentioned in Section \ref{sec:Hamonclosedsurfaces}. For two Floer data $(H_-, J_-)$ and $(H_+, J_+)$, there exists a homomorphism
\[\Phi_{(H_-,J_-)\to(H_+,J_+)}: \HF_*(H_-, J_-;\alpha)\to \HF_*(H_+, J_+;\alpha)\]
called a continuation map.
Continuation maps satisfy the following properties:
\begin{itemize}
	\item ({\bf Naturality}) \ For any three Floer data $(H_-, J_-)$, $(H_0, J_0)$, and \linebreak $(H_+, J_+)$, we have
	\[\Phi_{(H_-,J_-)\to(H_+,J_+)} = \Phi_{(H_0,J_0)\to(H_+,J_+)} \circ \Phi_{(H_-,J_-)\to(H_0,J_0)}.\]
	\item ({\bf Identity}) \ For any Floer data $(H,J)$, the continuation map
	\[\Phi_{(H,J)\to(H,J)}: \HF_*(H, J;\alpha)\to \HF_*(H, J;\alpha).\]
	is the identity map. 
\end{itemize}
It follows directly that continuation maps are isomorphisms.


For two Floer data $(H_\pm, J_\pm)$, we construct $\Phi_{(H_-,J_-)\to(H_+,J_+)}$ by the following steps. Take a smooth homotopy of Hamiltonians $\mathcal{H}=\set{H_{s}}_{s\in \RR}$ such that $H_{s} = H_{+}$ for $s$ sufficiently large and $H_{s} = H_{-}$ for $-s$ sufficiently large. Similarly, choose a smooth homotopy of $S^1$-families  $\mathcal{J}=\set{J_{s}}_{s\in \RR}$ of almost complex structures  with $J_{s} = J_{+}$ for $s$ sufficiently large and $J_{s} = J_{-}$ for $-s$ sufficiently large.

Consider the following variation of the Floer equation for $u:\RR\times S^1 \to \Sigma$:	
\begin{equation}
	\label{eq:defFloerContinuationeq}
	\partial_s u(s,t) +{J}_{s,t}(u(s,t)) \left(\partial_{t}u(s,t)-X_{H_s}(t,u(s,t))\right)=0,
\end{equation}
abbreviated as $\mathcal{F}_{\mathcal{H},\mathcal{J}}(u)=0$. 

Similar to \eqref{Equation: definition of energy 1 - Floer equation}, we define the energy of $u$ with respect to $(\mathcal{H},\mathcal{J})$ as
\begin{equation}
	\label{Equation: definition of energy 2 - Floer continuation equation}
	E_{\mathcal{H}, \mathcal{J}}(u)=\frac{1}{2}\int_{\RR\times S^1} \norm{\partial_s u}^2_{J_{s,t}}+\norm{\partial_t u - X_{H_{s,t}}}^2_{J_{s,t}} ds \, dt,
\end{equation}
where $\norm{v}_{J_{s,t}}^2=\omega(v, J_{s,t}v)$. The solutions of equation \eqref{eq:defFloerContinuationeq} with finite energy are called Floer continuation cylinders.

From a direct computation, we have
\begin{align*}
	E_{\mathcal{H}, \mathcal{J}}(u)
	&=-\int_{\RR}\frac{d}{ds} \mathcal{A}_{H_s}(u(s))-		
	\int_{\RR \times S^1}\partial_s{H_{s,t}} \left(u(s,t)\right) ds\,dt.
\end{align*}

If $u$ is a Floer continuation cylinder (i.e. a finite-energy solution of \eqref{eq:defFloerContinuationeq}) then $u(s,\cdot)$ converges to $1$-periodic orbits $y_\pm \in \mathcal{P}(H_\pm)$ for $s\to \pm\infty$ in the $C^\infty$-topology. We denote the limit $1$-periodic orbits $y_\pm$ by $u(\pm \infty)$. Given any capping $w_-$ of $y_-$, the energy satisfies:
\begin{equation}
\label{Equation: Floer continuation cylinder Energy}
\begin{aligned}
E_{\mathcal{H},\mathcal{J}}(u)
&= \mathcal{A}_{H_-}(y_-,w_-)
 - \mathcal{A}_{H_+}(y_+,w_-\# u) \\
&\quad - \int_{\RR\times S^1}
\partial_s H_{s,t}\bigl(u(s,t)\bigr)\, ds\,dt .
\end{aligned}
\end{equation}
Again, a Floer continuation cylinder $u$ gives a free homotopy between the limit $1$-periodic orbits $u(\pm \infty)$.

Fix a free homotopy class $\alpha \in \tilde{\pi}_1(\Sigma)$. For capped orbits $[y_-, w_-]\in \widetilde{\mathcal{P}}(H_-;\alpha)$ and $[y_+, w_+]\in \widetilde{\mathcal{P}}(H_+;\alpha)$, define the moduli space $${\mathcal{M}}([y_-,w_-], [y_+,w_+], \mathcal{H}, \mathcal{J})$$ as the space of Floer continuation cylinders $u$ asymptotic to $y_\pm$ at $\pm \infty$, satisfying $[y_+,w_+]=[y_+,w_-\# u]$. 

For a $C^\infty$-generic choice of $\mathcal{J}$, the moduli space ${\mathcal{M}}([y_-,w_-], [y_+,w_+], \mathcal{H}, \mathcal{J})$ is a smooth manifold of dimension ${\CZ}(y_-,w_-) - {\CZ}(y_+,w_+)$ . Such $\mathcal{J}$ is called regular, and the pair $(\mathcal{H},\mathcal{J})$ is referred to as Floer continuation data.
By Assumption \ref{Assumption: no bubbling}, each $0$-dimensional moduli space is compact and, therefore, a finite set. We then define
\[
K([y_-,w_-],[y_+,w_+]):= \#_2 {\mathcal{M}}([y_-,w_-],[y_+,w_+], \mathcal{H},\mathcal{J}).
\]

The compactness of the moduli space is a consequence of an apriori energy bound for the continuation cylinders. Indeed, a direct computation shows that for $u\in {\mathcal{M}}([y_-,w_-], [y_+,w_+], \mathcal{H}, \mathcal{J})$, we have
\begin{equation}
	\label{Equation: Apriori Energy bounds for continuation cylinders}
	E_\mathcal{H,J}(u) \le \mathcal{A}_{H_-}(y_-,w_-) - \mathcal{A}_{H_+}(y_+,w_+)+E^-(\mathcal{H})
\end{equation}
where $E^-(\mathcal{H})$ is given by
\[
E^-(\mathcal{H}):=\int_{\RR\times S^1} -\min \partial_s{H_{s,t}} \,ds\,dt
\]
Since $H_{s} = H_{+}$ for $s$ sufficiently large and $H_{s} = H_{-}$ for $-s$ sufficiently large, $E^-(\mathcal{H})$ is finite and we obtain the desired apriori energy bound for Floer continuation cylinders.

\medskip

\begin{rem} We define $E^+(\mathcal{H})$ by
\[E^+(\mathcal{H}):=\int_{\RR\times S^1} \max \partial_s{H_{s,t}} \,ds\,dt \]
and the Hofer norm $\norm{\mathcal{H}}_\Hofer$ by
\[\norm{\mathcal{H}}_\Hofer := E^+(\mathcal{H})+E^-(\mathcal{H}).\] 
The Hofer norm $\norm{\mathcal{H}}_\Hofer$ corresponds to the length of a path $\phi^1_{H_s}$ interpolating $\phi^1_{H_-}$ and $\phi^1_{H_+}$. This is the path length used to define the Hofer norm on the group of Hamiltonian diffeomorphisms.
\end{rem}


The chain-level continuation map $\Phi_{\mathcal{H},\mathcal{J}}:\CF_k(H_-;\alpha)\to \CF_{k}(H_+;\alpha)$ is defined by
\[\Phi_{\mathcal{H},\mathcal{J}}([y_-,w_-]):=\sum_{\substack{{[y_+,w_+]\,\in\,{\widetilde{\mathcal{P}}}(H_+;\alpha),} \\ {\CZ([y_+,w_+])=k}}} K\left([y_-,w_-], [y_+,w_+]\right) \cdot [y_+,w_+]\]
for each capped orbit $[y_-,w_-]$. The map $\Phi_{\mathcal{H},\mathcal{J}}$ is extended to all of $\CF_k(H_-)$ linearly. In the case of $\Sigma=T^2$ and $\alpha\neq\cont$, the condition \eqref{Equation: downward Novikov condition} is preserved under the continuation map $\Phi_{\mathcal{H},\mathcal{J}}$.

The structure of the boundary strata of $1$-dimensional moduli spaces of Floer continuation cylinders ensure that $\Phi_{\mathcal{H},\mathcal{J}}$ is a chain map, i.e. $$\Phi_{\mathcal{H},\mathcal{J}}\circ d_{H_-, J_-}=d_{H_+,J_+}\circ \Phi_{\mathcal{H},\mathcal{J}}.$$ Hence, $\Phi_{\mathcal{H},\mathcal{J}}$ defines a map between Hamiltonian Floer homology which we also denote by
\[
\Phi_{\mathcal{H},\mathcal{J}}:\HF_k(H_-,J_-;\alpha)\to\HF_k(H_+,J_+;\alpha).
\]

The construction of $\Phi_{\mathcal{H},\mathcal{J}}$ requires the choice of continuation data $(\mathcal{H}, \mathcal{J})$. However, the map  $\Phi_{\mathcal{H},\mathcal{J}}$ is independent of this choice on the homology level. To see this, we consider, for two continuation data $(\mathcal{H}_0, \mathcal{J}_0)$ and $(\mathcal{H}_1, \mathcal{J}_1)$, a homotopy of continuation data $(\mathfrak{H},\mathfrak{J})=\set{(\mathcal{H}_\tau,\mathcal{J}_\tau)}_{\tau\in[0,1]}$, often referred to as a homotopy of homotopies. We assume that $H_\tau$ and $J_\tau$ are  constant near $0$ and $1$.

For generic $\mathfrak{J}$, the parametrized moduli space ${\mathcal{M}}([y_-,w_-],[y_+,w_+], \mathfrak{H},\mathfrak{J})$ defined by the set
\begin{align*}
\{(\tau,w): \tau \in [0,1],~&u:\RR\times S^1\to \Sigma \text{ such that } \mathcal{F}_{\mathcal{H}_\tau, \mathcal{J}_\tau}(u)=0,\\
&u(\pm\infty)=y_\pm, \text{ and } [y_+, w_-\#u]=[y_+, w_+]\}.
\end{align*}
is a smooth manifold with dimension $\CZ(y_-,w_-)-\CZ(y_+,w_+)+1$. Such a $\mathfrak{J}$ is called regular.  Assumption \ref{Assumption: no bubbling} implies that each $0$-dimensional moduli space of this type is compact and, therefore, a finite set. We then define
\[
N([y_-,w_-],[y_+,w_+]):= \#_2 {\mathcal{M}}([y_-,w_-],[y_+,w_+], \mathfrak{H},\mathfrak{J}).
\]
The chain homotopy $S_{\mathfrak{H},\mathfrak{J}}:\CF_k(H_-;\alpha)\to \CF_{k+1}(H_+;\alpha)$ is defined by
\[S_{\mathfrak{H},\mathfrak{J}}([y_-,w_-]):=\sum_{\substack{{[y_+,w_+]\,\in\,{\widetilde{\mathcal{P}}}(H_+;\alpha),} \\ {\CZ([y_+,w_+])=k+1}}} N\left([y_-,w_-], [y_+,w_+]\right) \cdot [y_+,w_+]\]
for each capped orbit $[y_-,w_-]$. The map $S_{\mathfrak{H},\mathfrak{J}}$ is extended linearly to all of $\CF_k(H_-)$. In the case of $\Sigma=T^2$ and $\alpha\neq\cont$, the condition \eqref{Equation: downward Novikov condition} is preserved under the chain homotopy $S_{\mathfrak{H},\mathfrak{J}}$.

The structure of the boundary strata of the $1$-dimensional moduli spaces of type ${\mathcal{M}}([y,w],[y',w'], \mathfrak{H},\mathfrak{J})$ ensures that $S_{\mathfrak{H},\mathfrak{J}}$ is a chain homotopy between $\Phi_{\mathcal{H}_0,\mathcal{J}_0}$ and $\Phi_{\mathcal{H}_1,\mathcal{J}_1}$ , i.e.:
\begin{equation}
\label{Equation: Chain homotopy}
\Phi_{\mathcal{H}_0, \mathcal{J}_0}-\Phi_{\mathcal{H}_1, \mathcal{J}_1}=S_{\mathfrak{H},\mathfrak{J}}\circ d_{H_-, J_-}+ d_{H_+, J_+}\circ S_{\mathfrak{H},\mathfrak{J}}.
\end{equation}
Hence, $\Phi_{\mathcal{H}_0, \mathcal{J}_0}$ and $\Phi_{\mathcal{H}_1, \mathcal{J}_1}$ define the same map on the homology-level. We thus denote the well-defined homology level map by $\Phi_{(H_-,J_-)\to(H_+,J_+)}$.

To establish the {\bf (Identity)} property of Floer continuation maps, we consider the constant homotopy from $(H,J)$ to itself. In this case, the $0$-dimensional moduli spaces appearing in the definition of the Floer continuation map consist only of trivial cylinders. Then, the associated continuation map is the identity map both at the chain-level and at the homology level.

The proof of the {\bf (Naturality)} property relies on the Gluing principle, which will be recalled in more detail in Section \ref{Section: Product structures on Hamiltonian Floer homology}. For generic continuation data $(\mathcal{H}_-, \mathcal{J}_-)$ from $(H_-,J_-)$ to $(H_0,J_0)$ and $(\mathcal{H}_+, \mathcal{J}_+)$ from $(H_0,J_0)$ to $(H_+,J_+)$, we have the following relation:
\[
\Phi_{\mathcal{H}_+,\mathcal{J}_+}\circ \Phi_{\mathcal{H}_-,\mathcal{J}_-}=
\Phi_{\mathcal{H}_- \#_\ell \mathcal{H}_+, \mathcal{J}_- \#_\ell \mathcal{J}_+}
\]
for sufficiently large gluing parameter $\ell \in \RR$. Here, $(\mathcal{H}_- \#_\ell \mathcal{H}_+, \mathcal{J}_- \#_\ell \mathcal{J}_+)$ denotes the glued continuation data from $(H_-, J_-)$ to $(H_+, J_+)$ with gluing parameter $\ell$. Genericity is needed to ensure that $(\mathcal{H}_- \#_\ell \mathcal{H}_+, \mathcal{J}_- \#_\ell \mathcal{J}_+)$ does indeed give continuation data for every sufficiently large $\ell \in \RR$.

Since the Floer continuation map at the homology level is independent of the choice of continuation data, we obtain the {\bf (Naturality)} property:
\[\Phi_{(H_-,J_-)\to(H_+,J_+)} = \Phi_{(H_0,J_0)\to(H_+,J_+)} \circ \Phi_{(H_-,J_-)\to(H_0,J_0)}.\]

The construction described above gives filtered continuation maps between filtered Floer homologies: \[\Phi^a_{\mathcal{H},\mathcal{J}}:\HF_k^a(H_-,J_-;\alpha)\to\HF_k^{a+E^-(\mathcal{H})}(H_+,J_+;\alpha).\]
The filtration shift is due to the apriori energy bound \eqref{Equation: Apriori Energy bounds for continuation cylinders}. Similarly, the filtered chain homotopy is defined between
\[S^a_{\mathfrak{H},\mathfrak{J}}:\CF_k^a(H_-;\alpha)\to\CF_{k+1}^{a+E^-(\mathfrak{H})}(H_+;\alpha),\]
where $E^-(\mathfrak{H}):=\max_{\tau\in [0,1]} E^-(\mathcal{H}_\tau)<+\infty$. By definition, these filtered version maps commute with the persistence maps $\iota$.


As a filtered version of equation \eqref{Equation: Chain homotopy}, the filtered continuation maps and the filtered chain homotopies satisfy
\begin{equation}
\label{Equation: Filtered Chain homotopy}
	\begin{aligned}
		\iota_{H_+}^{a+E^-(\mathcal{H}_0), a+E^-(\mathfrak{H})} \circ
		\Phi^a_{\mathcal{H}_0,\mathcal{J}_0}-
		&\iota_{H_+}^{a+E^-(\mathcal{H}_1), a+E^-(\mathfrak{H})} \circ
		\Phi^a_{\mathcal{H}_1,\mathcal{J}_1}\\
		=~&S^a_{\mathfrak{H},\mathfrak{J}}\circ d_{H_-, J_-}+ d_{H_+, J_+}\circ S^a_{\mathfrak{H},\mathfrak{J}},
	\end{aligned}
\end{equation}
as a map from $\CF^a_k(H_-;\alpha) $ to $\CF^{a+E^-(\mathfrak{H})}_k(H_+;\alpha)$. Hence, we have
\begin{equation}
\label{Equation: Invariance of Filtered Continuation map upto}
\iota_{H_+}^{a+E^-(\mathcal{H}_0), a+E^-(\mathfrak{H})} \circ
\Phi^a_{\mathcal{H}_0,\mathcal{J}_0}=
\iota_{H_+}^{a+E^-(\mathcal{H}_1), a+E^-(\mathfrak{H})} \circ
\Phi^a_{\mathcal{H}_1,\mathcal{J}_1},
\end{equation}
as a map from $\HF^a_k(H_-,J_-;\alpha)$ to $\HF^{a+E^-(\mathfrak{H})}_k(H_+,J_+;\alpha)$.

From now on, we will omit the persistence maps $\iota$ when these are used to adjust the filtration levels. For example, we will abbreviate Equation \eqref{Equation: Filtered Chain homotopy} as:
\begin{align*}
\Phi^a_{\mathcal{H}_0,\mathcal{J}_0}-\Phi^a_{\mathcal{H}_1,\mathcal{J}_1}
=S^a_{\mathfrak{H},\mathfrak{J}}\circ d_{H_-, J_-}+ d_{H_+, J_+}\circ S^a_{\mathfrak{H},\mathfrak{J}}.
\end{align*}

The properties of continuation maps extend to filtered continuation maps in the following ways:
\begin{itemize}
	\item {\bf (Naturality)} For generic continuation data $(\mathcal{H}_-, \mathcal{J}_-)$ from $(H_-,J_-)$ to $(H_0,J_0)$ and $(\mathcal{H}_+, \mathcal{J}_+)$ from $(H_0,J_0)$ to $(H_+,J_+)$, we have,
	\[
	\Phi^{a+E^-(\mathcal{H}_-)}_{\mathcal{H}_+,\mathcal{J}_+}\circ \Phi^a_{\mathcal{H}_-,\mathcal{J}_-}=
	\Phi^a_{\mathcal{H}_- \#_\ell \mathcal{H}_+, \mathcal{J}_- \#_\ell \mathcal{J}_+}
	\]
	as a map from $\HF_*^a(H_-, J_-;\alpha)$ to $\HF_*^{a+E^-(\mathcal{H}_-)+E^-(\mathcal{H}_+)}(H_+, J_+;\alpha)$, for any $a\in\RR$ and for sufficiently large gluing parameter $\ell \in \RR$.
	\item {\bf (Identity)} For the constant homotopy $(\mathcal{H},\mathcal{J})$ from $(H,J)$ to itself,
	\[\Phi^a_{\mathcal{H},\mathcal{J}}: \HF^a_*(H, J;\alpha)\to \HF^a_*(H, J;\alpha).\]
	is the identity map. 
\end{itemize}

For (Naturality), we use the following identity, which will follow
directly from the Gluing principle introduced later:
\[E^-(\mathcal{H}_-) + E^-(\mathcal{H}_-) = E^-(\mathcal{H}_-\#\mathcal{H}_+).\]

The properties above imply that the filtered Floer homology is independent of the choice of regular $J$. To see this, consider the constant homotopy of Hamiltonian $\mathcal{H}=H$. By the Gluing principle, there exist regular homotopies $\mathcal{J_-}$ and $\mathcal{J_+}$ each connecting $J_-$ to $J_+$ and $J_+$ to $J_-$, respectively, such that:
\[
\Phi^{a}_{H,\mathcal{J}_+}\circ \Phi^a_{H,\mathcal{J}_-}=
\Phi^a_{H, \mathcal{J}_- \#_\ell \mathcal{J}_+},\quad
\Phi^{a}_{H,\mathcal{J}_-}\circ \Phi^a_{H,\mathcal{J}_+}=
\Phi^a_{H, \mathcal{J}_+ \#_\ell \mathcal{J}_-}
\]
each as a map from $\HF^a(H,J_-;\alpha)$ and $\HF^a(H,J_+;\alpha)$ to itself.

Considering the constant homotopy of homotopies $\mathfrak{H}=H$, it follows that $\Phi^a_{(H,\mathcal{J})}$ does not depend on the choice of a regular homotopy $\mathcal{J}$.  In particular, we may use the constant homotopy to calculate $\Phi^a_{(H,\mathcal{J})}$. Together with the {\bf (Identity)} property, we conclude that
\[
\Phi^{a}_{H,\mathcal{J}_+}\circ \Phi^a_{H,\mathcal{J}_-}=\id_{\HF^a(H,J_-;\alpha)},\quad
\Phi^{a}_{H,\mathcal{J}_-}\circ \Phi^a_{H,\mathcal{J}_+}=\id_{\HF^a(H,J_+;\alpha)}.
\]
In other words, the filtered continuation maps $\Phi^{a}_{H,\mathcal{J}_+} $ and $ \Phi^a_{H,\mathcal{J}_-}$ associated to the continuation data $(H,\mathcal{J_-})$ and $(H,\mathcal{J_+})$ are isomorphisms between the filtered Floer homologies $\HF^a(H,J_-;\alpha)$ and $\HF^a(H,J_+;\alpha)$.

For a possibly non-constant homotopy $(\mathcal{H},\mathcal{J})$ from $(H,J)$ to itself, there exists a homotopy of homotopies $(\mathfrak{H}, \mathfrak{J})$ connecting $(\mathcal{H},\mathcal{J})$ to the constant homotopy and satisfying $E^-(\mathfrak{H}) = E^-(\mathcal{H})$. Using the chain homotopy $S^a_{\mathfrak{H},\mathfrak{J}}$, we obtain that
\[\Phi^a_{\mathcal{H},\mathcal{J}}=\iota_*^{a,a+E^-(\mathcal{H})}.\]

\medskip

\subsubsection{Product structures on Hamiltonian Floer homology}
\label{Section: Product structures on Hamiltonian Floer homology}

\

In this section, we review the ring structure on Hamiltonian Floer homology, which is called the pair-of-pants product. We start introducing some operations on Hamiltonians. 

\begin{defn} \label{def:concatenation-and-inverse-Hamiltonians}
Let $H,K : S^1 \times M \to \mathbb{R}$
be time-dependent Hamiltonians with flows $\phi_H^t$ and $\phi_K^t$.
The Hamiltonian $H \# K$ is defined by
\[
(H \# K)_t(x)
=
H_t(x) + K_t\big((\phi_H^t)^{-1}(x)\big).
\]
Its flow satisfies
\[
\phi_{H \# K}^t = \phi_H^t \circ \phi_K^t .
\]
The Hamiltonian $\overbar{H}$ is defined by
\[
\overbar{H}_t(x)
=
- H_t(\phi_H^t(x)).
\]
Its flow satisfies
\[
\phi_{\overbar{H}}^t = (\phi_H^t)^{-1}.
\]
\end{defn}

More precisely, for Floer data $(H_0,J_0), (H_1,J_1)$ and $(H_2, J_2)$, the pair-of-pants product is defined as a map:
\[*_{\mathrm{PP}}: \HF_k(H_1,J_1;\cont)\otimes \HF_l(H_2, J_2;\cont) \to \HF_{k+l-2n}(H_0,J_0;\cont),\]
where, as before, $\cont$ denotes the free homotopy class of contractible loops.

We begin by focusing on the pair-of-pants product between Hamiltonian Floer homology of the contractible class $\cont$; see Remark \ref{Remark: Pair-of-pants for non-contractible orbits} for pair-of-pants products involving classes other than $\cont$. In this paper, we will also consider the following maps:
\begin{align*}
&*_{\mathrm{PP}}: \HF_k(H_1,J_1;\alpha)\otimes \HF_l(H_2, J_2;\cont) \to \HF_{k+l-2n}(H_0,J_0;\alpha), \text{ and}\\
&*_{\mathrm{PP}}: \HF_k(H_1,J_1;\cont) \otimes \HF_l(H_2, J_2;\alpha) \to \HF_{k+l-2n}(H_0,J_0;\alpha).
\end{align*}

For the contractible class $\cont$, (we omit $\cont$ from now on), the pair-of-pants product satisfies the following properties:
\begin{itemize}
	\item {\bf (Commutativity)} For $C_1 \in \HF_k(H_1,J_1)$ and $C_2 \in \HF_l(H_2,J_2)$,
	\[\PP(C_1,C_2)=\PP(C_2, C_1).\]
	as an element of $\HF_{k+l-2n}(H_0,J_0)$.
	\item {\bf (Associativity)} For additional Floer data $(H_{1. 5}, J_{1.5})$ and $(H_{2.5}, J_{2.5})$, compositions of maps
	\begin{align*}
		&*_\mathrm{PP}:\HF_*(H_1, J_1)\otimes \HF_*(H_2,J_2)\to \HF_*(H_{1.5}, J_{1.5}) \text{ with }\\
		&*_\mathrm{PP}:\HF_*(H_{1.5}, J_{1.5})\otimes \HF_*(H_3,J_3)\to \HF_*(H_0, J_0).
	\end{align*}
	and
	\begin{align*}
		&*_\mathrm{PP}:\HF_*(H_2, J_2)\otimes \HF_*(H_3,J_3)\to \HF_*(H_{2.5}, J_{2.5}) \text{ with }\\
		&*_\mathrm{PP}:\HF_*(H_1, J_1)\otimes \HF_*(H_{2.5},J_{2.5})\to \HF_*(H_0, J_0).
	\end{align*}
	are the same as a map
	\[*_\mathrm{PP}\circ *_\mathrm{PP}:\HF_*(H_1,J_1)\otimes \HF_*(H_2,J_2) \otimes \HF_*(H_3,J_3) \to \HF_*(H_0,J_0).\]
\end{itemize} 

In the context of Floer continuation cylinders, the domain $\RR\times S^1$ with the canonical complex structure can be viewed as $\mathbb{C}\setminus \set{0}$, or equivalently as the punctured Riemann sphere $\mathbb{CP}^1\setminus \set{0,\infty}$. By generalizing Floer-type equations to arbitrary punctured Riemann spheres, one can define maps by counting solutions. The pair-of-pants product is defined by considering the case where the domain is the pair-of-pants, i.e.\ the open Riemann surface obtained by removing three points from $\CP^1$.

This approach was first introduced in \cite{Schwarz-thesis} and \cite{PSS96} for any punctured Riemann surfaces, possibly with additional marked points. In this paper, we restrict our attention to maps defined over $\CP^1$ with punctures. Our approach follows \cite{Schwarz00, Oh05}, where Oh-Schwarz spectral invariants were defined.  

Let $z_0, z_1,$ and $z_2$ be distinct points on the Riemann sphere $\mathbb{CP}^1$, and let $P$ denote the pair-of-pants obtained by removing these points. We endow $P$ with the complex structure $j$ inherited from $\mathbb{CP}^1$. The point $z_0$ is considered a positive puncture, meaning that we fix a neighbourhood $U_0$ of $z_0$ and holomorphic identification $\epsilon_0$ of $U_0 \setminus z_0$ with the positive half-cylinder $Z^+:=[0, \infty) \times S^1$, equipped with the standard complex structure. The punctures $z_1$ and $z_2$ are regarded as negative punctures. For $i \in \{ 1,2\}$ we fix a neighbourhood $U_i$ of $z_i$ and a holomorphic identification $\epsilon_i$ of $U_i \setminus z_i$ with $Z^-:=(-\infty,0] \times S^1$, equipped with the standard complex structure.
Holomorphic identifications are referred to as cylindrical ends.


To define a Floer equation on $P$, we consider a Hamiltonian-valued 1-form $\mathcal{H}$ on $P$ and a $P$-family of $\omega$-compatible almost complex structures $\mathcal{J}=\set{J_z}_{z\in P}$. For given Floer data $\set{(H_i,J_i)}_{i=0,1,2}$, we consider $(\mathcal{H},\mathcal{J})$ that equals $(H_i\,dt, J_i)$ on each cylindrical end $\epsilon_i$ outside some compact subset of $P$. In this case, we say that $(\mathcal{H},\mathcal{J})$ is asymptotic to $(H_i, J_i)$ at each puncture $z_i$.

In holomorphic coordinates $(s,t)$ near each point $z\in P$, the Hamiltonian-valued $1$-form  $\mathcal{H}$ can locally be written as $F(s,t) ds + G(s,t) dt$, where $F$ and $G$ are Hamiltonians on the symplectic manifold $(M,\omega)$. 
Moreover, every Hamiltonian valued $1$-form $\mathcal{H}$ defines a Hamiltonian vector field-valued $1$-form $X_\mathcal{H}$, which is locally given by $X_F \, ds + X_G \, dt$.

For a given $(\mathcal{H},\mathcal{J})$ asymptotic to $(H_i,J_i)$, we define the Floer product equation for $u :P\to M$ as:
\begin{equation}
	\label{Equation: Floer product equation}
	(Du(z)-X_\mathcal{H}(z,u))+J_z(u) \circ (Du(z)-X_\mathcal{H}(z,u)) \circ j(z) = 0.
\end{equation}
On the cylindrical ends $\epsilon_i$, the Hamiltonian vector field-valued $1$-form $X_\mathcal{H}$ is given by $X_{H_i}\, dt$, and the equation \eqref{Equation: Floer product equation} reduces to the Floer equation for the Floer data $(H_i, J_i)$ outside a compact set. We abbreviate the equation \eqref{Equation: Floer product equation} by \[\mathcal{F}_{\mathcal{H},\mathcal{J}}(u)=0.\]

Similar to \eqref{Equation: definition of energy 2 - Floer continuation equation}, we define the energy of $u$ with respect to $(\mathcal{H},\mathcal{J})$ as:
\begin{equation}
	\label{Definition: Energy of pair-of-pants}
	E_{\mathcal{H},\mathcal{J}}(u):=\frac{1}{2}\int_P \norm{Du - X_\mathcal{H}}_{J_z}^2 \dvol_P(z),
\end{equation}
where the norm is extended from $\norm{v}_{J_z}^2=\omega(v,J_zv)$ to
vector field-valued $1$-forms on $\Sigma$. For solutions $u$ of $\mathcal{F}_{\mathcal{H},\mathcal{J}}(u)=0$ with finite energy $E_{\mathcal{H},\mathcal{J}}(u)$, there exist $1$-periodic orbits $y_0 \in \mathcal{P}(H_0)$, $y_1 \in \mathcal{P}(H_i)$ and $y_2 \in  \mathcal{P}(H_2)$  such that  $\lim_{s \to + \infty} u(\epsilon_0(s,t))=y_0(t)$ and $\lim_{s \to - \infty} u(\epsilon_i(s,t))=y_i(t)$ for $i \in \{1,2\}$. This follows from the fact that, outside a compact set, the solution defines a finite-energy Floer half-cylinder for each Hamiltonian. We denote the limits $y_0$, $y_1$ and $y_2$ by respectively $u(z_0)$, $u(z_1)$ and $u(z_2)$. At this point, the limits need not be contractible.

The curvature $2$-form of $\mathcal{H}$, a Hamiltonian valued $2$-form on $P$, is defined as: $R_\mathcal{H} d{\rm vol_P} := d\mathcal{H} -\set{\mathcal{H},\mathcal{H}}$, where $\set{\cdot, \cdot}$ denote the Poisson braket. In our convention, the Poisson bracket is given by \[\set{F,G}:=-\omega(X_F,X_G)=dF(X_G).\]

When $u$ converges to $y_i \in \mathcal{P}(H_i;\cont)$ at each puncture $z_i$, the energy $E_{\mathcal{H},\mathcal{J}}(u)$ satisfies
\begin{equation*}
	\begin{aligned}
	E_{\mathcal{H},\mathcal{J}}(u)&= \mathcal{A}_{H_1}(y_1,w_1)
	+\mathcal{A}_{H_2}(y_2,w_2)\\&-\mathcal{A}_{H_0}(y_0,w_0)+ \int_P R_\mathcal{H} (z,u(z))~\dvol_{P}.
	\end{aligned}
\end{equation*}
for any disk cappings $w_i$ satisfying $[y_0, (w_1 \cup w_2)\# u] = [y_0, w_0]$. Note that if $y_1$ and $y_2$ are contractible, such a solution $u$ can exist only if $y_0$ is also contractible.

Motivated by the definition of $E^-$ for Floer continuation cylinders, we define $E^-(\mathcal{H})$ by
\[E^-(\mathcal{H}):= \int_P \left(-\min_{x\in M}
R_\mathcal{H}(z,x)\right)\dvol_P(z)\]
Since $\mathcal{H}$ is asymptotic to $H_i$ at each puncture $z_i$, the curvature $2$-form vanishes outside some compact set. This ensures that $E^-(\mathcal{H})$ is indeed finite, and gives the apriori energy bound:
\begin{equation}
\label{Equation: Apriori equation for pair-of-pants}
	E_{\mathcal{H},\mathcal{J}}(u)\le \mathcal{A}_{H_1}(y_1,w_1)
	+\mathcal{A}_{H_2}(y_2,w_2)-\mathcal{A}_{H_0}(y_0,w_0)+ E^-(\mathcal{H}).
\end{equation}

\begin{rem} We define $E^+(\mathcal{H})$ by
\[E^+(\mathcal{H}):= \int_P \left(\max_{x\in M}R_\mathcal{H}(z,x)\right)\dvol_P(z) \in [0,\infty),\]
and define the Hofer norm $\norm{\mathcal{H}}_\Hofer$ by
\[\norm{\mathcal{H}}_\Hofer := E^+(\mathcal{H})+E^-(\mathcal{H}).\] 
\end{rem}
Let $\mathcal{M}_P([y_1,w_1],[y_2,w_2],[y_0,w_0];\mathcal{H},\mathcal{J})$ denote the set of solutions $u$ of the the Floer equation \eqref{Equation: Floer product equation} asymptotic to $y_i\in\mathcal{P}(H_i;\cont)$ at each puncture $z_i$, satisfying $[y_0, (w_1 \cup w_2)\# u] = [y_0, w_0]$. For a $C^\infty$-generic choice of $\mathcal{J}$, this moduli space is a smooth manifold of dimension $\CZ([y_1, w_1])+\CZ([y_2,w_2])-\CZ([y_0,w_0])-2n.$ Such $\mathcal{J}$ is called regular, and the pair $(\mathcal{H},\mathcal{J})$ is referred to as Floer product data.


Similarly to what happens in the construction of continuation maps, $0$-dimensional moduli spaces of solutions of \eqref{Equation: Floer product equation} define a chain-level map:
\[\PP: \CF_k(H_1,J_1)\otimes \CF_l(H_2, J_2) \to \CF_{k+l-2n}(H_0,J_0)\]
by the mod 2 count of elements in the associated moduli spaces. The structure of the boundary strata of the  $1$-dimensional moduli spaces ensures that $\PP$ induces a map on homology. Furthermore, using the cobordism method, the pair-of-pants product between Hamiltonian Floer homology does not depend on the choice of Floer product data asymptotic to a given Floer data $(H_i, J_i)$.
\begin{figure}
    \centering
    \includegraphics[width=\linewidth]{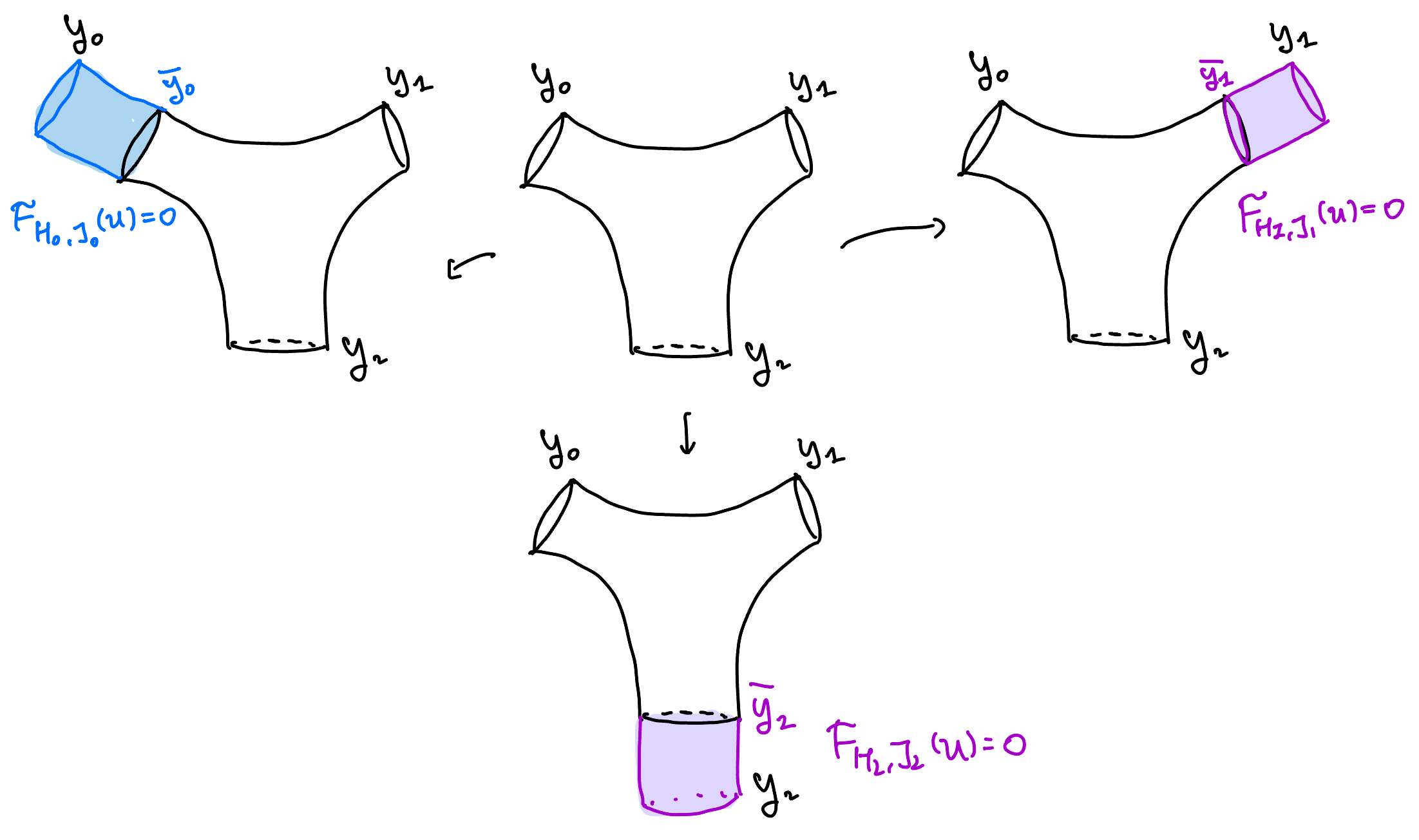}
    \caption{Boundary of $1$-dimensional moduli spaces for $\PP$}
    \label{fig: bdry of pop}
\end{figure}

The construction described above defines filtered pair-of-pants products
\begin{equation}
\label{Equation: Filtered pair-of-pants product}
  \PP:\HF^{a}_k(H_1,J_1)\otimes \HF^{b}_l (H_2, J_2)\to \HF^{a+b+E^-(\mathcal{H})}_{k+l-2n}(H_0,J_0),  
\end{equation}
for a Floer product data $(\mathcal{H}, \mathcal{J})$. Often it is useful to choose $\mathcal{H}$ such that $E^-(\mathcal{H})$ is as small as possible. In the ideal situation, we can take $\mathcal{H}$ with $E^{-}(\mathcal{H})=0$ that defines a pair-of-pants product without the term $E^-(\mathcal{H})$.

By definition, $E^-(\mathcal{H})=0$ if and only if the curvature $2$-form $R_\mathcal{H} d{\rm vol_P}$ vanishes everywhere on $P$. A Hamiltonian-valued $1$-form with vanishing curvature is called flat. From \cite{Schwarz00}, a flat Hamiltonian $1$-form exists under the assumption $H_1\# H_2=H_0$:
\begin{prop}
\label{Proposition: Flat Hamiltonian 1-form}
	For Hamiltonians $H_0$, $H_1$, and $H_2$ (not necessarily non-degenerate) satisfying $H_1\# H_2=H_0$, there exists a flat Hamiltonian-valued $1$-form $\mathcal{H}$ on $P$ asymptotic to $H_i~dt$ at the punctures $z_i$.
\end{prop}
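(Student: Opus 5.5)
The plan is to build $\mathcal{H}$ explicitly from the flows of $H_1$ and $H_2$, using the fact that a Hamiltonian $1$-form is flat exactly when it is, locally, the pullback of the zero form under a gauge transformation by Hamiltonian flows. Flatness, $d\mathcal{H}-\set{\mathcal{H},\mathcal{H}}=0$, is the zero-curvature condition for the connection $d - X_{\mathcal{H}}$ on the trivial bundle $P\times M$. So a flat $\mathcal{H}$ amounts to a flat connection on $P\times M\to P$ with structure group $\Ham(M,\omega)$. Its holonomies around the three punctures must equal the time-one maps of $H_0$, $H_1$, $H_2$, or more precisely their lifts to the universal cover $\widetilde{\Ham}$, given by the paths $\phi^t_{H_i}$.

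First, I will record the gauge formula. If $\mathcal{H}$ is a Hamiltonian $1$-form and $\Psi:P\to\Ham(M,\omega)$ is a smooth family, then $\Psi$ transforms $\mathcal{H}$ into
\[
\mathcal{H}'=\mathcal{H}\circ\Psi^{-1}+K_\Psi,
\]
where $K_\Psi$ is the Hamiltonian $1$-form generating $d\Psi\circ\Psi^{-1}$. In this transformation the curvature is conjugated, $R_{\mathcal{H}'}=R_{\mathcal{H}}\circ\Psi^{-1}$. Hence every $1$-form of the shape $K_\Psi$ is flat, being gauge equivalent to $0$, and flatness is preserved by gauge changes. I will check this by direct computation with the convention $\set{F,G}=dF(X_G)$, and I expect this sign bookkeeping to be the only real technical nuisance. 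On a cylinder $(s,t)$, the form $H_t\,dt$ is flat, since it is $K_\Psi$ for $\Psi(s,t)=\phi^t_H$, defined on the strip $[0,1]$ in $t$.

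Second, I will model $P$ as a domain with the prescribed cylindrical ends. A convenient model glues two half-infinite strips $\RR_{\le0}\times[0,1]$, carrying the negative ends $z_1$ and $z_2$, to one strip of width two, $\RR_{\ge0}\times[0,2]$, carrying the positive end $z_0$. The slit identifications are the standard ones, and there is a single branch point at the origin. On the $z_0$ end, with $t\in[0,2]$ rescaled to $[0,1]$, I want the form to be $H_0\,dt$. I take the global form to be $\mathcal{H}=F(t)\,dt$, independent of $s$. On the upper and lower pieces $F$ equals the reparametrized $H_1$ and $H_2$ respectively, so each piece is flat. Consistency along the seam $t=1$ on the $z_0$ side is exactly the concatenation identity behind $H_1\#H_2$. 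The flow of $F\,dt$ over $t\in[0,2]$ is $\phi_{H_1}$ followed by $\phi_{H_2}$. After reparametrizing $[0,2]\to[0,1]$ this is the path $\phi^t_{H_1\# H_2}=\phi^t_{H_1}\circ\phi^t_{H_2}$ up to a gauge change. The possible mismatch, between the flows of $H_1$ and $H_2$ run one after another and their pointwise product, is absorbed by a gauge transformation $\Psi$ on the $z_0$ end. Since gauge changes preserve flatness, the modified form stays flat and can be arranged to equal exactly $H_0\,dt$ near $z_0$.

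The main obstacle is smoothness near the branch point and along the seams. A form $F(t)\,dt$ with $F$ only piecewise smooth in $t$ is not smooth. I will handle this by first precomposing the Hamiltonians with time reparametrizations $t\mapsto\chi(t)$ that are flat near $t=0,1$. These generate the same time-one maps and are homotopic rel endpoints, so on each end they are related to $H_i$ by a gauge transformation. Near the branch point, $\mathcal{H}$ then vanishes identically and is trivially smooth. Finally, I will interpolate on each end between the reparametrized Hamiltonian and $H_i$ itself using a gauge family $\Psi(s,t)$. Such a family exists because the two paths are homotopic rel endpoints. The resulting $K_\Psi$-type modification keeps the curvature zero and makes $\mathcal{H}=H_i\,dt$ exactly, far out on each end.
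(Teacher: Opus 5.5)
The paper gives no argument of its own here. It attributes the existence of a flat form for $H_0=H_1\#H_2$ to Schwarz \cite{Schwarz00} and moves on. Your proposal replaces the citation with a self-contained construction, and the core of it is sound. On the slit-strip model, $F(t)\,dt$ has no $ds$-component, so $d\mathcal{H}=0$ and $\{\mathcal{H},\mathcal{H}\}=0$ identically. The reparametrization by $\chi$ makes $\mathcal{H}$ vanish near the seams and the branch point. What remains on each end is a flat interpolation on a cylinder, which is a comparison of holonomies.

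Your sign worry is harmless. If $X_F=\partial_s\Psi\circ\Psi^{-1}$ and $X_G=\partial_t\Psi\circ\Psi^{-1}$, then $\partial_sG-\partial_tF-\{F,G\}$ is fiberwise constant with the paper's bracket, reading $\{\mathcal{H},\mathcal{H}\}$ as $\{F,G\}\,ds\wedge dt$ for $\mathcal{H}=F\,ds+G\,dt$.

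Your route makes explicit that only the lift to $\widetilde{\Ham}$ and the mean values matter. This is the monodromy picture the paper invokes without proof just after the proposition. The citation is shorter but leaves the reader to match Schwarz's conventions.

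Three points need tightening.

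\emph{Order at $z_0$.} Write $\tilde\phi_H\in\widetilde{\Ham}$ for the class of $t\mapsto\phi^t_H$. With $F=H_1$ on $[0,1]$ and $F=H_2$ on $[1,2]$, the holonomy of the $z_0$-end read from $t=0$ is $\tilde\phi_{H_2}\tilde\phi_{H_1}$, not $\tilde\phi_{H_1}\tilde\phi_{H_2}$. So your justification that ``the two paths are homotopic rel endpoints'' is false at $z_0$ as written. Either start the end coordinate at $t=1$, which is a half-turn of the end and equivalent to swapping the strips. Or note that a flat interpolation on a cylinder only needs conjugate lifts, which holds since $\tilde\phi_{H_2}\tilde\phi_{H_1}=\tilde\phi_{H_1}^{-1}(\tilde\phi_{H_1}\tilde\phi_{H_2})\tilde\phi_{H_1}$.

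\emph{Constants.} The paper's curvature is function-valued, not taken modulo constants. A gauge term $K_\Psi$ is flat only if its fiberwise-constant part is a closed $1$-form. With normalized $K_\Psi$ you land on $(H_i-c_i(t))\,dt$ rather than $H_i\,dt$. Fix this by adding an exact form $d(\rho(s)C_i(t))$ with $C_i'=c_i$. Here $C_i$ is periodic because $\int_0^1c_i\,dt=0$: reparametrization preserves $\int_0^1\!\int_M H_t\,\omega^n\,dt$, and $\int_M(H_1\#H_2)_t\,\omega^n=\int_M H_{1,t}\,\omega^n+\int_M H_{2,t}\,\omega^n$.

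\emph{Cylindrical ends.} The slit model comes with its own cylindrical ends. Since the $\epsilon_i$ in the paper are an arbitrary choice, you may simply take these. Otherwise the discrepancy, a rotation plus exponentially decaying terms, is absorbed by the same flat interpolation on the ends.
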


Hence, for $H_0=H_1 \# H_2$, we have a filtered pair-of-pants product,
\[\PP:\HF^{a}_k(H_1,J_1)\otimes \HF^{b}_l(H_2,J_2)\to \HF_{k+\ell-2n}^{a+b}(H_1\#H_2,J_0).\]
Since the space of flat $1$-forms is connected, the pair-of-pants construction is independent of the choice of flat $1$-form. The converse also follows by considering the monodromies around loops in $P$. Nevertheless, we will not need this fact here and omit it from the proof.

In general, we have the following proposition:
\begin{prop}
	\label{Proposition: Hofer-small Hamiltonian 1-form}
	Let $H_0$, $H_1$, and $H_2$ be (not necessarily non-degenerate) Hamiltonians that satisfy
	\[\norm{\overbar{H}_0 \# H_1 \# H_2}_\Hofer<\delta,\]
	for a real number $\delta >0$. Then, there exists a Hamiltonian-valued $1$-form $\mathcal{H}$ on $P$ asymptotic to $H_i dt$ at the punctures $z_i$, satisfying $E^{-}(\mathcal{H}) < \delta$.
\end{prop}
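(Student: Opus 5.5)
The plan is to reduce to Proposition~\ref{Proposition: Flat Hamiltonian 1-form} and then add a correction supported in a compact region, whose curvature contribution is controlled by the Hofer norm of $G:=\overbar{H}_0\# H_1\# H_2$. Put $H_0':=H_1\# H_2$. By Definition~\ref{def:concatenation-and-inverse-Hamiltonians}, $\phi^t_{H_0'}=\phi^t_{H_0}\circ\phi^t_G$, that is, $H_0' = H_0\# G$. Proposition~\ref{Proposition: Flat Hamiltonian 1-form} gives a flat Hamiltonian-valued $1$-form $\mathcal{H}'$ on $P$, asymptotic to $H_1\,dt$, $H_2\,dt$ at $z_1,z_2$ and to $H_0'\,dt$ at $z_0$. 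It therefore suffices to modify $\mathcal{H}'$ on the positive cylindrical end $\epsilon_0([0,\infty)\times S^1)$ so that it interpolates from $H_0'\,dt$ to $H_0\,dt$ at a curvature cost $E^-<\delta$.

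For the interpolation, fix a smooth nondecreasing cutoff $\beta:\RR\to[0,1]$ with $\beta=0$ near $(-\infty,1]$ and $\beta=1$ on $[2,\infty)$. Let $G^{\lambda}$ be the Hamiltonian generating $t\mapsto \phi^{\lambda t}_{G}$ reparametrized appropriately; more precisely, $G^\lambda_t := \lambda G_{\lambda t}$ is the time-rescaled path. (If needed to stay $1$-periodic, first replace $G$ by a reparametrization vanishing near $t=0$; this changes neither the time-one map nor the Hofer norm.) On the end, in coordinates $(s,t)$, set
\[
\mathcal{H}=\bigl(H_0\# G^{1-\beta(s)}\bigr)_t\,dt,
\]
and glue this to $\mathcal{H}'$ on the rest of $P$. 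This form agrees with $H_0'\,dt$ for $s\le 1$ and with $H_0\,dt$ for $s\ge 2$. Because the form has no $ds$ component there, the curvature reduces to $R_{\mathcal{H}}=-\partial_s(H_0\# G^{1-\beta(s)})_t$, and everywhere else $R_{\mathcal{H}}=0$ by flatness.

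The main step is to estimate $\int\int -\min_x R_{\mathcal{H}}\,ds\,dt$. Write $K_\lambda:=H_0\# G^\lambda$. Then $\phi^t_{K_\lambda}=\phi^t_{H_0}\circ\phi^{\lambda t}_G$, and the standard formula for the $\lambda$-derivative of a family of Hamiltonians generating a two-parameter family of maps expresses $\partial_\lambda K_\lambda$ in terms of $G$. Concretely, $\partial_\lambda K_{\lambda,t}(x)=t\,G_{\lambda t}\bigl((\phi^t_{H_0}\circ \phi^{\lambda t}_G)^{-1}x\bigr)$ plus a term whose $t$-integral vanishes after the usual normalization, since $\partial_t$ and $\partial_\lambda$ commute up to a Poisson bracket. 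I expect verifying this identity, and handling the non-commuting term, to be the main obstacle.

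If this fails, I would switch to the cleaner route. Choose the cutoff homotopy so that $\mathcal{H}$ on the end is $F\,ds+K\,dt$, with $F$ the Hamiltonian generating the $s$-direction deformation $\phi^t_{H_0}\circ\phi^{t(1-\beta(s))}_G$. In that setup the curvature is exactly $\partial_s K-\partial_t F+\{F,K\}$, which computes to $-\beta'(s)\,t\,G_{\ldots}$ composed with a symplectomorphism. Pointwise minima are invariant under symplectomorphisms, so
\[
E^-(\mathcal{H})\le \int_1^2\beta'(s)\,ds\int_0^1 \max_x G_t\,dt\le \norm{G}_{\Hofer}<\delta .
\]
Alternatively, one can pick a path realizing the Hofer norm up to $\epsilon$ and use the continuation-map estimate $E^-(\mathcal{H})$ for linear homotopies, which directly gives $\int_0^1 \max_x(\pm G_t)\,dt\le\norm{G}_{\Hofer}$. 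Since the inequality $\norm{G}_{\Hofer}<\delta$ is strict, there is room to absorb any $\epsilon$ lost to smoothing the cutoffs.
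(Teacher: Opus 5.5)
Your architecture is the same as the paper's. You take the flat form of Proposition~\ref{Proposition: Flat Hamiltonian 1-form}, asymptotic to $H_1\# H_2\,dt$ at $z_0$, and glue a correction into the positive end, so that $E^-$ of the glued form equals $E^-$ of the correction. In the paper the correction is just a homotopy of Hamiltonians $\mathcal{H}_1$ from $H_1\#H_2$ to $H_0$ with $E^-(\mathcal{H}_1)<\delta$, which is essentially your final ``alternatively'' sentence.

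The two interpolations you actually work out, however, fail. For $K_\lambda=H_0\# G^\lambda$ one gets
\[
\partial_\lambda K_{\lambda,t}=\bigl(G_{\lambda t}+\lambda t\,\dot G_{\lambda t}\bigr)\circ(\phi^t_{H_0})^{-1},\qquad \dot G=\partial_t G .
\]
The $\dot G$ term is not bounded by $\norm{G}_\Hofer$ (take $G$ oscillating fast in $t$). No cancellation can rescue this, because $E^-$ integrates $-\min_x$ pointwise in $(s,t)$. Moreover, $K_\lambda$ is not even $1$-periodic in $t$: in general $G^\lambda_1=\lambda G_\lambda\neq\lambda G_0=G^\lambda_0$, and reparametrizing $G$ near $t=0$ does not fix this.

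In your second route, the form $F\,ds+K\,dt$ generated by the two-parameter family $\phi^t_{H_0}\circ\phi^{t(1-\beta(s))}_G$ is exactly flat. A direct computation with the paper's conventions gives $\partial_sK-\partial_tF-\{F,K\}=0$, not $-\beta'(s)\,t\,G$. The catch is that $F=-\beta'(s)\,t\,G_{\lambda t}\circ(\phi^t_{H_0})^{-1}$ (and likewise $K$) is not $1$-periodic in $t$, so this is not a form on the end at all. This is unavoidable: a flat form on an annulus would force the holonomies $\phi^1_{H_0}\circ\phi^1_G$ and $\phi^1_{H_0}$ at its two ends to be conjugate.

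The route that works needs one identity you never write down. It is immediate from Definition~\ref{def:concatenation-and-inverse-Hamiltonians}:
\[
(\overbar{H}_0\# H_1\# H_2)_t=(H_1\# H_2-H_0)_t\circ\phi^t_{H_0}.
\]
Set $G=\overbar{H}_0\#H_1\#H_2$ and take the linear homotopy $H_s=H_1\#H_2+\beta(s)(H_0-H_1\#H_2)$. The identity gives $-\min_x\partial_sH_{s,t}=\beta'(s)\max_xG_t$. Hence
\[
E^-=\int_0^1\max_xG_t\,dt\le\norm{G}_\Hofer<\delta,
\]
under the paper's implicit normalization, which guarantees $\max_xG_t\ge 0$.

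You do not need ``a path realizing the Hofer norm up to $\epsilon$'': the hypothesis concerns the specific Hamiltonian $G$, and the linear homotopy between the given $H_1\#H_2$ and $H_0$ already achieves the bound. Gluing it into the positive end, as in your first paragraph, finishes the proof exactly as in the paper.
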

\begin{proof}
	By Proposition \ref{Proposition: Flat Hamiltonian 1-form}, there exists a flat Hamiltonian-valued $1$-form $\mathcal{H}_0$ asymptotic to $H_1 dt$,   $H_2\, dt$,  $H_1 \# H_2\, dt$   at $z_1$, $z_2$ and $z_0$, respectively.
	By assumption, there exists a homotopy of Hamiltonians $\mathcal{H}_1$ connecting $H_1 \# H_2$ and $H_0$, such that $E^{-}(\mathcal{H}_1) < \delta$. For a sufficiently large real number $\ell > 0$, we define the Hamiltonian-valued $1$-form $\mathcal{H}_0 \#_\ell \mathcal{H}_1$ on $P$ asymptotic to $H_i dt$ at the punctures $z_i$ as follows:
	\begin{equation}
	\label{Equation: Almost flat Hamiltonian-valued 1-form}
	\begin{aligned}
		\mathcal{H}_0 \#_\ell \mathcal{H}_1(z)=
		\begin{cases}
			\mathcal{H}_0(z), & \text{if }z \in P\setminus \epsilon_0([2\ell,\infty)\times S^1),\\
			\mathcal{H}_1(s-3\ell,t), & \text{if } z \in \epsilon_0([\ell,\infty)\times S^1),
		\end{cases}
	\end{aligned}
	\end{equation}
    where we identify $(s,t)\in[0,\infty)\times S^1$ with $z\in P$ by $\epsilon_0$. Here, $\mathcal{H}_0$ and $\mathcal{H}_1$ agree and are both given by $H_1\#H_2\,dt$ with respect to the cylindrical coordinates on the overlapping domain $\epsilon_0([\ell,2\ell]\times S^1)$ for $\ell \gg 0$.
	
	By construction, $R_{\mathcal{H}_0\#_\ell \mathcal{H}_1}$ vanishes except for $\epsilon_0([2\ell,\infty)\times S^1)\subset P$ and $R_{\mathcal{H}_0\#_\ell \mathcal{H}_1}(z)=R_{\mathcal{H}_1}(s-3\ell, t)$ on $\epsilon_0([2\ell,\infty)\times S^1)$. Thus, we conclude that
	\[E^{-}(\mathcal{H}_0 \#_\ell \mathcal{H}_1)=E^{-}(\mathcal{H}_1)<\delta.\]
	This completes the proof.
\end{proof}


For the {\bf (Commutativity)} property, we consider a biholomorphism $\phi$ on $\CP^1$ that fixes the positive puncture and exchanges the negative punctures, i.e. $\phi(z_0)=z_0$, $\phi(z_1)=z_2$, and $\phi(z_2)=z_1$. Given Floer product data $(\mathcal{H},\mathcal{J})$, we consider its pullback under $\phi$, denoted by $(\phi^*\mathcal{H}, \phi^*\mathcal{J})$. Comparing the chain-level pair-of-pants associated to $(\mathcal{H},\mathcal{J})$ and $(\phi^*\mathcal{H}, \phi^*\mathcal{J})$, we conclude that commutativity already holds at the chain-level for these choices of Floer product data.

Before proving the {\bf (Associativity)} property of pair-of-pants products, we review the {\bf (Gluing principle)}.

{\bf (The Gluing Principle)} Let $S$ and $S^\prime$ be punctured Riemann spheres with exactly one positive puncture and assume that $S'$ has $m \geq 1$ negative punctures. Then, we have
\begin{equation}
\label{Equation: Gluing principle without perturbation data}
\Phi_{S \#_\ell S^\prime}=\Phi_{S^\prime} \circ \Phi_{S}.
\end{equation}
where $S\#_{\ell}S^\prime$ denotes the punctured Riemann surface obtained by gluing the positive puncture $z_0$ of $S$ to a negative puncture $z^\prime_k$ of $S^\prime$. The parameter $\ell \geq 0$ keeps track of where in the cylindrical ends near  $z_0$ and $z^\prime_k$ we perform the gluing. The larger $\ell$ is, the larger are the two disjoint regions in $S\#_{\ell}S^\prime$ that coincide with regions of, respectively, $S$ and $S'$. This is explained in more detail below; see also Figure~\ref{fig:gluging principle} for a pictorial illustration.

Let $\mathcal{H}$ and $\mathcal{H}^\prime$ be Hamiltonian valued $1$-forms on $S$ and $S^\prime$, respectively, asymptotic to given Floer data at each puncture. Suppose that the same Floer data is assigned to both  $z_0$ and $z^\prime_k$. Then, for a generic choice of a pair $(\mathcal{J}, \mathcal{J}^\prime)$, the following maps are well-defined and satisfy
\begin{equation}
	\label{Equation: Gluing Principle}
	\Phi_{\mathcal{H}\#_\ell \mathcal{H}^\prime, \mathcal{J}\#_\ell \mathcal{J}^\prime} = \Phi_{\mathcal{H}^\prime ,\mathcal{J}^\prime}\circ\Phi_{\mathcal{H},\mathcal{J}}
\end{equation}
at the homology-level.

\begin{figure}
    \centering
    \includegraphics[width=\linewidth]{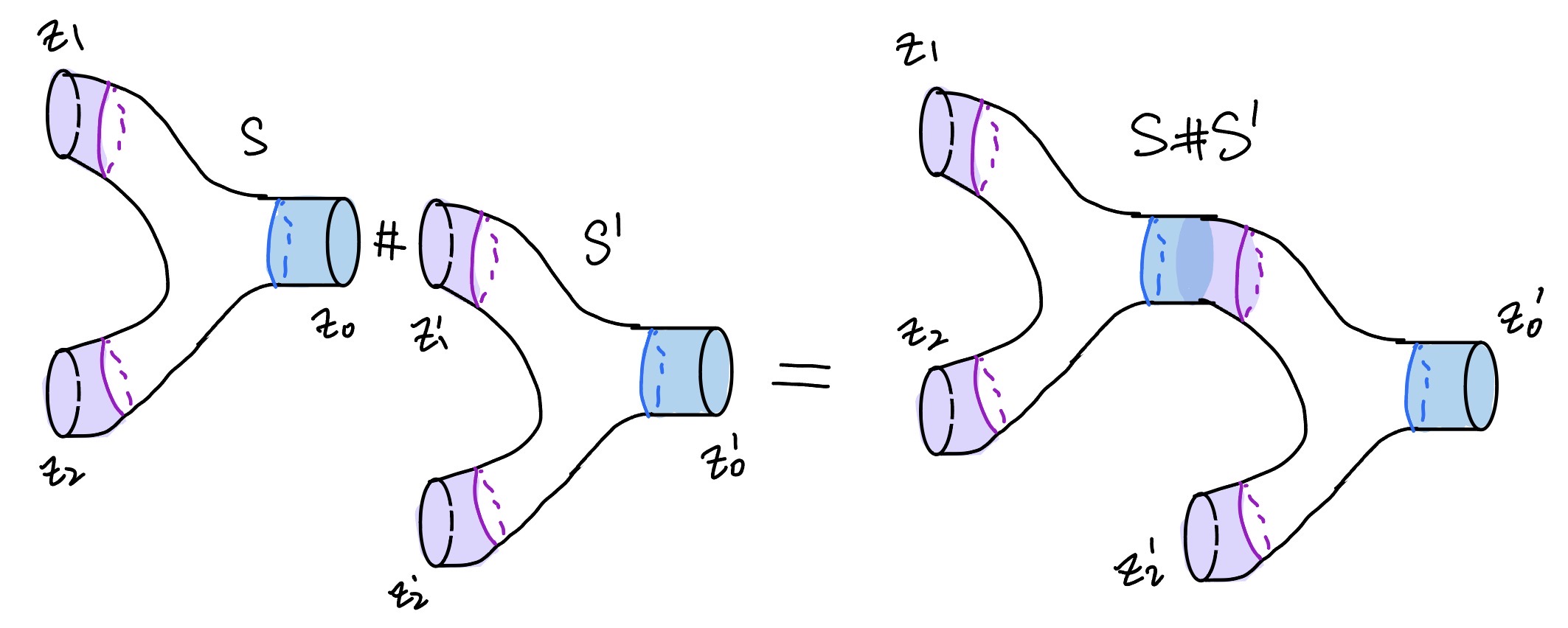}
    \caption{Punctured Riemann surface $S\#S'$ obtained by gluing}
    \label{fig:gluging principle}
\end{figure}

We now define the map $\Phi_{\mathcal{H},\mathcal{J}}$ associated with a punctured $\CP^1$. Let $S$ be a $\CP^1$ with one positive puncture $z_0$ and $d$ negative punctures $z_1, z_2,\ldots z_d$ where $d\ge 0$. We equip $S$ with the complex structure inherited from $\CP^1$. Near each puncture $z_i$, we assign cylindrical ends $\epsilon_i$ such that their images do not overlap. We also assign Floer data $(H_i, J_i)$ at each puncture $z_i$.

A perturbation data on $S$ consists of a pair $(\mathcal{H},\mathcal{J})$, where $\mathcal{H}$ is a Hamiltonian-valued $1$-form on $S$, and $\mathcal{J}$ is an $S$-family of $\omega$-compatible almost complex structures. We say that a perturbation data $(\mathcal{H}, \mathcal{J})$ is asymptotic to $(H_i,J_i)$ if they coincide with $(H_i~dt, J_i)$ on each cylindrical end $\epsilon_i$ outside some compact set.

We then consider the perturbed Cauchy-Riemann equation \eqref{Equation: Floer product equation} for a smooth map $u:S\to \M$, and define the moduli space of solutions
\[\mathcal{M}_S([y_0, w_0], [y_1, w_1],\ldots, [y_d, w_d];\mathcal{H},\mathcal{J})\]
following the same recipe that we used for the pair of pants $P$.
For a $C^\infty$-generic choice of $\mathcal{J}$, these moduli spaces are smooth manifolds. We refer to such a choice of $\mathcal{J}$ as regular.

The $0$-dimensional moduli spaces define a chain-level map $\Phi_{\mathcal{H},\mathcal{J}}$, and the boundary strata of the $1$-dimensional moduli spaces ensure that $\Phi_{\mathcal{H}, \mathcal{J}}$ induces a map on homology. As in the case of the pair-of-pants product, each solution $u$ has an associated energy $E_{\mathcal{H},\mathcal{J}}(u)$, together with an a priori energy bound involving the error term $E^-(\mathcal{H})$. The definition is the same as in the pair-of-pants case; see \eqref{Definition: Energy of pair-of-pants} and \eqref{Equation: Apriori equation for pair-of-pants}.

Using the cobordism method, we conclude that, on the homology level, $\Phi_{\mathcal{H},\mathcal{J}}$ is independent of the choice of perturbation data asymptotic to the given Floer data $\{(H_i,J_i)\}$. Hence, each punctured Riemann sphere $S$ defines a unique map, which we denote by $\Phi_S$. In fact, this map depends only on $S$ as a topological manifold and even not on the choice of cylindrical ends ${\epsilon_i}$. We refer interested readers to \cite{Schwarz-thesis} for further details. Nevertheless, this fact will not be needed in the remainder of the paper. 

The filtered version of the map $\Phi_S$ is also defined as in the case of the pair-of-pants product; see \eqref{Equation: Filtered pair-of-pants product}. Two filtered pair-of-pants products agree up to a shift in the energy level by $E^-(\mathfrak{H})=\max_{\tau \in [0,1]}E^-(\mathcal{H}_\tau)$ for a homotopy $(\mathfrak{H}, \mathfrak{J})$ of pairs $\{ (\mathcal{H}_\tau, \mathcal{J}_\tau)\}_{\tau \in (0,1)}$.

For specific values of $d$, the maps $\Phi_S$ correspond to the previously introduced maps. The case $d=2$ gives the pair-of-pants product, while the case $d=1$ corresponds to Floer continuation maps. The case $d=0$ will be discussed in Section \ref{Section:PSS-image-spectral-norm}.
In these cases, we fix the punctures at $z_0=\infty$, $z_1=0$, and $z_2=1$ on $ \CP^1 \cong \mathbb{C}\cup \{\infty\}$.

To define the map $\Phi_{\mathcal{H}\#_\ell \mathcal{H}^\prime, \mathcal{J}\#_\ell \mathcal{J}^\prime}$, we need to glue two punctured Riemann spheres and perturbation data on them. Given a positive puncture $z_0$ on $(S,j)$, a negative puncture $z_k^\prime$ on $(S^\prime, j^\prime$), and a real parameter $\ell>0$, called the gluing parameter, we construct the glued surface $S \#_\ell S^\prime$ as follows:
\begin{align*}
	\bar{S} &= S \setminus \epsilon_0((2\ell,\infty)\times S^1),\\
	\bar{S}^\prime &= S^\prime \setminus \epsilon^\prime_{k}((-\infty, -2\ell)\times S^1), \text{ and }\\
	S \#_{\ell} S^\prime &=  (\bar{S} \sqcup \bar{S}^\prime)/  \sim,
\end{align*}
where the equivalence relation on the ends is given by $\epsilon_0(s, t)=\epsilon^\prime_{k}(s-3\ell,t)$ for $(s,t)\in [\ell,2\ell]\times S^1$. Then, $S\#_\ell S^\prime$ is also a punctured Riemann sphere. The cylindrical ends at the punctures are inherited from $S$ and $S^\prime$.

If the same Floer data is assigned to both $z_0$ and $z_k^\prime$, we can also glue the corresponding perturbation data, resulting in a new pair $(\mathcal{H}\#_\ell \mathcal{H}^\prime, \mathcal{J}\#_\ell \mathcal{J}^\prime)$, which is well-defined for sufficiently large $\ell>0$. As an example, the Hamiltonian-valued $1$-form constructed $\mathcal{H}_0\#_\ell \mathcal{H}_1$, constructed in the proof of Proposition \ref{Proposition: Hofer-small Hamiltonian 1-form}, is the example of such a glued Hamiltonian-valued $1$-form.

The fact that $\mathcal{J}$ and $\mathcal{J}^\prime$ are regular does not imply that $\mathcal{J}\#_\ell\mathcal{J}^\prime$ is regular. However, for a generic choice of a pair $(\mathcal{J}, \mathcal{J}^\prime)$, we can ensure that $\mathcal{J} \#_\ell \mathcal{J}^\prime$ is regular for all $\ell$ greater than some constant $\ell_0$. Moreover, for a sufficiently large $\ell_1$, we can guarantee that for every $\ell>\ell_1$, there exists a bijection 
\begin{equation}
\label{Equation: Bijection by gluing}
    \begin{aligned}
	\#_\ell: \{(u,u^\prime) \in \mathcal{M}_S(\mathcal{H}, \mathcal{J}) &\times
	\mathcal{M}_{S^\prime} (\mathcal{H}^\prime, \mathcal{J}^\prime): u(z_0)=u^\prime(z_k^\prime)\}\\ &\rightarrow \mathcal{M}_{S\#_{\ell} S^\prime}(\mathcal{H}\#_\ell \mathcal{H}^\prime, \mathcal{J}\#_\ell \mathcal{J}^\prime)\\
	(u,u^\prime)&\mapsto u\#_\ell u^\prime,
    \end{aligned}
\end{equation}
where $u\# u^\prime$ and $u\#_\ell u^\prime$ are $C^0$-close.
In particular, $u \# u^\prime$ is homotopic to $u \#_\ell u^\prime$ relative to the asymptotics for sufficiently large $\ell$.

We thus obtain that for sufficiently large $\ell$ the equality \eqref{Equation: Gluing Principle} holds already at the chain-level; i.e.:
\begin{equation*}
\Phi_{\mathcal{H}\#_\ell \mathcal{H}^\prime, \mathcal{J}\#_\ell \mathcal{J}^\prime} = \Phi_{\mathcal{H}^\prime ,\mathcal{J}^\prime}\circ\Phi_{\mathcal{H},\mathcal{J}}
\end{equation*}
already at the chain-level.
Consequently, the equation \eqref{Equation: Gluing Principle} also holds at the homology. Since the map $\Phi_{\mathcal{H}, \mathcal{J}}$ depends only on the Riemann surfaces at the homology level, we conclude equation \eqref{Equation: Gluing principle without perturbation data}. 


Since we are considering filtered Floer homology, the choice of perturbation data $(\mathcal{H}, \mathcal{J})$ on $S$ plays a crucial role. Therefore, from now on,  whenever we refer to the {\bf (Gluing) principle}, we mean the existence of the bijection $\#_\ell$ together with equation \eqref{Equation: Gluing Principle}.

We now briefly outline the proof of the {\bf (Associativity)} property of the pair-of-pants product. Given arbitrary Floer data $(H_{1.5}, J_{1.5})$ and $(H_{2.5}, J_{2.5})$, we take four Floer product data:
\begin{itemize}
	\item $(\mathcal{H}_{0,0},\mathcal{J}_{0,0})$ for $\CF(H_1, J_1)\otimes \CF(H_2, J_2) \to \CF(H_{1.5},J_{1.5})$
	\item $(\mathcal{H}_{0,1},\mathcal{J}_{0,1})$ for $\CF(H_{1.5}, J_{1.5})\otimes \CF(H_3, J_3) \to \CF(H_0,J_0)$
	\item $(\mathcal{H}_{1,0},\mathcal{J}_{1,0})$ for $\CF(H_2, J_2)\otimes \CF(H_3, J_3) \to \CF(H_{2.5},J_{2.5})$
	\item $(\mathcal{H}_{1,1},\mathcal{J}_{1,1})$ for $\CF(H_1, J_1)\otimes \CF(H_{2.5}, J_{2.5}) \to \CF(H_0,J_0)$
\end{itemize}
The composition $\PP(\PP(\diamond,\circ),\star)$ corresponds to $\Phi_{\mathcal{H}_{1,0},\mathcal{J}_{1,0}} \circ \Phi_{\mathcal{H}_{0,0},\mathcal{J}_{0,0}}$, and $\PP(\diamond, \PP(\circ,\star))$ corresponds to
$\Phi_{\mathcal{H}_{1,1},\mathcal{J}_{1,1}}\circ \Phi_{\mathcal{H}_{0,1},\mathcal{J}_{0,1}}$. To prove that
\begin{equation}
\label{Equation: Associaitivity of pair-of-pants product}
	\Phi_{\mathcal{H}_{1,0},\mathcal{J}_{1,0}}\circ \Phi_{\mathcal{H}_{0,0},\mathcal{J}_{0,0}}
	=\Phi_{\mathcal{H}_{1,1},\mathcal{J}_{1,1}}\circ \Phi_{\mathcal{H}_{0,1},\mathcal{J}_{0,1}}
\end{equation}
at the homology-level, one uses a cobordism argument. 

Each composition can be viewed as a map defined on a stable curve of genus $0$ with four marked points, $z_0, z_1, z_2,$ and $z_3$. The left-hand side corresponds to a stable curve $S_0$ consisting of two Riemann spheres, each with two marked points and one nodal point: $z_1$ and $z_2$ lie on one sphere $P_{0,0}$, while $z_0$ and $z_3$ lie on the other sphere $P_{0,1}$. Similarly, the right hand side corresponds to a stable curve $S_1$ with $z_2, z_3$ on $P_{1,0}$ and $z_0, z_1$ on $P_{1,1}$.

Both for $S_0$ and $S_1$, we consider $z_0$ as a positive puncture and $z_1, z_2$, and $z_3$ as negative punctures. The nodal point connecting $P_{0,0}$ and $P_{0,1}$ is a positive puncture on $P_{0,0}$ and a negative puncture on $P_{0,1}$. Similarly, the nodal point connecting $P_{1,0}$ and $P_{1,1}$ is a positive puncture on $P_{1_0}$ and a negative puncture on $P_{1,1}$. We denote the nodal points by $z_\pm$ according to their sign. We then associate a positive or negative  cylindrical end for each marked points.

Consider a smooth path $\set{S_\tau}_{\tau\in[0,1]}$ in the moduli space of stable curves of genus $0$ with four marked points, interpolating between $S_0$ and $S_1$. We equip each stable curve $S_\tau$ with cylindrical ends $\epsilon_{i, \tau}$ for each puncture $z_i$. Moreover, when $\tau$ is close to $0$ or $1$, we assume that $S_\tau$ is obtained by gluing near the nodal points, with cylindrical ends inherited from $S_0$ and $S_1$.

For each stable curve $S_\tau$, we choose perturbation data $(\mathcal{H}_\tau, \mathcal{J}_\tau)$ asymptotic to the given Floer data at each puncture. For $\tau$ close to $0$, we assume that $(\mathcal{H}_\tau, \mathcal{J}_\tau)$ is obtained by gluing $(\mathcal{H}_{0,0},\mathcal{J}_{0,0})$ on $P_{0,0}$ and $(\mathcal{H}_{0,1},\mathcal{J}_{0,1})$ on $P_{0,1}$. For $\tau$ close to $1$, we assume $(\mathcal{H}_\tau, \mathcal{J}_\tau)$ is obtained by gluing $(\mathcal{H}_{1,0},\mathcal{J}_{1,0})$ on $P_{1,0}$ and $(\mathcal{H}_{1,1},\mathcal{J}_{1,1})$ on $P_{1,1}$. In particular, each $H_\tau$ agree with $H_i dt$ on each cylindrical ends $\epsilon_{i,\tau}$; see Figure~\ref{fig:proof of associativity} below.

We consider the parametrized moduli spaces of solutions of the Floer equation for the $[0,1]$-family of perturbation data $(\mathfrak{H},\mathfrak{J})=\set{(\mathcal{H}_\tau,\mathcal{J}_\tau)}_{\tau\in[0,1]}$. The $0$-dimensional moduli spaces define a degree $(1-4n)$ map
\[\Psi_{\mathfrak{H},\mathfrak{J}}:\HF_*(H_1,J_1)\otimes\HF_*(H_2,J_2)\otimes\HF_*(H_3,J_3)\to\HF_*(H_0,J_0).\]
The boundary of related $1$-dimensional moduli spaces ensures that
\begin{align*}
	&\Phi_{\mathcal{H}_{1,1},\mathcal{J}_{1,1}}\circ \Phi_{\mathcal{H}_{0,1},\mathcal{J}_{0,1}}-\Phi_{\mathcal{H}_{1,1},\mathcal{J}_{1,1}}\circ \Phi_{\mathcal{H}_{0,1},\mathcal{J}_{0,1}}\\
	=~&\Psi\circ d_{H_1,J_1}+ \Psi\circ d_{H_2,J_2} + \Psi\circ d_{H_3,J_3} - d_{H_0, J_0} \circ \Psi.
\end{align*}
The moduli spaces of pair-of-pants maps appear as part of the boundary of the $1$-dimensional moduli spaces for this perturbation data due to our construction and the {\bf (Gluing) principle}. This shows the {\bf (Associativity)} property of the pair-of-pants product \eqref{Equation: Associaitivity of pair-of-pants product} holds at the homology level.

\begin{figure}
    \centering
    \includegraphics[width=\linewidth]{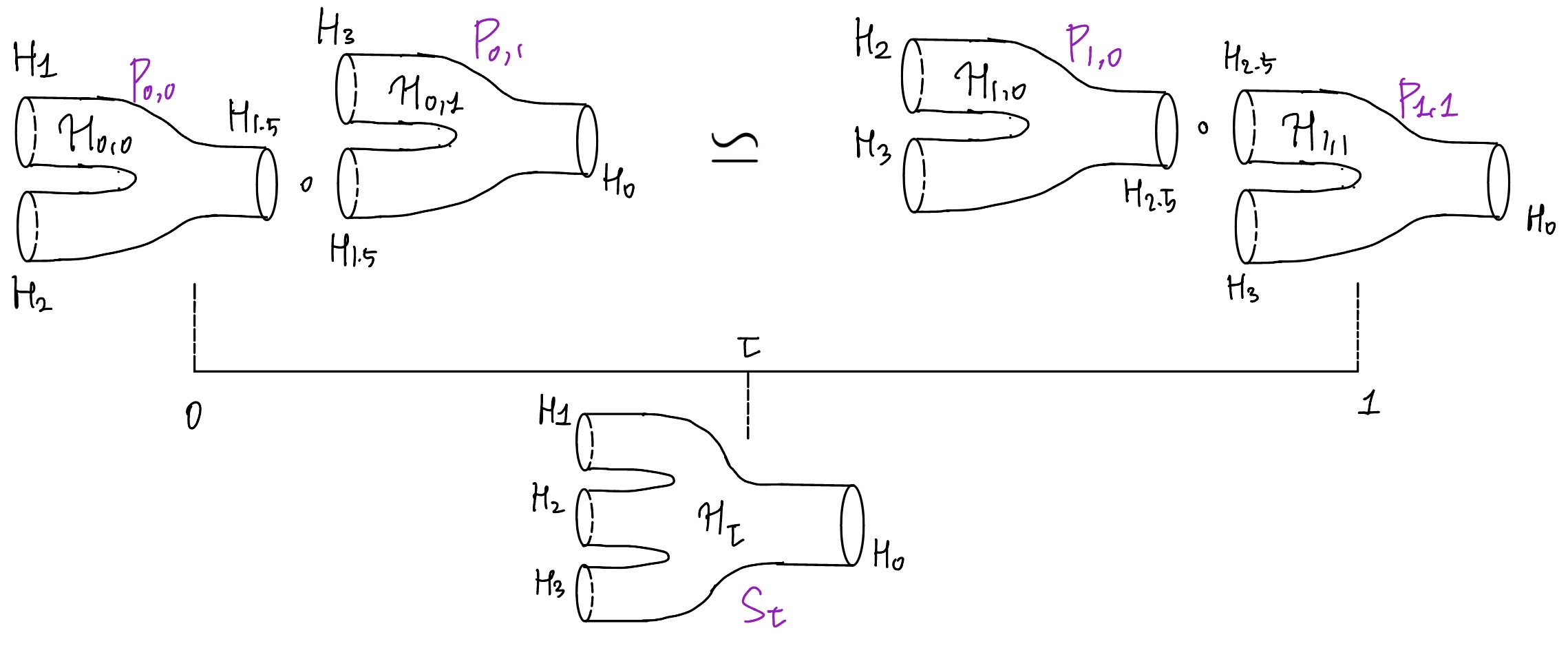}
    \caption{Proof of the (Associativity) property}
    \label{fig:proof of associativity}
\end{figure}

Considering the filtration, the {\bf (Associativity)} property is expressed as
\[\Phi_{\mathcal{H}_{1,0},\mathcal{J}_{1,0}}\circ \Phi_{\mathcal{H}_{0,0},\mathcal{J}_{0,0}}
=\Phi_{\mathcal{H}_{1,1},\mathcal{J}_{1,1}}\circ \Phi_{\mathcal{H}_{0,1},\mathcal{J}_{0,1}}\]
as a map between filtered Floer homology groups:
\[
\HF^a_*(H_1, J_1)\otimes \HF^b_*(H_2, J_2)\otimes \HF^c_*(H_3,J_3)\to \HF_*^{a+b+c+E^{-}(\mathfrak{H})}(H_0,J_0),
\]
where $E^{-}(\mathfrak{H})$ is defined as the maximum of $E^{-}(\mathcal{H}_\tau)$ for $\tau \in (0,1)$. Note that $E^{-}(\mathcal{H}_\tau)$ is constant near $0$ and $1$.

As in Proposition \ref{Proposition: Hofer-small Hamiltonian 1-form}, we obtain the existence of a family of Hamiltonian-valued $1$-forms $\mathfrak{H}=\set{\mathcal{H}_\tau}_{\tau \in [0,1]}$ satisfying $E^-(\mathfrak{H})<\delta$, provided that
\[\norm{\overbar{H}_0 \# H_1 \# H_2 \# H_3}_\Hofer<\delta.\]

\begin{prop}
	\label{Proposition: Hofer-small Family of Hamiltonian 1-form}
	Let $H_0, H_1, H_2$, and $H_3$ be Hamiltonians (not necessarily non-degenerate) such that
	\[\norm{\overbar{H}_0 \# H_1 \# H_2 \# H_3}_\Hofer<\delta,\]
	for some real number $\delta >0$. Then, there exist
    \begin{itemize}
        \item two non-degenerate Hamiltonians $H_{1.5}$ and $H_{2.5}$,
        \item four Hamiltonian-valued $1$-forms on the pair of pants $\mathcal{H}_{0,0}, \mathcal{H}_{0,1}, \mathcal{H}_{1,0}$ and $\mathcal{H}_{1,1}$,
    \end{itemize}
    along with a $[0,1]$-family of Hamiltonian-valued $1$-forms
    $\mathfrak{H}=\set{\mathcal{H}_\tau}_{\tau\in[0,1]}$ each defined on family of stable curves $\{S_\tau\}_{\tau \in [0,1]}$ satisfying
    \[E^-(\mathfrak{H})<\delta.\]
\end{prop}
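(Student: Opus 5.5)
The plan is to mimic the proof of Proposition~\ref{Proposition: Hofer-small Hamiltonian 1-form}: concentrate all the curvature of every member of the family into a single cylindrical piece near the positive puncture $z_0$. Everything else in the family will be flat.

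First I will choose the intermediate Hamiltonians. Set $H_{1.5}:=H_1\#H_2$ and $H_{2.5}:=H_2\#H_3$. If these are degenerate, I will replace them by $C^\infty$-small non-degenerate perturbations. The perturbations should be small enough that the Hofer errors they introduce, together with the given slack $\delta-\norm{\overbar{H}_0\#H_1\#H_2\#H_3}_\Hofer>0$, still leave total room below $\delta$.

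Next I will build the four 1-forms. By Proposition~\ref{Proposition: Flat Hamiltonian 1-form}, there are flat 1-forms $\mathcal{H}_{0,0}$ (for inputs $H_1,H_2$ and output $H_1\#H_2$) and $\mathcal{H}_{1,0}$ (for inputs $H_2,H_3$ and output $H_2\#H_3$). Where a perturbation was needed, I will splice in a short homotopy at the output end, at small Hofer cost, exactly as in Proposition~\ref{Proposition: Hofer-small Hamiltonian 1-form}. For $\mathcal{H}_{0,1}$ and $\mathcal{H}_{1,1}$, I will first take flat forms with outputs $H_{1.5}\#H_3$ and $H_1\#H_{2.5}$ respectively. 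Since $\overbar{H}_0\#H_1\#H_2\#H_3$ has Hofer norm $<\delta$, there is a homotopy $\mathcal{G}$ from $H_1\#H_2\#H_3$ to $H_0$ with $E^-(\mathcal{G})<\delta$, up to the small perturbation errors. I will glue $\mathcal{G}$ onto the positive cylindrical end of each of these two forms.

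Then I will construct the family. Both glued stable curves $S_0$ and $S_1$ are, away from the final cylinder, flat 1-forms on a 4-punctured sphere with inputs $H_1,H_2,H_3$ and output $H_1\#H_2\#H_3$. I want a path $\set{\mathcal{H}'_\tau}$ of flat 1-forms on the curves $S_\tau$ (with the curve $S_\tau$ varying) connecting these two glued flat forms. I will then glue the same cylinder $\mathcal{G}$ at $z_0$ for every $\tau$. The curvature of $\mathcal{H}_\tau$ is then supported only on the glued cylinder and equals that of $\mathcal{G}$, so $E^-(\mathcal{H}_\tau)=E^-(\mathcal{G})$ for all $\tau$, which gives $E^-(\mathfrak{H})<\delta$.

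The main obstacle is producing this flat path through $\tau$. I would argue via the holonomy description of flat Hamiltonian connections. A flat 1-form on an $m$-punctured sphere with prescribed ends corresponds to a trivialization in which $\mathcal{H}=$ "$d$ of a family of gauge transformations". Such forms exist exactly when the time-one monodromies compose correctly, and here they do, since the output is the concatenation $H_1\#H_2\#H_3$. The space of such forms over a fixed domain is affine, hence connected, because one can interpolate the primitives Schwarz-style with cutoff functions. Over the family of domains $S_\tau$, I would first fix the conformal structures (they vary smoothly with $\tau$) and carry out Schwarz's construction of Proposition~\ref{Proposition: Flat Hamiltonian 1-form} parametrically. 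Near $\tau=0,1$ the construction must agree with the glued data, and connectedness of the space of flat forms lets me interpolate to the given $\mathcal{H}_{i,j}$ there.
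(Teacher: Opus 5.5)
Your overall strategy (make every member of the family flat except for cylinders glued at punctures that do not depend on $\tau$) is the right one, and it is also the paper's. The gap is in how you make $H_{1.5},H_{2.5}$ non-degenerate. The Hamiltonians $H_1\#H_2$ and $H_2\#H_3$ need not be non-degenerate; for instance, in Proposition~\ref{Proposition: Quantitative version of composition = identity} one has $H_2\#H_3=G\#\overbar{G}=0$. So you must perturb them, and you splice the correcting homotopies into the \emph{output} ends of $\mathcal{H}_{0,0}$ and $\mathcal{H}_{1,0}$. Those ends are not punctures of $S_\tau$; they become the necks of the glued curves.

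This has two consequences.
\begin{itemize}
\item For $\tau$ near $0$ there is curvature on the neck separating $\{z_1,z_2\}$ from $\{z_3,z_0\}$. For $\tau$ near $1$ it sits on the neck separating $\{z_2,z_3\}$ from $\{z_1,z_0\}$.
\item The flat parts of $\mathcal{H}_{0,1}$ and $\mathcal{H}_{1,1}$ have different outputs, $H_{1.5}\#H_3$ and $H_1\#H_{2.5}$. So the cylinders you must attach at $z_0$ differ at the two ends of the family.
\end{itemize}
Hence your claim fails as soon as a perturbation is needed. That claim is that $S_0$ and $S_1$ both carry, away from one common cylinder $\mathcal{G}$ at $z_0$, a flat form with inputs $H_1,H_2,H_3$ and output $H_1\#H_2\#H_3$. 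The identity $E^-(\mathcal{H}_\tau)=E^-(\mathcal{G})$ is then unjustified. You would have to interpolate between curvature concentrated on two different necks. Since $R_{\mathcal{H}}$ is quadratic in $\mathcal{H}$, nothing in your argument controls $E^-$ along such an interpolation.

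The missing idea is to put the perturbation at \emph{external} punctures, so that the modified asymptotic data are the same for all $\tau$. Then define the intermediate Hamiltonians as exact $\#$-compositions of those modified data. The paper does this as follows.
\begin{itemize}
\item It takes one Hofer-small $G$ such that $H_{1.5}:=H_1\#G\#H_2$ and $H_{2.5}:=\overbar{G}\#\overbar{H}_1\#H_0$ are non-degenerate.
\item It sets $H_1':=H_1\#G$ and $H_3':=\overbar{H}_2\#\overbar{G}\#\overbar{H}_1\#H_0$, and checks $H_1'\#H_2=H_{1.5}$, $H_{1.5}\#H_3'=H_0$, $H_2\#H_3'=H_{2.5}$ and $H_1'\#H_{2.5}=H_0$.
\item Consequently all four pants forms are flat once two fixed cylinders are split off: $\mathcal{G}_1$ from $H_1$ to $H_1'$ at $z_1$, and $\mathcal{G}_3$ from $H_3$ to $H_3'$ at $z_3$.
\item A flat family on $S_\tau$ with ends $H_0,H_1',H_2,H_3'$ then interpolates, and
\[
E^-(\mathcal{H}_\tau)=E^-(\mathcal{G}_1)+E^-(\mathcal{G}_3)\le 2\norm{G}_\Hofer+\norm{\overbar{H}_0\#H_1\#H_2\#H_3}_\Hofer .
\]
\end{itemize}

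Your variant, with the main error cylinder at $z_0$, can be repaired the same way. Take $H_{1.5}=H_1\#G\#H_2$ and $H_{2.5}=H_2\#H_3\#G'$, glue cylinders at $z_1$ and $z_3$, and glue one common cylinder from $H_1\#G\#H_2\#H_3\#G'$ to $H_0$ at $z_0$.

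A minor point: the space of flat forms with fixed ends is not affine, because $d\mathcal{H}-\{\mathcal{H},\mathcal{H}\}=0$ is nonlinear. Its connectedness, which the paper simply asserts, needs a gauge-type argument rather than linear interpolation of primitives.
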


With the chosen family of Hamiltonian perturbation data $\mathfrak{H}$, we conclude that the following composition relation holds:
\[\Phi_{\mathcal{H}_{1,0},\mathcal{J}_{1,0}}\circ \Phi_{\mathcal{H}_{0,0},\mathcal{J}_{0,0}}
=\Phi_{\mathcal{H}_{1,1},\mathcal{J}_{1,1}}\circ \Phi_{\mathcal{H}_{0,1},\mathcal{J}_{0,1}}\]
as a map defined between filtered Floer homology:
\[\HF^a_*(H_1, J_1)\otimes \HF^b_*(H_2, J_2)\otimes \HF^c_*(H_3,J_3)\to \HF_*^{a+b+c+\delta}(H_0,J_0).\]
\begin{proof}
	We begin by choosing $H_{1.5}$ and $H_{2.5}$. Since non-degeneracy is a generic condition, we can find an arbitrary Hofer-small Hamiltonian $G$ such that the Hamiltonians
	\[H_{1.5} := H_1 \# G \# H_2,\quad H_{2.5}:= \overbar{G} \# \overbar{H}_1 \# H_0\]
	are non-degenerate.
	
	From the proof of \ref{Proposition: Hofer-small Hamiltonian 1-form}, we obtain a Hamiltonian-valued $1$-form $\mathcal{H}_{0,0}$ on $P_{0,0}$ asymptotic to $H_1 dt, H_2 dt$ at the negative punctures $z_1, z_2$ and to $H_{1.5}dt$ at the positive puncture $z_+$, satisfying
	\[E^-(\mathcal{H}_{0,0})<\norm{G}_\Hofer.\]
	Moreover, we assume that the curvature $2$-form is supported in the cylindrical end $\epsilon_1$. Similarly, we construct $\mathcal{H}_{0,1}$ on $P_{0,1}$ asymptotic to $H_{1.5} dt$ and to $H_3 dt$ at, respectively, $z_-$ and $ z_3$ and to $H_{0}dt$ at $z_0$, satisfying
	\[E^-(\mathcal{H}_{0,1})\le \norm{G}_\Hofer + \norm{\overbar{H}_0 \# H_1 \# H_2 \# H_3}_\Hofer,\]
	and the curvature $2$-form is supported in $\epsilon_3$. We also construct $\mathcal{H}_{1,0}$ and $\mathcal{H}_{1,1}$ on $P_{1,0}$ and $P_{1,1}$, respectively.
	
	For $\tau$ close to $0$ or $1$, we define $\mathcal{H}_\tau$ by gluing. Then, we have
	\[E^-(\mathcal{H}_{\tau})<2 \cdot \norm{G}_\Hofer + \norm{\overbar{H}_0 \# H_1 \# H_2 \# H_3}_\Hofer.\]
	Our construction ensures that $\mathcal{H}_\tau$ is obtained by gluing $\tau$-independent $1$-forms	$\mathcal{G}_1$ and $\mathcal{G}_3$ on $\RR\times S^1$ to a flat $\mathcal{H}^\prime_\tau$ on $S_\tau$. More precisely, $\mathcal{H}^\prime_\tau$ is a flat Hamiltonian-valued $1$-form on $S_\tau$ asymptotic to \[H_0 dt, H_1^\prime  dt := H_1 \# G dt, H_2 dt,\text{  and }H_3^\prime dt := (\overbar{H}_2 \# \overbar{G} \# \overbar{H}_1 \# H_0) dt\] at $z_0,z_1,z_2$, and $z_3$, respectively. The forms $\mathcal{G}_1$ and $\mathcal{G}_3$ are Hamiltonian-valued $1$-forms on $\RR\times S^1$ asymptotic to
	\[\mathcal{G}_1: H_1dt\text{ and }H_1^\prime dt,\quad\mathcal{G}_3: H_3 dt\text{ and }H_3^\prime dt\]
	at $-\infty$ and $+\infty$, respectively.
	
	Since the space of flat Hamiltonian-valued $1$-forms on Riemann surfaces is connected, 
    we can extend $\mathcal{H}^\prime_\tau$ to a smooth $(0,1)$-family of flat $1$-forms on $S_\tau$, asymptotic to $H_0 dt, H_1^\prime  dt, H_2 dt$, and $H_3^\prime dt$ at $z_0, z_1, z_2$, and $z_3$, respectively. By gluing $\mathcal{G}_1$ and $\mathcal{G}_3$ at $z_1$ and $z_3$, we define $\mathcal{H}_\tau$ for every $\tau \in (0,1)$.
	
	Then, $E^-(\mathcal{H}_\tau)$ satisfies the following inequality for every $\tau \in (0,1)$:
	\begin{align*}
		E^-(\mathcal{H}_\tau) = E^-(\mathcal{G}_1)+E^-(\mathcal{G}_3) \le 2 \cdot \norm{G}_\Hofer + \norm{\overbar{H}_0 \# H_1 \# H_2 \# H_3}_\Hofer.
	\end{align*}
	For a sufficiently Hofer small $G$, we conclude $E^-(\mathcal{H}_\tau)<\delta$, completing the proof of the proposition.
\end{proof}

\medskip


\begin{rem}[Pair of pants product for a non contractible class]	
\label{Remark: Pair-of-pants for non-contractible orbits}

\

As mentioned above, we also consider in this paper the following versions of the pair of pants product for a non contractible free homotopy class $\alpha$: 
\begin{align} \label{Equation: Product non contractible left}
&*_{\mathrm{PP}}: \HF_k(H_1,J_1;\alpha)\otimes \HF_l(H_2, J_2;\cont) \to \HF_{k+l-2n}(H_0,J_0;\alpha),
\end{align}
and 
\begin{align} \label{Equation: Product non contractible right}
&*_{\mathrm{PP}}: \HF_k(H_1,J_1;\cont) \otimes \HF_l(H_2, J_2;\alpha) \to \HF_{k+l-2n}(H_0,J_0;\alpha).
\end{align}
Note that we use the same reference data $(\eta_\alpha,\tau_\alpha)$ for the Hamiltonian Floer homology of non-contractible $1$-periodic orbits.

To define \eqref{Equation: Product non contractible left} we proceed similarly as in the contractible case. We consider Floer data $(\mathcal{H},\mathcal{J})$ in the pair of pants $P$ asymptotic to $(H_i, J_i)$ at each puncture $z_i$ of $P$.  Given capped $1$-periodic orbits $[y_1,w_1] \in \widetilde{\mathcal{P}}(H_1,\alpha)$, $[y_2,w_2] \in \widetilde{\mathcal{P}}(H_1,\cont)$ and $[y_0,w_0] \in \widetilde{\mathcal{P}}(H_0,\alpha)$ we consider the moduli space
$$\mathcal{M}_P([y_1,w_1],[y_2,w_2],[y_0,w_0];\mathcal{H},\mathcal{J})$$
of solutions $u$ of \eqref{Equation: Floer product equation} such that $[y_0, (w_1 \cup w_2)\# u] = [y_0, w_0]$. Note that if either $y_1$ or $y_2$ is contractible, then $y_0$ has the same homotopy type as the other one, provided that such a solution $u$ exists.

Again, for a $C^\infty$-generic choice of $\mathcal{J}$, this moduli space is a smooth manifold of dimension $\CZ([y_1, w_1])+\CZ([y_2,w_2])-\CZ([y_0,w_0])-2n.$ Such $\mathcal{J}$ is called regular, and, following our previous terminology, we refer to the pair $(\mathcal{H},\mathcal{J})$ as Floer product data.
The $\Z_2$-count of elements in the $0$-dimensional moduli spaces of this type defines the product on the chain-level. Reasoning as in the contractible case, one shows that this defines a product on the homology-level, thus obtaining \eqref{Equation: Product non contractible left}. 

A similar {\bf (Associativity)} property holds for the products on such cases, and by composing pair of pants products we obtain maps:
\[\HF_*(H_1, J_1;\alpha)\otimes \HF_*(H_2, J_2;\cont)\otimes \HF_*(H_3,J_3;\cont)\to \HF_*(H_0,J_0;\cont).\]
Under the hypotheses of Proposition \ref{Proposition: Hofer-small Family of Hamiltonian 1-form}, we thus obtain by composing pair of pants products a map
\begin{align*}
\HF^a_*(H_1,J_1;\alpha)\otimes \HF^b_*(H_2,J_2;\cont)
\otimes \HF^c_*(&H_3,J_3;\cont) \\
&\longrightarrow
\HF_*^{a+b+c+\delta}(H_0,J_0;\alpha).
\end{align*}

The construction of \eqref{Equation: Product non contractible right} and related discussions are identical, except that now the capped $1$-periodic orbits are elements in 
\[[y_1,w_1] \in \widetilde{\mathcal{P}}(H_1,\cont), [y_2,w_2] \in \widetilde{\mathcal{P}}(H_1,\alpha)\text{ and }[y_0,w_0] \in \widetilde{\mathcal{P}}(H_0,\alpha).\]


\end{rem}

\medskip

\subsection{The PSS-image spectral norm} \label{Section:PSS-image-spectral-norm}

\ 

In this section, we recall the definition of PSS-image spectral norm  recently introduced by Connery-Grigg in \cite{Connery-Grigg24}.

For a given Floer data $(H,J)$, a PSS-data is defined as a pair $\mathcal{D}=(\mathcal{H},\mathcal{J})$ where:
\begin{itemize}
	\item $(\mathcal{H},\mathcal{J})$ is a Floer continuation data, asymptotic to $(H,J)$ at $+\infty$ and asymptotic to $(0,J_0)$ at $-\infty$, for some $\omega$-compatible almost complex structure $J_0$.
\end{itemize}

Let $u: \R \times S^1 \to M$ satisfy $\mathcal{F}_{\mathcal{H},\mathcal{J}}(u)=0$ and $E_{\mathcal{H},\mathcal{J}}(u)<+\infty$. 
As observed previously, we know that $u$ converges (as $s\to +\infty$) to a $1$-periodic orbit $u(+\infty) \in \mathcal{P}(H)$ at its positive end.
By the removal of singularity for pseudo-holomorphic curves, the condition ${E}_{\mathcal{H},\mathcal{J}}(u)<+\infty$ ensures that the limit $u(-\infty)$ exists in $M$, and the extension of $u$ to the compactification $\C=\set{-\infty} \cup \RR \times S^1$ is $J_0$ holomorphic at $-\infty$. These imply that the $1$-periodic orbit $u(+\infty) \in \mathcal{P}(H)$ must be contractible, and this extension provides a capping disk for $u(+\infty)$.


 Given a $1$-periodic orbit $y \in \mathcal{P}(H)$, we consider the moduli space
\begin{align*}
	\mathcal{M}(y;\mathcal{D})=\{&u: \RR \times S^1 \to M :~
		 \mathcal{F}_{\mathcal{H},\mathcal{J}}(u)=0,\\
	& E_{\mathcal{H},\mathcal{J}}(u)<+\infty, ~u(+\infty)=y \}.
\end{align*}

We partition $\mathcal{M}(y;\mathcal{D})$ into $\mathcal{M}([y,w];\mathcal{D})$ according to the equivalence class of the capped orbit $[y,w]$. For generic choices of PSS-data, the moduli space $\mathcal{M}([y,w];\mathcal{D})$ is a smooth manifold of dimension
\[\dim \mathcal{M}([y,w];\mathcal{D}) = 2n - \CZ([y,w]).\]
Such a choice of $\mathcal{D}$ is called regular. We denote the set of regular PSS-data by $PSS(H,J)$.

Thus, the dimension of the moduli space $\mathcal{M}([y,w];\mathcal{D})$ is $0$ exactly when \linebreak $\CZ([y,w]) = 2n$.
Under Assumption \ref{Assumption: no bubbling}, the $0$-dimensional moduli spaces are compact, and their mod $2$ counts are well-defined.

When $\CZ([y,w]) = 2n-1$, the moduli space $\mathcal{M}([y,w];\mathcal{D})$ is 1-dimensional and admits a compactification by adding boundary strata:
\begin{align*}
	\bigcup_{\substack{{[y^\prime,w^\prime] \,\in\, \widetilde{\mathcal{P}}(H;\cont)} \\{ \CZ([y^\prime,w^\prime]) = 2n-1}}} \mathcal{M}([y^\prime,w^\prime];\mathcal{D})&\times \mathcal{M}([y^\prime,w^\prime], [y,w] ; H,J).
\end{align*}

\medskip
\begin{defn} \label{Definition:PSS-fundamental-class}
    The $0$-dimensional moduli spaces define the chain 
\begin{equation} \label{eq:PSS-fundamental-class}
{\Phi}_{\mathcal{D}}([M]) := \sum_{\CZ([y,w])=2n}\#_2\mathcal{M}([y,w];\mathcal{H},\mathcal{J})\cdot [y,w] \in \CF_{2n}(H).    
\end{equation}
The compactification of the $1$-dimensional moduli spaces mentioned above ensures that ${\Phi}_{\mathcal{D}}([M])$ is a cycle, i.e. $ d_{H,J}( {\Phi}_{\mathcal{D}}([M]))=0$. We call ${\Phi}_{\mathcal{D}}([M])$ the PSS-fundamental class; see Remark \ref{Remark: PSS-class-through-PSS-iso}.
\medskip
\end{defn}



\begin{rem}
We have the following equivalent description of the moduli spaces $\mathcal{M}([y,w];\mathcal{H},\mathcal{J})$:
\begin{align*}
		\mathcal{M}([y,w];\mathcal{H},\mathcal{J})=\{\bar{v}: \C \to M:~
		\mathcal{F}_{\overbar{\mathcal{H}},\overbar{\mathcal{J}}}(&\bar{v})=0,~E_{\overbar{\mathcal{H}},\overbar{\mathcal{J}}}(\bar{v})<+\infty,\\ &\bar{v}(+\infty)=y,~[y,\bar{v}]=[y,w]\}.
\end{align*}
where $(\overbar{\mathcal{H}},\overbar{\mathcal{J}})$ is the Floer perturbation data on $\C$, obtained by extending $(\mathcal{H},\mathcal{J})$ via the biholomorphic identification of $\RR\times S^1$ with $\C\setminus \set{0}$. 
\medskip
\end{rem}

\begin{rem} \label{Remark: PSS-class-through-PSS-iso}
As the notation ${\Phi}_{\mathcal{D}}([M])$ suggests, the cycle ${\Phi}_{\mathcal{D}}([M])$ can be obtained as the image of the fundamental class $[M]$ in the Morse homology complex $HM_*(M,\Z_2)$ by a map from $HM_*(M,\Z_2)$ to $HF_*(H,J)$. This is the approach followed in \cite{Connery-Grigg24}, and the map in question is the PSS-isomorphism. In the present paper we prefer to use Definition \ref{Definition:PSS-fundamental-class}, which expresses ${\Phi}_{\mathcal{D}}([M])$ directly from the moduli spaces, since it is more suitable for our arguments.
\end{rem}

\subsubsection{PSS-image spectral invariants}

\

Throughout this subsection, we assume that the symplectic manifold $(M,\omega)$ satisfies Assumption~\ref{Assumption: no bubbling}. Again, this includes all closed symplectic surfaces $(\Sigma,\omega)$. For $\Sigma=T^2$, we do not need to discuss Condition~\eqref{Equation: downward Novikov condition}, since every capped orbit on the right-hand side of Definition~\ref{Definition:PSS-fundamental-class} is contractible.

\begin{defn}
\label{Defintion: PSS-image spectral invariants}
	Let $(H,J)$ be a Floer data. The PSS-image spectral invariant of the fundamental class $[M]$ is defined as
	\[c_{\im}([M]; H,J):=\inf_{\mathcal{D}\in \text{PSS}(H,J)}\set{\mathcal{A}_H( {\Phi}_{\mathcal{D}}([M]))},  \]
where $ {\Phi}_{\mathcal{D}}([M])$ is as defined in \eqref{eq:PSS-fundamental-class}, and $\mathcal{A}_H( {\Phi}_{\mathcal{D}}([M]))$ is as defined in Definition \ref{Definition: Action of a chain}.
\end{defn}

A priori, the PSS-image spectral invariant $c_{\im}([M]; H,J)$ is defined only for Floer data. However, by the {\bf (Hofer continuity)} property stated below, they can be extended to any pair $(H,J)$ as the limit of spectral invariants along an approximating sequence of Floer data. This limit is well-defined and independent of the choice of the approximating sequence. The {\bf (Hofer continuity)} property also implies that $c_{\im}([M]; H,J)$ does not depend on the choice of $J$, allowing us to omit $J$ and write $c_{\im}([M]; H)$.
The next theorem presents the main basic properties of  $c_{\im}([M]; H,J)$.

\begin{thm}[Connery-Grigg \cite{Connery-Grigg24}]

\

The PSS-image spectral invariants $c_{\im}([M]; H,J)$
satisfies the following properties:
	\begin{enumerate}[leftmargin=5.5mm]
		\item ({\bf Finiteness}) $c_\im([M]; H) \in \R$.
		\item ({\bf Spectrality}) $c_\im([M]; H)\in \Spec(H;\cont)$.
		\item ({\bf Normalization}) For any  function $b:M\times[0,1]\to\R$ only depending on the real coordinate, 
		\[c_\im([M]; H+b)=c_\im([M]; H)+\int_0^1  b(t) dt.\] 
		\item ({\bf Hofer-Continuity})  For any Hamiltonians $H,K$
		\[ \, |c_\im([M]; H)-c_\im([M]; K) | \le \norm{H-K}_\Hofer.\]
		\item ({\bf Weak triangle inequality}) For any Hamiltonians $H,K$,
		\begin{equation}
		\label{Equation: Triangle Inequality for PSS image spectral invariant}
			\begin{aligned}
				c_\im([M]; H \# K ) \le c_\im([M]; H) + c_\im([M]; K).
			\end{aligned}
		\end{equation}
		\item ({\bf Homotopy invariance}) For two normalized Hamiltonians $H,K$ such that the paths $\set{\phi^t_H}_{t\in [0,1]},\set{\phi^t_K}_{t\in [0,1]} $ are homotopic relative to their initial and final points,
		we have $c_\im ([M]; H)=c_\im ([M],K)$.
		\item ({\bf Symplectic invariance}) For any symplectomorphism $\psi$ and any Hamiltonian $H$,
		$c_\im ([M]; H)=c_\im ([M]; \psi^*H).$
	\end{enumerate}
\end{thm}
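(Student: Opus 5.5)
The plan is to prove each property first for non-degenerate $H$ with regular $J$. The degenerate case then follows by approximation, once Hofer-continuity is in hand. The main tools are the moduli spaces $\mathcal{M}([y,w];\mathcal{D})$, the a priori energy identity \eqref{Equation: Floer continuation cylinder Energy}, and the \textbf{(Gluing) principle}.

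\emph{Finiteness and spectrality.} For regular $\mathcal{D}$, the cycle $\Phi_{\mathcal{D}}([M])$ represents the PSS-image of $[M]$, which is a nonzero class. Hence its support is nonempty and $\mathcal{A}_H(\Phi_{\mathcal{D}}([M]))$ is a finite element of $\Spec(H;\cont)$.
\begin{itemize}
\item Lower bound: every such cycle represents the same class, so its action is bounded below by the Oh--Schwarz invariant $c([M];H)>-\infty$. The infimum is therefore finite.
\item Attainment: the spectrum is discrete in bounded intervals. It is finite in the aspherical cases and a finite union of cosets of $\Z$ for $S^2$. So the infimum is attained and lies in $\Spec(H;\cont)$.
\item Degenerate $H$: spectrality follows from the limit definition, since the spectrum is closed and varies continuously.
\end{itemize}

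\emph{Normalization.} Replacing $H$ by $H+b(t)$ leaves $X_H$ unchanged. We use the same data with $H_s$ replaced by $H_s+\beta(s)b(t)$, where $\beta$ is a cutoff. The moduli spaces are then literally identical, and every capped action shifts by $\int_0^1 b\,dt$.

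\emph{Symplectic invariance.} Pulling back PSS-data by $\psi$ gives a bijection of moduli spaces that preserves cappings and actions.

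\emph{Hofer-continuity.}
\begin{enumerate}
\item Take PSS-data $\mathcal{D}$ for $H$ and a linear homotopy $\mathcal{K}$ from $H$ to $K$. This homotopy has $E^-(\mathcal{K})\le\int_0^1\max(K_t-H_t)\,dt$.
\item For large $\ell$, the glued data $\mathcal{D}\#_\ell\mathcal{K}$ is regular PSS-data for $K$.
\item By the chain-level gluing bijection \eqref{Equation: Bijection by gluing}, $\Phi_{\mathcal{D}\#_\ell\mathcal{K}}([M])=\Phi_{\mathcal{K}}(\Phi_{\mathcal{D}}([M]))$.
\item The energy bound \eqref{Equation: Apriori Energy bounds for continuation cylinders} gives $\mathcal{A}_K\le\mathcal{A}_H(\Phi_{\mathcal{D}}([M]))+E^-(\mathcal{K})$.
\item Taking the infimum over $\mathcal{D}$ and symmetrizing yields the Lipschitz bound.
\end{enumerate}
This bound also gives independence of $J$ (use a constant homotopy $H$ with varying $J$), and the extension to degenerate $H$.

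\emph{Homotopy invariance.} Connect $H$ and $K$ by a family $H^\lambda$ of normalized Hamiltonians whose time-one paths are homotopic rel endpoints. For normalized Hamiltonians, $\Spec(H^\lambda;\cont)$ depends only on the homotopy class of the path, so it is constant in $\lambda$. It is also nowhere dense. The function $\lambda\mapsto c_\im([M];H^\lambda)$ is continuous by Hofer-continuity and takes values in this fixed nowhere dense set, so it is constant. This is Schwarz's argument.

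\emph{Weak triangle inequality (expected main obstacle).} The natural construction is as follows.
\begin{enumerate}
\item Take PSS-data $\mathcal{D}_H$, $\mathcal{D}_K$.
\item Glue them to the two negative ends of a flat pair-of-pants form from Proposition~\ref{Proposition: Flat Hamiltonian 1-form}, asymptotic to $H$, $K$, $H\#K$.
\item The gluing principle shows the resulting cycle is $\Phi_{\mathcal{D}_H}([M])\PP\Phi_{\mathcal{D}_K}([M])$. Since the pair-of-pants form is flat, its action is at most the sum of the two actions.
\end{enumerate}
The difficulty is that the glued object is a plane carrying a Hamiltonian $1$-form with both $ds$- and $dt$-components. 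It is not PSS-data in the sense of Definition~\ref{Defintion: PSS-image spectral invariants}, which requires a continuation cylinder from $0$.

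I would handle this as follows. Identify the glued domain biholomorphically with $\C$, and write the $1$-form in cylindrical coordinates on $\C\setminus\{0\}$. Then deform it, through forms with curvature bounded above by an arbitrarily small constant, to a form of the type $H_s\,dt$ that vanishes near $0$. A cobordism over this deformation produces a chain-level identity up to a boundary term. The filtered chain homotopy \eqref{Equation: Filtered Chain homotopy} shifts action by at most $E^-$ of the family.

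The remaining issue is that $c_\im$ is an infimum over actual cycles, not over homology classes. So I expect to need a genericity argument: for generic $\ell$, the deformed data is itself regular PSS-data whose $0$-dimensional moduli count equals the glued count. This would replace the homotopy by a chain-level equality, with the error $\epsilon\to0$.
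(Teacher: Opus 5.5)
The paper gives no proof of this theorem; it is quoted from Connery-Grigg. So I am judging your argument on its own.

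\textbf{What works.} Items (1)--(4), (6) and (7) are fine as sketched. Your Hofer-continuity step works precisely because gluing a continuation cylinder onto PSS-data again produces PSS-data of the form $H_s\,dt$, so the glued cycle is itself a competitor in the infimum. Your first step for (5) is also correct. Gluing $\mathcal{D}_H,\mathcal{D}_K$ to a flat pair of pants gives, at chain level, the cycle $\Phi_{\mathcal{D}_H}([M])\PP\Phi_{\mathcal{D}_K}([M])$. Its action is at most $a+b$, where $a,b$ are the actions of the two PSS cycles.

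\textbf{The gap.} It lies in passing from this glued plane to genuine PSS-data, and the fix you propose fails for two reasons.

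First, along a generic path $\{\mathcal{D}_\tau\}$ of data on the plane the rigid count is not constant. At isolated parameters an index $-1$ plane appears, and the cycle jumps by $d_{H\#K}$ of a chain counting such planes. Genericity makes these wall-crossings transverse; it does not remove them. So no chain-level equality comes out of it.

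Second, you cannot bound the action of the correction terms by $a+b$. A plane has no input orbit, so the only a priori estimate is the absolute one, $\mathcal{A}_{H\#K}([y,u])\le\int_{\C}\max_x R_{\mathcal{D}_\tau}(z,x)$, the positive part of the curvature. Your path ends at data $G_s\,dt$ with $G_s=H\#K$ for $s\gg0$. Hence the supremum over $\tau$ of this quantity is at least $\int_{\R\times S^1}\max_x\partial_sG_{s,t}\,ds\,dt\ge\int_0^1\max_x(H\#K)_t\,dt$. In general this is far larger than $c_\im([M];H)+c_\im([M];K)$, for example for Hamiltonians supported in a small displaceable disk.

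So a deformation ``with arbitrarily small curvature'' does not exist. A cobordism only shows that the product cycle is homologous to a PSS cycle, which yields $c([M];H\#K)\le a+b$ for the Oh--Schwarz invariant but not the statement for $c_\im$.

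\textbf{The missing idea.} You need an \emph{exact}, action-preserving identification of the glued plane with honest PSS-data. One way to get it:
\begin{enumerate}
\item Put the origin at the centre of one of the caps, where the form vanishes. Write the glued form as $F\,ds+G\,dt$ on $\C\setminus\{0\}\cong\R\times S^1$, after a routine adjustment making the end at infinity standard.
\item Let $\Psi_{s,t}$ be the $s$-flow of $X_F$ (radial parallel transport), normalised to be constant near the origin and equal to the identity at $s=+\infty$, $t=0$.
\item Then $u\mapsto\Psi^{-1}\circ u$ is a bijection onto solutions of a continuation equation with data $(G'_s\,dt,\Psi^*\mathcal{J})$, which has no $ds$-component.
\item This bijection preserves regularity and energy. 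With the compatible normalisation of $G'$ it also preserves the curvature pointwise, hence the actions of the asymptotic capped orbits.
\end{enumerate}
The price is that the Hamiltonian at $+\infty$ now generates $\{\ell_t\circ\phi^t_{H\#K}\}$, where $\ell_t=\psi_0\psi_t^{-1}$ and $\psi_t$ is the holonomy along the full ray at angle $t$. This loop is contractible (shrink the rays), and the new Hamiltonian has the same mean value as $H\#K$, since the total curvature is unchanged. Your homotopy-invariance and normalization arguments do not use (5), and they then give $c_\im([M];H\#K)\le a+b$.
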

For the (\textbf{Homotopy invariance}) property, we say a Hamiltonian $H$ is normalized if $\int_M H(t,\cdot)\,\omega^{\wedge n}=0$ for each $t\in S^1$. 

\begin{rem}
Spectral invariants were introduced in symplectic topology by Viterbo in his foundational paper \cite{Viterbo92}: a Floer theoretical interpretation of Viterbo's construction was given by Oh \cite{Oh05} and Schwarz \cite{Schwarz00}: we let $c([M];H)$ denote the Oh-Schwarz spectral invariant. As observed by Connery-Grigg \cite{Connery-Grigg24}, the spectral invariant $c_\im ([M]; H)$ satisfies the following inequality:
\begin{equation}
	\label{Equation: Two spectral invariants}
	c([M];H)\le c_\im ([M]; H).
\end{equation}
\end{rem}

By the {\bf (Homotopy invariance)} property, the PSS-image spectral invariant descends to the group $\widetilde{\Ham}(M,\omega)$ defining
\[c_\im([M], [\Phi_H]) := c_\im ([M]; H) -\int_0^1\int_M H_t\omega^n \, dt.\]

Applying the {\bf (Weak triangle inequality)} property, we obtain
\begin{align*}
	0 = c_\im([M], \,\id) &\le c_\im([M],\, [\Phi_H])+c_\im([M],\, [\Phi_{\overbar{H}}]),
\end{align*}
for any Hamiltonian $H$.
These two observations lead to the definition of a pseudo-norm on the groups $\Ham(M,\omega)$ and $\widetilde{\Ham}(M,\omega)$.

\begin{defn}[Connery-Grigg \cite{Connery-Grigg24}]
	\label{Definition: PSS image Spectral norm}

    \
    
	For $\Phi\in \widetilde{\Ham}(M,\omega)$, its PSS-image spectral pseudo-norm is defined as
	\[\gamma_\im(\Phi):=c_\im([M], \Phi) + c_\im([M], \Phi^{-1}).\]
	For $\phi \in \Ham(M,\omega)$, its {PSS-image spectral norm} is given by
    \begin{equation}\label{eq: definition of gamma norm}
    \gamma_\im(\phi)=\inf_{\pi(\Phi)=\phi} \gamma_\im(\Phi),  
    \end{equation}
	where $\pi(\Phi)$ denotes the endpoint of $\Phi\in \widetilde{\Ham}(M,\omega)$.
\end{defn}

\medskip

The $\gamma_\im$-norm on $\Ham(M,\omega)$ satisfies the following properties.
\begin{thm}
	\label{Theorem: Properties of PSS image Spectral norm}
	The PSS-image spectral norm \[\gamma_\im:\Ham(M,\omega) \to \RR_{\ge 0}\] satisfies the following properties: 
	\begin{enumerate}
		\item ({\bf Non-degeneracy}) For any $\phi \in \Ham(M,\omega)$, we have
		\[ \gamma_\im(\phi)=0~\text{ if and only if }~\phi=\id.\]
		\item ({\bf Triangle inequality}) For any $\phi,\phi^\prime \in \Ham(M,\omega)$,
		\[\gamma_\im(\phi \,\circ\, \phi^\prime) \le \gamma_\im (\phi) + \gamma_\im(\phi^\prime).\]
		\item ({\bf Symplectic invariance}) For any $\psi \in \Symp(M,\omega)$,
		\[\gamma_\im(\psi^{-1}\circ\phi\circ\psi)=\gamma_\im(\phi).\]
		\item ({\bf Inverse}) $\gamma_\im(\phi)=\gamma_\im(\phi^{-1})$ for any $\phi \in \Ham(M, \omega)$.
		\item ({\bf Hofer-continuity}) $\gamma_\im(\phi) \le \norm{\phi}_{\Hofer}\,$ for any $\phi \in \Ham(M, \omega)$.
	\end{enumerate}
\end{thm}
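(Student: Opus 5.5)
The five properties split into the formal ones, (2)–(5), which follow from the listed properties of $c_\im$ on $\widetilde{\Ham}(M,\omega)$, and (1), which is the substantive one. We treat the formal ones first.

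\textbf{Inverse.} The expression $\gamma_\im(\Phi)=c_\im([M],\Phi)+c_\im([M],\Phi^{-1})$ is symmetric under $\Phi\mapsto\Phi^{-1}$. The map $\Phi\mapsto\Phi^{-1}$ is a bijection between lifts of $\phi$ and lifts of $\phi^{-1}$. Taking the infimum over lifts therefore gives $\gamma_\im(\phi)=\gamma_\im(\phi^{-1})$.

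\textbf{Triangle inequality.} Take lifts $\Phi=[\Phi_H]$ of $\phi$ and $\Phi'=[\Phi_K]$ of $\phi'$. Then $[\Phi_{H\#K}]$ lifts $\phi\circ\phi'$, and the weak triangle inequality \eqref{Equation: Triangle Inequality for PSS image spectral invariant} gives
\[c_\im([M],\Phi\Phi')\le c_\im([M],\Phi)+c_\im([M],\Phi').\]
Here we use that the mean-value normalization is additive under $\#$, since $\phi_H^t$ preserves $\omega^n$. We have $(\Phi\Phi')^{-1}=\Phi'^{-1}\Phi^{-1}$, so the same inequality bounds the inverse term. Adding the two bounds and taking infima over lifts yields the claim.

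\textbf{Symplectic invariance.} Use the symplectic invariance of $c_\im$ with $\psi^*H$, which generates $\psi^{-1}\phi_H^t\psi$ and has the same mean. Conjugation by $\psi$ is a bijection on lifts, so the infimum is unchanged.

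\textbf{Hofer continuity.} Hofer continuity of $c_\im$, compared against $c_\im([M];0)=0$, gives $c_\im([M];H)\le\int_0^1\max H_t\,dt$ for any $H$. Now take $H$ normalized. Since $\overbar H_t=-H_t\circ\phi_H^t$, we get
\[c_\im([M];\overbar H)\le\int_0^1-\min H_t\,dt.\]
Summing gives $\gamma_\im(\Phi_H)\le\int_0^1(\max H_t-\min H_t)\,dt$. Taking the infimum over all $H$ generating $\phi$ gives $\gamma_\im(\phi)\le\norm{\phi}_\Hofer$.

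\textbf{Non-degeneracy.} First we show $\gamma_\im\ge 0$. By the weak triangle inequality $0=c_\im([M],\id)\le\gamma_\im(\Phi)$, so every $\gamma_\im(\Phi)$ is nonnegative. Moreover $\gamma_\im(\id)=0$, using the constant lift.

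The converse, that $\gamma_\im(\phi)=0$ forces $\phi=\id$, is the real content and the main obstacle. The plan is to compare with the Oh–Schwarz spectral norm $\gamma(\phi)=\inf_\Phi\bigl(c([M],\Phi)+c([M],\Phi^{-1})\bigr)$. Inequality \eqref{Equation: Two spectral invariants} gives $c\le c_\im$ termwise, so $\gamma(\phi)\le\gamma_\im(\phi)$. Non-degeneracy then follows from the classical non-degeneracy of the Oh–Schwarz norm on $\Ham(M,\omega)$: by the energy–capacity inequality, $\gamma(\phi)$ is bounded below by a positive constant times the capacity of a ball displaced by $\phi$.

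One point needs checking in this comparison. The classical norm must be taken with the same fundamental class $[M]$ and the same sign conventions as $c_\im$. If the conventions differ, we would use the duality $c([pt];\Phi^{-1})=-c([M];\Phi)$ to rewrite the norm appropriately before applying the known result.
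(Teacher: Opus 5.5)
Your proof is correct and follows the route the paper relies on. The paper states this theorem without proof, importing it from Connery-Grigg, but it records the ingredients you use: the bound $0\le c_\im([M],\Phi)+c_\im([M],\Phi^{-1})$ from the weak triangle inequality, and the comparison $\gamma\le\gamma_\im\le\norm{\cdot}_{\Hofer}$ coming from \eqref{Equation: Two spectral invariants}, which together with non-degeneracy of the Oh--Schwarz norm gives (1).

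One detail to tighten in (5). The bound $c_\im([M];H)\le\int_0^1\max_M H_t\,dt$ needs the one-sided form $c_\im([M];H)-c_\im([M];K)\le\int_0^1\max_M(H_t-K_t)\,dt$ of Hofer continuity. Alternatively, apply the symmetric form to $H_t-\tfrac12(\max_M H_t+\min_M H_t)$ and combine it with the normalization property. The symmetric estimate as literally stated, applied against $K=0$ to a normalized $H$, would only give $\gamma_\im(\phi)\le 2\norm{\phi}_{\Hofer}$.
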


\begin{defn} [Connery-Grigg \cite{Connery-Grigg24}]

\

We define a distance $d_\im$ on the group $\Ham(M,\omega)$ by
\[d_\im(\phi, \phi^\prime):=\gamma_\im(\phi^{-1} \circ \phi^\prime).\]

\end{defn}

The ({\bf Non-degeneracy}), ({\bf Inverse}), and ({\bf Triangle inequality}) properties ensure that $d_\im$ defines a metric on $\Ham(M,\omega)$, and the ({\bf Symplectic invariance}) property guarantees that $d_\im$ is a bi-invariant metric on $\Ham(M,\omega)$.

The ({\bf Hofer-Continuity}) property, together with the inequality \eqref{Equation: Two spectral invariants}, implies
\begin{equation}
	\label{Equation: Comparision of three norms}
	\gamma(\phi)\le \gamma_\im (\phi) \le \norm{\phi}_{\Hofer},
\end{equation}
for $\phi \in \Ham(M,\omega)$. Here, $\gamma(\phi)$ denotes the Oh-Schwarz spectral norm. 

Below, we will use two properties of $\gamma_\im$ that holds for symplectic surfaces. These properties follow from \cite[Theorem~4]{Buhovsky-Humiliere-Seyfaddini21} and \cite{Humiliere-LeRoux-Seyfaddini16}, respectively:
\begin{enumerate}\setcounter{enumi}{5}	
	\item ({\bf $C^0$-continuity}) For any symplectic surface $(\Sigma,\omega)$, the norm $\gamma_\im$ is continuous with respect to $C^0$-topology on the group $\Ham(\Sigma, \omega)$.
	\item ({\bf Max property}) For a symplectic surface $(\Sigma,\omega)$ of positive genus, the following holds. For Hamiltonians $H_i  : [0,1]\times \Sigma \to \RR$ supported in pairwise disjoint disks $D_i \subset \Sigma$ ($i=1,2,\ldots, k$), we have
	\begin{equation*}
		c_\im(H_1+H_2+\ldots +H_k ; [\Sigma]) = \max_{i \in \set{1,2,\ldots, k}} c_\im(H_i;[\Sigma]).
	\end{equation*}
\end{enumerate}
In fact, $\gamma_{\mathrm{im}}$ is Lipschitz continuous with respect to the $C^0$-distance; see \cite{Serraille24}. Nevertheless, continuity is sufficient for the remainder of the paper.

\begin{rem}
For a non-degenerate Hamiltonian $H$, the PSS-image spectral invariant $c_\im([\Sigma]; H)$ admits a purely dynamical interpretation on symplectic surfaces $(\Sigma,\omega)$ in terms of braids; see \cite[Theorem 5.13]{Connery-Grigg24}.    
\end{rem}

Given a Hamiltonian $H$ we abuse notation and let $\gamma_{\im}(H)$ denote $\gamma_{\im}([\Phi_H])$. Since the fundamental group of $\Ham(\Sigma,\omega)$ is given by
\[
    \pi_1\left(\Ham(\Sigma,\omega), \id_\Sigma \right)=
	\begin{cases}
	\ZZ_2, & \Sigma=S^2\\
		\set{e}, & \Sigma \ne S^2,
	\end{cases}
\]
the norm $\gamma_{\im}(\phi_H^1)$ is independent of the choice of $H$ when $\Sigma\neq S^2$. When $\Sigma=S^2$, we have
\[
\gamma_{\im}(\phi^1_H) = \min \left(\gamma_{\im}([\Phi_H]), \gamma_{\im}([\Phi_{H\#G}]) \right)
\]
where $G$ generates the rotation of $S^2\subset\mathbb{R}^3$ about the
$z$-axis. In particular, the infimum in \eqref{eq: definition of gamma norm} can be changed to the minimum in the case of symplectic surfaces.

We conclude with two lemmas that will be used later.

Given an open set $U\subset M$, define
\[
    E_{\mathrm{conn}}(U)
    :=\inf_{\phi\in\Ham(M,\omega)}\{\norm{\phi}_\Hofer\},
\]
where the infimum is taken over all Hamiltonian diffeomorphisms $\phi$ that displace each connected component of $U$, i.e.,
\[
    \phi(V)\cap V=\varnothing\quad \text{for every connected component $V$ of $U$.}
\]

\begin{lem}
	\label{Lemma: Spectral invariant of [M] supported on the displacable open set}
	Let $U$ be an open set of a symplectic manifold $(M,\omega)$. Suppose that two Hamiltonians $H,K:S^1 \times M \to \RR$ satisfy
    \begin{itemize}
        \item $\phi_K^1(V)\cap V=\varnothing\quad \text{for every connected component $V$ of $U$.}$
        \item $\mathrm{supp}(H) \subset  S^1 \times U$.
    \end{itemize}
	Then, we have
    \[c_\im([M]; H) \le \gamma_\im(K).\]
\end{lem}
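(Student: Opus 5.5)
Under the hypotheses of the lemma, the natural route is the standard ``displacement energy--spectral invariant'' argument, adapted to the PSS-image invariant. We show $c_\im([M];H)\le c_\im([M],[\Phi_K])+c_\im([M],[\Phi_K]^{-1})=\gamma_\im([\Phi_K])$ for every lift, then take the minimum over lifts. Normalizations are harmless: if $H$ is supported in $S^1\times U$ we may regard it as compactly supported, and replacing $K$ by its normalized version changes neither $\phi^1_K$ nor $\gamma_\im$.

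\textbf{Step 1 (spectrum of $K\# sH$ is constant).} Consider the family $s\mapsto K\#(sH)$, $s\in[0,1]$. Since $\phi^1_K$ displaces each component $V$ of $U$, the fixed points of $\phi^1_K\circ\phi^1_{sH}$ lie outside $U$. There $\phi^t_{sH}$ acts as the identity on the relevant orbit, so the $1$-periodic orbits of $K\#(sH)$ are exactly those of $K$ lying outside $U$. The point that requires checking is that a point $x\in V$ has $\phi^1_{sH}(x)\in V$ (because $\phi^t_{sH}$ preserves each component $V$, being supported in $U$), and hence $\phi^1_K\phi^1_{sH}(x)\notin V$. Moreover, the actions agree for a continuous choice of cappings, since $sH$ vanishes along these orbits. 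Thus $\Spec(K\#sH;\cont)$ is independent of $s$ and is a nowhere dense (measure zero) set. By \textbf{Hofer-continuity} and \textbf{Spectrality}, $s\mapsto c_\im([M];K\#sH)$ is continuous with values in this set, hence constant. This gives
\[c_\im([M];K\#H)=c_\im([M];K).\]

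\textbf{Step 2 (triangle inequality).} Write $H=\overbar K\#(K\#H)$, up to homotopy of paths and the normalization bookkeeping in $\widetilde{\Ham}$, using $\phi_{\overbar K}^t\phi^t_{K\#H}=\phi^t_H$. The \textbf{Weak triangle inequality} then gives
\[c_\im([M];H)\le c_\im([M];\overbar K)+c_\im([M];K\#H)=c_\im([M];\overbar K)+c_\im([M];K)=\gamma_\im([\Phi_K]).\]
On $S^2$, where $\gamma_\im(\phi^1_K)$ is a minimum over the two lifts, we apply the same argument with $K$ replaced by $K\#G$. This Hamiltonian has the same time-one map and hence also displaces each $V$. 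Taking the minimum yields $c_\im([M];H)\le\gamma_\im(K)$.

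The main obstacle is Step 1. One must verify carefully that no new periodic orbits appear inside $U$, which uses only the displacement of each component separately together with the component-preservation of $\phi_{sH}$. One must also verify that the action values of the surviving orbits (with cappings, in the non-aspherical case $S^2$) are unchanged along the family, so that the spectrum is genuinely $s$-independent. Degeneracy of $K$ is handled by defining $c_\im$ for degenerate Hamiltonians via Hofer-continuity, as the paper allows, so the continuity-plus-totally-disconnected-spectrum argument still applies.
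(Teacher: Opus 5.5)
Your proposal is correct and is essentially the paper's proof. In both, the action spectrum along the family starting at $K$ is shown to be independent of the parameter, so Hofer-continuity and spectrality force $c_\im([M];K\#H)=c_\im([M];K)$, and the weak triangle inequality is then applied to $H=\overbar{K}\#(K\#H)$. The only difference is cosmetic: you use $K\#(sH)$ where the paper uses the concatenation $K\natural(sH)$, which lets the periodic orbits coincide with those of $K$ exactly and makes $\overbar{K}\#(K\#H)=H$ hold as an identity of functions, with no reparametrization or vanishing-near-$t=0$ assumption needed.
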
	

Note that here we assume that each connected component of $U$ is displaced by $K$, rather than that $U$ itself is displaced. Still, the proof is verbatim the same as in the case of the Oh--Schwarz spectral distance. Nevertheless, for completeness, we include the proof below. As a direct corollary of Lemma~\ref{Lemma: Spectral invariant of [M] supported on the displacable open set} and the (\textbf{Hofer-Continuity}) property, we obtain the following.
\begin{cor}
	\label{Corollary: Spectral norm supported on the displacable open set}
	Let $U$ be an open set in $M$. For a Hamiltonian $H$ supported on $U$, we have the inequality
	\begin{equation}
		\label{Equation: Energy-capacity inequality for PSS-image spectral invariants}
		\gamma_\im(H) \le 2\cdot E_{\mathrm{conn}}(U).
	\end{equation}
\end{cor}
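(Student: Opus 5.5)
The statement to prove is Corollary~\ref{Corollary: Spectral norm supported on the displacable open set}. The plan is to apply Lemma~\ref{Lemma: Spectral invariant of [M] supported on the displacable open set} twice, once to $H$ and once to $\overbar{H}$, and then sum. Fix $\eta>0$ and choose $\phi=\phi^1_K\in\Ham(M,\omega)$ that displaces every connected component of $U$, with $\norm{\phi}_\Hofer<E_{\mathrm{conn}}(U)+\eta$. If no such $\phi$ exists, then $E_{\mathrm{conn}}(U)=\infty$ and there is nothing to prove.

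Both $H$ and $\overbar{H}$ are supported in $S^1\times U$. For $H$ this is the hypothesis. For $\overbar{H}$, we have $\overbar{H}_t(x)=-H_t(\phi^t_H(x))$, which vanishes unless $\phi^t_H(x)\in\supp H_t\subset U$. Since $\phi^t_H$ is the identity outside $U$ and preserves $U$, this forces $x\in U$. Lemma~\ref{Lemma: Spectral invariant of [M] supported on the displacable open set}, applied to the pairs $(H,K)$ and $(\overbar{H},K)$, then gives
\[
c_\im([M];H)\le\gamma_\im(K),\qquad c_\im([M];\overbar{H})\le\gamma_\im(K).
\]

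Next we pass from these Hamiltonian-level invariants to $\gamma_\im(H)$. By definition $\gamma_\im(H)=\gamma_\im([\Phi_H])=c_\im([M],[\Phi_H])+c_\im([M],[\Phi_H]^{-1})$, where each term is $c_\im([M];\cdot)$ minus the mean value of the Hamiltonian. The inverse class $[\Phi_H]^{-1}$ is generated by $\overbar{H}$, and $\int_M\overbar{H}_t\,\omega^n=-\int_M H_t\,\omega^n$ because $\phi^t_H$ preserves volume. So the two mean-value corrections cancel, and
\[
\gamma_\im(H)=c_\im([M];H)+c_\im([M];\overbar{H})\le 2\gamma_\im(K).
\]

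Finally, (\textbf{Hofer-continuity}) in Theorem~\ref{Theorem: Properties of PSS image Spectral norm} gives $\gamma_\im(K)\le\norm{\phi}_\Hofer<E_{\mathrm{conn}}(U)+\eta$. Here $\gamma_\im(K)$ is read as $\gamma_\im(\phi^1_K)$, matching the way it is used in the lemma; the only care needed is that the choice of $K$ generating $\phi$ should realize the infimum in \eqref{eq: definition of gamma norm}, which we may arrange by picking $K$ appropriately. Letting $\eta\to0$ yields \eqref{Equation: Energy-capacity inequality for PSS-image spectral invariants}.

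The main point needing care is the bookkeeping in the middle step: checking that $\overbar{H}$ is supported in $U$, and that the normalization terms cancel. Both are routine.
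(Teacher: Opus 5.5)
Your proof is correct and follows the route the paper intends, which it describes as a direct corollary of the displacement lemma and (\textbf{Hofer-continuity}). You apply that lemma to both $H$ and $\overbar{H}$, both supported in $S^1\times U$; the mean-value terms cancel, so the sum is $\gamma_\im(H)$; and you finish with (\textbf{Hofer-continuity}) and an infimum over displacing $\phi$. One cosmetic point: on a general $M$ you need not realize the infimum over lifts of $\phi$ exactly. The displacement hypothesis depends only on $\phi^1_K$, so it suffices to choose $K$ with $\gamma_\im([\Phi_K])$ within $\eta$ of $\gamma_\im(\phi)$.
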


\begin{proof}[Proof of Lemma \ref{Lemma: Spectral invariant of [M] supported on the displacable open set}] 

Since $\gamma_{\mathrm{im}}(H)$ depends only on the relative homotopy class in $\widetilde{\Ham}(M,\omega)$, we may assume that $H$ and $K$ vanish near $1\in S^1$. We then define the concatenation $K\natural H$ by
\[
(K\natural H)*t=
\begin{cases}
2 K_{2t} & t \in [0,1/2], \\
2 H_{2t-1} & t \in [1/2,1].\end{cases}
\]
The Hamiltonian flow of $K\natural H$ corresponds to running the Hamiltonian flow of $K$ followed by that of $H$.
In the proof, instead of working with $K\#H$, we work with $K\natural H$, which also represents the same homotopy class $[K\#H]$. In particular, we have $\phi^1_{K\natural H}=\phi^1_K \circ \phi^1_H$.
	
We begin by claiming that the sets of $1$-periodic orbits of $K$ and $K \natural H$ are identical. By the assumption $\phi^1_K(V) \cap V = \varnothing$ for any connected component $V$ of $U$, no point in $U$ can be a $1$-periodic orbit of $K$. It is clear that $\phi^1_{K}\circ \phi^1_H(V) \cap V = \varnothing$ for for any connected component $V$ of $U$, so that the same holds for  $K \natural H$.
For $x \notin U$, we have $\phi^1_{K}\circ \phi^1_H(x)=\phi^1_K(x)$ since the Hamiltonian vector fields $X_{H_t}$ vanishes outside of $U$.  Thus, a point $x$ is a fixed point of $\phi^1_{K}\circ \phi^1_H$ if and only if it is a fixed point of $\phi^1_K$. 

The discussion above further implies that each $1$-periodic orbit of $K\natural H$ is a smooth reparametrization of some $1$-periodic orbit of $K$. Hence, there exists a canonical bijection between $\widetilde{\mathcal{P}}(H;\alpha)$ and $\widetilde{\mathcal{P}}(K \natural H;\alpha)$ for any free homotopy class $\alpha\in \tilde{\pi}_1(M)$.
		
	Next, we compare the Hamiltonian actions of $K\natural H$ and $K$. Given a capped orbit $(y,w)$ for $K\natural H$, we denote the corresponding reparametrized capped orbit for $K$ by $(\tilde{y}, \tilde{w})$. Then, we compute the action functional:
	\begin{align*}
		\mathcal{A}_{K\natural H}(y, w)
		&=\int w^*\omega - \int_0^1 K\natural H(t, y(t))\, dt\\
		&=\int \tilde{w}^*\omega - \int_0^1 K(t,\tilde{y}(t))- \int_0^1 H(t,(y(0))\,dt\\
		&=\mathcal{A}_K(\tilde{y},\tilde{w}).
	\end{align*}
	For the last equality, we used the fact that $H(0,y(0))$ vanishes, since $y(0) \notin U$. This implies the action spectra are the same, i.e. $\Spec(K\natural H)=\Spec(K)$.
	
	Now, consider the continuous path of Hamiltonians  $\epsilon\to\epsilon H$ for $\epsilon \in [0,1]$. This induces a continuous function
	\[\epsilon \mapsto c_\im([M]; K \natural \epsilon H)\]
	which takes values in the nowhere dense set $\Spec(K) \subset \RR$. Therefore, this function must be constant, implying that
	\[c_\im([M]; K\natural H)=c_\im([M]; K).\]
	Finally, applying the ({\bf Weak triangle inequality}) property, we obtain
	\begin{align*}
		c_\im([M]; H)&=c_\im([M]; \overbar{K} \# (K \# H))\\
		&\le c_\im ([M]; \overbar{K}) + c_\im ([M]; K \# H)\\
		&= c_\im([M]; \overbar{K}) + c_\im([M]; K)=\gamma_\im(K).
	\end{align*}
\end{proof}

The second lemma establishes a bound on the PSS-image spectral distance between iterates of Hamiltonian diffeomorphisms:
\begin{lem}
	\label{Lemma: Spectral distance of the iteration}
	Let $\phi_1,\phi_2$ be Hamiltonian diffeomorphisms of $(M,\omega)$. Then, for every $n\in \N$, we have
	\begin{align*}
		d_{\im}(\phi_1^n,\phi_2^n) &\le n \cdot d_{\im}(\phi_1,\phi_2).
	\end{align*}
\end{lem}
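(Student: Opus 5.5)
We argue by induction on $n$, using only that $d_\im$ is a bi-invariant metric satisfying the triangle inequality. These facts follow from the ({\bf Triangle inequality}) and ({\bf Symplectic invariance}) properties in Theorem~\ref{Theorem: Properties of PSS image Spectral norm}. For $n=1$ there is nothing to prove.

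The key step is the telescoping estimate
\[
d_\im(\phi_1^{n},\phi_2^{n}) \le d_\im(\phi_1^{n},\phi_1^{n-1}\phi_2) + d_\im(\phi_1^{n-1}\phi_2,\phi_2^{n}),
\]
which is just the triangle inequality for the metric $d_\im$.

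Each term on the right is then rewritten using invariance.

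\begin{itemize}
\item \textbf{Left invariance.} By definition,
\[
d_\im(\psi\phi,\psi\phi')=\gamma_\im(\phi^{-1}\psi^{-1}\psi\phi')=d_\im(\phi,\phi').
\]
Applying this with $\psi=\phi_1^{n-1}$ gives $d_\im(\phi_1^{n},\phi_1^{n-1}\phi_2)=d_\im(\phi_1,\phi_2)$.
\item \textbf{Right invariance.} We have
\[
d_\im(\phi\psi,\phi'\psi)=\gamma_\im(\psi^{-1}\phi^{-1}\phi'\psi)=\gamma_\im(\phi^{-1}\phi')
\]
by conjugation invariance. This holds because Hamiltonian diffeomorphisms are symplectomorphisms, so ({\bf Symplectic invariance}) applies with $\psi\in\Ham(M,\omega)\subset\Symp(M,\omega)$. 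Applying it with $\psi=\phi_2$ gives $d_\im(\phi_1^{n-1}\phi_2,\phi_2^{n})=d_\im(\phi_1^{n-1},\phi_2^{n-1})$.
\end{itemize}

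Combining these, $d_\im(\phi_1^n,\phi_2^n)\le d_\im(\phi_1,\phi_2)+d_\im(\phi_1^{n-1},\phi_2^{n-1})$, and the induction hypothesis gives the bound $n\cdot d_\im(\phi_1,\phi_2)$.

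Alternatively, one can avoid the induction by telescoping directly, $\phi_1^{-n}\phi_2^{n}=\prod_{k}\phi_2^{-k}(\phi_1^{-1}\phi_2)\phi_2^{k}$ up to reordering, and applying the triangle inequality and conjugation invariance of $\gamma_\im$ to each factor.

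The only point needing care is that bi-invariance genuinely holds on $\Ham(M,\omega)$ rather than only on the universal cover. It does, since $\gamma_\im(\phi)$ is defined as an infimum over lifts. Conjugating a lift of $\phi$ by a lift of $\psi$ gives a lift of $\psi^{-1}\phi\psi$, and the paper already states ({\bf Symplectic invariance}) at the level of $\Ham(M,\omega)$. No further obstacle is expected; the argument is formal.
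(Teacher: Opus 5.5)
Your proof is correct and essentially matches the paper's. The paper writes $\phi_2^{-n}\circ\phi_1^n$ directly as a product of the $n$ conjugates $\phi_2^{-(m-1)}\circ(\phi_2^{-1}\circ\phi_1)\circ\phi_2^{m-1}$, $m=1,\ldots,n$, and then applies the triangle inequality and symplectic invariance of $\gamma_\im$; your induction with left and right invariance of $d_\im$ is this same telescoping unrolled, and your stated alternative is exactly the paper's route.
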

\begin{proof}
	By definition, the PSS-image spectral distance satisfies
	\[d_\im(\phi_1^n, \phi_2^n)=\gamma_\im(\phi_2^{-n} \circ \phi_1^n).\]
	We write $\phi_2^{-n}~\circ~\phi_1^n$ as the product 
	\begin{align*}
	\phi_2^{-n}\,\circ\, \phi_1^n = (\phi_2^{-n}\,\circ\,\phi_1 \circ \phi_2^{n-1})\circ(\phi_2^{-n}\,\circ\,\phi_1 \circ \phi_2^{n-1})\,\circ\cdots\,\circ\,(\phi_2^{-1}\circ \phi_1).
	\end{align*}
	By the ({\bf Triangle inequality}) property of $\gamma_\im$, we have
	\[\gamma_\im(\phi_2^{-n}\,\circ\,\phi_1^n)\le\sum_{m=1}^n \gamma_\im(\phi_2^{-m}\,\circ\,\phi_1\,\circ\,\phi_2^{m-1}).\]
	Furthermore, by the ({\bf Symplectic Invariance}) property, we have
	\[\gamma_\im(\phi_2^{-1}\,\circ\,\phi_1)=\gamma_\im(\phi_2^{-m}\,\circ\,\phi_1\,\circ\,\phi_2^{m-1})\quad\text{for all $m$.}\]
	Combining these results, we conclude that
	\[d_\im(\phi_1^n,\phi_2^n)=\gamma_\im(\phi_2^{-n}\circ \phi_1^n) \le n\cdot \gamma_\im(\phi_1^{-1}\circ\phi_2)=n\cdot d_\im(\phi_1,\phi_2).\]
	This completes the proof.
\end{proof}

\vspace{.5cm}

\section[Proof of the main theorems]{Proof of the main theorems}
\label{Proof of the main theorems}
In this section, we study the stability of braids under $\gamma_\im$-small perturbations. Our main objective is to prove Theorems \ref{Main Theorem: Braid Stability under PSS-image spectral distance}, \ref{Main Theorem: Lower semicontinuity of topological entropy}, and then \ref{Main theorem: Topological entropy after perturbation supported on a disk}. Throughout this section, we assume that $(\Sigma,\omega)$ is a closed symplectic surface. For simplicity, we omit explicit references to our choice of regular (family of) almost complex structures.



\subsection{Enhanced Floer continuation maps}
\label{Section: Enhanced Floer continuation maps}
In Section \ref{Section: Product structures on Hamiltonian Floer homology}, we observed that continuation maps can be defined not only using a homotopy of Hamiltonians but also using a Hamiltonian-valued $1$-form on a cylinder $Z$. Building on this observation, our goal in this subsection is to introduce and prove the following proposition:
\begin{prop}
\label{Proposition: Existence of enhanced continuation data and bijection through enhanced Floer cylinder}
	Let $\alpha \in \tilde{\pi}_1(\Sigma)$, $H: S^1 \times \Sigma \to \R$ and $K:S^1 \times \Sigma \to \R$ be non-degenerate Hamiltonians, and $(H,J_H)$ and $(K,J_K)$ be Floer data associated to $H$ and $K$, respectively. Let $\mathcal{Y}$ be a finite subset of $\mathcal{P}(H;\alpha)$ satisfying the inequality
\[\gamma_\im(\overbar{H}\#K)<\Delta(\mathcal{Y},\phi^1_H).\]
Then, there exist
\begin{enumerate}
	\item  enhanced continuation data $\mathcal{HK}$ from  $(H,J_H)$ to $(K,J_K)$ and $ \mathcal{KH}$  from $(K,J_K)$ to $(H,J_H)$, 
	\item a subset $\mathcal{Z} \subset \mathcal{P}(K;\alpha)$, 
	\item a injection $\mathfrak{R}_{\mathcal{HK}}: \mathcal{Y} \to \mathcal{Z}$ and a bijection $\mathfrak{S}_\mathcal{KH}: \mathcal{Y} \to \mathcal{Y}$ such that
	\[\mathcal{M}(y; \mathfrak{R}_{\mathcal{HK}}(y) ; \mathcal{HK})_0 ~ \text{ and } ~ \mathcal{M}(\mathfrak{R}_{\mathcal{HK}}(y), \mathfrak{S}_{\mathcal{KH}}(y) ; \mathcal{KH})_0\]
	are non-empty for every $y$ in $\mathcal{Y}$.
\end{enumerate}
\end{prop}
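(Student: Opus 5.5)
We will follow the outline announced in the introduction: realize the identity continuation $\CF(H)\to\CF(H)$, up to small filtration error, as a composition through $\CF(K)$ whose total negative energy $E^-$ is less than $\Delta(\mathcal{Y},\phi^1_H)$. Set $\delta:=\Delta(\mathcal{Y},\phi_H^1)-\gamma_\im(\overbar{H}\#K)>0$. Choose $H,K$ (within their homotopy classes, which is harmless on $S^2$ by the remark after Definition \ref{Definition: PSS image Spectral norm}) so that $\gamma_\im(\overbar H\#K)=c_\im([\Sigma];\overbar H\#K)+c_\im([\Sigma];\overbar K\#H)$. By Definition \ref{Defintion: PSS-image spectral invariants}, after a small non-degenerate perturbation justified by Hofer continuity, pick regular PSS-data $\mathcal{D}_1$ for $\overbar H\#K$ and $\mathcal{D}_2$ for $\overbar K\#H$ such that the PSS-fundamental cycles $\Phi_{\mathcal{D}_1}([\Sigma])$ and $\Phi_{\mathcal{D}_2}([\Sigma])$ have actions below $c_\im+\delta/4$.

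Next we build $\mathcal{HK}$. Take a pair of pants with inputs $H$ and $\overbar H\#K$ and output $K$, carrying a flat $1$-form from Proposition \ref{Proposition: Flat Hamiltonian 1-form}, since $H\#(\overbar H\#K)=K$. Glue the PSS-plane $\mathcal{D}_1$ into the second input. The result is a Hamiltonian $1$-form on a cylinder, possibly with nonzero $ds$-component, asymptotic to $H\,dt$ at $-\infty$ and $K\,dt$ at $+\infty$. This is the enhanced continuation datum. By the gluing principle its $0$-dimensional solutions are exactly glued pairs of a PSS-plane and a pair-of-pants solution. The action bounds coming from the flat form, together with the choice of $\mathcal{D}_1$, show that every cylinder $u$ from $y$ to $z$ satisfies $\mathcal{A}_K(z)\le\mathcal{A}_H(y)+c_\im([\Sigma];\overbar H\#K)+\delta/4$, with cappings transported along $u$. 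The same construction gives $\mathcal{KH}$. Gluing $\mathcal{HK}$ and $\mathcal{KH}$ produces a cylinder datum from $H$ to $H$ with effective shift below $\gamma_\im(\overbar H\#K)+\delta/2<\Delta$. By the cobordism argument and associativity, with the $E^-$ control of Proposition \ref{Proposition: Hofer-small Family of Hamiltonian 1-form}, this composite is chain homotopic to the identity on $\CF(H;\alpha)$ via data whose filtration shift stays below $\Delta$. The composite is itself a pair-of-pants action by a product of PSS-classes; that product is the unit $[\Sigma]$ because $c_\im$ controls the PSS image and the product with $[\Sigma]$ is the identity.

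Now we extract the combinatorics. Apply Proposition \ref{Proposition: Property of Floer-theoretic action gap} to choose $J_H$ regular so that $\mathcal{Y}$ is Floer $(\Delta-\delta/4)$-isolated. For $y\in\mathcal{Y}$, consider the matrix coefficients of the composite chain map $C=\Phi_{\mathcal{KH}}\circ\Phi_{\mathcal{HK}}$, which equals $\mathrm{id}+dS+Sd$ with $S$ shifting action by less than $\Delta-\delta/4$. Floer isolation implies that $d$ has no nonzero entries into or out of $y$ within action window $<\Delta$. Hence the entries of $dS+Sd$ involving elements of $\mathcal{Y}$, within the relevant window, vanish. Therefore the restriction of $C$ to $\mathrm{span}(\mathcal{Y})$, projected back to $\mathrm{span}(\mathcal{Y})$ and truncated to action differences $<\Delta$, is the identity matrix. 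Over $\Z_2$ a permutation-type expansion of this identity gives, for each $y$, a term $y\to z\to y'$. Since the $\mathcal{Y}\times\mathcal{Y}$ block is invertible, some permutation $\mathfrak{S}$ of $\mathcal{Y}$ admits nonzero entries $y\to\mathfrak{S}(y)$, and therefore intermediate orbits $\mathfrak{R}(y)\in\mathcal{P}(K;\alpha)$. Injectivity of $\mathfrak{R}$ comes from the factorization: the block matrix factors as $B\cdot A$ with $A:\mathcal{Y}\to\mathcal{Z}$, and invertibility lets one choose a matching by a Hall/determinant argument. Expanding $\det(BA)=\sum$ via Cauchy–Binet gives a $|\mathcal{Y}|$-subset $\mathcal{Z}$ with $\det A_{\mathcal{Z}}\det B_{\mathcal{Z}}\ne0$. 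Each nonzero determinant yields a bijection, namely $\mathfrak{R}$ and then $\mathfrak{S}$ through composition.

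The main obstacle is the second step. Making the enhanced data rigorous requires showing that the glued cylinder $1$-form has filtration shift at most $c_\im+\epsilon$. This is not simply $E^-$, since the PSS cap contributes through its output action rather than through curvature. It also requires showing that the homotopy to the identity has controlled shift uniformly in the family. We would first try to compute the energy identity for the glued solutions directly, splitting the energy into the PSS part, whose action is bounded by $\mathcal{A}(\Phi_{\mathcal{D}_1}([\Sigma]))$, and the flat pants part, which has zero curvature.
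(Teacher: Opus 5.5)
Your overall architecture matches the paper's. You glue a PSS plane for a non-degenerate perturbation $G$ of $\overbar{H}\#K$ into a pair of pants, build a filtered homotopy from $\Phi_{\mathcal{KH}}\circ\Phi_{\mathcal{HK}}$ to the identity with shift about $\gamma_{\im}(\overbar{H}\#K)$, use Floer isolation of $\mathcal{Y}$, and finish with Cauchy--Binet. However, the step you single out as the main obstacle is resolved in a way that fails.

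\textbf{The per-cylinder action bound is false.} It is not true that every glued cylinder from $y$ to $z$ satisfies $\mathcal{A}_K(z)\le\mathcal{A}_H(y)+c_{\im}([\Sigma];\overbar{H}\#K)+\delta/4$. The gluing bijection pairs pants solutions with \emph{all} rigid PSS planes. This includes planes asymptotic to capped orbits $[y_0,w_0]$ whose moduli space is non-empty but has even count. Such $[y_0,w_0]$ lie outside $\supp\Phi_{\mathcal{D}_1}([\Sigma])$, and their action can exceed $\mathcal{A}(\Phi_{\mathcal{D}_1}([\Sigma]))$.

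Splitting the energy of a single solution only bounds the output action of the PSS part by $E^-(\mathcal{D}_1)$, which is a Hofer-type quantity. That is exactly why this route yields $\norm{\overbar{H}\#K}_{\Hofer}$ rather than $c_{\im}$.

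\textbf{The correct bound is count-level.} The gluing bijection gives the chain-level identity $\Phi_{\mathcal{G}\#_\ell\mathcal{H}}(-)=\Phi_{\mathcal{H}}(-,\Phi_{\mathcal{G}}([\Sigma]))$. Hence the chain map shifts action by at most $\mathcal{A}_G(\Phi_{\mathcal{G}}([\Sigma]))+E^-(\mathcal{H})$. This is all your third step uses, since it only looks at supports of chains. Note also that after perturbing $\overbar{H}\#K$ to $G$, the pants form is no longer flat; you need one with $E^-<\varepsilon$.

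\textbf{The homotopy to the identity has the same problem.} Saying ``the product of the PSS classes is the unit'' is a homology statement and gives no filtration control on $S$. The missing idea is to run associativity through an intermediate $C^2$-small Morse function $F$ with a unique maximum. Then the degree-$2$ cycle $\Phi_{\mathcal{H}_{0,1}}(C,\overbar{C})$ is forced, at chain level, to equal $C_{\max}=[p_{\max},w_{\max}]$. Consequently $\Phi_{\mathcal{H}_{1,1}}(-,C_{\max})=\Phi_{\mathcal{F}\#_\ell\mathcal{H}_{1,1}}(-)$ is an honest continuation map with $E^-<2\varepsilon$, filtered-homotopic to the inclusion. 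This produces $S$ with shift at most $\gamma_{\im}(\overbar{H}\#K)+6\varepsilon$.

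\textbf{The extraction step is misstated and incomplete.} Isolation does not give that the block ``truncated to action differences $<\Delta$'' is the identity. What it gives is that the $\mathcal{Y}$-block of $\Phi_{\mathcal{KH}}\circ\Phi_{\mathcal{HK}}$, taken with index-matched cappings, is the identity plus entries that strictly lower the action. The drop is by at least $\Delta$ minus the shift of $S$ and the isolation slack, not by $\Delta$.

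Invertibility of this block must then be proved; you only assert it. For $\Sigma\neq T^2$, or $\alpha=\cont$, each orbit has at most one capping class of a given Conley--Zehnder index. So a chain of non-identity entries at fixed index cannot return to an orbit, and $L_{\mathcal{KH}}\circ L_{\mathcal{HK}}-\mathrm{id}$ is nilpotent.

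For $\Sigma=T^2$ and $\alpha\neq\cont$, every orbit has infinitely many cappings of the same index. The $\Z_2$-count of index-$0$ enhanced cylinders from $y$ to $z$ need not be finite, so your $\Z_2$-matrices are not defined. The paper handles this as follows:
\begin{enumerate}
\item It weights each cylinder by $T^{e(u)}$ in the Novikov field $\Lambda$.
\item It shows that $L_{\mathcal{KH}}\circ L_{\mathcal{HK}}-\mathrm{id}$ has entries in $\Lambda_+$, so $\det\in 1+\Lambda_+$ is non-zero.
\item Only then does it apply Cauchy--Binet.
\end{enumerate}
Your Cauchy--Binet extraction of $\mathcal{Z}$, $\mathfrak{R}$, $\mathfrak{S}$ is otherwise the paper's.

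\textbf{Your first step is vacuous.} $\gamma_{\im}(\overbar{H}\#K)$ is the pseudo-norm of the given path. It already equals $c_{\im}([\Sigma];\overbar{H}\#K)+c_{\im}([\Sigma];\overbar{K}\#H)$, because the mean-value normalizations cancel. The statement also does not let you replace the given $H$ and $K$.
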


Here, the $1$-form $\mathcal{HK}$ is enhanced in the following sense: there exists a real number $0 \le E \le E^-(\overbar{H} \# K)$ such that the associated chain-level map
\[\Phi_{\mathcal{HK}}:\CF_*(H;\alpha)\to\CF_*(K;\alpha)\]
increases the Hamiltonian action at most $E$. The existence of bijections via enhanced continuation cylinders will be used in Section \ref{Section: Isotopy via enhanced Floer cylinders} to construct a braid isotopy between $\mathcal{B}(\mathcal{Y})$ and $\mathcal{B}(\mathcal{Z})$. 


The proof of Proposition \ref{Proposition: Existence of enhanced continuation data and bijection through enhanced Floer cylinder} will be given at the end of this subsection. The remainder of this subsection is dedicated to introducing the terminology used in the proposition and establishing key properties of enhanced continuation data, which will be essential for proving the existence of the $\mathcal{Z} \subset \mathcal{P}(K;\alpha)$ and the corresponding bijections.

\subsubsection{Construction of enhanced Floer continuation data}
\label{Section: Construction of enhanced Floer continuation data}
In this subsection, we construct enhanced continuation data $\mathcal{HK}$ and $\mathcal{KH}$ on a cylinder $Z$ by gluing a the PSS-data on a plane $\C$ and the Floer product data on a pair-of-pants $P$. We then prove that the associated maps $\Phi_\mathcal{HK}$ and $\Phi_\mathcal{KH}$ behave like Floer continuation maps while shifting the action by an amount controlled by the PSS-image spectral norm.

\begin{rem}
Throughout this section, we will use the same letter to denote an enhanced continuation data on $Z$ and its underlying Hamiltonian $1$-form. For example, $\mathcal{HK}$ and $\mathcal{KH}$ will denote both enhanced continuation data on $Z$ and the Hamiltonian $1$-forms associated to them. The same will apply to the PSS-data and the Floer product data considered in this section. 

It will always be clear from the context whether we are considering the Floer data or the Hamiltonian-valued $1$-form.
\end{rem}

As explained in Section \ref{Section:PSS-image-spectral-norm}, the $0$-dimensional moduli spaces associated with a PSS-data $\mathcal{G}$ on $\C$ determine the PSS-fundamental class $\Phi_\mathcal{G}([\Sigma])$. 

We begin by proving the following proposition; see Figure~\ref{fig: Proposition: PSS + Product = Continuation Filtered} for a pictorial illustration of the proof.
\begin{prop}
\label{Proposition: PSS + Product = Continuation Filtered}
	Let $H, G$, and $K$ be non-degenerate Hamiltonians on $\Sigma$. Then, we have the identity
	\begin{equation}
	\label{Equation: PSS + Product = Continuation}
		{\Phi}_{\mathcal{G}\#_\ell \mathcal{H}}(-) = {\Phi}_{\mathcal{H}}(-, {\Phi}_{\mathcal{G}}([\Sigma]))
	\end{equation}
	as chain maps $\CF_*(H;\alpha)\to \CF_*(K;\alpha)$, where $\mathcal{H}$ denotes a Floer product data from $H,G$ to $K$, $\mathcal{G}$ denotes a PSS data for $G$, and $\ell>0$ is a sufficiently large gluing parameter. In particular, ${\Phi}_{\mathcal{G}\#_\ell \mathcal{H}}$ defines filtered maps
	\[{\Phi}^a_{\mathcal{G}\#_\ell \mathcal{H}}:\HF^a_*(H;\alpha)\to\HF^{a+\mathcal{A}_G({\Phi}_{\mathcal{G}}([\Sigma]))+E^-(\mathcal{H})}(K;\alpha).\]
\end{prop}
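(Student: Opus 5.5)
The plan is to apply the \textbf{(Gluing principle)} to the PSS-plane and the pair-of-pants. View the PSS-data $\mathcal{G}$ as perturbation data on $\C\cong \CP^1\setminus\{\infty\}$. This surface has one positive puncture asymptotic to $(G,J_G)$ and no negative punctures; it is the case $d=0$. The Floer product data $\mathcal{H}$ lives on $P$, with negative punctures $z_1,z_2$ asymptotic to $(H,J_H)$ and $(G,J_G)$, and positive puncture $z_0$ asymptotic to $(K,J_K)$. Since the Floer data at the positive end of $\C$ and at $z_2$ agree, for $\ell$ large the glued data $\mathcal{G}\#_\ell\mathcal{H}$ on $\C\#_\ell P$ is defined. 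Topologically this glued surface is a sphere with one negative puncture (from $z_1$) and one positive puncture (from $z_0$). It is therefore a cylinder $Z$, with cylindrical ends inherited from $P$, and $\Phi_{\mathcal{G}\#_\ell\mathcal{H}}$ is a chain map $\CF_*(H;\alpha)\to\CF_*(K;\alpha)$ of continuation type. For non-contractible $\alpha$ we use the mixed product of Remark \ref{Remark: Pair-of-pants for non-contractible orbits}, whose second input is contractible, so gluing the plane there is consistent.

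Next I would establish \eqref{Equation: PSS + Product = Continuation} at the chain level. After generically choosing $(\mathcal{J}_G,\mathcal{J}_P)$ so that $\mathcal{J}_G\#_\ell\mathcal{J}_P$ is regular for all $\ell\ge\ell_0$, I would invoke the gluing bijection \eqref{Equation: Bijection by gluing}. It identifies pairs $(v,u)$, with $v\in\mathcal{M}([y_2,w_2];\mathcal{G})$ and $u\in\mathcal{M}_P([y_1,w_1],[y_2,w_2],[y_0,w_0];\mathcal{H})$, with index-zero solutions on $Z$ from $[y_1,w_1]$ to $[y_0,w_0]$. The capping of $y_0$ is preserved because $v\#u$ is homotopic to $v\#_\ell u$ relative to the asymptotics.

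For rigid solutions I would use index additivity:
\[
\big(2n-\CZ([y_2,w_2])\big)+\big(\CZ([y_1,w_1])+\CZ([y_2,w_2])-\CZ([y_0,w_0])-2n\big)=\CZ([y_1,w_1])-\CZ([y_0,w_0]).
\]
Hence a rigid glued solution corresponds exactly to a rigid PSS plane, which forces $\CZ=2n$ at $y_2$, together with a rigid pair of pants. Counting mod $2$ then gives $\Phi_{\mathcal{G}\#_\ell\mathcal{H}}([y_1,w_1])=\Phi_{\mathcal{H}}([y_1,w_1],\Phi_{\mathcal{G}}([\Sigma]))$, by the definition \eqref{eq:PSS-fundamental-class} of $\Phi_{\mathcal{G}}([\Sigma])$. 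Extending linearly gives the claim, including for downward-completed chains on $T^2$, since only finitely many terms lie above any action level.

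The filtration statement then follows from the filtered pair-of-pants estimate \eqref{Equation: Filtered pair-of-pants product}. If $\sigma\in\CF^a(H;\alpha)$, then $\Phi_{\mathcal{H}}(\sigma,\Phi_{\mathcal{G}}([\Sigma]))$ has action at most $a+\mathcal{A}_G(\Phi_{\mathcal{G}}([\Sigma]))+E^-(\mathcal{H})$, by the a priori bound \eqref{Equation: Apriori equation for pair-of-pants} applied to each contributing pair of pants. By the chain identity the glued map obeys the same bound. Since it commutes with the differentials, it descends to the stated map on filtered homology.

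The main obstacle I expect is the gluing step itself, specifically the surjectivity of $\#_\ell$ onto the index-zero glued moduli space. This requires a compactness argument: a sequence of glued solutions with $\ell\to\infty$ must break into a PSS plane and a pair of pants, with no extra Floer cylinders in the neck. Such a configuration would have negative index and is excluded by regularity. No bubbling occurs, by Assumption \ref{Assumption: no bubbling}. The energy bounds needed for this compactness come from the a priori estimates, which are uniform in $\ell$ because the curvature of $\mathcal{G}\#_\ell\mathcal{H}$ is just that of $\mathcal{G}$ and $\mathcal{H}$ transplanted. I would cite the standard gluing analysis for this step rather than redo it.
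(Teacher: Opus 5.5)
Your proposal is correct and follows the paper's proof: the gluing bijection identifies rigid glued cylinders with pairs consisting of a rigid PSS plane (forcing $\CZ=2$ at the $G$-orbit) and a rigid pair of pants, so the mod $2$ count gives the chain identity. As in the paper, the filtration shift then comes from the a priori pair-of-pants bound, and your index-additivity and capping-compatibility checks make explicit what the paper leaves implicit.
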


\begin{proof}
	As discussed above, the cycle ${\Phi}_{\mathcal{G}}([\Sigma])\in\CF_2(G;\cont)$ is defined by the (mod $2$) count of $0$-dimensional moduli spaces of solutions to \linebreak $\mathcal{F}_{\mathcal{G}}(u)=0$. By the {\bf (Gluing)} principle), there exists a bijection 
	\begin{align*}
		\#_\ell: \{(u,u^\prime) \in \mathcal{M}_\C(\mathcal{G}) \times
		\mathcal{M}_{P} (\mathcal{H}): u(z_0)=u^\prime(z_2^\prime)\} &\rightarrow \mathcal{M}_{Z}(\mathcal{G}\#_\ell \mathcal{H})\\
		(u,u^\prime)&\mapsto u\#_\ell u^\prime. 
	\end{align*}
    Here $z_0$ is the unique (negative) puncture of $\mathcal{G}$ in $\C \simeq S^2 \setminus z_0$, and $z'_2$ is the puncture of $P$ on which $\mathcal{H}$ is asymptotic to $G$.
	In particular, for the $0$-dimensional components, we obtain
	\begin{equation*}
		\begin{aligned}
			&{\mathcal{M}_{Z}}([y_-,w_-],[y_+,w_+]; \mathcal{G} \#_\ell \mathcal{H})=\\
			&\bigcup_{\CZ([y_0,w_0])=2}
			\mathcal{M}_{\C}([y_0, w_0];\mathcal{G}) \times \mathcal{M}_{P}([y_-,w_-],[y_0,w_0],[y_+,w_+]; \mathcal{H})
		\end{aligned}
	\end{equation*}
	for capped orbits $[y_-, w_-]\in \widetilde{P}(H;\alpha)$ and $[y_+, w_+]\in \widetilde{P}(K;\alpha)$ with the same Conley-Zehnder index, i.e. $\CZ([y_-, w_-])=\CZ([y_+, w_+])$.
	
	Hence, for such capped orbits $[y_\pm, w_\pm]$, the (mod $2$) count
	\[\#_2\left({\mathcal{M}_{Z}}([y_-,w_-],[y_+,w_+]; \mathcal{G} \#_\ell \mathcal{H})\right)\]
	is equal to
	\[\sum_{\CZ([y_0,w_0])=2}
	\#_2\left(\mathcal{M}_{\C}([y_0, w_0];\mathcal{G})\right)\, \cdot \, \#_2\left(\mathcal{M}_{P}([y_-,w_-],[y_0,w_0],[y_+,w_+]; \mathcal{H})\right).\]
	This expression simplifies to
	\[\sum_{[y_0, w_0] \,\in\, \supp {\Phi}_{\mathcal{G}}([\Sigma])} \#_2\left(\mathcal{M}_{P}([y_-,w_-],[y_0,w_0],[y_+,w_+]; \mathcal{H})\right).\]
	In particular, we conclude that
	\[\#_2\left({\mathcal{M}_{Z}}([y_-,w_-],[y_+,w_+]; \mathcal{G} \#_\ell \mathcal{H})\right)=0\]
	whenever $\mathcal{A}_G([y_+,w_+]) > \mathcal{A}_H([y_-,w_-]) + \mathcal{A}_G({\Phi}_{\mathcal{G}}([\Sigma]))+E^-(\mathcal{H})$.
    Note that this does not necessarily imply that the moduli space is empty.
	
	This proves that the identity \eqref{Equation: PSS + Product = Continuation} holds as a map between unfiltered Floer chain complexes. Moreover, since $ {\Phi}_{\mathcal{H}}(-, {\Phi}_{\mathcal{G}}([\Sigma]))$ increases the Hamiltonian action by at most $\mathcal{A}_G({\Phi}_{\mathcal{G}}([\Sigma]))+E^-(\mathcal{H})$, the same is true for  $\Phi_{\mathcal{G}\#_\ell \mathcal{H}}$. Thus, $\Phi_{\mathcal{G}\#_\ell \mathcal{H}}$ defines a filtered map \[{\Phi}^a_{\mathcal{G}\#_\ell \mathcal{H}}:\CF^a_*(H;\alpha)\to\CF_*^{a+\mathcal{A}_G({\Phi}_{\mathcal{G}}([\Sigma]))+E^-(\mathcal{H})}(K;\alpha).\] This completes the proof.
\end{proof}

\begin{figure}
    \centering
    \includegraphics[width=\linewidth]{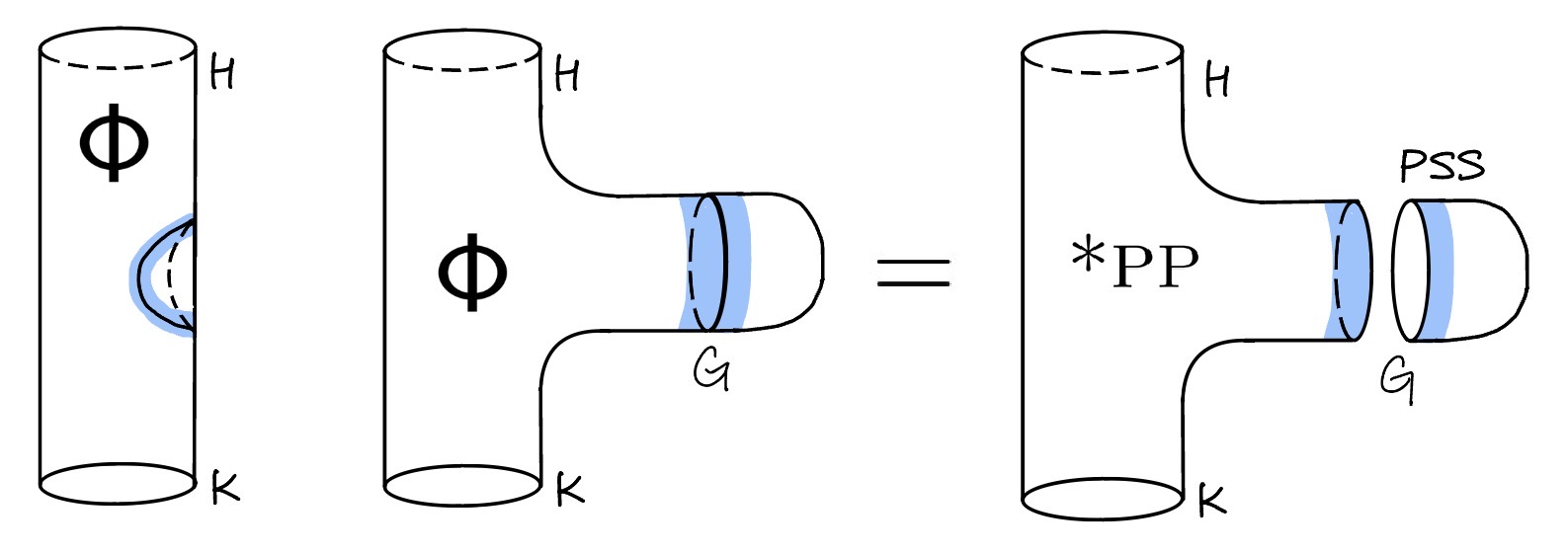}
    \caption{Construction of enhanced continuation data}
    \label{fig: Proposition: PSS + Product = Continuation Filtered}
\end{figure}
The next step is to choose Hamiltonian $1$-forms $\mathcal{G}$ and $\mathcal{H}$ so that the shift \[\mathcal{A}_G(\Phi_\mathcal{G}[\Sigma]) + E^-(\mathcal{H})\] is as small as possible. For this we proceed as follows.

\begin{choice}
For the rest of this section, we fix $\varepsilon>0$ small so that
\begin{equation} \label{eq:epsilon-for-action-gap}
\gamma_\im(\overbar{H}\#K) + 8\varepsilon < \Delta(\mathcal{Y},\phi^1_H).
\end{equation}
\end{choice}

To optimize $\mathcal{H}$, we choose $G$ as a non-degenerate Hamiltonian satisfying
\[\norm{\overbar{K}\# H \#G}_{\Hofer}<\varepsilon.\]
The Hamiltonian $G$ is a Hofer-small perturbation of $\overbar{H} \# K$: this is necessary because  $\overbar{H}\#K$ may not be non-degenerate. By Proposition \ref{Proposition: Hofer-small Hamiltonian 1-form}, there exists a Floer product data $\mathcal{H}$ from $H,G$ to $K$ satisfying $E^-(\mathcal{H})<\varepsilon$.

Moreover, by the definition of PSS-image fundamental class, there exists a PSS-data $\mathcal{G}$ satisfying
\begin{align*}
	\mathcal{A}_G({\Phi}_{\mathcal{G}}([\Sigma])) &\le c_\im(G;[\Sigma])+\varepsilon \le c_\im(\overbar{H}\#K;[\Sigma])+2\varepsilon.
\end{align*}
The second inequality follows from the {\bf (Hofer-Continuity)} property.

By Proposition \ref{Proposition: PSS + Product = Continuation Filtered}, for sufficiently large gluing parameter $\ell>0$, the glued enhanced continuation data $\mathcal{HK}:=\mathcal{G} \#_\ell \mathcal{H}$ defines filtered maps
\[{\Phi}^a_{\mathcal{HK}}:\CF^a_*(H;\alpha)\to\CF_*^{a+c_\im(\overbar{H}\#K;[\Sigma])+3\varepsilon}(K;\alpha).\]
Similarly, we construct the  glued enhanced continuation data $\mathcal{KH}:=\mathcal{\overbar{G}}\#_\ell \overbar{\mathcal{H}}$ that defines filtered maps
\[{\Phi}^a_{\mathcal{KH}}:\CF^a_*(K;\alpha)\to\CF_*^{a+c_\im(\overbar{K}\#H;[\Sigma])+3\varepsilon}(H;\alpha).\]

We introduce some terminology. Given our fixed $\varepsilon>0$ satisfying \eqref{eq:epsilon-for-action-gap}, we say that an enhanced continuation data $\mathcal{HK}$ is an {$\varepsilon$-admissible enhanced continuation data} from $H$ to $K$ if it is  obtained by gluing the following data:
\begin{itemize}
	\item A PSS-data $\mathcal{G}$ to $G$, satisfying $\mathcal{A}_G(\Phi_\mathcal{G}[\Sigma])\le c_\im(\overbar{H}\#K;[\Sigma])+2\varepsilon$,
	\item A Floer product data $\mathcal{H}$ from $H,G$ to $K$, satisfying $E^-(\mathcal{H})<\varepsilon$.
\end{itemize}
As explained above, $G$ is chosen as a non-degenerate perturbation of $\overbar{H}\#K$ which satisfies $\norm{\overbar{K}\# H \#G}_{\Hofer}<\varepsilon$. Similarly, we say that $\mathcal{KH}$ as an {$\varepsilon$-admissible enhanced continuation data} from $K$ to $H$ if it is  obtained by gluing the following data: 
\begin{itemize}
	\item A PSS-data $\overbar{\mathcal{G}}$ to $\overline{G}$, satisfying $\mathcal{A}_{\overbar{G}}(\Phi_{\overbar{\mathcal{G}}}[\Sigma])\le c_\im(\overbar{H}\#K;[\Sigma])+2\varepsilon$,
	\item A Floer product data $\mathcal{K}$ from $K,\overline{G}$ to $H$, satisfying $E^-(\mathcal{K})<\varepsilon$.
\end{itemize}

When considering two $\varepsilon$-admissible enhanced continuation data $\mathcal{HK}, \mathcal{KH}$ between $H$ and $K$ as above, we always assume that the Hofer-small perturbation of $\overbar{K}\#H$ is given by the inverse of the perturbation of $\overbar{H}\#K$.

Now, consider two $\varepsilon$-admissible enhanced continuation data $\mathcal{HK}$ and $ \mathcal{KH}$ from $H$ to $K$ and $K$ to $H$ respectively. Reasoning as in the proof of \linebreak $\Phi_{K \to H} \circ \Phi_{H \to K} = \id_{\HF_*(H;\alpha)}$, one shows that the map \[{\Phi}_{\mathcal{KH}}\circ{\Phi}_{\mathcal{HK}}:\CF_*(H;\alpha) \to \CF_*(H;\alpha)\]
is chain homotopic to the identity. Apriori, since
\[E^-(\mathcal{HK}) = E^-(\mathcal{G}) + E^-(\mathcal{H}) \text{ and } E^-(\mathcal{KH}) = E^-(\overbar{\mathcal{G}}) + E^-(\overbar{\mathcal{H}}), \]
the chain-homotopy can increase the Hamiltonian action by at most \[E^-(\mathcal{G})+E^-(\mathcal{\overbar{G}})+2\varepsilon\]
for two PSS data $\mathcal{G},\mathcal{\overbar{G}}$ which are used to define $\mathcal{HK},\mathcal{KH}$. Hence, with this approach, we only get $\norm{\overbar{H}\# K}_\Hofer+3\varepsilon$ as the upper bound of action shift.

However, we claim the upper bound can be further enhanced to $\gamma_\im(\overbar{H}\#K)+6\varepsilon$. Recall that we have the inequality
\[\gamma_\im(\overbar{H}\#K) \le \norm{\overbar{H}\# K}_\Hofer\]
from the (Hofer-continuity) property. Equivalently, we claim that 
\begin{equation*}
{\Phi}^{a+c_\im(\overbar{K}\#H;[\Sigma])+3\varepsilon}_{\mathcal{KH}}\circ{\Phi}^a_{\mathcal{HK}}:\CF^a_*(H;\alpha) \to \CF_*^{a+\gamma_\im(\overbar{H}\#K)+6\epsilon}(H;\alpha)
\end{equation*}
is chain-homotopic to the chain-level inclusion map
\[\iota^{a,a+\gamma_\im(\overbar{H}\#K)+6\epsilon}:\CF^a_*(H;\alpha) \to \CF_*^{a+\gamma_\im(\overbar{H}\#K)+6\epsilon}(H;\alpha).\]
The proof follows the same strategy as \cite[Proposition 37]{Kislev-Shelukhin21}. For completeness, we provide an adapted version of the proposition and its proof:
\begin{prop}
\label{Proposition: Quantitative version of composition = identity}
	Let $H,K$ be non-degenerate Hamiltonians, and let $G$ be a   perturbation of $\overbar{H} \# K$ such that $\norm{\overbar{K}\# H \#G}_{\Hofer}<\varepsilon$.
    Let $\mathcal{G}$ be a PSS-data for $G$ and $\overbar{\mathcal{G}}$ a PSS-data from $\overbar{G}$.
    Let $C$ and $\overbar{C}$ be the cycles 
	\[[C]=[\Phi_{\mathcal{G}}([\Sigma])] \in \CF_2(G),\qquad [\overbar{C}]=[\Phi_{\overbar{\mathcal{G}}}([\Sigma])]\in \CF_2(\overbar{G}).\]
	Moreover, suppose that we are given Floer product data $\mathcal{H}_{0,0}$ from $H,G$ to $K$ and Floer product data $\mathcal{H}_{1,0}$ from $K,\overbar{G}$ to $H$ that satisfy
	\[E^-(\mathcal{H}_{0,0})<\varepsilon, \qquad E^-(\mathcal{H}_{1,0})<\varepsilon.\]
	Then, the composition
	\[\Phi^{a+\mathcal{A}_G(C)+\varepsilon}_{\mathcal{H}_{1,0}}(\Phi^a_{\mathcal{H}_{0,0}}(-,C),\overbar{C}):\CF^a(H;\alpha)\to\CF^{a+\mathcal{A}_G(C)+\mathcal{A}_{\overbar{G}}(\overbar{C})+2\epsilon}(H;\alpha)\]
	is a chain-homotopic to the chain-level inclusion map as filtered map.
\end{prop}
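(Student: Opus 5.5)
We follow the strategy of \cite[Proposition 37]{Kislev-Shelukhin21}: the composition is rewritten via (\textbf{Associativity}) as a product with the single cycle $\Phi_{\mathcal{H}'}(C,\overbar{C})$, which is shown to be filtered-homologous to the PSS-fundamental class of a Hofer-small Hamiltonian, hence acts like the identity. Every homotopy used is produced with explicit control of $E^-$ from Propositions \ref{Proposition: Hofer-small Hamiltonian 1-form} and \ref{Proposition: Hofer-small Family of Hamiltonian 1-form}.

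\textbf{Step 1 (reassociation).} The composition $\Phi_{\mathcal{H}_{1,0}}(\Phi_{\mathcal{H}_{0,0}}(-,C),\overbar{C})$ comes from a nodal four-punctured sphere with inputs $H,G,\overbar{G}$ and output $H$. We use the other degeneration: a pair of pants $\mathcal{H}_{0,1}$ from $G,\overbar{G}$ to a non-degenerate Hamiltonian $F:=G\#G'\#\overbar{G}$, where $G'$ is Hofer-small, followed by a pair of pants $\mathcal{H}_{1,1}$ from $H,F$ to $H$. We apply Proposition \ref{Proposition: Hofer-small Family of Hamiltonian 1-form} with $H_0=H$, $H_1=H$, $H_2=G$, $H_3=\overbar{G}$; since $\overbar{H}\#H\#G\#\overbar{G}=0$, this yields a family $\mathfrak{H}$ with $E^-(\mathfrak{H})$ arbitrarily small. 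Together with the filtered form of (\textbf{Associativity}) and the bounds $E^-(\mathcal{H}_{0,0}),E^-(\mathcal{H}_{1,0})<\varepsilon$, the chain homotopy shifts action by at most $\mathcal{A}_G(C)+\mathcal{A}_{\overbar{G}}(\overbar{C})+2\varepsilon$. The composition is therefore filtered chain-homotopic to $\Phi_{\mathcal{H}_{1,1}}(-,\,\Phi_{\mathcal{H}_{0,1}}(C,\overbar{C}))$.

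\textbf{Step 2 (the product of PSS classes).} By the gluing principle, exactly as in Proposition \ref{Proposition: PSS + Product = Continuation Filtered}, gluing $\mathcal{G}$ and $\overbar{\mathcal{G}}$ into the two negative ends of $\mathcal{H}_{0,1}$ gives a PSS-data $\mathcal{F}$ for $F$ on $\C$ with $\Phi_{\mathcal{F}}([\Sigma])=\Phi_{\mathcal{H}_{0,1}}(C,\overbar{C})$ at chain level. Its action is at most $\mathcal{A}_G(C)+\mathcal{A}_{\overbar{G}}(\overbar{C})+E^-(\mathcal{H}_{0,1})$.

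\textbf{Step 3 (a PSS plane glued to a pants is a continuation).} Substituting Step 2 into Step 1 and applying Proposition \ref{Proposition: PSS + Product = Continuation Filtered} again, the composition equals $\Phi_{\mathcal{F}\#_\ell\mathcal{H}_{1,1}}$. This is a map on the cylinder $Z$ from $H$ to $H$, built from a Hamiltonian $1$-form whose curvature is supported in a compact set. We compare it with the constant continuation data $H\,dt$, whose chain map is the identity, through a homotopy of $1$-forms on $Z$. The resulting chain homotopy shifts filtration by $\max_\tau E^-(\mathcal{H}_\tau)$, so its size is the crux of the argument.

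\textbf{Main obstacle.} The difficulty is choosing this homotopy so that $\max_\tau E^-$ does not exceed $\mathcal{A}_G(C)+\mathcal{A}_{\overbar{G}}(\overbar{C})+2\varepsilon$. A naive linear interpolation only gives a bound by the Hofer norm. Instead we will avoid an explicit homotopy of forms and argue on the level of moduli spaces. The glued data consists of three components: the PSS planes $\mathcal{G}$ and $\overbar{\mathcal{G}}$, and a nearly flat pants-of-pants piece. We degenerate only the flat part, using the connectedness of flat forms as in the proof of Proposition \ref{Proposition: Hofer-small Family of Hamiltonian 1-form}, and keep $\mathcal{G}$ and $\overbar{\mathcal{G}}$ as capped-off planes throughout. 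Each solution in the one-parameter family is then a glued configuration containing these PSS planes. An action estimate on the remaining flat part shows that the chain homotopy raises action by at most the actions of $C$ and $\overbar{C}$ plus the $O(\varepsilon)$ curvature. At the end of the family the planes and the flat piece can be deformed, with no action cost because the form is flat, into the trivial cylinder. If this direct degeneration argument fails, we fall back on Kislev--Shelukhin's cobordism argument for $(\mathbb{C}\text{-plane})\#P$ versus $Z$, adapting their energy bookkeeping verbatim.
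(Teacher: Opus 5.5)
The gap is in Step 3, exactly at the obstacle you flag, and your way around it does not work. Your reassociation (Steps 1--2) agrees with the paper's: you insert a Hofer-small intermediate Hamiltonian $F$ and apply filtered (\textbf{Associativity}) with $E^-(\mathfrak{H})<2\varepsilon$. This reduces the problem to $\Phi_{\mathcal{H}_{1,1}}(-,\Phi_{\mathcal{H}_{0,1}}(C,\overbar{C}))$.

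The plane $\mathcal{F}$ you build in Step 2 contains the PSS planes $\mathcal{G}$ and $\overbar{\mathcal{G}}$, whose curvature comes from homotopies $0\to G$ and $0\to\overbar{G}$. One only has $\mathcal{A}_G(C)\le E^-(\mathcal{G})$. In general $E^-(\mathcal{G})$ is of the order of $\norm{G}_{\Hofer}$, not of $\mathcal{A}_G(C)$. Any family of data on $Z$ ending at the constant data $H\,dt$ must remove that curvature at some point. Your claim that at the end ``the planes and the flat piece can be deformed, with no action cost because the form is flat, into the trivial cylinder'' is false, because the planes are not flat. Keeping the planes as capped-off inputs throughout just restates the proposition: that a nearly flat form on the four-punctured sphere applied to $(-,C,\overbar{C})$ is filtered chain-homotopic to the inclusion. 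The fallback to Kislev--Shelukhin is a placeholder, not an argument.

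The missing idea is that after reassociation the map depends only on the chain $\Phi_{\mathcal{H}_{0,1}}(C,\overbar{C})$. So you may forget how that chain was produced and swap the expensive PSS planes for cheap PSS data. The paper makes this exact at chain level by taking the intermediate Hamiltonian to be a $C^2$-small Morse function $F$ with $\norm{F}_\infty<\varepsilon$ and a unique maximum $p_{\max}$:
\begin{itemize}
\item $\CF_2(F)$ is then spanned by $C_{\max}=[p_{\max},w_{\max}]$. So the cycle $\Phi_{\mathcal{H}_{0,1}}(C,\overbar{C})$, which is nonzero since it represents the fundamental class, equals $C_{\max}$.
\item The same chain is $\Phi_{\mathcal{F}}([\Sigma])$ for the PSS data $\mathcal{F}$ given by the linear homotopy $0\to F$, which has $E^-(\mathcal{F})<\varepsilon$.
\item Gluing this $\mathcal{F}$, rather than your glued plane, to $\mathcal{H}_{1,1}$ gives data on $Z$ with $E^-<2\varepsilon$.
\item A linear homotopy to $H\,dt$ is now harmless. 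It yields a chain homotopy to $\iota^{a,a+2\varepsilon}$, which fits in the required shift because $\mathcal{A}_G(C)+\mathcal{A}_{\overbar{G}}(\overbar{C})\ge 0$.
\end{itemize}

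With your generic $F=G\#G'\#\overbar{G}$, you could instead write $\Phi_{\mathcal{H}_{0,1}}(C,\overbar{C})-\Phi_{\mathcal{F}}([\Sigma])=dz$ and use $\Phi_{\mathcal{H}_{1,1}}(-,z)$ as a chain homotopy via the Leibniz rule. That route needs a filtered bound on $z$, i.e.\ a boundary-depth estimate for $F$, which your proposal does not supply.
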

\begin{proof}
Fix a $C^2$-small Morse function $F$ satisfying $\norm{F}_{\infty}<\varepsilon$. We further assume that $F$ has a unique maximum $p_{\max}$. To apply the (\textbf{Associativity}) property, we consider the tuple of Hamiltonians
\[(H_0,H_1,H_2,H_3,H_{1.5},H_{2.5})=(H,H,G,\overbar{G},K,F).\]
For the four pair-of-pants products, we choose Hamiltonian-valued $1$-forms using the given $1$-forms $\mathcal{H}_{0,0}, \mathcal{H}_{1,0}$, along with additional $1$-forms $\mathcal{H}_{0,1}$ from $G, \overbar{G}$ to $F$, and $\mathcal{H}_{1,1}$ from $H,F$ to $H$.

By Proposition \ref{Proposition: Hofer-small Hamiltonian 1-form}, we assume that
\[E^-(\mathcal{H}_{0,1})<\varepsilon, \qquad E^-(\mathcal{H}_{1,1})<\varepsilon.\]
Then, there exists a $[0,1]$-family of Floer data, denoted by $\mathfrak{H}$, interpolating the composition of pair-of-pants products and satisfying $E^-(\mathfrak{H})<2\epsilon$. More explicitly, near $\tau=0, 1$, we construct the underlying Hamiltonian valued $1$-form $\mathcal{H}_\tau$ to by gluing the chosen Floer product data and interpolating using a Hamiltonian-valued $1$-form on the three-legged pants as in Proposition~\ref{Proposition: Hofer-small Family of Hamiltonian 1-form}. The interpolation can be done by a bump-function supported near $\tau=0,1$.

One obtains the underlying homotopy of almost complex structures for $\mathfrak{H}$ by choosing a generic isotopy of almost complex structures on the three-legged pair of pants interpolating the almost complex structures  obtained by gluing the chosen Floer product data.

Applying the (\textbf{Associativity}) property to this $\mathfrak{H}$, we conclude that the maps
\[\Phi_{\mathcal{H}_{1,0}}(\Phi_{\mathcal{H}_{0,0}}(-,C),\overbar{C})~\text{ and }~
\Phi_{\mathcal{H}_{1,1}}(-,\Phi_{\mathcal{H}_{0,1}}(C,\overbar{C}))\]
are chain-homotopic as maps
\[\CF_*^a(H;\alpha)\to\CF_*^{a+\mathcal{A}_G(C)+\mathcal{A}_{\overbar{G}}(\overbar{C})+2\epsilon}(H;\alpha).\]

Since $[C]=[\Phi_{\mathcal{G}}([\Sigma])]$ and $[\overbar{C}]=[\Phi_{\overbar{\mathcal{G}}}([\Sigma])]$, the cycle $\Phi_{\mathcal{H}_{0,1}}(C,\overbar{C})$ satisfies \[[\Phi_{\mathcal{H}_{0,1}}(C,\overbar{C})]=[\Phi_{F}([\Sigma])].\]
Since $F$ is a $C^2$-small Morse function, we conclude that
\[C_{\max}:=[p_{\max}, w_{\max}]=\Phi_{\mathcal{H}_{0,1}}(C,\overbar{C}) \in \CF^{\mathcal{A}_G(C)+\mathcal{A}_{\overbar{G}}(\overbar{C})+2\varepsilon}(F;\cont),\]
where $w_{\max}$ denotes the canonical capping of $p_{\max}$. We note that $C_{\max}$ is the only cycle in $\CF_2(F;\cont)$, and $\mathcal{A}_G(C)+\mathcal{A}_{\overbar{G}}(\overbar{C})$ is always non-negative.

Next, we consider the map \[\Phi_{\mathcal{H}_{1,1}}(-,C_{\max}):\CF^a(H;\alpha)\to\CF^{a+F(p_{\max})+\varepsilon}(H;\alpha).\]

From a linear homotopy from $0$ to $F$, we obtain a PSS-data  $\mathcal{F}$ for $F$ such that $E^-(\mathcal{F})<\varepsilon$ and $\Phi_\mathcal{F}([\Sigma])=C_{\max}$. Hence, by Proposition \ref{Proposition: PSS + Product = Continuation Filtered}, we obtain the identity
\[\Phi_{\mathcal{H}_{1,1}}(-,C_{\max})=\Phi_{\mathcal{F} \#_\ell {\mathcal{H}_{1,1}}}(-)\] at the chain-level. By construction, we also have $E^-({\mathcal{F}\#_\ell \mathcal{H}_{1,1}})<2\varepsilon$. 

Taking a linear homotopy between ${\mathcal{F}\#_\ell \mathcal{H}_{1,1}}$ and the constant Floer continuation data $\mathcal{H}=H$ on $\RR \times S^1$, we conclude that
\[\Phi_{\mathcal{H}_{1,1}}(-,C_{\max})\]
is chain-homotopic to the chain-level inclusion map 
\[\iota^{a,a+2\varepsilon}:\CF^a(H;\alpha)\to\CF^{a+2\varepsilon}(H;\alpha).\]
Thus, the map $\Phi_{\mathcal{H}_{1,0}}(\Phi_{\mathcal{H}_{0,0}}(-,C),\overbar{C})$ is also chain-homotopic to the chain-level inclusion map as a map $\CF^a(H;\alpha)\to\CF^{a+\mathcal{A}_G(C)+\mathcal{A}_{\overbar{G}}(\overbar{C})+2\epsilon}(H;\alpha)$.
This completes the proof.
\end{proof}

For two $\varepsilon$-admissible enhanced continuation data $\mathcal{HK}, \mathcal{KH}$ between $H$ and $K$, Proposition \ref{Proposition: PSS + Product = Continuation Filtered} and Proposition \ref{Proposition: Quantitative version of composition = identity} together imply that the following corollary.
\begin{cor}
	For two enhanced continuation data $\mathcal{HK}$ and $\mathcal{KH}$ between $H$ and $K$, the composition
	\begin{equation*}
		{\Phi}^{a+c_\im(\overbar{K}\#H;[\Sigma])+3\varepsilon}_{\mathcal{KH}}\circ{\Phi}^a_{\mathcal{HK}}:\CF^a_*(H;\alpha) \to \CF_*^{a+\gamma_\im(\overbar{H}\#K)+6\epsilon}(H;\alpha)
	\end{equation*}
	is chain-homotopic to the chain-level inclusion map
	\[\iota^{a,a+\gamma_\im(\overbar{H}\#K)+6\epsilon}:\CF^a_*(H;\alpha) \to \CF_*^{a+\gamma_\im(\overbar{H}\#K)+6\epsilon}(H;\alpha).\]
\end{cor}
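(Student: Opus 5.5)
The plan is to combine Proposition~\ref{Proposition: PSS + Product = Continuation Filtered} with Proposition~\ref{Proposition: Quantitative version of composition = identity}. First, rewrite the two enhanced continuation maps as pair-of-pants products with PSS cycles. Write $\mathcal{HK}=\mathcal{G}\#_\ell\mathcal{H}$ and $\mathcal{KH}=\overbar{\mathcal{G}}\#_\ell\mathcal{K}$, where $\mathcal{G},\overbar{\mathcal{G}}$ are the PSS data and $\mathcal{H},\mathcal{K}$ the Floer product data entering the definition of $\varepsilon$-admissibility. Set $C:=\Phi_{\mathcal{G}}([\Sigma])$ and $\overbar C:=\Phi_{\overbar{\mathcal{G}}}([\Sigma])$. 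For $\ell$ large, Proposition~\ref{Proposition: PSS + Product = Continuation Filtered} gives, at chain level,
\[
\Phi_{\mathcal{HK}}(-)=\Phi_{\mathcal{H}}(-,C),\qquad \Phi_{\mathcal{KH}}(-)=\Phi_{\mathcal{K}}(-,\overbar C).
\]
Hence $\Phi_{\mathcal{KH}}\circ\Phi_{\mathcal{HK}}=\Phi_{\mathcal{K}}(\Phi_{\mathcal{H}}(-,C),\overbar C)$ as chain maps.

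Next, apply Proposition~\ref{Proposition: Quantitative version of composition = identity} with $\mathcal{H}_{0,0}=\mathcal{H}$ and $\mathcal{H}_{1,0}=\mathcal{K}$. Both have $E^-<\varepsilon$ by admissibility, and $G$ satisfies $\norm{\overbar K\#H\#G}_\Hofer<\varepsilon$. The proposition shows the composition is chain homotopic to the inclusion $\CF^a_*(H;\alpha)\to\CF^{a+\mathcal{A}_G(C)+\mathcal{A}_{\overbar G}(\overbar C)+2\varepsilon}_*(H;\alpha)$, with the chain homotopy respecting these filtrations.

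It remains to bound the shift. By admissibility,
\[
\mathcal{A}_G(C)\le c_\im(\overbar H\#K;[\Sigma])+2\varepsilon,\qquad \mathcal{A}_{\overbar G}(\overbar C)\le c_\im(\overbar K\#H;[\Sigma])+2\varepsilon,
\]
where the second bound for the inverse data is read via Hofer continuity, $\overbar G$ being Hofer-close to $\overbar K\#H$. Summing gives a total shift of at most
\[
c_\im(\overbar H\#K;[\Sigma])+c_\im(\overbar K\#H;[\Sigma])+6\varepsilon .
\]
I then identify $c_\im(\overbar H\#K)+c_\im(\overbar K\#H)$ with $\gamma_\im(\overbar H\#K)$. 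Since $\overbar K\#H$ generates the inverse path of $\overbar H\#K$, Definition~\ref{Definition: PSS image Spectral norm} gives this equality, because the normalization terms cancel (the mean-value integrals of a path and its inverse are negatives of each other). Finally, push the chain homotopy forward by inclusion into the level $a+\gamma_\im(\overbar H\#K)+6\varepsilon$. This is allowed because inclusions commute with all filtered maps, and also with the intermediate level $a+c_\im(\overbar K\#H)+3\varepsilon$ appearing in the statement.

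The main obstacle is bookkeeping rather than new analysis, in two places. First, the gluing identification must hold for both data simultaneously at a common large $\ell$, with the glued almost complex structures regular. Second, one must check that Proposition~\ref{Proposition: Quantitative version of composition = identity} really produces a filtered chain homotopy at the stated level, not merely an unfiltered one. For the second point I would take the homotopy from the associativity family $\mathfrak H$ with $E^-(\mathfrak H)<2\varepsilon$, composed with the linear homotopy used for $\Phi_{\mathcal F\#_\ell\mathcal H_{1,1}}$. Each of these shifts action by at most the accounted amount, so the budget of $6\varepsilon$ suffices.
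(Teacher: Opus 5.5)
Your proposal is correct and follows the paper's own route. The paper obtains this corollary exactly by combining Proposition~\ref{Proposition: PSS + Product = Continuation Filtered}, which rewrites $\Phi_{\mathcal{KH}}\circ\Phi_{\mathcal{HK}}$ as $\Phi_{\mathcal K}(\Phi_{\mathcal H}(-,C),\overbar C)$, with Proposition~\ref{Proposition: Quantitative version of composition = identity}. Your shift estimate $\mathcal{A}_G(C)+\mathcal{A}_{\overbar G}(\overbar C)+2\varepsilon\le c_\im(\overbar H\#K;[\Sigma])+c_\im(\overbar K\#H;[\Sigma])+6\varepsilon=\gamma_\im(\overbar H\#K)+6\varepsilon$ is the right bookkeeping. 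In particular, the admissibility bound for $\overbar{\mathcal G}$ has to be read as $c_\im(\overbar K\#H;[\Sigma])+2\varepsilon$, as you do, and the normalization terms in $\gamma_\im$ do cancel.
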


\vspace{1cm}

\subsubsection{Bijection through enhanced Floer cylinders}
In this section, we complete the proof of Proposition \ref{Proposition: Existence of enhanced continuation data and bijection through enhanced Floer cylinder}.
For this, we fix a non-degenerate Hamiltonian $H$ and a subset $\mathcal{Y} \subset \mathcal{P}(H;\alpha)$ for a free homotopy class $\alpha\in\tilde{\pi}_1(\Sigma)$. Let $K$ be a non-degenerate Hamiltonian satisfying
\[\gamma_\im(\overbar{H}\#K)<\Delta(\mathcal{Y},\phi^1_H).\]
For such $K$, we choose $\varepsilon>0$ such that 
\[\gamma_\im(\overbar{H}\#K)+8\varepsilon<\Delta(\mathcal{Y},\phi^1_H).\]
Given such $\varepsilon>0$, we denote by $\mathcal{HK}$ and $\mathcal{KH}$ for $\varepsilon$-admissible enhanced continuation data from $H$ to $K$ and from $K$ to $H$ respectively.

Our goal is to construct a subset $\mathcal{Z}\subset \mathcal{P}(K;\alpha)$, an injection \[\mathfrak{R}_\mathcal{HK}:\mathcal{Y}\to\mathcal{Z}\] such that for each $y \in \mathcal{Y}$ there exists a Floer continuation cylinder for $\mathcal{HK}$ from $y$ to the $1$-periodic orbit $\mathfrak{R}_\mathcal{HK}(y) \in \mathcal{Z}$, and a bijection $$\mathfrak{S}_\mathcal{KH}: \mathcal{Y} \to \mathcal{Y}$$ such that for every $y \in \mathcal{Y}$, there exists a Floer continuation cylinder for $\mathcal{KH}$ from the $1$-periodic orbit $\mathfrak{R}_\mathcal{HK}(y) \in \mathcal{Z}$ to the $1$-periodic orbit $\mathfrak{S}_\mathcal{HK}(y) \in \mathcal{Y}$. 
We begin by proving the following lemma.
\begin{lem}	\label{Lemma: Composition of continuation map drops action for small-perturbation}
		Let $H, K$ be non-degenerate Hamiltonians on $\Sigma$. Let $\varepsilon >0$ and suppose that $\mathcal{Y}$ is a finite subset of $\mathcal{P}(H;\alpha)$ and that  such that 
		\begin{equation} \label{eq:isolation-hypothesis-for-lemma}
		    \gamma_\im(\overbar{H} \# K) + 8\varepsilon < \Delta(\mathcal{Y},\phi^1_H).
		\end{equation}
        Let $J$ be a $S^1$-family of compatible almost complex structures on $(\Sigma,\omega)$ that is regular for $H$, and such that $\mathcal{Y}$ is $(\Delta(\mathcal{Y},\phi^1_H)-\epsilon)$-Floer isolated with respect to $J$.
		Let $\mathcal{HK}, \mathcal{KH}$ be $\varepsilon$-enhanced continuation data between $H$ and $K$.
		Then, for any $y\in \mathcal{Y}$ and any capping $w$ of $y$ the support of the chain $\mathrm{supp}\left(\left({\Phi_{\mathcal{KH}}\circ\Phi_{\mathcal{HK}}([y,w]) -[y,w]}\right)\right)$ has the following property: 
        \begin{itemize}
            \item if $[y',w'] \in  \mathrm{supp}\left(\left({\Phi_{\mathcal{KH}}\circ\Phi_{\mathcal{HK}}([y,w]) - [y,w]}\right)\right)$, then either
		$y^\prime$ is not an element of $\mathcal{Y}$ or
        \[\mathcal{A}_H([y^\prime,w']) < \mathcal{A}_H ([y,w]).\]
        \end{itemize}
	\end{lem}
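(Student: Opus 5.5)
Let $\delta := \gamma_\im(\overbar{H}\#K)+6\varepsilon$. The plan is to use the chain homotopy from the Corollary following Proposition \ref{Proposition: Quantitative version of composition = identity}. That corollary gives a filtered chain homotopy $S$ with
\[
\Phi_{\mathcal{KH}}\circ\Phi_{\mathcal{HK}} - \iota = S\circ d_{H,J} + d_{H,J}\circ S
\]
as maps $\CF^a_*(H;\alpha)\to \CF^{a+\delta}_*(H;\alpha)$. Apply this identity to $[y,w]$ with $a$ just above $\mathcal{A}_H([y,w])$. It follows that every $[y',w']$ in the support of $\Phi_{\mathcal{KH}}\circ\Phi_{\mathcal{HK}}([y,w])-[y,w]$ lies in the support of $S(d_{H,J}[y,w])$ or of $d_{H,J}(S[y,w])$. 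Since both maps shift action by at most $\delta$, we already have $\mathcal{A}_H([y',w'])\le \mathcal{A}_H([y,w])+\delta$. The work is to sharpen this to a strict decrease whenever $y'\in\mathcal{Y}$.

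First I would handle the term $d_{H,J}(S[y,w])$. Any capped orbit $[y',w']$ in the support of $d_{H,J}(\sigma)$ is the negative-end limit of a nontrivial Floer cylinder of $(H,J)$. That cylinder has $y'$ as an asymptotic end and energy $\mathcal{A}_H([x,v])-\mathcal{A}_H([y',w'])>0$, where $[x,v]\in\mathrm{supp}(S[y,w])$. Suppose $y'\in\mathcal{Y}$. Floer $(\Delta-\varepsilon)$-isolation with respect to $J$ then forces this energy to exceed $\Delta(\mathcal{Y},\phi^1_H)-\varepsilon$. Combining with $\mathcal{A}_H([x,v])\le \mathcal{A}_H([y,w])+\delta$ gives
\[
\mathcal{A}_H([y',w'])<\mathcal{A}_H([y,w])+\delta-\Delta+\varepsilon<\mathcal{A}_H([y,w]),
\]
using \eqref{eq:isolation-hypothesis-for-lemma}.

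Next I would handle the term $S(d_{H,J}[y,w])$. Here every $[x,v]$ in the support of $d_{H,J}[y,w]$ is joined to $y\in\mathcal{Y}$ by a nontrivial Floer cylinder with $y$ at its positive end. Isolation gives $\mathcal{A}_H([x,v])<\mathcal{A}_H([y,w])-\Delta+\varepsilon$. Applying $S$ raises action by at most $\delta$, so again $\mathcal{A}_H([y',w'])<\mathcal{A}_H([y,w])$. In this case it does not even matter whether $y'\in\mathcal{Y}$.

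The main obstacle is the bookkeeping of the chain homotopy. Its action shift must really be bounded by $\delta$ on each generator, not just on homology. For this I would revisit the construction in Proposition \ref{Proposition: Quantitative version of composition = identity}. It consists of the associativity homotopy with $E^-(\mathfrak{H})<2\varepsilon$, the PSS gluing, and the linear homotopy to the constant data. Each of these is built from moduli spaces satisfying the a priori energy estimates, so the generator-wise shift is bounded by $\mathcal{A}_G(C)+\mathcal{A}_{\overbar G}(\overbar C)+O(\varepsilon)\le\delta$. The total slack consumed is at most $8\varepsilon$, which \eqref{eq:isolation-hypothesis-for-lemma} absorbs.

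A second point to check is that the regular $J$ used for $d_{H,J}$ is the same one appearing in the ends of $\mathcal{HK}$ and $\mathcal{KH}$ and in the homotopy. Proposition \ref{Proposition: Property of Floer-theoretic action gap} allows such a choice. One may need to perturb, losing a further $\varepsilon$, which is still within the $8\varepsilon$ budget.
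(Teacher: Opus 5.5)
Your proposal is correct and follows the paper's proof. Both take the chain homotopy $S$ from Proposition~\ref{Proposition: Quantitative version of composition = identity}, with action shift at most $\gamma_\im(\overbar{H}\#K)+6\varepsilon$, split the support into the $d_{H,J}\circ S$ and $S\circ d_{H,J}$ terms, and use Floer $(\Delta(\mathcal{Y},\phi^1_H)-\varepsilon)$-isolation together with \eqref{eq:isolation-hypothesis-for-lemma} to absorb that shift. For the $S\circ d_{H,J}$ term you apply isolation to the cylinders leaving $y$ itself rather than to $y'$; that is the right way to fill in what the paper calls ``identical reasoning,'' and the fact that your naming of ends is reversed relative to the paper's convention is harmless, since the isolation condition covers both ends.
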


\begin{proof}
Under these assumptions, we apply Proposition \ref{Proposition: Quantitative version of composition = identity} for $\varepsilon>0$ and obtain the chain-homotopy $S:\CF_*(H;\alpha)\to \CF_{*+1}(H;\alpha)$   between \linebreak
$\Phi_{\mathcal{KH}}\circ\Phi_{\mathcal{HK}}$ and the identity map, i.e.
\[\Phi_{\mathcal{KH}}\circ\Phi_{\mathcal{HK}} - \id = d_{H,J} \circ S + S \circ  d_{H,J}.\]
From the construction of $S$, we obtain the apriori bound
\begin{align} \label{eq:action-increase-S}
	\mathcal{A}_H(S([y,w])) &\le \mathcal{A}_H([y,w]) +\gamma_\im(\overbar{H}\#K) + 6\epsilon.
\end{align}
We thus conclude that $\mathrm{supp}\left( S([y,w])\right)$ only contains elements with action smaller than $\mathcal{A}_H([y,w])+\gamma_\im(\overbar{H}\#K) + 6\epsilon$.


Let $[y',w'] \in  \mathrm{supp}\left(\left({\Phi_{\mathcal{KH}}\circ\Phi_{\mathcal{HK}}([y,w]) - [y,w]}\right)\right)$ and assume that $y' \in \mathcal{Y}$. Then, one of the following must be true:
\begin{itemize}
    \item[a)] The capped orbit $[y',w']$ appears in the support of the cycle \linebreak $d_{H,J} \circ S ([y,w])$.
    \item[b)] The capped orbit $[y',w']$ appears in the support of the cycle  \linebreak $S \circ d_{H,J} ([y,w])$.
\end{itemize}

In case a) holds, exists a capped $1$-periodic orbit $\sigma  \in \mathrm{supp}\left( S([y,w])\right)$ and a Floer cylinder $u$ appearing in the differential $d_{H,J}(\sigma)$ such that $u$ is a Floer cylinder from $\sigma$ to $[y',w']$, where $w'$ is some capping of $y'$. Because $y' \in \mathcal{Y}$ and $\mathcal{Y}$ is $(\Delta(\mathcal{Y},\phi^1_H)-\epsilon)$-Floer isolated with respect to $J$, we conclude that $E(u)>(\Delta(\mathcal{Y},\phi^1_H)-\epsilon)$, and therefore 
$$\mathcal{A}_H([y',w']) <  \mathcal{A}_H(\sigma) - \Delta(\mathcal{Y},\phi^1_H) + \epsilon.$$
By \eqref{eq:action-increase-S}, we know that $\mathcal{A}_H(\sigma) \leq \mathcal{A}_H([y,w]) +\gamma_\im(\overbar{H}\#K) + 6\epsilon   $. Combining these two inequalities we conclude that $$\mathcal{A}_H([y',w']) \le \mathcal{A}_H([y,w]) +\gamma_\im(\overbar{H}\#K) + 7\epsilon - \Delta(\mathcal{Y},\phi^1_H)<\mathcal{A}_H([y,w]),$$
where the last inequality follows from \eqref{eq:isolation-hypothesis-for-lemma}. 

Possibility b) is obtained by an identical reasoning. Namely, if $[y',w']$ appears in the support of the cycle $S \circ d_{H,J} ([y,w])$ and $y' \in \mathcal{Y}$
\begin{equation*}
    \mathcal{A}_H(([y',w'])) < \mathcal{A}_H([y,w])).
\end{equation*}
This completes the proof.
\end{proof}

With Lemma~\ref{Lemma: Composition of continuation map drops action for small-perturbation}, we now prove the existence of an injection $\mathfrak{R}_{\mathcal{HK}}$ and a bijection $\mathfrak{S}_{\mathcal{KH}}$ by dividing the argument into two cases. 

\textit{Case 1: $\Sigma \ne T^2$}.\\
From the maps $\Phi_{\mathcal{HK}}$ and $ \Phi_{\mathcal{KH}}$, we construct linear maps between the $\ZZ_2$-vector space $V(\mathcal{Y})$ generated by $\mathcal{Y}$ and the $\ZZ_2$-vector space $V(K;\alpha)$ generated by $\mathcal{P}(K;\alpha)$:
\[L_{\mathcal{HK}}:V(\mathcal{Y})\to V(K;\alpha)\quad \text{and} \quad L_{\mathcal{KH}}:V(K;\alpha)\to V(\mathcal{Y}).\]
The linear map $L_\mathcal{HK}$ is then defined as follows:
For each $y \in \mathcal{Y}$,
\[L_\mathcal{HK}(y):=\sum_{z \in \mathcal{P}(K;\alpha)} \#_2 \mathcal{M}(y,z;\mathcal{HK})_0 \cdot z\]
where $\mathcal{M}(y,z;\mathcal{HK})_0$ denotes the moduli space of enhanced continuation cylinders of Fredholm index $0$, defined as follows:
\begin{equation}
\label{Equation: Existence of injection bijection - 1}
\begin{aligned}
    \mathcal{M}(y,z;\mathcal{HK}) = \{ u: &\R \times S^1 \to \Sigma~ |~ \mathcal{F}_{\mathcal{HK}}(u)=0, ~u(-\infty)=y, ~u(+\infty)=z,\\ & \mbox{and } \CZ(z, w\#u)= \CZ (y, w) \mbox{ for some capping } w \mbox{ of } y  \}.
\end{aligned}
\end{equation}
We note that the condition
\begin{equation}
\label{Equation: Existence of injection bijection - 2}
  \CZ(z, w\#u)= \CZ (y, w)  
\end{equation}
does not depend on the choice of $w$. Moreover, after fixing a capping $w$ for $y_i$, there is at most one equivalence class of cappings satisfying \eqref{Equation: Existence of injection bijection - 2}. 

If $\Sigma\neq S^2$ or $T^2$, this follows from the uniqueness of the equivalence class of cappings. If $\Sigma=S^2$, there is again at most one equivalence class for each index. Therefore, in either case, the collection of capping data satisfying the index condition is either empty or consists of a single equivalence class.

Consequently, the moduli space $\mathcal{M}(y, z;\mathcal{HK})$ is either empty or can be identified with a moduli space
\[
\mathcal{M}([y, w],[z,w'];\mathcal{HK})
\]
for a pair of cappings satisfying the relation \eqref{Equation: Existence of injection bijection - 2}. In particular, the mod 2 count
\[
\#_2\mathcal{M}(y,z;\mathcal{HK})_0
\]
is well-defined. We extend the map $L_{\mathcal{HK}}$ linearly to all elements of $V(\mathcal{Y})$.



Similarly, we define $L_{\mathcal{KH}}$. 
For each $z \in \mathcal{P}(K;\alpha)$,
\[L_\mathcal{HK}(z)=\sum_{y \in \mathcal{Y}} \#_2 \mathcal{M}(z,y;\mathcal{KH})_0 \cdot z\]
where $\mathcal{M}(z,y;\mathcal{KH})_0$ is the moduli space of enhanced continuation cylinders of Fredholm index $0$ obtained by
interchanging the roles of $H$ and $K$.

\begin{lem}
\label{Lemma: Composition = Unipotent}
	The composition of these linear maps $L_{\mathcal{HK}}:V(\mathcal{Y})\to V(K;\alpha)$ and $L_{\mathcal{KH}}:V(K;\alpha)\to V(\mathcal{Y})$ is unipotent.
    Equivalently, there exists a natural number $n>0$ such that
    \begin{equation}
    \label{Equation: Existence of injection bijection - 3}
        \left(L_\mathcal{KH} \circ L_\mathcal{HK} - \id_{V(\mathcal{Y})}\right)^n=0.
    \end{equation}
\end{lem}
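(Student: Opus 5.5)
The plan is to show that, in a suitable basis of $V(\mathcal{Y})$, the matrix of $L_{\mathcal{KH}}\circ L_{\mathcal{HK}}$ is upper triangular with ones on the diagonal. Unipotency then follows at once: $L_{\mathcal{KH}}\circ L_{\mathcal{HK}}-\id_{V(\mathcal{Y})}$ is strictly triangular, so its $n$-th power vanishes for $n=\#\mathcal{Y}$. The ordering comes from the action. For each $y\in\mathcal{Y}$, fix a capping $w_y$. In Case 1 ($\Sigma\neq T^2$), capped orbits of fixed Conley--Zehnder index over a given orbit are unique. So, after restricting to the index-preserving moduli spaces used to define $L_{\mathcal{HK}}$ and $L_{\mathcal{KH}}$, every output comes with a well-defined capping induced by gluing the cylinders. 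Order $\mathcal{Y}$ by the values $\mathcal{A}_H([y,w_y])$.

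First I will identify the matrix coefficients of $L_{\mathcal{KH}}\circ L_{\mathcal{HK}}$ with those of the chain map $\Phi_{\mathcal{KH}}\circ\Phi_{\mathcal{HK}}$ restricted to $\mathcal{Y}$. The coefficient of $y'$ in $L_{\mathcal{KH}}L_{\mathcal{HK}}(y)$ equals
\[\sum_{z}\#_2\mathcal{M}(y,z;\mathcal{HK})_0\cdot\#_2\mathcal{M}(z,y';\mathcal{KH})_0,\]
and this is exactly the coefficient of $[y',w']$ in $\Phi_{\mathcal{KH}}\circ\Phi_{\mathcal{HK}}([y,w_y])$, where $w'=w_y\#u\#v$ is the index-preserving capping. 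This holds because both maps preserve degree, and in the non-torus case each index determines the capping class. One point to check is that on $S^2$ the capping produced by composing is the one of the same index; this is automatic because the continuation maps preserve $\CZ$.

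Second, I will apply Lemma~\ref{Lemma: Composition of continuation map drops action for small-perturbation}. Any $[y',w']$ with $y'\in\mathcal{Y}$ appearing in $\Phi_{\mathcal{KH}}\circ\Phi_{\mathcal{HK}}([y,w_y])-[y,w_y]$ satisfies $\mathcal{A}_H([y',w'])<\mathcal{A}_H([y,w_y])$. This has two consequences. The diagonal coefficient of $y$ is $1$: a term $[y,w']$ with $w'\sim w_y$ cannot appear in the difference, since it would have to satisfy the strict inequality $\mathcal{A}_H([y,w_y])<\mathcal{A}_H([y,w_y])$. The off-diagonal coefficients from $y$ to $y'$ are nonzero only if $\mathcal{A}_H([y',w'])<\mathcal{A}_H([y,w_y])$. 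For $\Sigma\neq S^2,T^2$ the action is independent of capping, so this already gives strict triangularity once $\mathcal{Y}$ is ordered by action; ties in action lead to zero entries.

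The main obstacle is the case $\Sigma=S^2$. There the capping $w'$ of $y'$ that appears is not the fixed reference $w_{y'}$, so the action comparison has to be made in terms of a function defined on orbits rather than on capped orbits. To handle this, I will use the fact that the composition preserves $\CZ$ and that on $S^2$ the quantity $\mathcal{A}_H([y,w])-\tfrac14\CZ([y,w])$ is invariant under recapping, because $\CZ$ changes by $4\omega(A)$ while the action changes by $\omega(A)$. Since $\CZ$ is preserved, the inequality $\mathcal{A}_H([y',w'])<\mathcal{A}_H([y,w_y])$ becomes a strict inequality for the recapping-invariant function $f(y)=\mathcal{A}_H([y,w])-\tfrac14\CZ([y,w])$. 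Ordering $\mathcal{Y}$ by $f$ then makes $L_{\mathcal{KH}}L_{\mathcal{HK}}-\id$ strictly triangular. If any entry causes trouble, the fallback is to argue by nilpotence directly: iterate the strict decrease of $f$ along nonzero entries, and conclude that any product of $\#\mathcal{Y}$ entries of $L_{\mathcal{KH}}L_{\mathcal{HK}}-\id$ vanishes, since $f$ takes only finitely many values on $\mathcal{Y}$.
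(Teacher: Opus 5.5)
Your proposal is correct and follows the paper's proof. Like the paper, you identify the entries of $L_{\mathcal{KH}}\circ L_{\mathcal{HK}}$ with coefficients of the chain-level composition $\Phi_{\mathcal{KH}}\circ\Phi_{\mathcal{HK}}$, then combine Lemma~\ref{Lemma: Composition of continuation map drops action for small-perturbation} with the uniqueness of the capping class of a given Conley--Zehnder index. The only difference is cosmetic: the paper runs a pigeonhole argument along chains of $\#\mathcal{Y}$ nonzero entries, while you encode the same fact in the recapping-invariant function $\mathcal{A}_H-\tfrac14\CZ$ (simply $\mathcal{A}_H$ in higher genus), which gives an explicit ordering making $L_{\mathcal{KH}}\circ L_{\mathcal{HK}}-\id_{V(\mathcal{Y})}$ strictly triangular; your fallback is exactly the paper's argument.
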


\begin{proof}
Take $n=|\mathcal{Y}|$ and fix the ordered basis $\mathcal{Y}=\{y_1,\ldots,y_n\}$ of $V(\mathcal{Y})$. Then the linear map $L_\mathcal{KH}\circ L_\mathcal{HK}$ is represented by the matrix
whose $(j,i)$-entry is $a_{ji}$, where
\[
L_\mathcal{KH}\circ L_\mathcal{HK}(y_i)
=
\sum_{j=1}^n a_{ji}\,y_j,
\]
with $a_{ji}\in\mathbb{Z}_2$. By construction, coefficients $a_{ji}$ are given by
\[a_{j i}= \#_2 \left( \bigcup_{z \in \mathcal{P}(K;\alpha)} \mathcal{M}(y_i,z;\mathcal{HK})_0 \times \mathcal{M}(z,y_j;\mathcal{KH})_0 \right).\]
When $a_{ji} \neq 0$, fix a capping $w$ of $y_i$. Then, from the discussion above, there exists a unique equivalence class of cappings $w'$ of $y_j$ satisfying
\[
\mu(y_i,w)=\mu(y_j,w').
\]
Moreover, $a_{ji}$ agrees with the coefficient of $[y_j,w']$ in the image
\[
\Phi_{\mathcal{KH}}\circ\Phi_{\mathcal{HK}}([y_i,w]).
\]

Consider, the matrix
\[
L_\mathcal{KH} \circ L_\mathcal{HK} - \id_{V(\mathcal{Y})}.
\]
Together with Lemma \ref{Lemma: Composition of continuation map drops action for small-perturbation}, this implies that the $(j,i)$-entry $a_{ji}-\delta_{ji}$ is non-zero only if
\[
\mathcal{A}_H([y_j,w']) < \mathcal{A}_H([y_i,w]),
\]
where $\delta_{ij}$ denotes the Kronecker delta.

{ We next prove that equation \eqref{Equation: Existence of injection bijection - 3} holds for
$n=|\mathcal{Y}|$.} If the $(j,i)$-entry of the $|\mathcal{Y}|$-th iterate is non-zero, then there exists a sequence
\[
i=k_0,~k_1,~\ldots,~k_n=j
\]
such that
\[
a_{k_\ell k_{\ell-1}}\neq 0
\]
for every $\ell=1,\ldots,n$. Hence, there exist cappings $w_\ell$ for each $1$-periodic orbit $y_{k_\ell}$ such that
\begin{equation*}
\begin{aligned}
    \mathcal{A}_H([y_{k_n},w_n]) < \mathcal{A}_H([y_{k_{n-1}},w_{n-1}]) < \cdots < \mathcal{A}_H([y_{k_0},w_0]),\\
    \CZ([y_{k_n},w_n]) = \CZ([y_{k_{n-1}},w_{n-1}]) = \cdots = \CZ([y_{k_0},w_0]).
\end{aligned}
\end{equation*}
Hence, there exist distinct subscripts $m$ and $m'$ such that
\[
y_{k_m}=y_{k_{m'}}=y_k.
\]
Since there is at most one equivalence class of cappings of $y_k$ for a given index, we obtain a contradiction from the strict inequality of Hamiltonian actions. This shows that \eqref{Equation: Existence of injection bijection - 3} holds and completes the proof the lemma.
\end{proof}

Since the determinant of a unipotent matrix is equal to $1$, we have
\[
\det(L_\mathcal{KH}\circ L_\mathcal{HK})=1.
\]
We now present the proof of Proposition~\ref{Proposition: Existence of enhanced continuation data and bijection through enhanced Floer cylinder}. To this end, we recall the following form of the Cauchy--Binet formula. For two matrices $A=(a_{ij})$ and $B=(b_{ij})$ of sizes $m\times n$ and $n\times m$, respectively,
\begin{equation}
\begin{aligned}
\label{Equation: Cauchy-Binet formula}
\det(A\cdot B)=\sum_{f,g}\prod_{i\in\set{1,2,\ldots,n}}
a_{g(i)f(i)}\cdot b_{f(i)i},
\end{aligned}
\end{equation}
where the summation is taken over all injections  $f:\{1,\ldots,n\}\hookrightarrow\{1,\ldots,m\}$ and all bijections $g:\{1,\ldots,n\}\rightarrow\{1,\ldots,n\}$.

\begin{proof}[Proof of Proposition \ref{Proposition: Existence of enhanced continuation data and bijection through enhanced Floer cylinder} for $\Sigma \ne T^2$] First, we take a Hofer-small perturbation $G$ of $\overbar{H}\# K$. More precisely we choose $G$ to be a non-degenerate Hamiltonian satisfying \[\norm{\overbar{K}\# H \#G}_{\Hofer}<\varepsilon,\]
where we recall that our standing choice of $\varepsilon>0$ satisfies \eqref{eq:epsilon-for-action-gap}. 
From the construction in Section \ref{Section: Construction of enhanced Floer continuation data}, we obtain an $\varepsilon$-admissible enhanced continuation data $\mathcal{HK}$ from $H$ to $K$. Similarly, by considering $\overbar{G}$, we obtain  an $\varepsilon$-admissible enhanced continuation $\mathcal{KH}$ data from $K$ to $H$.

Next, we consider the $\ZZ_2$-vector space $V(\mathcal{Y})$ generated by $\mathcal{Y}$ and $\ZZ_2$-vector space $V(K;\alpha)$ generated by $\mathcal{P}(K;\alpha)$. Using the moduli spaces
\[ \mathcal{M}(y, z;\mathcal{HK})_0~\text{ and }~\mathcal{M}(z, y;\mathcal{KH})_0\]
defined in \eqref{Equation: Existence of injection bijection - 1}, we define the linear maps \[L_{\mathcal{HK}}:V(\mathcal{Y})\to V(K;\alpha)\quad \text{and} \quad L_{\mathcal{KH}}:V(K;\alpha)\to V(\mathcal{Y})\]
as described above. By Lemma \ref{Lemma: Composition = Unipotent}, the composition of these linear maps is unipotent. In particular, the determinant of the composition is non-zero.

Again, we regard
\[
\mathcal{Y}=\{y_1,\ldots,y_n\} \quad\text{and}\quad \mathcal{P}(K;\alpha)=\{z_1,\ldots,z_m\}
\]
as ordered bases of $V(\mathcal{Y})$ and $V(K;\alpha)$, respectively. Since the determinant is non-zero, the Cauchy--Binet formula \eqref{Equation: Cauchy-Binet formula} implies that there exist a bijection
\[
\mathfrak{R}_{\mathcal{HK}}:\mathcal{Y}\to\mathcal{Z}
\quad\text{and}\quad
\mathfrak{S}_{\mathcal{KH}}:\mathcal{Y}\to\mathcal{Y}
\]
such that, for every $1$-periodic orbit $y\in\mathcal{Y}$, the following hold:
\begin{itemize}
	\item The $\mathfrak{R}_{\mathcal{HK}}(y)$-coefficient of $L_{\mathcal{HK}}(y)$ is non-zero.
	\item The $\mathfrak{S}_{\mathcal{KH}}(y)$-coefficient of
	$L_{\mathcal{KH}}(\mathfrak{R}_{\mathcal{HK}}(y))$ is non-zero.
\end{itemize}

By the definitions of $L_\mathcal{HK}$ and $L_\mathcal{KH}$, this implies that both moduli spaces
\[
\mathcal{M}(y,\mathfrak{R}_{\mathcal{HK}}(y);\mathcal{HK})_0
\quad\text{and}\quad
\mathcal{M}(\mathfrak{R}_{\mathcal{HK}}(y),\mathfrak{S}_{\mathcal{KH}}(y);\mathcal{KH})_0
\]
are non-empty for every $1$-periodic orbit $y\in\mathcal{Y}$. This completes the proof of the proposition.
\end{proof}

\textit{Case 2: $\Sigma = T^2$}.\\
When $\alpha=\cont$, the proof is exactly the same as in Case 1. Hence, we only need to consider the case of non-contractible loops. In this case, the main difficulty is that a given loop with a fixed CZ-index may admit infinitely many cappings. In principle, the $0$-dimensional moduli space
\[
\mathcal{M}(y,z;\mathcal{HK})_0
\]
defined as in \eqref{Equation: Existence of injection bijection - 1} need not be finite.

Here, as in the construction of the Floer chain complex in Section~\ref{sec:Hamontorus} on $T^2$, we work with the downward completion. To this end, we introduce the Novikov field
\[
\Lambda := \left\{ \sum_{j\geq 0} a_j T^{e_j} \;\middle|\; a_j\in\mathbb{Z}_2,\ e_j\in\mathbb{R},\ \#\{j\mid e_j<\lambda\}<\infty \ \forall \lambda\in\mathbb{R} \right\}.
\]
This time, we define linear maps between the $\Lambda$-vector spaces $V(\mathcal{Y})$ and $V(K;\alpha)$ generated by $\mathcal{Y}$ and $\mathcal{P}(K;\alpha)$, respectively:
\[L_{\mathcal{HK}}:V(\mathcal{Y})\to V(K;\alpha)\quad \text{and} \quad L_{\mathcal{KH}}:V(K;\alpha)\to V(\mathcal{Y}).\]

The linear map $L_\mathcal{HK}$ is then defined as follows: For each $y \in \mathcal{Y}$,
\begin{equation}
\label{Equation: Map L for torus case}
    L_\mathcal{HK}(y):=\sum_{z \in \mathcal{P}(K;\alpha)}\,\sum_{u \in \mathcal{M}(y,z;\mathcal{HK})_0} T^{e(u)}  \cdot z
\end{equation}
where $\mathcal{M}(y,z;\mathcal{HK})_0$ denotes the moduli space of enhanced continuation cylinders of index $0$, defined as \eqref{Equation: Existence of injection bijection - 1}. The exponent $e(u)$ is given by
\[
e(u):=
\int_{S^1}\bigl(H(y_i(t))-K(z_j(t))\bigr)\,dt -\int_{\mathbb{R}\times S^1}u^*\omega.
\]
Equivalently, for a capping $w$ of $y_i$, we have
\[
e(u)=\mathcal{A}_H([y_i,w])-\mathcal{A}_K([z_j,w\#u]).
\]

Again, note that since the first Chern class $c_1(TT^2)$ vanishes, the CZ-index of a capped orbit is independent of the choice of capping for a given reference data $(\eta_\alpha, \tau_\alpha)$. Hence, in this case, we omit the capping from the notation for the CZ-index. As in Case 1, the moduli space $\mathcal{M}(y, z; \mathcal{HK})_0$ can be non-empty only if
\begin{equation}
\label{Equation: Existence of injection bijection - 4}
\CZ(y) - \CZ(z)=0.
\end{equation}
Recall that the CZ-index may depend on the choice of the reference data $\eta_\alpha$ and the trivialization $\tau_\alpha$. However, with a common choice of these data, the difference does not depend on such choices.

When the relation \eqref{Equation: Existence of injection bijection - 4} holds, the $z$-coefficient of $L_\mathcal{HK}(y)$ can be written as
\begin{equation}
\label{Equation: Existence of injection bijection - 5}
\sum_{w'} \#_2 \mathcal{M}([y,w],[z,w'];\mathcal{HK}) \cdot T^{\mathcal{A}_H([y,w])-\mathcal{A}_H([z,w'])}
\end{equation}
for some capping $w$ of $y_i$.  Again, the coefficient does not depend on the choice of capping. Moreover, \eqref{Equation: Existence of injection bijection - 5} defines a formal power series with coefficients in $\mathbb{Z}_2$ and possibly negative real exponents. By Gromov--Floer compactness, the $z$-coefficient satisfies the Novikov finiteness condition and hence belongs to the Novikov field $\Lambda$.


To see this, recall that the exponent and the energy are related by
\begin{equation}
    \label{Equation: Existence of injection bijection - 6}
    -E^+(\mathcal{H}K) \le E_{\mathcal{HK}}(u)-(\mathcal{A}_H([y,w])-\mathcal{A}_K([z,w']))\le E^-(\mathcal{HK}).
\end{equation}
By Gromov--Floer compactness, for any given upper bound on the energy, there are only finitely many enhanced continuation cylinders. Since the orbit and the continuation data are fixed, inequality \eqref{Equation: Existence of injection bijection - 6} then implies that there are only finitely many enhanced continuation cylinders satisfying
\[
e(u)\leq \lambda,
\]
for any $\lambda\in\mathbb{R}$. Hence, the coefficient belongs to $\Lambda$. We extend the map $\mathcal{L}_{\mathcal{HK}}$ linearly to all elements of $V(\mathcal{Y})$. Similarly, we define $\mathcal{L}_{\mathcal{KH}}$.

Here, Lemma~\ref{Lemma: Composition of continuation map drops action for small-perturbation} also applies. As a result, we obtain the following lemma, which is a version of Lemma~\ref{Lemma: Composition = Unipotent} for Case 2. To state the result, we define 
\[
    \Lambda_+ := \left\{ \sum_{j\geq 0} a_j T^{e_j} \;\middle|\; a_j\in\mathbb{Z}_2,\ e_j \in \mathbb{R}^+,\ \#\{j\mid e_j<\lambda\}<\infty \ \forall \lambda\in\mathbb{R} \right\}.
    \]
\begin{lem}
\label{Lemma: Composition = Identity + higher terms}
    The composition of these linear maps $L_{\mathcal{HK}}:V(\mathcal{Y})\to V(K;\alpha)$ and $L_{\mathcal{KH}}:V(K;\alpha)\to V(\mathcal{Y})$ satisfies the following: with respect to the ordered basis $\mathcal{Y}=\{y_1,\ldots,y_n\}$ of $V(\mathcal{Y})$, every entry satisfies
    \[
    \left(L_\mathcal{KH} \circ L_\mathcal{HK} - \id_{V(\mathcal{Y})}\right)_{j i} \in \Lambda_+.
    \]
\end{lem}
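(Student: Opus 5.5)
The plan is to reduce the statement to Lemma~\ref{Lemma: Composition of continuation map drops action for small-perturbation}, exactly as in Case 1, by reading the Novikov-valued entries of $L_{\mathcal{KH}}\circ L_{\mathcal{HK}}$ as generating series of coefficients of $\Phi_{\mathcal{KH}}\circ\Phi_{\mathcal{HK}}$, indexed by action drops.

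First I will compute the $(j,i)$-entry. By \eqref{Equation: Map L for torus case}, composing $L_{\mathcal{KH}}$ with $L_{\mathcal{HK}}$ multiplies exponents, and the exponent of a pair $(u,v)$ with $u\in\mathcal{M}(y_i,z;\mathcal{HK})_0$ and $v\in\mathcal{M}(z,y_j;\mathcal{KH})_0$ is
\[
e(u)+e(v)=\mathcal{A}_H([y_i,w])-\mathcal{A}_H([y_j,w\#u\#v]).
\]
The intermediate action of $z$ cancels here. Grouping by the equivalence class $[y_j,w']$ of the resulting capping, the entry becomes
\[
\sum_{w'} c_{[y_j,w']}\,T^{\mathcal{A}_H([y_i,w])-\mathcal{A}_H([y_j,w'])},
\]
where $c_{[y_j,w']}\in\mathbb{Z}_2$ is the coefficient of $[y_j,w']$ in the chain $\Phi_{\mathcal{KH}}\circ\Phi_{\mathcal{HK}}([y_i,w])$. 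On $T^2$ with $\alpha\neq\cont$, capping classes over $y_j$ are distinguished exactly by their action, so distinct $w'$ give distinct exponents and no cancellation between different $w'$ is hidden. The Novikov finiteness of the sum follows from the finiteness already shown for each factor together with the energy estimate \eqref{Equation: Existence of injection bijection - 6}. I also need the composition of the Novikov-valued linear maps to agree with the chain-level composition on the downward completed complex. This holds because both are finite sums in each action window.

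Next I subtract the identity, which contributes $T^0$ times the coefficient at $[y_i,w]$ when $j=i$. The resulting entry has $T$-exponents coming from capped orbits $[y_j,w']$ in
\[
\mathrm{supp}\bigl(\Phi_{\mathcal{KH}}\circ\Phi_{\mathcal{HK}}([y_i,w])-[y_i,w]\bigr).
\]
Since $y_j\in\mathcal{Y}$, Lemma~\ref{Lemma: Composition of continuation map drops action for small-perturbation} gives $\mathcal{A}_H([y_j,w'])<\mathcal{A}_H([y_i,w])$. So every exponent appearing is strictly positive, and the entry lies in $\Lambda_+$.

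The main obstacle is the legitimacy of applying that lemma in the torus setting. Its proof uses the filtered chain homotopy $S$ from Proposition~\ref{Proposition: Quantitative version of composition = identity} and the Floer isolation of $\mathcal{Y}$. For non-contractible $\alpha$ on $T^2$ the chains are infinite, downward-completed sums, so I must check that the relation
\[
\Phi_{\mathcal{KH}}\circ\Phi_{\mathcal{HK}}-\id=dS+Sd
\]
holds coefficientwise on these completed chains. This follows because each coefficient is determined by finitely many moduli spaces below a fixed action level, by Gromov--Floer compactness and the a priori energy bounds. I must also check that the action-shift bound \eqref{eq:action-increase-S} holds for each generator. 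With these two checks in place, the support argument of the lemma applies verbatim to every capped generator $[y_i,w]$, and the claim follows.
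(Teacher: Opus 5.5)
Your proposal is correct and follows essentially the same route as the paper. The paper likewise identifies, for each capping class $w'$ of $y_j$, the coefficient of $T^{\mathcal{A}_H([y_i,w])-\mathcal{A}_H([y_j,w'])}$ in the $(j,i)$-entry with the $[y_j,w']$-coefficient of $\Phi_{\mathcal{KH}}\circ\Phi_{\mathcal{HK}}([y_i,w])$, and then invokes Lemma~\ref{Lemma: Composition of continuation map drops action for small-perturbation} to conclude that only strictly positive exponents survive after subtracting the identity. Your additional checks are points the paper takes for granted with the remark that the lemma ``also applies'': that capping classes over $y_j$ are separated by action, and that the chain homotopy relation and the bound \eqref{eq:action-increase-S} hold coefficientwise on downward-completed chains.
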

\begin{proof}
    Assume that the $(j,i)$-entry
    \[
        \left(L_\mathcal{KH}\circ L_\mathcal{HK}-\id_{V(\mathcal{Y})}\right)_{ji}
    \]
    is non-zero. Recall that when $i\neq j$, this entry is the same as the $(j,i)$-entry of
    $L_\mathcal{KH}\circ L_\mathcal{HK}$, and it can be non-zero only if
    \[
    \CZ(y_i)=\CZ(y_j)
    \]
    for some cappings $w$ and $w'$ of $y_i$ and $y_j$, respectively. This condition is automatically satisfied when $i=j$. Hence, we only consider the case where the CZ-indices are the same.

    From Equation \eqref{Equation: Existence of injection bijection - 5}, the $(j,i)$-entry of $L_\mathcal{KH}\circ L_\mathcal{HK}$ is given by
    \begin{equation*}
    \begin{aligned}
    \sum_{w'}\sum_{[z,w'']} \#_2\mathcal{M}([y_i,w],[z,w''];\mathcal{HK})
    \cdot \#_2 \mathcal{M}([z,w''],[y_j,w'];\mathcal{KH})\cdot
    T^{e_{ij}(w,w')},
\end{aligned}
\end{equation*}
where, for convenience, we denote the action difference by
\[
e_{ij}(w,w'):=\mathcal{A}_H([y_i,w])-\mathcal{A}_H([y_j,w']).
\]
The first summation runs over all possible cappings $w'$ of $y_j$, and the second summation runs over all possible capped orbits $[z,w'']$ satisfying
\[
\CZ(y_i)=\CZ(z)=\CZ(y_j).
\]    
    
    In particular, for each capping $w'$ of $y_j$, the sum
    \[
    \sum_{[z,w'']} \#_2 \mathcal{M}([y,w],[z,w''];\mathcal{HK}) \cdot \#_2 \mathcal{M}([z,w''],[y_j,w'];\mathcal{KH})
    \]
    agrees precisely with the $[y_j,w']$-coefficient of
    \[\Phi_{\mathcal{KH}}\circ \Phi_{\mathcal{HK}}([y_i,w]).\]
    By Lemma~\ref{Lemma: Composition of continuation map drops action for small-perturbation}, this sum can only be non-zero if either
    \[
        [y_j,w']=[y_i,w] \quad\text{or}\quad \mathcal{A}_H([y_j,w''])<\mathcal{A}_H([y_i,w]).
    \]
    In the first case $[y_j,w'']=[y_i,w]$, the coefficient is precisely $1$.
    
    Combining the above observations, the $(j,i)$-entry
    \[(L_{\mathcal{KH}}\circ L_{\mathcal{HK}}-\id_{V(\mathcal{Y})})_{ji}\] is given by
    \begin{equation*}
    \begin{aligned}
        \sum_{\substack{w':\text{ capping of }y_j\\ \mathcal{A}_H([y_j,w'])<\mathcal{A}_H([y_i,w])}} \sum_{\substack{[z,w''] \in \widetilde{\mathcal{P}}(K;\alpha) \\ \CZ(z) = \CZ(y_i)}}\#_2 &\mathcal{M}([y,w],[z,w''];\mathcal{HK}) \\ \cdot \#_2 &\mathcal{M}([z,w''],[y_j,w'];\mathcal{KH}) \cdot T^{e_{ij}(w,w')}.
    \end{aligned}
    \end{equation*}
    Hence, every exponent appearing above is positive, and the $(j,i)$-entry belongs to $\Lambda_+$. This completes the proof.
\end{proof}

\begin{proof}[Proof of Proposition \ref{Proposition: Existence of enhanced continuation data and bijection through enhanced Floer cylinder} for $\Sigma = T^2$] First, we take a Hofer-small perturbation $G$ of $\overbar{H}\# K$. More precisely we choose $G$ to be a non-degenerate Hamiltonian satisfying \[\norm{\overbar{K}\# H \#G}_{\Hofer}<\varepsilon,\]
where we recall that our standing choice of $\varepsilon>0$ satisfies  \eqref{eq:epsilon-for-action-gap}. 
From the construction in Section \ref{Section: Construction of enhanced Floer continuation data}, we obtain an $\varepsilon$-admissible enhanced continuation data $\mathcal{HK}$ from $H$ to $K$. Similarly, by considering $\overbar{G}$, we obtain  an $\varepsilon$-admissible enhanced continuation $\mathcal{KH}$ data from $K$ to $H$.

When $\alpha=\cont$, the proof is exactly the same as in the case $\Sigma\ne T^2$. Next, we consider the $\Lambda$-vector space $V(\mathcal{Y})$ generated by $\mathcal{Y}$ and the $\Lambda$-vector space $V(K;\alpha)$ generated by $\mathcal{P}(K;\alpha)$. Using the moduli spaces
\[ \mathcal{M}(y, z;\mathcal{HK})_0~\text{ and }~\mathcal{M}(z, y;\mathcal{KH})_0\]
defined in \eqref{Equation: Existence of injection bijection - 1}, we define the linear maps
\[L_{\mathcal{HK}}:V(\mathcal{Y})\to V(K;\alpha)\quad \text{and} \quad L_{\mathcal{KH}}:V(K;\alpha)\to V(\mathcal{Y})\]
as described above.

By Lemma~\ref{Lemma: Composition = Identity + higher terms}, with respect to the ordered basis $\mathcal{Y}$, the matrix of the composition satisfies
\[
L_{\mathcal{KH}}\circ L_{\mathcal{HK}} = \id_{V(\mathcal{Y})}+N,
\]
where every entry of $N$ belongs to $\Lambda_+$. Therefore, the determinant satisfies
\[
\det(L_{\mathcal{KH}}\circ L_{\mathcal{HK}}) = 1+\text{(terms in }\Lambda_+).
\]
In particular, the determinant of the composition is non-zero. The rest of the proof is the same as in Case 1. This completes the proof of the proposition for Case 2.
\end{proof}

\vspace{0.5cm}

\subsection{Braid isotopy through enhanced Floer cylinders}
\label{Section: Isotopy via enhanced Floer cylinders}

\

Let $\alpha \in \tilde{\pi}_1(\Sigma)$, $H,K:S^1 \times \Sigma \to \R$ be non-degenerate Hamiltonians, and $\mathcal{Y} \in \mathcal{P}(H;\alpha)$ satisfy the hypothesis of Proposition  \ref{Proposition: Existence of enhanced continuation data and bijection through enhanced Floer cylinder}.
In this subsection, we prove that for the collection $\mathcal{Z}$ of $1$-periodic orbits of $K$ obtained in that proposition, the braid induced by  $(\mathcal{Z},K)$ is freely isotopic to the braid induced by $(\mathcal{Y},H)$. Moreover, the free isotopy is realized by the enhanced Floer cylinders constructed in the proof of Proposition \ref{Proposition: Existence of enhanced continuation data and bijection through enhanced Floer cylinder}; see Figure \ref{fig:Free isotopy through enhanced Floer cylinders} below. This is the content of the following theorem.

\begin{figure}
    \centering
    \includegraphics[width=\linewidth]{Figures/Isotopy_through_enhanced_cylinder.jpg}
    \caption{Free isotopy through enhanced Floer cylinders}
    \label{fig:Free isotopy through enhanced Floer cylinders}
\end{figure}

\begin{thm}
\label{Theorem: Braid Stability under PSS-image spectral distance}
	Let $(\Sigma,\omega)$ be a closed symplectic surface, and let $\alpha\in\tilde{\pi}_1(\Sigma)$ be a free homotopy class of loops in $\Sigma$. Suppose that two non-degenerate Hamiltonians $H, K$ and a subset $\mathcal{Y} \subset \mathcal{P}(H;\alpha)$ satisfy the inequality
	\[\gamma_\im(\overbar{H}\#K)<\Delta(\mathcal{Y},\phi^1_H).\]
	Then, there exist a subset $\mathcal{Z} \subset \mathcal{P}(K;\alpha)$ such that the braids
	\[\BB(\mathcal{Y},H)\quad\text{and}\quad \BB(\mathcal{Z};K)\]
	are freely isotopic as braids.
\end{thm}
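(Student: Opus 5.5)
Our approach combines Proposition~\ref{Proposition: Existence of enhanced continuation data and bijection through enhanced Floer cylinder} with a positivity-of-intersections argument in the spirit of \cite{Alves-Meiwes24}. First we apply that proposition. This yields $\varepsilon$-admissible enhanced continuation data $\mathcal{HK}$ and $\mathcal{KH}$, a subset $\mathcal{Z}=\mathfrak{R}_{\mathcal{HK}}(\mathcal{Y})\subset\mathcal{P}(K;\alpha)$, and, for each $y\in\mathcal{Y}$, index-$0$ enhanced cylinders $u_y\in\mathcal{M}(y,\mathfrak{R}_{\mathcal{HK}}(y);\mathcal{HK})_0$ and $v_y\in\mathcal{M}(\mathfrak{R}_{\mathcal{HK}}(y),\mathfrak{S}_{\mathcal{KH}}(y);\mathcal{KH})_0$. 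Since the Hamiltonian $1$-form $\mathcal{HK}$ on $Z=\R\times S^1$ may have a $ds$-component, we use Gromov's trick in the form adapted to $1$-forms. On $\R\times S^1\times\Sigma$ we take the almost complex structure $\tilde J$ determined by $\mathcal{HK}$ and $\mathcal{J}$, so that graphs $\tilde u(s,t)=(s,t,u(s,t))$ of solutions of $\mathcal{F}_{\mathcal{HK}}(u)=0$ are exactly the $\tilde J$-holomorphic sections. Near $s=\pm\infty$, $\tilde J$ is the standard translation-invariant structure for $H$ and $K$, and trivial cylinders over orbits are $\tilde J$-holomorphic there.

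The main step is to show that the graphs $\tilde u_y$ and $\tilde u_{y'}$ are disjoint for $y\neq y'$. We argue as in \cite{Alves-Meiwes24}. First, distinct $y\neq y'$ give distinct sections, and the asymptotic orbits at both ends are distinct because $\mathfrak{R}_{\mathcal{HK}}$ is injective. Hence the algebraic intersection number $\tilde u_y\cdot\tilde u_{y'}$ is a homotopy invariant determined by the asymptotics. Concretely, it equals the difference of the linking/winding data of the braids at the two ends; for sections it can be computed as the winding of $u_y-u_{y'}$ in a local trivialization. We then glue $u_y$ with $v_y$ along the common end $\mathfrak{R}_{\mathcal{HK}}(y)$ to obtain the cylinder $w_y=u_y\#_\ell v_y$ for the data $\mathcal{HK}\#_\ell\mathcal{KH}$. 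This data is homotopic, through a family of $1$-forms, to the constant data $H\,dt$. Since $\mathfrak{S}_{\mathcal{KH}}$ is a bijection of $\mathcal{Y}$, the family $\{w_y\}$ runs from the braid $\BB(\mathcal{Y},H)$ to itself.

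The key difficulty, which we expect to be the main obstacle, is to rule out that the total intersection number $\tilde w_y\cdot\tilde w_{y'}$ is positive. Positivity of intersections gives
\[0\le \tilde u_y\cdot\tilde u_{y'}\le \tilde w_y\cdot\tilde w_{y'},\]
so it suffices to show that the right-hand side vanishes. We would first try to compare the asymptotic winding numbers at $\pm\infty$. The same sub-braid $\BB(\mathcal{Y},H)$ appears at both ends, up to the permutation $\mathfrak{S}_{\mathcal{KH}}$. Summing over all pairs and using that a permutation preserves the total linking of the braid, we get $\sum_{y\neq y'}\tilde w_y\cdot\tilde w_{y'}=0$. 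Each term is nonnegative, so each term vanishes.

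Where asymptotic winding numbers are not rigid enough, we would instead use the bounds on asymptotic winding from non-degenerate Conley--Zehnder indices, as in Hutchings' and Alves--Meiwes' arguments. For this we use the fact that all cylinders have index $0$ and that the capped orbits have equal $\CZ$, which forces the extremal winding. With disjointness in hand, we set $\BB_s=\bigcup_{y}\{(t,u_y(s,t)):t\in S^1\}$. Each $\BB_s$ is a geometric braid, since each slice of a section is an embedded loop and the slices of different $u_y$ are disjoint. The family $\{\BB_s\}$ converges to $\BB(\mathcal{Y},H)$ as $s\to-\infty$ and to $\BB(\mathcal{Z},K)$ as $s\to+\infty$, by the $C^\infty$ asymptotic convergence. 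This gives the desired free isotopy.
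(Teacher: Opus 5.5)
Your setup matches the paper: apply the enhanced-continuation proposition, use Gromov's trick for the $1$-form data, use positivity of intersections to get $m_u(y,y')\ge 0$ and $m_v(y,y')\ge 0$, and finish with the slices $\BB_s$. The gap is in the step that is supposed to make the total intersection number vanish.

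The intersection number of two graphs with fixed, disjoint asymptotics is not determined by the asymptotic braids. It depends on the relative homotopy classes of the cylinders. For example, on $S^2$ with $\mathfrak{S}_{\mathcal{KH}}$ the identity, replace $w_y$ by its connected sum with a degree-one sphere. Both ends are unchanged, but $\tilde w_y\cdot\tilde w_{y'}$ increases by one. So ``a permutation preserves the total linking of the braid'' cannot give $\sum_{y\neq y'}\tilde w_y\cdot\tilde w_{y'}=0$. Moreover, on a closed surface ``the winding of $u_y-u_{y'}$'' is not defined globally. Your fallback does not repair this. Asymptotic winding bounds coming from $\CZ$ indices control intersections of curves that share an asymptotic orbit near that end. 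That never happens here, because $y\neq y'$ and $\mathfrak{R}_{\mathcal{HK}}$ is injective.

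What is missing is a global homological computation, together with the Floer-theoretic input that kills the extra homotopy contributions. The paper proceeds as follows.

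\begin{enumerate}
\item Iterate until $\mathfrak{S}^M$ is the identity, and concatenate $u_y, v_y, u_{\mathfrak{S}(y)},\dots,v_{\mathfrak{S}^{M-1}(y)}$ into a torus $T_y\colon S^1\times S^1\to\Sigma$.
\item The graphs $\tilde T_y,\tilde T_{y'}\subset S^1\times S^1\times\Sigma$ are closed, and $\tilde T_y\cdot\tilde T_{y'}=\sum_m \bigl(m_u+m_v\bigr)(\mathfrak{S}^m y,\mathfrak{S}^m y')$.
\item The K\"unneth decomposition, with $\gamma_y=T_y(\cdot,0)$ and $[y]=[y']$, gives $[\tilde T_y]\cdot[\tilde T_{y'}]=\Area(T_y)+\Area(T_{y'})+[\gamma_y]\circ[y]+[\gamma_{y'}]\circ[y']$.
\end{enumerate}

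One must then show that this expression vanishes.

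\begin{itemize}
\item For $\Sigma\neq T^2$, $\Area(T_y)=0$. Index-$0$ cylinders preserve $\CZ$, and there is at most one capping class per $\CZ$-index. This is where your index observation is actually needed: it kills the degree term, not a winding term.
\item Also for $\Sigma\neq T^2$, $[\gamma_y]\circ[y]=0$. On $S^2$ this holds because $H_1(S^2)=0$. In genus $\ge 2$ it holds because abelian subgroups of $\pi_1(\Sigma)$ are cyclic.
\item On $T^2$ neither term need vanish. However $[y]\circ[\gamma_y]=\deg T_y=\Area(T_y)$, so the two terms cancel.
\end{itemize}

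Without this argument, or an equivalent one, nonnegativity of the local intersection numbers alone does not give disjointness of the graphs $\tilde u_y$.
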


\begin{proof}


By Proposition \ref{Proposition: Existence of enhanced continuation data and bijection through enhanced Floer cylinder}, there exist
\begin{enumerate}
	\item  two enhanced continuation data
    $\mathcal{HK}, \mathcal{KH}$ between Floer data $(H,J_H)$ and $(K,J_K)$, chosen under the assumption that $\varepsilon>0$ is sufficiently small to satisfy
	\[\gamma_\im(\overbar{H}\#K) + 6\varepsilon < \Delta_H(y),\]
	\item a subset $\mathcal{Z} \subset \mathcal{P}(K;\alpha)$, and
	\item a injection $\mathfrak{R}_{\mathcal{HK}}: \mathcal{Y} \to \mathcal{Z}$ and a bijection $\mathfrak{S}_{\mathcal{KH}}: \mathcal{Y} \to \mathcal{Y}$ such that both moduli spaces
    \begin{align*}
    & \mathcal{M}(y; \mathfrak{R}_{\mathcal{HK}}(y) ; \mathcal{HK})_0, \\ & \mathcal{M}(\mathfrak{R}_{\mathcal{HK}}(y), \mathfrak{S}_{\mathcal{KH}}(y) ; \mathcal{KH})_0
    \end{align*}
	are not empty for every $1$-periodic orbit $y$ in $\mathcal{Y}$.
\end{enumerate}

We denote by $\mathcal{J}_1$ the underlying family of almost complex structures that is part of the enhanced continuation data $\mathcal{HK}$, and by $\mathcal{J}_2$ the underlying family of almost complex structures that is part of the enhanced continuation data $\mathcal{KH}$. These will play an important role in the arguments that follow.

From now on, we abbreviate $\mathfrak{R}_{\mathcal{HK}}$ and $\mathfrak{S}_{\mathcal{KH}}$ as $\mathfrak{R}$ and $\mathfrak{S}$, respectively. For each $1$-periodic orbit $y$ in $\mathcal{Y}$, fix an enhanced continuation cylinder $u_y \in \mathcal{M}(y; \mathfrak{R}(y) ; \mathcal{HK})_0$.

We introduce the following terminology. If $u: Z \to \Sigma$ is a differentiable map we let $\tilde{u}$  denote the embedded cylinder in $\widetilde{\Sigma}:=Z\times \Sigma$ obtained as the graph  of the map $u$. We proceed to prove the following result for the set of enhanced continuation cylinders $\{ u_y \}_{y \in \mathcal{Y}}$ chosen above.

\begin{prop}
\label{Proposition: No intersection between graph of enhanced cylinders}
	For every pair of distinct $1$-periodic orbits $y, y^\prime$ in $\mathcal{Y}$, we have that
	\[\tilde{u}_y \cap \tilde{u}_{y^\prime}=\emptyset.\]
\end{prop}

We first proceed to complete the proof of Theorem \ref{Theorem: Braid Stability under PSS-image spectral distance} assuming Proposition \ref{Proposition: No intersection between graph of enhanced cylinders}. We claim that the $\mathcal{Y}$-family of enhanced Floer cylinder $\set{u_y}_{y\in\mathcal{Y}}$ provides a free isotopy between the braids $\BB(\mathcal{Y},H)$ and $\BB(\mathcal{Z},K)$. Let $\mathfrak{f}:(0,1)\to \RR$ be a diffeomorphism that extends continuously to a homeomorphism $\bar{\mathfrak{f}}:[0,1]\to \overline{\RR}=\RR \cup \set{-\infty,\infty}$. Define a $[0,1]$-family of maps $\set{\BB_\tau}_{\tau\in[0,1]}$ by
\[\BB_\tau:\bigcup_{y \in \mathcal{Y}} S^1_y \to S^1\times \Sigma\]
where each $\BB_{\tau, y}(t)$ is given by
\begin{align*}
	\BB_{\tau,y}(t)=
	\begin{cases}
		(t, y(t)) & \text{for } \tau=0,\\
		(t, u_y(\mathfrak{f}(\tau), t)) & \text{for } \tau \in (0,1),\\
		(t, \mathfrak{R}(y)(t)) & \text{for } \tau=1.
	\end{cases}
\end{align*}
In particular, we have $\mathcal{B}_0 = \mathcal{B}(\mathcal{Y, H})$ and $\mathcal{B}_1 = \mathcal{B}(\mathcal{Z,K})$.

By Proposition \ref{Proposition: No intersection between graph of enhanced cylinders}, for every distinct $y, y^\prime$ in $\mathcal{Y}$, we have
\[ u_y(s,t) \ne u_{y^\prime}(s,t) \quad \text{ for all } (s, t) \in \RR \times S^1.\]
Equivalently, for every distinct $y, y^\prime$ in $\mathcal{Y}$, we have
\[ \mathcal{B}_{\tau, y}(t) \ne \mathcal{B}_{\tau, y^\prime}(t) \quad \text{ for all } (\tau,t)\in (0,1) \times S^1\]
ensuring that each $\mathcal{B}_\tau$ is a braid. Therefore, the family $\set{\BB_\tau}_{\tau\in[0,1]}$ defines a free isotopy between $\mathcal{B}(\mathcal{Y, H})$ and $\mathcal{B}(\mathcal{Z, K})$. This completes the proof.
\end{proof}


Hence, it remains to prove Proposition \ref{Proposition: No intersection between graph of enhanced cylinders}. The proof consists of two steps. First, we observe that the graphs of Floer continuation cylinders always intersect positively. Next, we prove that the total intersection number is zero. From these two results, we conclude that the graphs do not intersect.

The first step will follow from Lemma \ref{Lemma: Positivity of intersection between graph of enhanced cylinders} below. To state it, we define the set of intersections between graphs $\tilde{u}_y$ and $\tilde{u}_{y^\prime}$ as
\[ I(u_y,u_{y^\prime}):=\set{(s,t) \in \RR\times S^1: u_y(s,t)=u_{y^\prime}(s,t)}.\]
The lemma will be obtained from combining the Gromov trick and the positivity of intersections for pseudoholomorphic curves in $4$-dimensional almost complex manifolds.

\begin{lem}
	\label{Lemma: Positivity of intersection between graph of enhanced cylinders}
	For every $y \ne y^\prime$ in $\mathcal{Y}$, the set $I(u_y, u_{y^\prime})$ is isolated in $\RR\times S^1$. Moreover, for each point $(s,t) \in I(u_y, u_{y^\prime})$, the local intersection number of $\tilde{u}_y, \tilde{u}_{y^\prime}$ at $(s,t)$, denoted by $\iota(\tilde{u}_y,\tilde{u}_{y^\prime}; s, t)$, is well-defined and positive.
\end{lem}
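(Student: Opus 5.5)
We plan to use Gromov's trick: the graphs $\tilde{u}_y$ and $\tilde{u}_{y'}$ are pseudoholomorphic curves for a single almost complex structure $\tilde{J}$ on the four-dimensional manifold $\widetilde{\Sigma}=Z\times\Sigma$, where $Z=\RR\times S^1$. Once this is established, both claims follow from standard positivity of intersections for pseudoholomorphic curves in almost complex four-manifolds (Micallef--White, McDuff).

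\textbf{Step 1: the almost complex structure.} The enhanced continuation data $\mathcal{HK}$ consists of a Hamiltonian-valued $1$-form $\mathcal{H}$ on $Z$ and a $Z$-family $\mathcal{J}_1=\set{J_z}_{z\in Z}$. Locally $\mathcal{H}=F\,ds+G\,dt$, and in the present setting the $ds$-component may be nonzero. At a point $(z,x)$ we split $T_{(z,x)}\widetilde{\Sigma}=T_zZ\oplus T_x\Sigma$ and define $\tilde{J}$ by
\[
\tilde{J}(\xi,0)=\bigl(j\xi,\;X_{\mathcal{H}}(z,x)(j\xi)-J_z(x)X_{\mathcal{H}}(z,x)(\xi)\bigr),\qquad \tilde{J}(0,v)=(0,J_z(x)v).
\]
This is the standard Gromov graph construction. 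We will verify $\tilde{J}^2=-\id$ directly. Next we unwind the definition to check that $u$ solves $\mathcal{F}_{\mathcal{HK}}(u)=0$, i.e.\ $(Du-X_\mathcal{H})^{0,1}=0$, if and only if $\tilde{u}(z)=(z,u(z))$ satisfies $d\tilde{u}\circ j=\tilde{J}\circ d\tilde{u}$. For this we compute $d\tilde{u}(\xi)=(\xi,Du(\xi))$, apply $\tilde{J}$, and compare second components. The resulting identity is exactly $Du(j\xi)-X_\mathcal{H}(j\xi)=J\bigl(Du(\xi)-X_\mathcal{H}(\xi)\bigr)$, which is the Floer product equation \eqref{Equation: Floer product equation}. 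Since $\tilde{J}$ depends only on the data $\mathcal{HK}$, all the cylinders $u_y$ become $\tilde{J}$-holomorphic graphs for the same $\tilde{J}$. The map $\tilde{J}$ is smooth, because $\mathcal{H}$ and $\mathcal{J}_1$ are smooth.

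\textbf{Step 2: isolatedness and positivity.} The graphs $\tilde{u}_y$ are embedded, since each is the graph of a map. For $y\neq y'$ they are distinct curves, because their asymptotics at $-\infty$ are different orbits $y\neq y'$. So they have no common component: if they agreed on an open set, unique continuation for $\tilde{J}$-curves would make them coincide everywhere and force $y=y'$. The Micallef--White local structure theorem (or McDuff's positivity of intersections for distinct somewhere-injective $\tilde{J}$-curves in dimension four) then gives two conclusions. First, the intersection points of $\tilde{u}_y$ and $\tilde{u}_{y'}$ are isolated in $\widetilde{\Sigma}$. Second, the local intersection number at each such point is well defined and satisfies $\iota\ge 1$, with equality exactly at transverse points. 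Intersection points of the graphs correspond bijectively to points $(s,t)\in I(u_y,u_{y'})$ via the projection to $Z$. Hence $I(u_y,u_{y'})$ is discrete in $\RR\times S^1$.

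The main obstacle is making Step 1 correct when the $ds$-component is present and the family $J_z$ genuinely depends on $z$. One has to check that the formula gives an honest almost complex structure and that no curvature terms spoil the equivalence with the perturbed Cauchy--Riemann equation. These terms enter only the energy, not the complex structure, so we expect the pointwise linear-algebra check to suffice.

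A secondary point is that Lemma \ref{Lemma: Positivity of intersection between graph of enhanced cylinders} concerns only local behaviour. Isolatedness in the noncompact domain $\RR\times S^1$ is all that is claimed, so no compactness at the ends is needed here. Finiteness of the intersection set will presumably be handled later using the nondegenerate asymptotics.
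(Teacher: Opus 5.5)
Your proposal is correct and follows the paper's proof. The paper likewise uses Gromov's trick to make the graphs $\tilde u_y$ holomorphic sections of $\RR\times S^1\times\Sigma$ for a single almost complex structure, whose explicit matrix (including the $ds$-component $F$) agrees with your coordinate-free formula, and then invokes positivity of intersections in dimension four; your additional remark that $\tilde u_y$ and $\tilde u_{y'}$ share no component, because their negative asymptotics differ, supplies a hypothesis the paper leaves implicit.
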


\begin{proof}
For any continuation data $(\mathcal{H},\mathcal{J})$, there exists an almost complex structure $\widetilde{\mathcal{J}}_{\mathcal{H},\mathcal{J}}$ on $\widetilde{\Sigma}=\R\times S^1 \times \Sigma$ such that, for any map $u:\RR\times S^1 \to \Sigma$, the following conditions are equivalent:
\begin{itemize}
	\item the map $u$ is a solution to $\mathcal{F}_{H,J}(u)=0$
	\item the graph $\tilde{u}$ is a $\widetilde{\mathcal{J}}_{\mathcal{H},\mathcal{J}}$-holomorphic section.
\end{itemize}
Here, we identify graphs as sections of the trivial fibration $\pi:\widetilde{\Sigma}\to \RR \times S^1$.

The construction of such an almost complex structure $\widetilde{\mathcal{J}}_{\mathcal{H},\mathcal{J}}$ is known as the Gromov trick. Explicitly, for $\mathcal{H}=F(s,t) ds+G(s,t) dt$ on a cylinder and a $\R \times S^1$-family of almost complex structures $\mathcal{J}=\set{J_{s,t}}_{(s,t)\in \RR\times S^1}$, the almost complex structure $\widetilde{\mathcal{J}}_{\mathcal{H},\mathcal{J}}$ on $\widetilde{\Sigma}$ is given by
\begin{equation*}
	\widetilde{\mathcal{J}}_{\mathcal{H},\mathcal{J}}:=
	\begin{pmatrix}
		~0 &-1 &~0\\
		~1 &0 &~0\\
		~-J_{s,t}X_{F} + X_{G} &-J_{s,t}X_{G}-X_{F} &~J_{s,t}~
	\end{pmatrix},
\end{equation*}
where the matrix representation is written with respect to the decomposition $T_s\RR \oplus T_t S^1 \oplus T_p \Sigma$.

Since, for $y,y'\in \mathcal{Y}$, the cylinders $u_y$ and $u_{y'}$ are solutions of the Floer equation $\mathcal{F}_{\mathcal{HK},\mathcal{J}_1}(u)=0$, it follows that their graphs $\tilde{u}_y$ and $\tilde{u}_{y^\prime}$ are \linebreak $\widetilde{\mathcal{J}}_{\mathcal{HK},\mathcal{J}_1}$-holomorphic curves in almost complex $4$-manifold $(\widetilde{\Sigma},\widetilde{\mathcal{J}}_{\mathcal{HK},\mathcal{J}_1})$. Applying the positivity of intersections for holomorphic curves in almost complex $4$-manifolds, we conclude that the set $I(u_y, u_{y^\prime})$ is isolated in $\RR\times S^1$ for every pair $y \neq  y^\prime$ in $\mathcal{Y}$. Moreover, positivity of intersections guarantees that for each point $(s,t) \in I(u_y, u_{y^\prime})$, the local intersection number of $\tilde{u}_y, \tilde{u}_{y^\prime}$ at $(s,t)$, denoted by $\iota(\tilde{u}_y,\tilde{u}_{y^\prime}; s, t)$, is well-defined and positive. This completes the proof.
\end{proof}

Since the Floer cylinder $u_y$ converges to $y$ at $-\infty$ and to $F(y)$ at $+\infty$, intersections between the graphs $\tilde{u}_y$ and $\tilde{u}_{y^\prime}$ can only appear on a compact set of $\RR \times S^1 \times \Sigma$. It then follows from Lemma \ref{Lemma: Positivity of intersection between graph of enhanced cylinders}, that for every pair $y \neq y' \in \mathcal{Y}$ the set $I(u_y,u_{y'})$ is finite, and that the sum of local intersection numbers
\[m_u(y,y^\prime):=\sum_{(s,t) \in I(u_y,u_{y'})} \iota(\tilde{u}_y,\tilde{u}_{y^\prime}; s, t)\]
is finite. Moreover, $m_u(y,y^\prime)=0$ if and only if $I(u_y,u_{y'})=\emptyset$, or equivalently if $\tilde{u}_y \cap \tilde{u}_{y^\prime}=\phi$.

We will show that for any pair $y \neq y' \in \mathcal{Y}$ we have $m_u(y,y^\prime)=0$.
To prove that, we will also consider enhanced continuation cylinders from the moduli spaces $\mathcal{M}_\im(\mathfrak{R}(y), \mathfrak{S}(y) ; \mathcal{KH})$.  For each $1$-periodic orbit $y$ in $\mathcal{Y}$, fix any enhanced continuation cylinder $v_y \in \mathcal{M}_\im(\mathfrak{R}(y), \mathfrak{S}(y) ; \mathcal{KH})$.
The intersection count $m_v(y,y^\prime)$ is defined similarly as $m_u(y, y^\prime)$ but using the cylinders $\tilde{v}_y, \tilde{v}_{y^\prime}$ instead. The same reasoning applied above implies that $m_u(y, y^\prime) \geq 0$, and that $m_u(y, y^\prime) = 0$ if and only if $I(v_y,v_{y'})=\emptyset$.
We are now in position to prove Proposition \ref{Proposition: No intersection between graph of enhanced cylinders}.

\begin{proof}[Proof of Proposition \ref{Proposition: No intersection between graph of enhanced cylinders}] Let $M$ be a natural number such that the iterate \linebreak $\mathfrak{S}^M:\mathcal{Y} \to \mathcal{Y}$ is an identity. For each $1$-periodic orbit $y$ in $\mathcal{Y}$, we consider the tuples of enhanced Floer continuation cylinders
\[U_y:=\left(u_y, v_y, u_{\mathfrak{S}(y)}, v_{\mathfrak{S}(y)}, \ldots, u_{\mathfrak{S}^{M-1}(y)}, v_{\mathfrak{S}^{M-1}(y)}\right).\]
We denote by $U_{y,i}$ the $i$-th component of the tuple $U_y$. From the definition, two adjacent Floer cylinders have the matching asymptotics, i.e.
\[U_{y,i}(+\infty)=U_{y, i+1}(-\infty)\]
for any $1\le i\le 2M-1$. Since $\mathfrak{S}^M(y)=y$, we also have
\[U_{y,2M}(+\infty)=U_{y,1}(-\infty).\]

Hence, we can define the concatenated cylinder $C_y:[0,1]\times S^1 \to \Sigma$ by
\begin{equation*}
C_y(s,t)=
\begin{cases}
		\mathfrak{S}^{Ms}(y)(t) &\text{for } 2Ms\in 2\ZZ\\
		\mathfrak{R}^{1/2 + Ms}(y)(t) &\text{for } 2Ms\in 2\ZZ+1\\
		U_{y,[2Ms]}(\mathfrak{f}(2Ms-[2Ms]),\,t) &\text{for } 2Ms\notin \ZZ
	\end{cases}.
\end{equation*}
Since $C_y(0,t)= y(t) = C_y(1, t)$, the map $C_y$ defines a torus map
\[T_y: S^1_s \times S^1_t \to \Sigma,\]
obtained by identifying the two boundaries of $[0,1]\times S^1$ in the obvious way.
We denote by $\widetilde{T}_y$ the graph of the torus $T_y$. Note that the map $C_\gamma$ is continuous on $[0,1]\times S^1$ and smooth for $(s,t)$ with $2Ms \notin \ZZ$. Therefore, the map $T_\gamma$ is also continuous. 
\begin{figure}
    \centering
    \includegraphics[width=.6\linewidth]{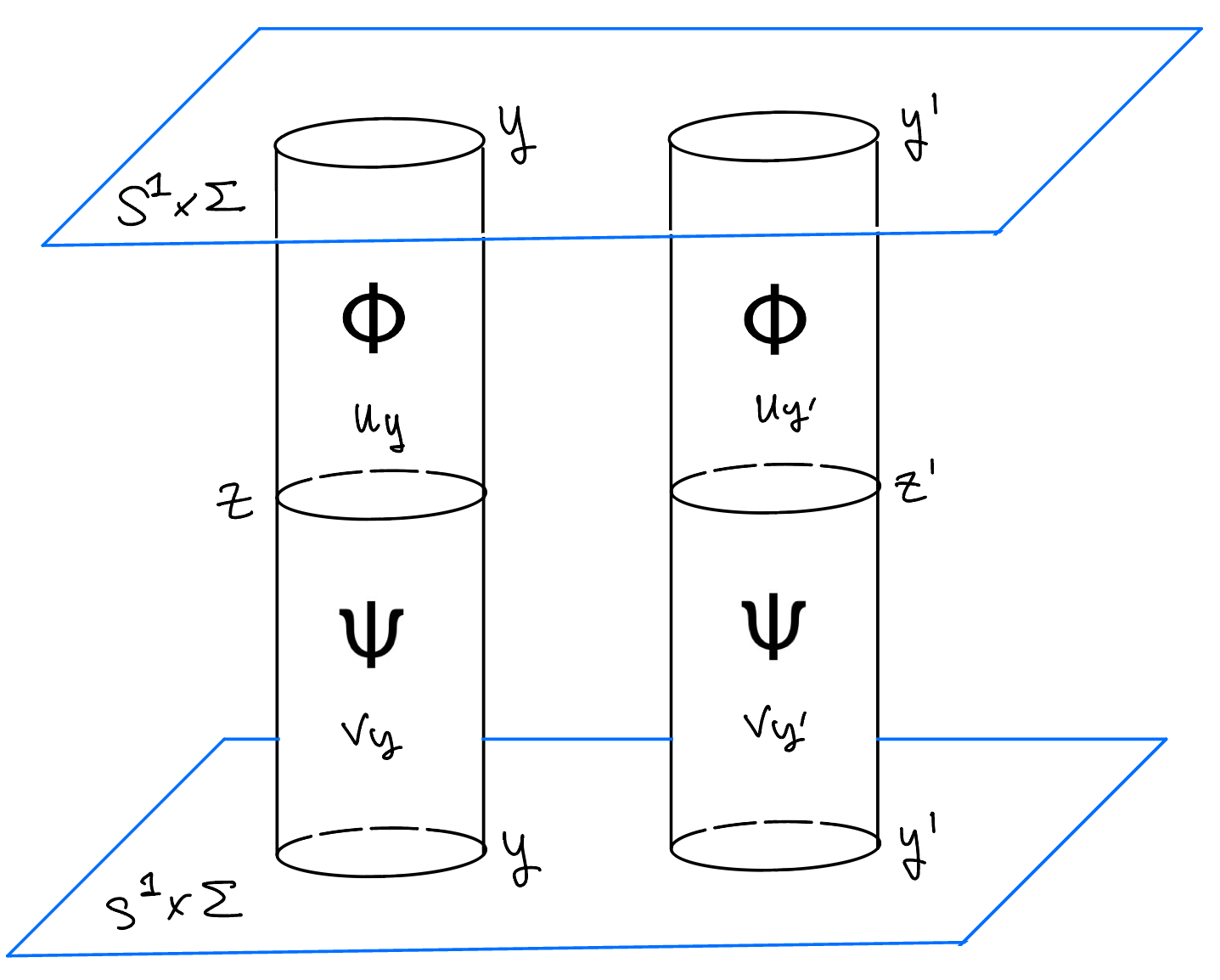}
    \caption{Tori of enhanced Floer cylinders}
    \label{fig:Torus of enhanced Floer cylinders}
\end{figure}
By the construction of the tori $T_y$, the algebraic intersection number $\widetilde{T}_y \cdot \widetilde{T}_{y^\prime}$ is given by
\begin{equation}
	\label{Equation: Sum of intersection points}
	\widetilde{T}_y \cdot \widetilde{T}_{y^\prime}= \sum_{0 \le m \le M-1} m_u(\mathfrak{S}^m(y), \mathfrak{S}^m(y^\prime))+m_v(\mathfrak{S}^m(y), \mathfrak{S}^m(y^\prime)),
\end{equation}
for any pair $y\neq y' \in \mathcal{Y}$.

By Lemma \ref{Lemma: Positivity of intersection between graph of enhanced cylinders}, each $m_u$ and $m_v$ is non-negative. Hence, for any pair $y\neq y' \in \mathcal{Y}$, we must have
\begin{equation}
m_u(y,y^\prime)=0
\end{equation}
if 
\[[\widetilde{T}_y] \cdot [\widetilde{T}_{y^\prime}]=0.\]

For $y\in \mathcal{Y}$, define the loop $\gamma_y:S^1 \to \Sigma$ by
\[\gamma_{y}(\tau)=T_y(\tau,0).\]
Then, the homology class of $[\widetilde{T}_y]$ is given by
\begin{equation} \label{eq:decomposition-via-künneth}
[\widetilde{T}_y] = [S^1_a \times S^1_b]\otimes[pt] + \big( [S^1_a]\otimes [y]- [S^1_b]\otimes[\gamma_y] \big)   + [pt]\otimes (T_y)_*[S^1_a \times S^1_b],
\end{equation}
where $[\widetilde{T}_y]\in H_2(S^1_a \times S^1_b \times \Sigma)$ is expressed using the decomposition:
\begin{align*}
	H_2(S^1_a \times S^1_b \times \Sigma) =\,&H_2(S^1_a \times S^1_b) \otimes H_0(\Sigma) \oplus H_1(S^1_a \times S^1_b) \otimes H_1(\Sigma) \\
	\oplus\, & H_0(S^1_a \times S^1_b) \otimes H_2(\Sigma),
\end{align*}
which follows from the Künneth theorem.

We will use this decomposition to compute $[\widetilde{T}_y] \cdot [\widetilde{T}_{y^\prime}]$. For this, we recall the formula for the intersection pairing on tensor products.
Given \linebreak $\alpha \in H_i(S^1_a \times S^1_b)$, $\alpha' \in H_{2-i}(S^1_a \times S^1_b)$, $\beta \in H_{2-i}(\Sigma)$ and $\beta' \in H_{i}(\Sigma)$ we have
\[
(\alpha \otimes \beta)\cdot(\alpha' \otimes \beta')
=
(-1)^{i}
(\alpha\cdot \alpha')(\beta\cdot \beta').
\]
Moreover,
\[(\alpha \otimes \beta)\cdot(\alpha' \otimes \beta')
= 0, \]
unless
\[
\deg(\alpha)+\deg(\alpha')=2,
\qquad
\deg(\beta)+\deg(\beta')=2.
\]
Using these formulas and equation \eqref{eq:decomposition-via-künneth}, we obtain 
\begin{align} \label{eq:intersection-first-approximation}
[\widetilde{T}_y] \cdot [\widetilde{T}_{y^\prime}] & = \mathrm{deg}(T_y) + \mathrm{deg}({T}_{y^\prime}) - (  [y'] \circ [\gamma_y]  +    [y] \circ [\gamma_{y'}] ) \\
& \nonumber = \mathrm{deg}(T_y) + \mathrm{deg}({T}_{y^\prime}) + ([\gamma_y] \circ  [y'] + [\gamma_{y'}] \circ  [y]),
\end{align}
where $\circ$ denotes the intersection product $\circ: H_1(\Sigma)\otimes H_1(\Sigma)\to \ZZ$, and $\mathrm{deg}(T_y)$ and $ \mathrm{deg}({T}_{y^\prime})$ denote the degrees of the maps $T_y$ and $T_{y'}$.

We introduce the following terminology, which will be very useful later on. Given a map $T: S^1_a \times S^1_b \to \Sigma $, we define $\mathrm{Area}(T):=\int_{S^1_a \times S^1_b}T^* \omega$. Because $\int_{\Sigma} \omega=1$, and $\Sigma$ is oriented by the symplectic form $\omega$, it follows that $[\omega]$ is the  fundamental cohomology class of $\Sigma$ for the de-Rham cohomology. Therefore, we obtain
\begin{align} \label{eq:area-of-Ty}
 & \mathrm{deg}(T_y) = \int_{S^1_a \times S^1_b} (T_y)^*\omega = \mathrm{Area}(T_y), \\
\nonumber & \mathrm{deg}(T_{y'}) = \int_{S^1_a \times S^1_b} (T_{y'})^*\omega = \mathrm{Area}(T_{y'}).
\end{align}

Since the free homotopy class of $y$ is $\alpha$, the homology class $[y]$ satisfies \[[y]=[\alpha]\in H_1(\Sigma, \ZZ).\]
In particular, \[ [y]=[y^\prime]\quad \text{ for any } y,y^\prime \in \mathcal{Y}.\]
We thus have an equality
\begin{equation} \label{eq:swappin-y-and-y'}
[\gamma_y] \circ  [y'] + [\gamma_{y'}] \circ  [y]= [\gamma_y] \circ  [y] + [\gamma_{y'}] \circ  [y'].
\end{equation}

Substituting \eqref{eq:area-of-Ty} and \eqref{eq:swappin-y-and-y'} in the right side of \eqref{eq:intersection-first-approximation} we obtain
\begin{equation} \label{eq:intersection-final-form}
    [\widetilde{T}_y] \cdot 	[\widetilde{T}_{y^\prime}] = \Area(T_y) + \Area(T_{y^\prime}) + [\gamma_y] \circ  [y] + [\gamma_{y'}] \circ  [y'].
\end{equation}
To complete the proof, we show that the right-hand side of \eqref{eq:intersection-final-form} vanish by considering the possible cases. This implies $m_u(y,y^\prime)=0,$
and hence
\[
    \tilde{u}_y\cap\tilde{u}_{y^\prime}=\emptyset
\]
for any distinct $y,y^\prime\in\mathcal{Y}$, completing the proof.

\textit{Case 1: $\Sigma \ne T^2$}.\\
In this case, $\Area(T_y)$ in fact vanishes for every $y\in\mathcal{Y}$. This follows from
\[
\Area(T_y)
=\mathcal{A}_H([y,w])-\mathcal{A}_H([y,C_y\# w])
\]
and the fact that there is at most one class of capped loops for each CZ--index. Note that, since we count only $0$-dimensional moduli spaces, indices does not change along the tori.

Next $[\gamma_y] \circ  [y] =0$ for any $y\in \mathcal{Y}$. For this, let $y\in \mathcal{Y}$.
\begin{enumerate}
    \item $\Sigma = S^2$. Since $H_1(S^2)=0$, we have that  $[y] = [\gamma_y] = 0$, so that $[\gamma_y] \circ  [y] = 0$.
    \item $\Sigma$ has genus $\geq 2$. In this case, we know that any abelian subgroup of $\pi_1(\Sigma)$ is either trivial or isomorphic to $\mathbb{Z}$. Since $\pi_1(T^2)$ is abelian, it follows that $f_*(\pi_1(T^2))$ is either $0$ or isomorphic to $\Z$. Using the fact that $H_1(\Sigma)$ is the abelianization of $\pi_1(\Sigma)$, it follows that $f_*(H_1(T^2))$ is either $0$ or isomorphic to $\Z$: in any case $f_*(H_1(T^2))$ is cyclic. Now, since $[y]$ and $ [\gamma_y]$ are both elements of the cyclic  group $f_*(H_1(T^2))$, there exists $\vartheta \in f_*(H_1(T^2))$ and integers $n,m$ such that $[y]=n\vartheta$ and $[\gamma_y]=m\vartheta$, which implies that \linebreak $[\gamma_y] \circ  [y] = n\vartheta \ \circ \ m\vartheta=0$, since the intersection product \linebreak $\circ : H_1(\Sigma) \otimes H_1(\Sigma) \to \Z$ is antisymmetric.
\end{enumerate}

\textit{Case 2: $\Sigma = T^2$}.\\
In this case,
    \[
    T_y:S^1_a\times S^1_b\longrightarrow T^2
    \]
    is a map between oriented tori. Recall that the orientation of $S^1_a\times S^1_b$ is the one for which  $\{[S^1_a],[S^1_b]\}$ is an oriented basis, and the orientaion of $T^2$ is determined by the symplectic form $\omega$. Let
    \[
    (T_y)_*:H_1(S^1_a\times S^1_b)\longrightarrow H_1(T^2)
    \]
    be the induced map on homology. Choosing oriented bases for the homology groups of the domain and target, let \(A\) be the matrix representing \((T_y)_*\). The degree of a map between oriented tori is given by the determinant of the induced map on first homology: indeed $\deg(T_y)=\det(A) = \mathrm{Area}(T_y)$.
    With respect to the basis $\{[S^1_a],[S^1_b]\}$
    for \(H_1(S^1_a\times S^1_b)\), the columns of \(A\) are the coordinates of
    \[
    (T_y)_*[S^1_a]=[y],
    \qquad
    (T_y)_*[S^1_b]=[\gamma_y].
    \]
   Moreover, the algebraic intersection pairing on \(H_1(T^2;\mathbb{Z})\) is represented, with respect to any oriented basis, by the matrix
    \[
    \begin{pmatrix}
    0&1\\
    -1&0
    \end{pmatrix}.
    \] 
    Hence, if two homology classes are written as column vectors in such a basis, their intersection number is given by the determinant of the matrix formed by these two columns. Therefore,
    \[
    [y]\circ[\gamma_y]=\det(A).
    \]
    Combining the two equalities above, we obtain
    \[
    [y]\circ[\gamma_y]=\deg(T_y) = \mathrm{Area}(T_y),
    \]
    Hence, the right-hand side of \eqref{eq:intersection-final-form} vanishes, and completes the proof.
\end{proof}

\subsubsection{Variation of Theorem \ref{Theorem: Braid Stability under PSS-image spectral distance}}
In this section, we prove a variation of Theorem \ref{Theorem: Braid Stability under PSS-image spectral distance} which relaxes the assumption that the Hamiltonain $H$ be non-degenerate. Instead, we only assume that the subset $\mathcal{Y} \in \mathcal{P}(H;\alpha)$ consists of non-degenerate $1$-periodic orbits.

\begin{thm}
	\label{Theorem: Variation of Braid Stability under PSS-image spectral distance}
	Let $(\Sigma,\omega)$ be a closed symplectic surface,  let $\alpha\in\tilde{\pi}_1(\Sigma)$ be a free homotopy class of loops in $\Sigma$, and $H:S^1 \times \Sigma \to \R$ be a Hamiltonian. Suppose $\mathcal{Y}$ is a finite subset of non-degenerate $1$-periodic orbits in $\mathcal{P}(H;\alpha)$. Then, for any non-degenerate Hamiltonian $K$ satisfying the inequality
	\[\gamma_\im(\overbar{H}\#K)<\Delta(\mathcal{Y},\phi^1_H),\]
	there exists a subset $\mathcal{Z} \subset \mathcal{P}(K;\alpha)$ such that the braids
	\[\BB(\mathcal{Y},H)\quad\text{and}\quad \BB(\mathcal{Z};K)\]
	are freely isotopic as braids.
\end{thm}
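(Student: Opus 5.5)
The plan is to reduce to Theorem \ref{Theorem: Braid Stability under PSS-image spectral distance} by perturbing $H$ away from the braid $\BB(\mathcal{Y},H)$ to a non-degenerate Hamiltonian $H'$. We then check that the hypotheses persist and that the braid is unchanged.

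First fix $\eta>0$ with
\[
\gamma_\im(\overbar{H}\#K)+3\eta<\Delta(\mathcal{Y},\phi^1_H).
\]
Since the orbits in $\mathcal{Y}$ are non-degenerate, the standard genericity argument gives a sequence $G_i\to 0$ in $C^\infty$. Each $G_i$ is compactly supported in $(S^1\times\Sigma)\setminus\BB(\mathcal{Y},H)$, and each $H_i:=G_i\#H$ is non-degenerate. Because $G_i$ vanishes near the graph of $\mathcal{Y}$, every $y\in\mathcal{Y}$ is still a $1$-periodic orbit of $H_i$. Its linearized return map is that of $H$ composed with the identity, since $G_i$ vanishes on a neighbourhood, so $y$ stays non-degenerate. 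Moreover $\BB(\mathcal{Y},H_i)=\BB(\mathcal{Y},H)$, because the flow of $G_i\#H$ is $\phi^t_{G_i}\circ\phi^t_H$ and $\phi^t_{G_i}$ fixes $\phi^t_H(\mathcal{Y})$.

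Second, Proposition \ref{Proposition: Stabillity of quasi-isolatedness} b) gives, for $i$ large,
\[
\Delta(\mathcal{Y},\phi^1_{H_i})>\Delta(\mathcal{Y},\phi^1_H)-\eta .
\]
For the other side of the inequality, the Hofer-continuity of $c_\im$ gives $|\gamma_\im(\overbar{H_i}\#K)-\gamma_\im(\overbar{H}\#K)|\le 2\|\overbar{H_i}\#K-\overbar{H}\#K\|_\Hofer$. Equivalently, it follows from the triangle inequality for $\gamma_\im$ together with $\gamma_\im(\phi^1_{G_i})\le\|G_i\|_\Hofer\to0$. One has to be slightly careful to work with the lifts $[\Phi_{\overbar{H_i}\#K}]$ in $\widetilde{\Ham}$ rather than with $\gamma_\im$ on $\Ham$, which matters on $S^2$. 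This is fine because $\Phi_{H_i}=\Phi_{G_i}\circ\Phi_H$ is a small perturbation of the path. Hence for $i$ large
\[
\gamma_\im(\overbar{H_i}\#K)<\gamma_\im(\overbar{H}\#K)+\eta<\Delta(\mathcal{Y},\phi^1_{H_i}).
\]

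Third, apply Theorem \ref{Theorem: Braid Stability under PSS-image spectral distance} to the non-degenerate pair $(H_i,K)$ and the set $\mathcal{Y}\subset\mathcal{P}(H_i;\alpha)$. Here $\mathcal{Y}$ keeps its class $\alpha$, since each orbit is literally the same loop. This yields $\mathcal{Z}\subset\mathcal{P}(K;\alpha)$ with $\BB(\mathcal{Y},H_i)$ freely isotopic to $\BB(\mathcal{Z},K)$, and $\BB(\mathcal{Y},H_i)=\BB(\mathcal{Y},H)$ concludes the proof.

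The main obstacle I expect is the first step: producing a non-degenerate perturbation supported away from $\BB(\mathcal{Y},H)$. The orbits of $H$ near $\mathcal{Y}$ are already isolated, so non-degeneracy near $\mathcal{Y}$ is automatic. Away from a neighbourhood of $\mathcal{Y}$, the standard transversality argument, perturbing by generic $G$ supported in the complement, makes all remaining fixed points non-degenerate. One must choose the neighbourhood small enough that no degenerate fixed points of $H$ lie in it. Such a neighbourhood exists because non-degenerate fixed points are isolated.
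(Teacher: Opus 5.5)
Your proposal is correct and follows the paper's proof essentially step by step. You perturb $H$ by a $C^\infty$-small Hamiltonian supported away from $\BB(\mathcal{Y},H)$ to make it non-degenerate, use Proposition~\ref{Proposition: Stabillity of quasi-isolatedness}~b) and continuity of $\gamma_\im$ to keep the strict inequality, and then apply Theorem~\ref{Theorem: Braid Stability under PSS-image spectral distance} to the non-degenerate pair.

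One small repair is needed, and the paper's write-up has the same looseness. Vanishing of $G_i$ near the braid does not by itself make $\phi^t_{G_i}$ fix $\phi^t_H(\mathcal{Y})$, because $G_{i,s}$ with $s\neq t$ may be non-zero near $y(t)$. It is cleaner to perturb additively, $H_i=H+F_i$ with $F_i$ supported off $\BB(\mathcal{Y},H)$. Then $X_{H_i}=X_H$ near the braid, so the orbits, their linearizations and the braid are literally unchanged. The proof of Proposition~\ref{Proposition: Stabillity of quasi-isolatedness}~b) still applies verbatim, since it only uses $H_i\to H$ in $C^\infty$ and $H_i=H$ near the braid.
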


\begin{proof}
If $H$ is non-degenerate, then the result was already established in Theorem \ref{Theorem: Braid Stability under PSS-image spectral distance}. 
We thus consider the case when $H$ is degenerate.

It is well-known that non-degenerate Hamiltonians are $C^\infty$-dense. Thus, there exist arbitrary  perturbations $G$ such that $H \# G$ is non-degenerate. Since every fixed point in $\mathcal{Y}$ is already non-degenerate, we can further assume that $G$ is compactly supported in the complement $(S^1 \times \Sigma) \setminus \BB(\mathcal{Y},H)$. For such a perturbation $G$, the set $\mathcal{Y}$ remains a subset of $\mathcal{P}(H \# G; \alpha)$, and the braids $\BB(\mathcal{Y},H)$ and $\BB(\mathcal{Y},H \# G)$ remain identical.

Let $\delta>0$ be small enough so that
\begin{itemize}
    \item $\Delta(\mathcal{Y},\phi^1_{H})>  \gamma_{\rm im}(\overline{H}\#K) + 4\delta.$
\end{itemize}
We claim that if $G$ is a small enough perturbation in the $C^\infty$-topology, then the following hold:
\begin{itemize}
    \item[i)] $\Delta(\mathcal{Y},\phi^1_{H \# G}) > \Delta(\mathcal{Y},\phi^1_{H}) - \delta$,
    \item[ii)]  $\gamma_\im(\overline{H \# G} \# K) < \gamma_\im(\overline{H} \# K) + \delta$.
\end{itemize}
Claim i) follows from Proposition \ref{Proposition: Stabillity of quasi-isolatedness}, while claim ii) follows from the $C^\infty$-continuity  of $\gamma_{\rm im}$.

In then follows that 
\begin{equation}
    \gamma_\im(\overline{H \# G} \# K) < \Delta(\mathcal{Y},\phi^1_{H \# G}).
\end{equation}
	
	
We can thus apply Theorem \ref{Theorem: Braid Stability under PSS-image spectral distance} to $H\#G$ and $K$.
It follows that there exists a finite subset $\mathcal{Z} \subset \mathcal{P}(K;\alpha)$ such that the braids
\[\BB(\mathcal{Y},H\#G)\quad \text{and} \quad \BB(\mathcal{Z};K)\]
are freely isotopic as braids. Since the braid $\BB(\mathcal{Y},H\#G)$ is identical to $\BB(\mathcal{Y},H)$, it follows that there exists a $\mathcal{Z} \subset \mathcal{P}(K;\alpha)$ such that $\BB(\mathcal{Y},H)$ and $\BB(\mathcal{Z};K)$ are freely isotopic. This completes the proof.	
\end{proof}

\subsection{Proof of main Theorems}
\label{Section: Proof of main Theorem subsection}
In this subsection, we prove the main results of the paper which were stated from Section~\ref{sec:mainresults}.

\subsubsection{Proof of Theorem \ref{Main Theorem: Braid Stability under PSS-image spectral distance}} 
\label{Section: Proof of Theorem A}
\begin{thm}[= Theorem \ref{Main Theorem: Braid Stability under PSS-image spectral distance}]
	\label{Theorem: Braid type Stability under PSS-image spectral distance}
	Let $(\Sigma,\omega)$ be a closed symplectic surface. Let $\phi$ be a Hamiltonian diffeomorphism of $(\Sigma, \omega)$, $H$ a Hamiltonian generating $\phi$,  and let $\mathcal{Y}$ be a finite collection of non-degenerate fixed points of $\phi$ in the same free homotopy class of loops $\alpha \in \tilde{\pi}_1(\Sigma)$. Then, for any non-degenerate Hamiltonian diffeomorphism $\phi^\prime$ satisfying \[d_{\im}(\phi,\phi^\prime)<\Delta (\mathcal{Y},\phi),\]
	there exist
    \begin{itemize}
		\item a Hamiltonian $K$ generating $\phi^{\prime}$,
		\item and a finite collection $\mathcal{Z}$ of fixed points of $\phi^\prime$
	\end{itemize}
such that $\BB(\mathcal{Z},K)$ is freely isotopic as a braid to $\BB(\mathcal{Y},H)$.
\end{thm}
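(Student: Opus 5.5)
The plan is to reduce the statement to Theorem~\ref{Theorem: Variation of Braid Stability under PSS-image spectral distance} by choosing the generating Hamiltonian $K$ of $\phi'$ so that the quantity $\gamma_\im(\overbar{H}\#K)$ is exactly the distance $d_\im(\phi,\phi')$. By definition,
\[
d_\im(\phi,\phi')=\gamma_\im(\phi^{-1}\circ\phi')=\inf_{\pi(\Psi)=\phi^{-1}\circ\phi'}\gamma_\im(\Psi),
\]
and as noted after Definition~\ref{Definition: PSS image Spectral norm}, on surfaces this infimum is a minimum. Indeed, $\pi_1(\Ham(\Sigma,\omega))$ is trivial for $\Sigma\neq S^2$ and $\Z_2$ for $\Sigma=S^2$, so only finitely many lifts occur.

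The first step is to choose a lift $\Psi\in\widetilde{\Ham}(\Sigma,\omega)$ of $\phi^{-1}\circ\phi'$ attaining this minimum. Fix any Hamiltonian $K_0$ generating $\phi'$.
\begin{itemize}
\item If $\Sigma\neq S^2$, then $[\Phi_{\overbar{H}\#K_0}]$ is the unique lift, and we take $K=K_0$.
\item If $\Sigma=S^2$, the two lifts are $[\Phi_{\overbar{H}\#K_0}]$ and $[\Phi_{\overbar{H}\#K_0\#G}]$, where $G$ generates a full rotation loop. We take $K=K_0$ or $K=K_0\#G$, whichever realizes the minimum.
\end{itemize}
Since $G$ generates a loop, $K_0\#G$ still generates $\phi'$, because $\phi^1_{K_0\#G}=\phi'\circ\id$. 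In either case $\overbar{H}\#K$ generates a path whose class is the chosen lift. Hence $\gamma_\im(\overbar{H}\#K)=d_\im(\phi,\phi')$.

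One normalization point needs checking. $\gamma_\im$ on $\widetilde{\Ham}$ is defined through normalized $c_\im$, while the paper writes $\gamma_\im(\overbar{H}\#K)$ for $\gamma_\im([\Phi_{\overbar{H}\#K}])$. This is consistent with the abuse of notation adopted earlier, and the mean-value corrections cancel between $\Phi$ and $\Phi^{-1}$. So no adjustment of $H$ or $K$ by functions of time alone is needed.

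The second step applies Theorem~\ref{Theorem: Variation of Braid Stability under PSS-image spectral distance} to $H$, $\mathcal{Y}$, and the non-degenerate $K$. $K$ is non-degenerate because $\phi'$ is. The hypothesis of that theorem holds because
\[
\gamma_\im(\overbar{H}\#K)=d_\im(\phi,\phi')<\Delta(\mathcal{Y},\phi)=\Delta(\mathcal{Y},\phi^1_H).
\]
Here we use that the Floer theoretical action gap depends only on the time-one map. The theorem produces $\mathcal{Z}\subset\mathcal{P}(K;\alpha)$, that is, fixed points of $\phi'$, with $\BB(\mathcal{Y},H)$ freely isotopic to $\BB(\mathcal{Z},K)$. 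Note that $H$ itself may be degenerate; only the points of $\mathcal{Y}$ are assumed non-degenerate, which is exactly what Theorem~\ref{Theorem: Variation of Braid Stability under PSS-image spectral distance} requires.

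The main obstacle is the choice of lift on $S^2$. We must make sure that switching $K_0$ to $K_0\#G$ changes the class in $\widetilde{\Ham}$ by the nontrivial loop, and that $\gamma_\im$ of the resulting path is the minimum. This is exactly the formula $\gamma_\im(\phi^1_H)=\min\bigl(\gamma_\im([\Phi_H]),\gamma_\im([\Phi_{H\#G}])\bigr)$ stated in the paper. Applying it to $\overbar{H}\#K_0$ settles this step, since $\overbar{H}\#K_0\#G$ and $\overbar{H}\#(K_0\#G)$ coincide.
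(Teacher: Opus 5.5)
Your proposal is correct and is essentially the paper's proof: choose $K$ generating $\phi'$ with $\gamma_\im(\overbar{H}\#K)=d_\im(\phi,\phi')$, then apply Theorem~\ref{Theorem: Variation of Braid Stability under PSS-image spectral distance}. On $S^2$ you keep the prescribed $H$ fixed and correct only $K_0$ by the rotation loop so that the minimum over the two lifts is attained; this spells out the paper's one-line step ``there exist Hamiltonians $H$ and $K$,'' and is the right reading, since $H$ is given in the statement.
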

\begin{proof}[Proof of Theorem \ref{Main Theorem: Braid Stability under PSS-image spectral distance}]
	By the definition of the PSS-image spectral distance, there exist Hamiltonians $H$ and $K$ on $(\Sigma,\omega)$ generating $\phi$ and $\phi^\prime$, respectively, such that
	\[d_\im(\phi, \phi^\prime) = \gamma_\im(\overbar{H}\# K).\]
	
	We again denote by $\mathcal{Y}$ the set of $1$-periodic orbits of $H$ corresponding to the fixed points in $\mathcal{Y}$. 
    By assumption, 
	\[\gamma_\im(\overbar{H}\#K)<\Delta (\mathcal{Y},\phi).\]
	
    By applying Theorem \ref{Theorem: Variation of Braid Stability under PSS-image spectral distance}, there exists a subset $\mathcal{Z} \subset \mathcal{P}(K;\alpha)$ such that the braids
	$\BB(\mathcal{Y},H)$ and $\BB(\mathcal{Z};K)$ are freely isotopic. 
\end{proof}

\subsubsection{Proof of Theorem \ref{Main Theorem: Lower semicontinuity of topological entropy}}
\label{Subsubsection: Proof of Theorem C}

\

In order to deduce Theorem \ref{Main Theorem: Lower semicontinuity of topological entropy}  from Theorem \ref{Main Theorem: Braid Stability under PSS-image spectral distance} we recall the notion of braid type as used in \cite{Birman,Boyland}. 

For a finite subset $\mathcal{Y} \subset \Sigma$, let $\mathrm{Homeo}_0(\Sigma, \mathcal{Y})$ denote the group of homeomorphism $f:\Sigma \to \Sigma$ isotopic to identity and satisfy $f(\mathcal{Y})=\mathcal{Y}$. In particular, the points in $\mathcal{Y}$ are periodic points, but not necessarily fixed points.

Two homeomorphisms $f,g \in \mathrm{Homeo}_0(\Sigma, \mathcal{Y})$ are said to be isotopic relative to $\mathcal{Y}$ if there exists an isotopy within $\mathrm{Homeo}_0(\Sigma, \mathcal{Y})$. The relative isotopy class of $f$ is denoted by $[f]$, and we define the group of isotopy classes as $\mathrm{Iso}_0(\Sigma, \mathcal{Y})$. This group inherits a group structure given by the isotopy class of the composition of any two representatives.

For a given homeomorphism $f$ and a finite set $\mathcal{Y}$ with $f(\mathcal{Y})=\mathcal{Y}$, we define the braid type of $(\mathcal{Y},f)$, denoted by $\mathcal{B}(\mathcal{Y}, f)$, as the conjugacy class of $[f]$. In particular,
\[\mathcal{B}(\mathcal{Y},f) \in \mathrm{Iso}_0(\Sigma, \mathcal{Y})/\mathrm{Conj}.\]

We say two braid types $\BB(\mathcal{Y}, f)$ and $\BB(\mathcal{Z},g )$ have the same braid type if there exists a homeomorphism $h:\Sigma\to\Sigma$ isotopic to identity such that
\begin{itemize}
	\item $h(\mathcal{Y})=\mathcal{Z}$ and
	\item $[f]=[h^{-1}\circ g \circ h]$ as elements in $\mathrm{Iso}_0(\Sigma, \mathcal{Y})/\mathrm{Conj}$.
\end{itemize} 
We denote the equivalence class of the braid type $\BB(\mathcal{Y}, f)$ by $[\BB(\mathcal{Y}, f)]$.

We now review the relationship between braid types and (geometric) braids. For further details and discussion on their relations, we refer to \cite{Birman,Boyland}.

For a given pair $(\mathcal{Y}, f)$, consider any path of homeomorphism $\set{f_t}_{t \in [0,1]}$ connecting $f_0 = \id_\Sigma$ and $f_1 = f$. This path defines a braid based at $\mathcal{Y}$ by
\[\BB(\mathcal{Y},\set{f_t}):=\bigcup_{t\in S^1} \set{t}\times f_t(\mathcal{Y}).\]
Conversely, for a given braid $\BB$ based at $\mathcal{Y}$, there exists a path of homeomorphisms $\set{f_t}_{t \in [0,1]}$ such that $f_0=\id$ and $\BB = \BB(\mathcal{Y},\set{f_t})$.

The braid type $\mathcal{B}(\mathcal{Y}, f_1)$ does not depend on the choice of the path $\set{f_t}$. Moreover, if two braids $\BB_1$ and $\BB_2$ are freely isotopic as braids, then their associated braid types are equivalent. 

\begin{rem}
	When $\BB$ is smooth, the existence of a path $\set{f_t}_{t \in [0,1]}$ as above can be established by constructing explicit time-dependent vector fields $\set{X_t}_{t\in [0,1]}$ supported near a neighborhood of $\BB(t) \cap (\set{t}\times \Sigma)$.
	
	The braid type does not depend on the choice of time-dependent vector fields. Indeed, for two vector fields $\set{X_t}, \set{Y_t}$, the linear interpolation between them gives a relative isotopy between their time $1$-maps.
	
	A smooth free isotopy between braids can be used to define a time-dependent vector field, and its time $1$-map guarantees that the two braid types are equivalent.
\end{rem}

Summarizing the discussion above, a pair $(\mathcal{Y}, H)$ defines a braid type $\mathcal{B}(\mathcal{Y}, \phi_H^1)$. When two braids $\BB(\mathcal{Y},H), \BB(\mathcal{Z};K)$ are freely isotopic as braids, their associated braid types are equivalent. This discussion combined with Theorem \ref{Theorem: Braid type Stability under PSS-image spectral distance} implies the following corollary.
\begin{cor} \label{Corollary: Braid type Stability under PSS-image spectral distance}
Let $(\Sigma,\omega)$ be a closed symplectic surface. Let $\phi$ be a Hamiltonian diffeomorphism of $(\Sigma, \omega)$, 
and let $\mathcal{Y}$ be a finite collection of non-degenerate fixed points of $\phi$ in the same free homotopy class of loops $\alpha \in \tilde{\pi}_1(\Sigma)$. Then, for any non-degenerate Hamiltonian diffeomorphism $\phi^\prime$ satisfying \[d_{\im}(\phi,\phi^\prime)<\Delta (\mathcal{Y},\phi),\]
	there exist
    \begin{itemize}
		\item a finite collection $\mathcal{Z}$ of fixed points of $\phi^\prime$
	\end{itemize}
such that $\BB(\mathcal{Z},\phi')$ has the same braid type as $\BB(\mathcal{Y},\phi)$.    
\end{cor}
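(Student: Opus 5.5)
The corollary should follow almost formally from Theorem \ref{Theorem: Braid type Stability under PSS-image spectral distance} together with the discussion of braid types preceding it. The plan is to fix a Hamiltonian $H$ generating $\phi$ and apply the theorem, then translate free isotopy of geometric braids into equality of braid types.

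First, choose any $H$ with $\phi^1_H=\phi$. The hypotheses of Theorem \ref{Theorem: Braid type Stability under PSS-image spectral distance} are exactly those of the corollary:
\begin{itemize}
\item $\mathcal{Y}$ is a finite set of non-degenerate fixed points in a single class $\alpha$;
\item $\phi'$ is non-degenerate;
\item $d_\im(\phi,\phi')<\Delta(\mathcal{Y},\phi)$.
\end{itemize}
Note that $\Delta(\mathcal{Y},\phi)$ was shown to depend only on $\phi^1_H$. The theorem then gives a Hamiltonian $K$ generating $\phi'$ and a finite set $\mathcal{Z}$ of fixed points of $\phi'$ such that $\BB(\mathcal{Z},K)$ is freely isotopic to $\BB(\mathcal{Y},H)$.

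Second, I would pass to braid types. The geometric braid $\BB(\mathcal{Y},H)$ is $\BB(\mathcal{Y},\{\phi^t_H\})$ for the path $t\mapsto\phi^t_H$ from $\id_\Sigma$ to $\phi$. By the discussion above, the braid type $\BB(\mathcal{Y},\phi)$, meaning the conjugacy class of $[\phi]$ in $\mathrm{Iso}_0(\Sigma,\mathcal{Y})$, is independent of the chosen path. Likewise $\BB(\mathcal{Z},K)$ determines the braid type $\BB(\mathcal{Z},\phi')$.

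Since freely isotopic braids have equivalent braid types, the free isotopy yields a homeomorphism $h$ isotopic to the identity with $h(\mathcal{Y})=\mathcal{Z}$ and $[\phi]=[h^{-1}\circ\phi'\circ h]$ up to conjugacy. One can make $h$ explicit: it is the time-one map of a time-dependent vector field integrating the isotopy of base points. Conjugating by the associated flow then transports the path $\{\phi^t_H\}$ relative to $\mathcal{Y}$ to a path generating the braid $\BB(\mathcal{Z},K)$. This is the content of the Remark preceding the corollary, and it is the only step needing any care.

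The main (mild) obstacle is smoothness. The free isotopy built in Theorem \ref{Theorem: Braid Stability under PSS-image spectral distance} is smooth only away from the endpoints $\tau=0,1$, where the Floer cylinders converge asymptotically. I would handle this in one of two ways. The first is to reparametrize near the ends using the exponential convergence of the cylinders to non-degenerate orbits, which gives a $C^1$ family. The second is simply to invoke the topological statement from \cite{Birman,Boyland} that continuous free isotopies of braids preserve braid type. Either way, the conclusion is that $\BB(\mathcal{Z},\phi')$ has the same braid type as $\BB(\mathcal{Y},\phi)$.
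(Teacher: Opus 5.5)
Your proposal is correct and follows the paper's argument: the paper likewise applies Theorem~\ref{Theorem: Braid type Stability under PSS-image spectral distance} to get $K$ and $\mathcal{Z}$ with $\BB(\mathcal{Z},K)$ freely isotopic to $\BB(\mathcal{Y},H)$, and then uses, without further proof, that freely isotopic braids have equivalent braid types. Your observation that the isotopy built from the Floer cylinders is a priori only continuous at $\tau=0,1$, while the paper's Remark is phrased for smooth isotopies, is a reasonable extra precaution that the paper leaves implicit.
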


We are now ready to  prove
\begin{thm}[=Theorem \ref{Main Theorem: Lower semicontinuity of topological entropy}]
	\label{Theorem: Lower semicontinuity of topological entropy}
	Let $(\Sigma,\omega)$ be a closed symplectic surface. Then, the topological entropy
	\[h_{\top} : (\Ham(\Sigma, \omega), d_\im) \to [0,\infty)\]
	is lower semicontinuous.
\end{thm}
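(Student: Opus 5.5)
To prove lower semicontinuity at $\phi\in\Ham(\Sigma,\omega)$, we fix $\varepsilon>0$ and aim to find $\delta>0$ such that $h_\top(\phi')>h_\top(\phi)-\varepsilon$ whenever $d_\im(\phi,\phi')<\delta$. If $h_\top(\phi)\le\varepsilon$ there is nothing to show, so we assume $h_\top(\phi)>\varepsilon$. The key input is \cite[Theorem B.1]{Alves-Meiwes24}, in the form used there: the topological entropy of a surface homeomorphism is approximated by the entropy forced by braid types of finite invariant sets. Concretely, we want a finite invariant set $\mathcal{Y}$ of $\phi$ such that every homeomorphism isotopic to the identity realising the braid type $[\BB(\mathcal{Y},\phi)]$ has entropy $>h_\top(\phi)-\varepsilon$. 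By the Katok horseshoe theorem and a Thurston--Nielsen argument (pseudo-Anosov components of the braid type), we expect to choose $\mathcal{Y}$ to consist of hyperbolic, hence non-degenerate, periodic points. The natural move is to take $\mathcal{Y}$ to be a union of periodic orbits of a common period $n$ inside a horseshoe. Then $\mathcal{Y}$ is a finite set of non-degenerate fixed points of $\phi^n$.

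Fixed points of $\phi^n$ may lie in different free homotopy classes. We therefore partition $\mathcal{Y}$ according to classes $\alpha$. The cleanest choice is to pick, inside the horseshoe, orbits whose $n$-th iterate loops are all in one class (e.g.\ contractible or a fixed class). Alternatively, we apply Corollary~\ref{Corollary: Braid type Stability under PSS-image spectral distance} class by class and argue that the combined braid persists. The per-class approach has a gap: disjointness across classes is not addressed. I would try the first option, relying on the freedom in choosing horseshoe orbits. Working with $\phi^n$ rather than $\phi$, the braid type of $\mathcal{Y}$ under $\phi^n$ still forces entropy $>n(h_\top(\phi)-\varepsilon)$, using $h_\top(\phi^n)=n\,h_\top(\phi)$. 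We then set $\Delta:=\Delta(\mathcal{Y},\phi^n)>0$, which is positive by Proposition~\ref{Proposition: Property of Floer-theoretic action gap}.

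Take $\delta=\Delta/n$. If $d_\im(\phi,\phi')<\delta$, Lemma~\ref{Lemma: Spectral distance of the iteration} gives $d_\im(\phi^n,\phi'^n)<\Delta$. If $\phi'^n$ is non-degenerate, Corollary~\ref{Corollary: Braid type Stability under PSS-image spectral distance} yields a finite set $\mathcal{Z}$ of fixed points of $\phi'^n$ with the same braid type. Hence $h_\top(\phi'^n)>n(h_\top(\phi)-\varepsilon)$, and dividing by $n$ gives $h_\top(\phi')>h_\top(\phi)-\varepsilon$.

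To remove the non-degeneracy hypothesis on $\phi'^n$, approximate $\phi'$ in $C^\infty$ by $\phi'_k$ with all iterates up to $n$ non-degenerate. The strict inequality $d_\im(\phi,\phi'_k)<\delta$ persists by $C^0$-continuity of $\gamma_\im$. The obtained braid type then holds for $\phi'_k$, and we want to pass it to $\phi'$. This step is the main obstacle. I would instead strengthen the statement: braid types forcing entropy yield, by Boyland/Handel, that any map in the relative isotopy class has entropy at least the pseudo-Anosov entropy. For limits, I would use lower semicontinuity in $C^0$ (Nitecki) only in reverse, which fails. So the better route is to derive a version of Theorem~\ref{Theorem: Variation of Braid Stability under PSS-image spectral distance} with $K$ degenerate, by taking limits of the enhanced Floer cylinders $u_y$ with uniformly bounded energy via Gromov compactness. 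Disjointness of graphs survives in the limit by positivity of intersections, since limits of disjoint holomorphic graphs either coincide or are disjoint, and coincidence is excluded because the limits start at distinct $y$.
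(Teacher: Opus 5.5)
Up to the last step your argument is the paper's. The paper takes the single hyperbolic $k$-periodic orbit $\mathcal{P}$ given by \cite[Theorem B.1]{Alves-Meiwes24} and sets $\delta=\Delta(\mathcal{P},\phi^k)/k$. It uses Lemma~\ref{Lemma: Spectral distance of the iteration} and Corollary~\ref{Corollary: Braid type Stability under PSS-image spectral distance} when $\psi^k$ is non-degenerate. It concludes with Manning's inequality and $\Gamma_{\pi_1}([\BB(\mathcal{P},\phi^k)])=k\,\Gamma_{\pi_1}([\BB(\mathcal{P},\phi)])$. Your worry about free homotopy classes disappears with this choice. The $k$ points of one orbit, viewed as fixed points of $\phi^k$, trace cyclic shifts of the same loop, so they lie in a single class. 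No union of horseshoe orbits, and no class-by-class argument, is needed.

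The genuine gap is the removal of the non-degeneracy assumption on $(\phi')^n$, and the compactness route you propose does not close it. First, the enhanced continuation data depend on $K_k$, where $K_k$ generates $(\phi'_k)^n$: the perturbation of $\overbar{H}\#K_k$, the PSS data, and the gluing length all vary with $k$ and need not converge. Second, Gromov--Floer compactness only gives convergence up to breaking. Breaking can occur at $-\infty$, so the limit need not start at $y$ and your ``limits start at distinct $y$'' argument fails. It can also occur at $+\infty$ along degenerate orbits of $K$. Third, positivity of intersections only gives disjointness of the limit graphs in the interior of $\R\times S^1$. Nothing prevents the endpoints $z_{k,y}\neq z_{k,y'}$, which are non-degenerate fixed points of $(\phi'_k)^n$, from converging to the same degenerate fixed point of $(\phi')^n$. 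In that case the limit is not a braid with $\#\mathcal{Y}$ strands at all.

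The missing idea is that you never need a braid type for $\phi'$, only the entropy bound. That bound passes to $C^\infty$-limits by Newhouse's upper semicontinuity of $h_\top$ on $C^\infty$ diffeomorphisms; the paper invokes the $C^\infty$-continuity of $h_\top$ on surfaces, from Yomdin and Newhouse. Take your approximations $\phi'_k\to\phi'$ in $C^\infty$ with $(\phi'_k)^n$ non-degenerate and $d_\im(\phi,\phi'_k)<\delta$, the latter by $C^0$-continuity of $\gamma_\im$. Then $h_\top(\phi'_k)>h_\top(\phi)-\varepsilon$, hence $h_\top(\phi')\ge\limsup_k h_\top(\phi'_k)\ge h_\top(\phi)-\varepsilon$. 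This suffices since $\varepsilon$ is arbitrary. You were right that Nitecki's lower semicontinuity points the wrong way; upper semicontinuity in $C^\infty$ is the tool you were missing.
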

The proof follows the argument given in \cite[Theorem 2.6]{Alves-Meiwes24}, where the first two authors establish the lower semicontinuity of topological entropy with respect to the Hofer metric. To be more specific, we consider the exponential growth rate associated with braid types and approximate the topological entropy using these growth rates.

Given a braid type $\BB(\mathcal{Y}, f)$, we let $\Gamma_{\pi_1}(\BB(\mathcal{Y}, f))$ be the exponential growth rate of the induced action of $f$ on the fundamental group of $\Sigma \setminus \mathcal{Y}$:
\[\Gamma_{\pi_1}(\BB(\mathcal{Y}, f)):=\sup_{g\in\pi_1(\Sigma\setminus \mathcal{Y}, x_0)}\limsup_{n\to \infty} \frac{\log(\ell_S(f^n_*(g)))}{n},\]
where $f_*$ is the automorphism of $\pi_1(\Sigma \setminus \mathcal{Y},x_0)$ with respect to some basepoint $x_0 \in \Sigma \setminus \mathcal{Y}$ and a path $\sigma$ from $x_0$ to $f(x_0)$. Here $S$ is a set of generators, and $\ell_S(h)$ is the minimal length of a word in $S$ and $S^{-1}$ that is needed to represent $h$.

The definition is independent of the choices made, and moreover gives an invariant for the equivalence class of braid types, i.e. the exponential growth rate $\Gamma_{\pi_1}([\BB(\mathcal{Y}, f)])$ is well-defined. Furthermore, it follows from elementary properties of $f_*$, that for all $k\in \N$, 
\[\Gamma_{\pi_1}([\BB(\mathcal{Y}, f^k)]) = k \cdot \Gamma_{\pi_1}([\BB(\mathcal{Y}, f)]).\]

By an inequality of Manning (see \cite{Bowen} 
for a proof in the present setting of a punctured surface and $f$ differentiable)
\begin{equation}
	\label{Equation: topological entropy of braid vs homeomorphism}
	\Gamma_{\pi_1}([\BB(\mathcal{Y}, f)]) \le h_{\top}(f).
\end{equation}

We define the topological entropy of the braid type $[\BB(\mathcal{Y}, f)]$ as
\[h_{\top}([\BB(\mathcal{Y},f]) =\inf_{g} h_{\top}(g),\]
where the infimum runs over all $g$ which has an invariant set $\mathcal{Z}$ such that $[(\mathcal{Y}, f)]=[(\mathcal{Z}, g)]$. It follows from the Nielsen-Thurston classification theory of surface diffeomorphisms, that:
\[	\Gamma_{\pi_1}([\BB(\mathcal{Y}, f)]) = h_{\top}([\BB(\mathcal{Y},f]).\]

Denote by $\mathrm{BT}(f)$ the set of all braid types of $f$:
\[\mathrm{BT}(f):=\set{[\BB(\mathcal{Y},f)]:f(\mathcal{Y})=\mathcal{Y}, \#\mathcal{Y}<+\infty}.\]
When $f:\Sigma \to \Sigma$ is a diffeomorphism, the inequality
\[\sup_{[\BB(\mathcal{Y},f)]\in \mathrm{BT}(f)} \Gamma_{\pi_1}([\BB(\mathcal{Y}, f)]) \le h_{\top}(f).\]
follows from Equation \eqref{Equation: topological entropy of braid vs homeomorphism}.

\medskip

The following theorem states that this inequality is, in fact, an equality.

\begin{thm}[\hspace{1sp}{\cite[Theorem B.1]{Alves-Meiwes24}}]
	\label{Theorem: Topological entropy can be approximated by braids}
	Let $\Sigma$ be a compact surface, and let $f:\Sigma \to \Sigma$ be a diffeomorphism such that $h_{\top}(f)>0$. Then, for any $\epsilon>0$, there is a hyperbolic periodic point $\mathcal{P}$ of $f$ such that
	\[\Gamma_{\pi_1}([\BB(\mathcal{P}, f)]) > h_{\top}(f)-\epsilon.\]
\end{thm}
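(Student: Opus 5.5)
The plan is to combine Katok's horseshoe theorem with Nielsen fixed point theory relative to a finite invariant set. Throughout, $\Gamma_{\pi_1}$ is controlled from below by the growth of essential periodic Nielsen classes.

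\textbf{Step 1 (horseshoe).} Since $f$ is a $C^{1+\alpha}$ (indeed smooth) surface diffeomorphism with $h_{\top}(f)>0$, Katok's theorem applies. It gives, for the given $\epsilon>0$, an integer $k\ge 1$ and a compact hyperbolic set $\Lambda$ invariant under $f^k$ with the following properties. The restriction $f^k|_\Lambda$ is topologically conjugate to the full shift on $N$ symbols, with
\[\tfrac{1}{k}\log N > h_{\top}(f)-\epsilon/2.\]
Concretely, $\Lambda$ is the maximal invariant set of $f^k$ in a union $R_1\cup\dots\cup R_N$ of disjoint topological rectangles. Each $f^k(R_i)$ crosses every $R_j$ in a full horizontal strip. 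All periodic points of $\Lambda$ are hyperbolic saddles.

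\textbf{Step 2 (choice of $\mathcal{P}$).} I would choose $\mathcal{P}$ to be the union of finitely many periodic orbits of $f$ passing through $\Lambda$, all of them hyperbolic. For instance, take the $f$-orbits of the $N$ fixed points of $f^k|_\Lambda$, together with a few further periodic points whose itineraries force them into the gaps between consecutive horizontal and vertical strips. The purpose of these points is to act as obstacles. Any arc in $\Sigma\setminus\mathcal{P}$ joining two distinct strips $R_i\cap f^{-k}(R_j)$ must wind around a point of $\mathcal{P}$, so that it cannot be homotoped away rel $\mathcal{P}$.

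\textbf{Step 3 (Nielsen classes).} This is the main step, and I expect it to be the main obstacle. The claim is that for each $n$, periodic points of $f^k|_\Lambda$ of period $n$ with distinct itineraries lie in distinct Nielsen classes of $f^{kn}$ on $\Sigma\setminus\mathcal{P}$. Each such class consists of a single hyperbolic point of index $\pm 1$, so each class is essential. To prove inequivalence, I would argue as follows. A Nielsen path between two such points $x,x'$ is a path $\sigma$ with $f^{kn}(\sigma)\simeq\sigma$ rel endpoints in $\Sigma\setminus\mathcal{P}$. Follow the iterates $f^{kj}(\sigma)$ and record at each time the strip crossings. The obstacles of Step 2 should force the first index $j$ at which the itineraries of $x$ and $x'$ differ to produce a homotopically nontrivial arc around a puncture, and this nontriviality should survive iteration by a train-track / Markov-partition argument. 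This yields at least $N^{n}-C$ essential Nielsen classes for $f^{kn}$ rel $\mathcal{P}$, for some constant $C$. Handling all possible itinerary differences uniformly is where care is needed. If it fails with a fixed finite set of obstacles, I would enlarge $\mathcal{P}$ by passing to a refined Markov partition and allow a loss of $\epsilon/2$ in the exponent.

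\textbf{Step 4 (conclusion).} The exponential growth rate of the number of essential Nielsen classes of $f^m$ rel $\mathcal{P}$ is a braid-type invariant. By Ivanov/Jiang-type estimates it is bounded above by the growth rate of the induced action on $\pi_1(\Sigma\setminus\mathcal{P})$, equivalently (via the Nielsen--Thurston classification) by $h_{\top}([\BB(\mathcal{P},f)])=\Gamma_{\pi_1}([\BB(\mathcal{P},f)])$. Combining this with Step 3, and using the identity $\Gamma_{\pi_1}(\BB(\mathcal{P},f^k))=k\,\Gamma_{\pi_1}(\BB(\mathcal{P},f))$, gives
\[\Gamma_{\pi_1}([\BB(\mathcal{P},f)])\ge \tfrac1k\log N>h_{\top}(f)-\epsilon.\]
Here $\mathcal{P}$ is a finite union of hyperbolic periodic orbits, as required.
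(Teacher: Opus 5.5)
Your Step 3 carries the entire content of the theorem, and as stated it is false, not merely unproved. For a fixed finite $\mathcal{P}$ it asserts at least $N^n-C$ essential Nielsen classes of $f^{kn}$ rel $\mathcal{P}$. Combined with Step 4, this gives $\Gamma_{\pi_1}([\BB(\mathcal{P},f^k)])\ge \log N$.

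Test this on the Smale horseshoe map of $S^2$. The horseshoe itself satisfies everything you list in Step 1 with $k=1$, $N=2$ and $h_{\top}(f)=\log 2$. The braid-type entropy of any finite invariant set is either $0$ or the logarithm of a pseudo-Anosov stretch factor, which is an algebraic unit. It is also at most $h_{\top}(f)=\log 2$. Since $2$ is not a unit, it is strictly less than $\log 2$.

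So some loss is unavoidable, and $\mathcal{P}$ must depend on $\epsilon$. Your one-sentence fallback is where all the work lies, and the proposal does not carry it out. There are three further problems with Steps 2 and 3.
\begin{enumerate}
\item The obstacle mechanism is unavailable. $\Lambda=\bigcap_{n\in\mathbb{Z}}f^{kn}(R_1\cup\dots\cup R_N)$ misses the gaps between strips at every level, so no choice of itinerary places a point of $\Lambda$ in a gap.
\item Separation of two strips rel $\mathcal{P}$ can only come from points of $\mathcal{P}$ in finer strips lying between them. Even then you must exclude homotopies running around the outside of the rectangles. It is not true that every arc joining two strips ``winds around'' a point of $\mathcal{P}$.
\item The claim ``each class is a single hyperbolic point, hence essential'' ignores fixed points of $f^{kn}$ outside $\Lambda$. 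These can be Nielsen equivalent rel $\mathcal{P}$ to horseshoe points and cancel their index. Inessential classes are not isotopy invariants, so they give no lower bound on the braid type.
\end{enumerate}

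Second, the statement requires $\mathcal{P}$ to be the orbit of a single hyperbolic periodic point. This is exactly how it is used in the proofs of Theorems~\ref{Main theorem: Topological entropy after perturbation supported on a disk} and~\ref{Main Theorem: Lower semicontinuity of topological entropy}. There, $\mathcal{P}$ must consist of fixed points of $\phi^k$ in one free homotopy class, so that Theorem~\ref{Main Theorem: Braid Stability under PSS-image spectral distance} applies. Your $\mathcal{P}$ is a union of orbits of different periods, possibly in different classes. Monotonicity of braid entropy under adding points cannot repair this, since no single orbit contains such a union.

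A natural way to obtain a single orbit is as follows. Take the $f$-orbit of one periodic point of $f^k|_\Lambda$ whose itinerary contains every word of length $m$, so that $\mathcal{P}$ meets every level-$m$ rectangle. What would then have to be proved is a lower bound $\log N-o(1)$, as $m\to\infty$, for the growth rate of $f^{k}$ rel $\mathcal{P}$. One could measure this growth, for instance, by geometric intersection numbers rel $\mathcal{P}$ of $f^{kn}(\gamma)$ with a fixed arc. These are isotopy invariants and need no essentiality argument. Controlling which crossings survive bigon removal rel $\mathcal{P}$ is the missing idea.

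Note that the present paper does not prove this statement; it imports it from Alves--Meiwes.
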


Combining this result  with Corollary \ref{Corollary: Braid type Stability under PSS-image spectral distance}, we now prove Theorem \ref{Main Theorem: Lower semicontinuity of topological entropy}.

\begin{proof}[Proof of Theorem \ref{Main Theorem: Lower semicontinuity of topological entropy}]
	Let $\phi$ be a Hamiltonian diffeomorphism of a symplectic surface $(\Sigma, \omega)$. For any $\epsilon>0$, we prove that there exists $\delta>0$ such that
	\[h_\top(\psi) > h_\top(\phi)-\epsilon\] for every Hamiltonian diffeomorphism $\psi$ with  $d_\im(\phi,\psi)<\delta$.
	
	If $h_{\top}(\phi)=0$, there is nothing to prove. So, assume that $h_{\top}(\phi)>0$. By Theorem \ref{Theorem: Topological entropy can be approximated by braids}, there exists a hyperbolic $k$-periodic orbit \linebreak $\mathcal{P}=\set{x, \phi(x),\ldots, \phi^{k-1}(x)}$ of $\phi$, for some $k\in \N$, such that
	\[\Gamma_{\pi_1}([\BB(\mathcal{P}, \phi)]) > h_{\top}(\phi)-\epsilon.\]
	In particular, each periodic point in $\mathcal{P}$ is non-degenerate.
      
	Hence, it is enough to prove the following claim.
	
	\textit{Claim: For any Hamiltonian diffeomorphism $\psi$ satisfying}
	\[d_\im({\phi,\psi})< \Delta(\mathcal{P},\phi^k)/k,\]
	\textit{we have}
	\begin{equation}
	\label{Equation: Lower semicontinuity}
		h_{\top}({\psi})>h_{\top}(\phi)-\epsilon.
	\end{equation}

	We first prove the claim under the assumption that $\psi$ is strongly \linebreak non-degenerate, i.e. $\psi^n$ is a non-degenerate Hamiltonian for any $n \in \N$. Applying Corollary~\ref{Corollary: Braid type Stability under PSS-image spectral distance} for $(\mathcal{P}, \phi^k)$, 
    there exists a set $\mathcal{Q}$ of fixed points of $\psi^k$, such that the braid types $\BB(\mathcal{P},\phi^k)$ and $\BB(\mathcal{Q}, \psi^k)$ are equivalent. Note that by Lemma~\ref{Lemma: Spectral distance of the iteration}, the $d_\im(\phi^k,\psi^k)$ satisfies
	\[d_\im(\phi^k,\psi^k)< \Delta(\mathcal{P},\phi^k).\]
	
    We conclude that
	\begin{align*}
		k\cdot h_{\top}(\psi) &= h_{\top}(\psi^k)\\
		&\ge \Gamma_{\pi_1}([\mathcal{Q}, \psi^k])
		= \Gamma_{\pi_1}([\mathcal{P}, \phi^k])
		= k\cdot \Gamma_{\pi_1}([\mathcal{P}, \phi])\\
		&> k\cdot (h_{\top}(\phi)-\epsilon).
	\end{align*}
	In particular, we have
	\[h_{\top}(\psi) > h_{\top}(\phi)-\epsilon\]
	for every strongly non-degenerate $\psi$ satisfying $d_\im(\phi,\psi)<\Delta(\mathcal{P},{\phi^k})/k$.

	Note that strongly non-degenerate Hamiltonian diffeomorphisms are $C^\infty$-dense in $\Ham(\Sigma,\omega)$. Since both the $d_\im$-distance and the topological entropy $h_\top$ are continuous with respect to $C^\infty$-topology, the claim holds for any Hamiltonian diffeomorphism $\psi$ satisfying $d_\im(\phi,\psi)<\Delta(\mathcal{P},\phi^k)/k$. This completes the proof.
\end{proof}

\subsubsection{Proof of Theorem \ref{Main theorem: Topological entropy after perturbation supported on a disk}}

\


\begin{thm}[=Theorem \ref{Main theorem: Topological entropy after perturbation supported on a disk}]
\label{Theorem: Topological entropy after perturbation supported on a disk}
	Let $(\Sigma,\omega)$ be a symplectic surface. For every Hamiltonian diffeomorphism $\phi$ and every $\varepsilon>0$, there exists a real number $A(\phi,\varepsilon)>0$ such that 
	\[h_{\top}(\psi \circ \phi)>h_{\top}(\phi)-\varepsilon\]
	for every area preserving diffeomorphism $\psi$ whose support is contained in a disjoint union $\sqcup^{n}_{i=1}D_{i}$ of open disks $D_i\subset \Sigma$ with area  $ <A(\phi,\varepsilon)$.
\end{thm}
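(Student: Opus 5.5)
The strategy is to reduce Theorem~\ref{Theorem: Topological entropy after perturbation supported on a disk} to the lower semicontinuity statement, Theorem~\ref{Theorem: Lower semicontinuity of topological entropy}. Concretely, we show that $d_\im(\phi,\psi\circ\phi)$ is small whenever $\psi$ is supported in disks of small area. By the proof of Theorem~\ref{Theorem: Lower semicontinuity of topological entropy}, for given $\phi$ and $\varepsilon$ there is $\delta=\delta(\phi,\varepsilon)>0$ such that $d_\im(\phi,\phi')<\delta$ implies $h_\top(\phi')>h_\top(\phi)-\varepsilon$. If $h_\top(\phi)=0$, then $A$ can be chosen arbitrarily and there is nothing to prove. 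Otherwise it suffices to find $A>0$ with the property that every $\psi$ supported in a disjoint union of disks $D_i$ of area $<A$ satisfies $d_\im(\phi,\psi\circ\phi)<\delta$.

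First we note that $\psi$ is Hamiltonian. An area-preserving diffeomorphism supported in an open disk is Hamiltonian with a generating Hamiltonian supported in that disk, since the group of compactly supported area-preserving diffeomorphisms of a disk is connected and has trivial flux. Doing this disk by disk gives $\psi=\phi^1_F$ with $F=\sum_i F_i$ and $\operatorname{supp}F_i\subset S^1\times D_i$. By bi-invariance of $d_\im$ (the ({\bf Symplectic invariance}) property), $d_\im(\phi,\psi\circ\phi)=\gamma_\im(\phi^{-1}\circ\psi\circ\phi)=\gamma_\im(\psi)$. The dependence on $\phi$ therefore disappears, and we are left to show that $\gamma_\im(\psi)\le C\cdot A$, or at least that $\gamma_\im(\psi)\to0$ as $A\to0$, uniformly in the number and shape of the disks.

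For $\Sigma$ of positive genus we use Corollary~\ref{Corollary: Spectral norm supported on the displacable open set} with $U=\sqcup D_i$, which gives $\gamma_\im(\psi)\le 2E_{\mathrm{conn}}(U)$. The task is then to displace each $D_i$ individually by a Hamiltonian diffeomorphism of Hofer norm controlled by $\Area(D_i)$. Since the disks may be long and thin, we first apply a symplectic conjugation. Because the $D_i$ are disjoint and each has area $<A$, there is, by Moser/Dacorogna type arguments on surfaces, a Hamiltonian diffeomorphism $\theta$ carrying each $D_i$ into a round disk $B_i$ of area slightly larger than $A$, with the $B_i$ pairwise disjoint. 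Here we may have to shrink each $D_i$ slightly to a compact subdisk containing $\operatorname{supp}\psi$, and the area condition is what makes room for the $B_i$. By symplectic invariance $\gamma_\im(\psi)=\gamma_\im(\theta\psi\theta^{-1})$. Rather than displacing all the small round disks simultaneously, we use the ({\bf Max property}): $c_\im([\Sigma];\sum_i G_i)=\max_i c_\im([\Sigma];G_i)$, applied to both $G$ and $\overbar G$ (the supports stay in the disjoint $B_i$). This gives $\gamma_\im\le \max_i \gamma_\im$ of the individual pieces, up to the sum of the two maxima, which is still at most $2\max_i$. Each piece is supported in a single round disk $B_i$ of area $\approx A$, which is displaceable with Hofer energy $\lesssim A$ when $A<1/2$. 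Lemma~\ref{Lemma: Spectral invariant of [M] supported on the displacable open set} then bounds each term by $CA$. Hence $\gamma_\im(\psi)\le 2CA$, and choosing $A<\delta/(2C)$ finishes the argument.

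For $\Sigma=S^2$ the max property is not stated. There we would instead use the conjugation $\theta$ to cluster all the round disks $B_i$ inside a single displaceable region whose connected components are the $B_i$, and then directly construct a displacement moving each $B_i$ off itself, for example by a small rotation. A rotation moves every small disk by a fixed small angle, and one of Hofer norm $O(\sqrt A)$ suffices if the disks are placed along the equator. Combined with Corollary~\ref{Corollary: Spectral norm supported on the displacable open set}, this again gives $\gamma_\im(\psi)\to0$. I expect the main obstacle to be this uniformity step: controlling the displacement energy independently of the number of disks, together with justifying that the conjugating $\theta$ exists as a Hamiltonian (not merely symplectic) map. If the clustering fails, the fallback is $C^0$-continuity of $\gamma_\im$ combined with conjugating each $D_i$ into a tiny-diameter disk, since a map supported in tiny disks is $C^0$-close to the identity.
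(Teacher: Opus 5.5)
The reduction to bounding $\gamma_\im(\psi)$ is fine. The case $\Sigma=S^2$, however, has a genuine gap, and it sits exactly at the uniformity step you flag.

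A rotation about the polar axis displaces a region at a Hofer cost equal to the region's longitudinal angular width divided by $2\pi$. For a round disk of area $a$ this width is of order $\sqrt a$, not $a$, so round models do not scale. Take $n$ disjoint disks $D_i$ of area $1/(2n)$; all have area $<A$ once $n>1/(2A)$. Lining up round copies of them along the equator would require $\sum_i\sqrt{a_i}=\sqrt{n/2}$ to stay bounded, which it does not. Worse, if the $D_i$ have equal areas and total area close to $1$, disjoint round copies do not exist at all, because many equal round disks cannot fill almost all of $S^2$.

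Your $C^0$-continuity fallback runs into the same packing problem. It needs all $D_i$ placed simultaneously in pairwise disjoint sets of diameter below the continuity modulus, with total area possibly near $1$, and you do not carry this out.

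The missing idea is to choose target regions shaped for the rotation. The paper moves each $D_j$, by an area-preserving diffeomorphism, into the lune $S_j=\{a_{j-1}\le\theta<a_j,\ -1<z<1\}$ with $a_j-a_{j-1}=2\pi(\Area(D_j)+\delta)$. These lunes fit because $\sum_j(\Area(D_j)+\delta)<1$. Each has longitudinal width exactly $2\pi\Area(S_j)$ and area $<1/2$. Hence the single rotation $R$ by angle $2\pi\max_j\Area(S_j)$ displaces all of them, with $\norm{R}_{\Hofer}=\max_j\Area(S_j)$. Conjugating back and applying Corollary~\ref{Corollary: Spectral norm supported on the displacable open set} gives $\gamma_\im(\psi)\le 2(\max_j\Area(D_j)+\delta)$, uniformly in $n$. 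The conjugating map only needs to be area preserving, since the Hofer norm is invariant under symplectic conjugation, and on $S^2$ it is Hamiltonian anyway. So your worry about $\theta$ being Hamiltonian is moot.

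For positive genus your route works, but it differs from the paper's. As written, the single $\theta$ sending all $D_i$ into disjoint round disks of area slightly larger than $A$ need not exist, since there may be more than $1/A$ disks. You do not need it, though. The Max property applies directly to the original disjoint disks $D_i$, so you may bound each $c_\im([\Sigma];G_i)$ and $c_\im([\Sigma];\overbar{G_i})$ after conjugating by a symplectomorphism chosen for that disk alone. Symplectic invariance of $c_\im$ holds for all symplectomorphisms, so this is legitimate. You then get $\gamma_\im(\psi)\le 2\max_i(\text{displacement energy of the round model of } D_i)$, which is roughly $2\max_i\Area(D_i)$ once $A<1/2$.

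The paper instead proves $c_\im([\Sigma];G)\le\Area(D)$ for $G$ supported in a single disk $D$, using the symplectic contraction principle. Rescale $G$ inside $D$ to $G_a$. The function $a\mapsto c_\im([\Sigma];G_a)/a$ is continuous with values in the nowhere dense set $\Spec(G)$, hence constant. For small $a$ the support of $G_a$ is displaceable inside $D$ with energy $a\Area(D')$. This avoids any conjugation, gives the sharp constant, and works for disks of arbitrary area. Your version needs $A<1/2$, which is harmless because you are free to shrink $A$.
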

\begin{proof}
	By Theorem \ref{Theorem: Topological entropy can be approximated by braids}, there exists a hyperbolic periodic point $x$ of $\phi$,
	such that \[\Gamma_{\pi_1}([\BB(\mathcal{P}, \phi)]) > h_{\top}(\phi)-\epsilon,\]
	for $\mathcal{P}=\set{x, \phi(x),\ldots, \phi^{k-1}(x)}$. We define the area bound \[A(\phi,\varepsilon):=\Delta(\mathcal{P},{\phi^k})/{2k}.\]
	
Let $\sqcup^{n}_{i=1}D_{i}$ be a union of disjoint open disks $D_i$, each with area at most $A(\phi, \varepsilon)$. By Corollary \ref{Corollary: Symp(D2) is contractible}, every area preserving diffeomorphism $\psi$ compactly supported on $\sqcup^{n}_{i=1}D_{i}$ is a Hamiltonian diffeomorphism. Moreover, there exists a Hamiltonian $G$ compactly supported on $\sqcup^{n}_{i=1}D_{i}$ that generates~$\psi$.

Then, as in the proof of Theorem \ref{Main Theorem: Lower semicontinuity of topological entropy}, it will follow that 
\[h_{\top}(\psi \circ \phi)>h_{\top}(\phi)-\epsilon\]
if we can show that
\begin{equation}\label{eq:lower bound to show}
d_\im(\psi \circ \phi, \phi) <  2A(\phi, \varepsilon) = \Delta(\mathcal{P},\phi^k)/{k}.
\end{equation}

We proceed to show that \eqref{eq:lower bound to show} holds. To prove that, we consider two distinct cases: $\Sigma =S^2$ and $\Sigma\neq S^2$. 


	\textit{Case 1: $\Sigma=S^2$}.\\
Note that by definition, the action gap $\Delta(\mathcal{P},\phi^k)$ is at most $1$, which means that
	\[A(\phi, \varepsilon) \le 1/2.\]
    Let us first assume that $n=1$, i.e., we consider only one disk $D$.   Since any open disk in $S^2$ with area at most $1/2$ is Hamiltonian displaceable, 
Corollary \ref{Corollary: Spectral norm supported on the displacable open set} implies that in this case 
	\[d_\im(\psi \circ \phi, \phi) = \gamma_{\im}(\psi) \le 2 \cdot E_{\mathrm{conn}}(D) = 2\mathrm{Area}(D) < 2A(\phi,\varepsilon).\]

 Let us now assume that $n\geq 2$. Let $\delta>0$ be so small that $$\sum_{i=1}^{n} (\mathrm{Area}(D_i) + \delta ) < 1,$$
 and $(\mathrm{Area}(D_i) + \delta )< A(\phi,\varepsilon) \leq \frac{1}{2}$, for all $1 \leq i \leq n$.

Identify  $S^2 = \{x^2 +y^2 + z^2 = 1\} \subset \R^3$ equipped with the standard (normalized) area form with area $1$. We 
let $$S_j = S^2 \cap \{x=\sin \theta, y=\cos\theta, a_{j-1} \leq \theta < a_j, -1<z<1\}, $$
where $$a_j = 2\pi \sum^{j}_{l=1} (\mathrm{Area}(D_l)+\delta).$$

\medskip
Note that the area of each $S_j$ is equal to $\mathrm{Area}(D_j) + \delta$. 
By a theorem of Moser there is an $\omega$-preserving diffeomorphism $\tau_1:S_1\to S_1$ with $\tau_1(D_1) \subset S_1$. Choose a small closed neighborhood $K_1$ of $\tau_1(D_1)$ that does not intersect the sets $\tau_1(D_2), \cdots \tau_1(D_n)$. Again, there is a $\omega$-preserving diffeomorphism $\tau_2$ supported in the complement of $K_1$ with $\tau_2(\tau_1(D_2))\subset S_2$. Repeating that argument, we find $\omega$-preserving diffeomorphisms $\tau_3, \ldots, \tau_n$ with $\tau_n\circ \cdots \circ \tau_1(D_j) \subset S_j$ for all $j=1,\ldots, n$. Set $$\Phi := \tau_n\circ \cdots \circ \tau_1.$$ 
Let $R:S^2 \to S^2$ denote the rotation by angle $\theta = \max_{ 1 \leq i \leq n} \{\mathrm{Area}(S_i) \}$. Since $$\mathrm{Area}(S_j)=\mathrm{Area}(D_j) + \delta< A(\phi,\varepsilon) \leq \frac{1}{2},$$
it follows that $R$ displaces each set $S_j$. Thus, $\Phi^{-1}R\Phi$ displaces every connected component of $\sqcup^{n}_{i=1}D_{i}$.
Moreover, $$\|R\|_{\mathrm{Hofer}} = \max_{ 1 \leq i \leq n} \{\mathrm{Area}(S_i) \} \leq A(\phi,\varepsilon)+\delta.$$ 
By Corollary~\ref{Corollary: Spectral norm supported on the displacable open set}, 
$$\gamma_{\mathrm{im}}(G) \leq 2 E_{\mathrm{conn}}(\sqcup^{n}_{i=1}D_{i})\leq 2(A(\phi,\varepsilon)+\delta).$$
Since this argument works for arbitrarily small $\delta>0$, we conclude that \eqref{eq:lower bound to show} holds, thus establishing the theorem in the case $\Sigma = S^2$. 

	
	\textit{Case 2: $\Sigma \ne S^2$.}\\
	We consider the case $n=1$: we thus have only one disk $D$.  Then, it is enough to show
    \begin{equation} \label{eq:thiswillfinishtheproof}
    c_{\im}([\Sigma],G) \le \Area(D).
    \end{equation}
    Indeed, it will follow directly from \eqref{eq:thiswillfinishtheproof} that 
	\[\gamma_{\im}(\psi) \le 2\cdot \Area(D) < 2A(\phi,\varepsilon).\] 
    Once the case for $n=1$ is established, the case for $n\geq 2$ follows then from the max inequality for the spectral invariant for surfaces of positive genus \cite{Buhovsky-Humiliere-Seyfaddini21}. 
    	
	To prove \eqref{eq:thiswillfinishtheproof}, we start by remarking that any Hamiltonian compactly supported on $D \subset \Sigma$ can be identified with a Hamiltonian supported on a standard open disk $\mathbb{D}(A) \subset \RR^2$ of area $A:=\Area(D)$. We add superscript $*$ to distinguish Hamiltonians defined on $\RR^2$.
	
	For $a \in (0,1)$ and a Hamiltonian $H^*$ on $\mathbb{R}^2$, we define $H_a^*$ by
	\[H^{*}_{a,t}(x,y)=a \cdot H^{*}_t(x/a, y/a)\quad \text{for any }(x,y)\in\RR^2.\]
	Then, a loop $z^*$ is a $1$-periodic orbit of $H^*$, if and only if $a\cdot z^*$ is a $1$-periodic orbit of $H^*_a$. 
    
    
	

	We denote by $H_a$ the Hamiltonian corresponding to $H^*_a$ and defined on $\Sigma$. Each $1$-periodic orbit $z$ of $X_{H_a}$ is either constant or contained in $D$. Hence, each $1$-periodic orbit is contractible. From a direct computation we deduce 
	\[\Spec(H_a)=\Spec(H_a;\cont) = a \cdot \Spec(H;\cont).\]	
	
		
	By the \textbf{Spectrality property} of PSS-image spectral invariant,
	\[c_\im([\Sigma];H_a)\in \Spec(H_a;\cont).\]
	Consider the function $c:(0,1] \to \RR$ defined by
	\[c(a) = c_\im([\Sigma];H_a) / a.\]
	By the \textbf{Hofer-Continuity property}, $c$ is a continuous function mapping into $\Spec(H;\cont)$, a nowhere dense subset of $\R$. Therefore, $c$ must be constant.
	
	Let $G$ be a Hamiltonian compactly supported in $D$ that generates $\psi$. Note that there exists another open disk $D^\prime$ such that
	\[\supp(G) \subset D^\prime \subsetneq D.\]
	
	The Hamiltonian $G_a$ is compactly supported on an open disk $D^\prime_a$ with \linebreak $\Area(D_a^\prime)=a\cdot \Area(D^\prime)$. This is immediate from the symplectic identification of $D \subset \Sigma$ and the standard disk $\D(A) \subset \RR^2$ of area $A$. In particular, for sufficiently small $a>0$, the disk $D_a^\prime$ is displaceable even in $D$. 
	It then follows from Lemma \ref{Lemma: Spectral invariant of [M] supported on the displacable open set} that
	\[c_\im([\Sigma];G_a) \le E(D^\prime_a),\]
	or equivalently
	\[c_\im([\Sigma];G) \le \Area(D^\prime).\]
	
	Applying the same argument to $c_\im([\Sigma],\overbar{G})$, we obtain 
    \[c_\im([\Sigma];\overline{G}) \le \Area(D^\prime).\]
    We thus conclude that 
	\[d_{\mathrm{im}}(\psi \circ \phi,\phi) = \gamma_\im(G) < 2 \cdot \Area(D) < 2A(\phi,\varepsilon).\]

	
    This completes the proof of the theorem in the case $\Sigma \neq S^2$.
\end{proof}

We note that the idea used to prove the second case is known as the
symplectic contraction principle, introduced in  \cite{Polterovich14}. This appears also in \cite{Humiliere-LeRoux-Seyfaddini16} to prove the Max property of the Oh--Schwarz spectral invariant, which includes the second case.


\newpage

\appendix
\section{Diffeomorphism groups on symplectic surfaces}
\label{app: Diffeomorphism groups on symplectic surfaces}

In this appendix, we recall and prove some topological properties of the group of symplectomorphisms $\Symp(\Sigma,\omega)$ and the group of Hamiltonian diffeomorphisms $\Ham(\Sigma,\omega)$ on a symplectic surface $(\Sigma, \omega)$.

For an oriented surface $\Sigma$, a $2$-form $\omega$ is a symplectic form if and only if it is nowhere vanishing. We always assume $\omega$ is positive with respect to the volume form induced by the given orientation.

For a compact oriented surface $\Sigma$, let $\Omega(\Sigma, A)$ denote the space of symplectic forms on $\Sigma$ satisfying $\Area(\Sigma,\omega)=A$, equipped with the $C^\infty$-topology. Two key observations for any $\omega_0,\omega_1 \in \Omega(\Sigma, A)$ are:
\begin{enumerate}
	\item $\omega_1-\omega_0$ is an exact $2$-form, and
	\item $\omega_t:=t\cdot\omega_1 + (1-t)\cdot\omega_0 \in \Omega(\Sigma, A)$ for any $t\in[0,1]$.
\end{enumerate}
By (2), the set $\Omega(\Sigma, A)$ is a convex subset of linear space $\Omega^2(\Sigma)$.

As a low-dimensional phenomenon, symplectic forms on a compact orientable surface are classified by their area.
\begin{lem}
\label{Lemma: symplectic forms on surfaces}
	Let $\Sigma$ be a compact orientable surface possibly with a boundary. For two symplectic forms $\omega_0, \omega_1$ on $\Sigma$, the followings are equivalent:
	\begin{enumerate}
		\item $\Area(\Sigma,\omega_0) = \Area(\Sigma,\omega_1)$
		\item There exists a diffeomorphism $\Phi: \Sigma \to \Sigma$ such that,
		\[\Phi^*\omega_1=\omega_0 \quad\text{and}\quad\Phi|_{\partial \Sigma} = \id.\] 
	\end{enumerate}
\end{lem}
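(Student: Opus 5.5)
The direction (2)$\Rightarrow$(1) is immediate. If $\Phi^*\omega_1=\omega_0$ with $\Phi$ a diffeomorphism that is the identity on $\partial\Sigma$, then $\Phi$ preserves orientation (it fixes the boundary pointwise, and in the closed case $\omega_0$ and $\omega_1$ are both positive, so $\Phi^*\omega_1=\omega_0$ forces $\Phi$ to be orientation preserving). Hence $\int_\Sigma\omega_0=\int_\Sigma\Phi^*\omega_1=\int_\Sigma\omega_1$.

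For (1)$\Rightarrow$(2) we run Moser's path method along the linear interpolation $\omega_t:=(1-t)\omega_0+t\omega_1$. Each $\omega_t$ is a positive area form, since it is a convex combination of positive forms; this is observation (2) above. All of them have the same area $A$.

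The first step is to show that the exact primitive can be chosen to vanish on $\partial\Sigma$. We need $\omega_1-\omega_0=d\beta$ with $\beta|_{\partial\Sigma}=0$, meaning $\beta$ vanishes as a form at boundary points, not merely its pullback to the boundary.

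In the closed case, $H^2(\Sigma;\R)\cong\R$ via integration, so equal area gives exactness. In the case $\partial\Sigma\neq\emptyset$ (with $\Sigma$ connected, componentwise otherwise), we use $H^2(\Sigma,\partial\Sigma;\R)\cong\R$, again via integration. A $2$-form is automatically a relative cocycle, so equal area makes $\omega_1-\omega_0$ exact in the relative de Rham complex. This gives $\beta'$ with $d\beta'=\omega_1-\omega_0$ and $\iota^*\beta'=0$ on $\partial\Sigma$.

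To upgrade this to $\beta$ vanishing entirely along the boundary, work in a collar $[0,\epsilon)\times\partial\Sigma$. There $\beta'$ restricted to the boundary has the form $g\,dr$ with $g=\beta'(\partial_r)$. Put $f:=\chi(r)\,r\,g$ with a cutoff $\chi$ equal to $1$ near $r=0$, and set $\beta:=\beta'-df$. Then $d\beta=d\beta'$, and on the boundary $df=g\,dr$, so $\beta=0$ there.

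The second step is the Moser flow. Define $X_t$ by $\iota_{X_t}\omega_t=-\beta$; this is well defined because each $\omega_t$ is nondegenerate. Then $X_t$ vanishes on $\partial\Sigma$, so its flow $\psi_t$ exists for $t\in[0,1]$ on the compact surface and fixes the boundary pointwise. By Cartan's formula,
\[
\tfrac{d}{dt}\psi_t^*\omega_t=\psi_t^*\bigl(d\iota_{X_t}\omega_t+\omega_1-\omega_0\bigr)=\psi_t^*\bigl(-d\beta+d\beta\bigr)=0,
\]
so $\psi_1^*\omega_1=\omega_0$. Taking $\Phi:=\psi_1$ gives (2).

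The main obstacle is the first step, namely securing a primitive that vanishes along $\partial\Sigma$ rather than just one whose pullback vanishes. That requires the relative cohomology computation together with the collar correction.
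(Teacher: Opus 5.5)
Your proof is correct and is essentially the paper's argument. The paper invokes Moser's path method along the linear interpolation $\omega_t=(1-t)\omega_0+t\omega_1$ and cites Banyaga for the boundary case; your relative-cohomology primitive, together with the collar correction making $\beta$ vanish pointwise on $\partial\Sigma$ (so that $X_t=0$ there and $\Phi|_{\partial\Sigma}=\mathrm{id}$), supplies exactly the detail those citations cover. One small correction: hypothesis (1) controls only the total area, so the lemma genuinely requires $\Sigma$ connected (as the paper tacitly assumes), and your parenthetical ``componentwise otherwise'' should be dropped rather than treated as a reduction.
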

This follows immediately from the Moser's trick \cite{Moser65} for closed surfaces and the work of Banyaga \cite{Banyaga74} for surfaces with boundary. The linear interpolation $\set{\omega_t}_{t\in[0,1]}$ generates a smooth path of diffeomorphisms $\set{\Phi_t}_{t\in[0,1]}$ such that $\Phi_t^*\omega_t=\omega_0$ for each $t\in [0,1]$, with $\Phi_0=\id$. If $\Sigma$ has a boundary, the diffeomorphisms $\Phi_t$ satisfy $\Phi_t|_{\partial \Sigma}=\id$. However, note that $\Phi_t$ is not necessarily the identity in a neighborhood of $\partial \Sigma$.

As a corollary, given a compact symplectic surface $(\Sigma, \omega_o)$ with $\Area(\Sigma,\omega_o)=1$, any other symplectic structure $\omega$ on $\Sigma$ is symplectomorphic to $(\Sigma, c \cdot \omega_0)$. Thus, up to rescaling by a constant, there is a unique symplectic form for each compact orientable surface. In the case of a disk, rather than rescaling, we take $(\mathbb{D}(A),\omega_{\std})$, the disk centered on the origin with area $A$, as a canonical representative.

Next, we consider symplectomorphisms on $(\Sigma, \omega)$. In this setting, symplectomorphisms are simply area-, and orientation-preserving diffeomorphisms. For non-closed $\Sigma$ such as the disk $\mathbb{D}^2$ or the plane $\RR^2$, we define:
\begin{itemize}[leftmargin=5.5mm]
	\item $\Symp_c(\Sigma,\omega)$: the group of compactly supported symplectomorphisms.
	\item $\Ham_c(\Sigma,\omega)$: the group of Hamiltonian diffeomorphism generated by copmactly supported Hamiltonians.
\end{itemize}
Here, compactly supported means that the maps or Hamiltonians have support contained in the interior $\Int(\Sigma)$. When $\Sigma$ is closed, every symplectomorphisms and every Hamiltonian diffeomorphisms are automatically compactly supported. For this reason, we may add subscript $c$ for diffeomorphism groups when $\Sigma$ is closed, to integrate the notations.

By Lemma \ref{Lemma: symplectic forms on surfaces}, there exists an isomorphism between two groups \[\Symp_c(\D, \omega_0)\quad \text{and} \quad \Symp_c(\D, \omega_1).\]
Hence, we only consider $(\mathbb{D}^2, \omega_\std)$ when $\Sigma=\mathbb{D}^2$.

We now review some topological properties associated morphism groups. More precisely, we consider the following groups with inclusions:
\[\Ham_c(\Sigma,\omega) \subset \Symp_{c,0}(\Sigma, \omega) \subset \Symp_c(\Sigma,\omega) \subset \Diff_c^+(\Sigma).\]
The proofs of the stated properties will be presented in the end of the appendix. 

As a low dimensional phenomenon, the inclusion $\Symp_c(\Sigma, \omega) \subset \Diff_c^+(\sigma, \omega)$ is a homotopy equivalence.
\begin{prop}
	\label{Proposition: Symp and Diff+ of symplectic surfaces}
	Let $(\Sigma,\omega)$ be a compact symplectic surface, possibly with non-emtpy boundary. Then, the group $\Symp_c\Sigma,\omega)$ (resp. $\Symp_{0,c}(\Sigma,\omega)$) is homotopy equivalent to the group $\Diff_c^+(\Sigma)$ (resp. $\Diff_{0,c}(\Sigma)$).
\end{prop}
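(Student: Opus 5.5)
The natural route is to compare both groups with a common group that deformation retracts onto each. We use the space $\Omega(\Sigma,A)$ of area forms of fixed total area $A=\Area(\Sigma,\omega)$, which is convex and hence contractible. The group $\Diff_c^+(\Sigma)$ acts on $\Omega(\Sigma,A)$ by pullback. By Lemma~\ref{Lemma: symplectic forms on surfaces} (Moser/Banyaga), this action is transitive on forms agreeing with $\omega$ near the boundary. The stabilizer of $\omega$ is exactly $\Symp_c(\Sigma,\omega)$. So we aim to exhibit a fibration
\[
\Symp_c(\Sigma,\omega)\hookrightarrow \Diff_c^+(\Sigma)\to \Omega_c(\Sigma,A),\qquad \Phi\mapsto \Phi^*\omega,
\]
where $\Omega_c(\Sigma,A)$ denotes area forms of total area $A$ that coincide with $\omega$ near $\partial\Sigma$. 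This base is still convex, hence contractible. The long exact sequence of homotopy groups then shows the inclusion is a weak homotopy equivalence. Since all these are Fréchet manifolds (or have the homotopy type of CW complexes, by Palais/Milnor), a weak equivalence is a homotopy equivalence.

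Rather than quoting abstract local triviality, I would make the retraction explicit, which handles both claims at once. For $\Phi\in\Diff_c^+(\Sigma)$, set $\omega_t=(1-t)\omega+t\,\Phi^*\omega$. Then apply the parametric Moser trick: because $\Phi^*\omega-\omega$ is exact with a primitive $\beta$ that can be chosen compactly supported in the interior, the vector field $X_t$ defined by $\iota_{X_t}\omega_t=-\beta$ is compactly supported. Its flow $\rho_t$ satisfies $\rho_t^*\omega_t=\omega$. The map $(\Phi,t)\mapsto \Phi\circ\rho_t^{-1}$ then gives a deformation of $\Diff_c^+(\Sigma)$ into $\Symp_c(\Sigma,\omega)$ that fixes $\Symp_c$ pointwise, provided $\beta=0$ when $\Phi^*\omega=\omega$.

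The key technical point is choosing $\beta$ continuously in $\Phi$ and with compact support. For this we use a fixed linear right inverse of $d$ on compactly supported exact $2$-forms. On a closed surface, Hodge theory gives $\beta=d^*G(\Phi^*\omega-\omega)$, continuous in $C^\infty$. With boundary, we use compactly supported cohomology: the class $[\Phi^*\omega-\omega]\in H^2_c(\Int\Sigma)\cong\R$ is the difference of total areas, which is zero, and a linear homotopy operator supplies $\beta$. Linearity ensures that $\beta=0$ whenever $\Phi^*\omega=\omega$.

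For the identity components, the same deformation preserves path components. A path from $\id$ to $\Phi$ in $\Diff_{0,c}$ is carried to a path in $\Symp_c$ from $\id$ to $\Phi\circ\rho_1^{-1}$. Hence the retraction restricts to $\Diff_{0,c}(\Sigma)\to\Symp_{0,c}(\Sigma,\omega)$, which gives the second equivalence.

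The main obstacle is the continuity and compact support of the primitive, especially in the case with boundary. The paper's remark that the Banyaga diffeomorphisms fix $\partial\Sigma$ only pointwise signals that support near the boundary requires care. I would first try restricting the base to forms equal to $\omega$ near $\partial\Sigma$, and use a de Rham homotopy operator on a collar to keep $\beta$ supported away from the boundary.
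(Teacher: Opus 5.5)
Your proof is correct up to one slip, noted at the end, but it takes a more explicit route than the paper.

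\textbf{Comparison with the paper.} The paper only considers $\pi(\Phi)=\Phi^*\omega$ from $\Diff_c^+(\Sigma)$ to the convex set of area forms of area $A$ agreeing with $\omega$ near $\partial\Sigma$. It asserts that Moser's trick makes $\pi$ a Serre fibration. From contractibility of the base it concludes that the fibre $\pi^{-1}(\omega)=\Symp_c(\Sigma,\omega)$ includes as a homotopy equivalence, and it passes to identity components without further argument.

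Your first paragraph reproduces exactly this. Your actual proof, however, is a strong deformation retraction $(\Phi,t)\mapsto\Phi\circ\rho^{\Phi}_t$. It is built from a parametric Moser isotopy whose primitive $\beta=Q(\Phi^*\omega-\omega)$ comes from a fixed continuous linear right inverse $Q$ of $d$.

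\textbf{What each approach buys.} Your route gives a genuine homotopy equivalence directly. You never have to verify the homotopy lifting property or upgrade a weak equivalence via CW-type results, and the identity-component statement follows by applying the retraction to paths. The price is that you must actually construct $Q$ with compact supports. This is precisely the ingredient the paper's Serre-fibration claim uses implicitly, since homotopy lifting is parametric Moser. The paper's route is shorter but leaves that step unjustified.

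\textbf{The boundary is easier than you fear.} Since $\Phi\in\Diff_c^+(\Sigma)$ is the identity near $\partial\Sigma$, the form $\Phi^*\omega-\omega$ is automatically compactly supported in $\Int\Sigma$ with zero total integral. So the retraction never needs Banyaga's surjectivity statement. It only needs a continuous compactly supported primitive for such forms. One exists: integrate along the collar direction to push the form into a fixed compact subsurface, then apply a fixed primitive operator there, using $H^2_c(\Int\Sigma)\cong\R$ via integration for connected $\Sigma$.

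\textbf{The slip.} With your convention $\rho_t^*\omega_t=\omega$, the deformation must be $\Phi\circ\rho_t$, not $\Phi\circ\rho_t^{-1}$. Indeed
$(\Phi\circ\rho_1)^*\omega=\rho_1^*\Phi^*\omega=\rho_1^*\omega_1=\omega$,
whereas
$(\Phi\circ\rho_1^{-1})^*\omega=(\rho_1^{-1})^*\omega_1=(\rho_1^{-1}\circ\rho_1^{-1})^*\omega$,
which is not $\omega$ in general.

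With this correction, the rest goes through as you describe:
\begin{itemize}
\item $\beta$ vanishes on $\Symp_c(\Sigma,\omega)$ by linearity of $Q$;
\item $\rho_t$ is compactly supported because $\beta$ is;
\item $\omega_t$ is nondegenerate because both endpoints are positive area forms.
\end{itemize}
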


We emphasize that this result does not extend to higher dimensions. For example, in the disk cotangent bundle $D^*S^2$, the square of Dehn-Seidel twist is isotopic to the identity map in $\Diff_c^+(D^*S^2)$ but not in $\Symp_c(D^*S^2, \omega_{\std})$. See \cite{Seidel97} for details and further discussion.

Even in dimension $2$, a symplectomorphism $\psi$ is not necessarily isotopic to the identity, such as the Dehn twist along a non-contractible loop on a closed surface of positive genus. Even when $\psi$ is isotopic to the identity, like translations on $T^2$, it may still fail to be a Hamiltonian diffeomorphism.

In contrast, when $\Sigma=S^2$ or $\mathbb{D}^2$, we obtain following corollaries.
\begin{cor}
	\label{Corollary: Symp(S2) is connected}
	The group $\Symp(S^2,\omega)$ is connected. Moreover, every symplectomorphism on $(S^2, \omega)$ is a Hamiltonian diffeomorphism, i.e.
	\[\Symp(S^2,\omega)=\Symp_0(S^2,\omega)=\Ham(S^2,\omega).\]
\end{cor}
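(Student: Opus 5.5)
The plan is to establish the three claims in order: connectedness of $\Symp(S^2,\omega)$, the equality $\Symp_0=\Ham$, and as a byproduct that $\Symp=\Symp_0$.

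First, connectedness. By Proposition \ref{Proposition: Symp and Diff+ of symplectic surfaces}, $\Symp(S^2,\omega)$ is homotopy equivalent to $\Diff^+(S^2)$, so it is enough that $\Diff^+(S^2)$ is connected. For this I will use Smale's theorem, $\Diff^+(S^2)\simeq SO(3)$. A more elementary route would also do: the mapping class group of $S^2$ is trivial, so every orientation-preserving diffeomorphism of $S^2$ is isotopic to the identity. Since homotopy equivalences induce bijections on $\pi_0$, $\Symp(S^2,\omega)$ is connected, which gives $\Symp(S^2,\omega)=\Symp_0(S^2,\omega)$. This also shows that the inclusion of $\Symp$ into orientation-preserving maps loses nothing: $\omega$ fixes the orientation, so every symplectomorphism preserves it.

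Second, the equality $\Symp_0=\Ham$. Take $\psi\in\Symp_0(S^2,\omega)$ and a smooth symplectic isotopy $\{\psi_t\}_{t\in[0,1]}$ from $\id$ to $\psi$. The existence of a smooth path, and not merely a continuous one, follows from local contractibility and smooth path-connectedness of the Fréchet Lie group $\Symp$; one can also smooth the path directly. Let $X_t$ be the generating vector field, $\frac{d}{dt}\psi_t=X_t\circ\psi_t$. Since each $\psi_t$ preserves $\omega$, the form $\iota_{X_t}\omega$ is closed. Because $H^1_{dR}(S^2)=0$, it is exact, say $\iota_{X_t}\omega=-dH_t$. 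The functions $H_t$ can be chosen depending smoothly on $t$ by normalizing them to have mean zero, for instance via the Hodge decomposition. Hence $\psi_t=\phi^t_H$ and $\psi\in\Ham(S^2,\omega)$. The inclusion $\Ham\subset\Symp_0$ is automatic.

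The main obstacle is only the input $\pi_0(\Diff^+(S^2))=0$, which I take from Smale (or Munkres) rather than prove. Everything else is the standard flux argument; on $S^2$ the flux homomorphism vanishes identically because $H^1(S^2;\R)=0$.
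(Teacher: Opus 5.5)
Your proposal is correct and follows the paper's proof essentially step for step. For connectedness you use Smale's theorem $\Diff^+(S^2)\simeq SO(3)$ together with the homotopy equivalence $\Symp(S^2,\omega)\simeq\Diff^+(S^2)$, and you then use $H^1_{dR}(S^2)=0$ to make every symplectic isotopy Hamiltonian. Your additional remarks, on obtaining a smooth isotopy and on choosing $H_t$ smoothly in $t$ via mean-zero normalization, only fill in details the paper leaves implicit.
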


\begin{cor}
	\label{Corollary: Symp(D2) is contractible}
	For any $A>0$, the group $\Symp_c(\mathbb{D}(A), \omega_{\std})$ is contractible. Moreover, every compactly supported symplectomorphism is a compactly supported Hamiltonian diffeomorphism, i.e.
	\[\Symp_c(\mathbb{D}(A),\omega_{\std})=\Symp_{0,c}(\mathbb{D}(A),\omega_{\std})=\Ham_c(\mathbb{D}^2,\omega).\]
\end{cor}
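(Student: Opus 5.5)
The plan is to deduce this from the analogous statement for all orientation-preserving diffeomorphisms, via Proposition \ref{Proposition: Symp and Diff+ of symplectic surfaces} and Lemma \ref{Lemma: symplectic forms on surfaces}.

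First, by Smale's theorem, $\Diff_c^+(\mathbb{D}^2)$ (diffeomorphisms equal to the identity near the boundary) is contractible. Proposition \ref{Proposition: Symp and Diff+ of symplectic surfaces} applied to $(\mathbb{D}(A),\omega_{\std})$ then shows that $\Symp_c(\mathbb{D}(A),\omega_{\std})$ is contractible. If one prefers a direct argument, it runs as follows. Given a contraction $f_s$ of $\psi$ in $\Diff_c^+$, consider the forms $f_s^*\omega_{\std}$. Each has the same total area and agrees with $\omega_{\std}$ near the boundary. A parametric Moser argument (Banyaga, as in Lemma \ref{Lemma: symplectic forms on surfaces}), with primitives of $f_s^*\omega_{\std}-\omega_{\std}$ chosen compactly supported and depending continuously on $s$, produces $\Phi_s$ with $\Phi_s^*f_s^*\omega_{\std}=\omega_{\std}$. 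The path $f_s\circ\Phi_s$ is then a path in $\Symp_c$ from $\psi$ to the identity.

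\emph{Main obstacle.} The delicate point is getting the primitive compactly supported. This holds because compactly supported de Rham $H^2_c$ of the open disk is $\R$, detected by the integral, and the integral of the difference vanishes. In the parametric version, continuity in the parameter follows from choosing a fixed homotopy operator.

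\emph{Consequence for the subgroups.} Connectedness gives $\Symp_c=\Symp_{0,c}$. To get equality with $\Ham_c$, take a compactly supported symplectic isotopy $\psi_t$ from the identity and its generating vector field $X_t$. The form $\iota_{X_t}\omega$ is closed and compactly supported in the open disk. Since $H^1_c(\mathrm{int}\,\mathbb{D})=0$, it equals $-dH_t$ with $H_t$ compactly supported. Here one fixes the constant so that $H_t$ vanishes near the boundary; this is possible because the complement of the support is connected near the boundary. Hence $\psi=\psi_1\in\Ham_c$, and the reverse inclusion is trivial.
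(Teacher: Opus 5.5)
Your argument is correct and follows the same route as the paper. You use contractibility of $\Diff_c^+(\mathbb{D}^2)$, transfer it to $\Symp_c(\mathbb{D}(A),\omega_{\std})$ via Proposition~\ref{Proposition: Symp and Diff+ of symplectic surfaces}, and then write the closed, compactly supported form $\iota_{X_t}\omega$ as $-dH_t$ with $H_t$ compactly supported. Your version is in places more precise than the paper's sketch. Smale's theorem, rather than the Alexander trick (which gives the deformation only for homeomorphisms), is what yields contractibility in the smooth category. The vanishing actually needed for the Hamiltonian step is $H^1_c(\mathrm{int}\,\mathbb{D})=0$ together with your normalization of $H_t$ near the boundary, not the statement about $2$-forms written in the paper.
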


Corollary \ref{Corollary: Symp(D2) is contractible} leads to another result, which plays a crucial role in Theorem \ref{Main theorem: Topological entropy after perturbation supported on a disk}. Given an open $U \subset \Sigma$, we denote by $\Symp(\Sigma,\omega;U)$ the group of symplectomorphisms compactly supported in $U$ and by $\Ham(\Sigma,\omega;U)$ the group of Hamiltonian diffeomorphisms generated by Hamiltonians compactly supported in $U$. When $U$ is a open disk, these groups coincide.

\begin{prop}
	\label{Proposition: Symplectomorphism supported on a disk}
	Let $(\Sigma, \omega)$ be a symplectic surface possibly with boundary. For any open disk $D\subset \Sigma$, we have
	\[\Symp(\Sigma,\omega;D)=\Ham(\Sigma,\omega;D).\]
\end{prop}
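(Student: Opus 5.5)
The plan is to reduce the statement to Corollary~\ref{Corollary: Symp(D2) is contractible}, transported to $D$ by a symplectic identification. The inclusion $\Ham(\Sigma,\omega;D)\subset\Symp(\Sigma,\omega;D)$ is immediate: a Hamiltonian compactly supported in $S^1\times D$ generates a flow by symplectomorphisms whose support lies in a compact subset of $D$. For the converse, let $\psi\in\Symp(\Sigma,\omega;D)$, so that $\supp\psi\subset K$ for some compact $K\subset D$. Since $D$ is an open disk, I would choose a closed smooth disk $\overline{D'}$ with $K\subset D'\subset\overline{D'}\subset D$. By Lemma~\ref{Lemma: symplectic forms on surfaces}, applied to the compact surface with boundary $\overline{D'}$, there is a diffeomorphism $\Phi:\mathbb{D}(A)\to\overline{D'}$ with $\Phi^*\omega=\omega_\std$, where $A=\Area(D')$. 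Concretely, one first takes any orientation-preserving diffeomorphism $\overline{D'}\to\mathbb{D}(A)$, pushes $\omega$ forward, and then corrects it via Lemma~\ref{Lemma: symplectic forms on surfaces}.

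Next set $\psi^*:=\Phi^{-1}\circ\psi\circ\Phi$. This is an area-preserving diffeomorphism of $\mathbb{D}(A)$ with support $\Phi^{-1}(K)$, which is compact in the interior. Hence $\psi^*\in\Symp_c(\mathbb{D}(A),\omega_\std)$, and Corollary~\ref{Corollary: Symp(D2) is contractible} shows it is generated by a Hamiltonian $G^*:S^1\times\mathbb{D}(A)\to\RR$ compactly supported in the interior. Define $G:=G^*\circ\Phi^{-1}$ on $S^1\times D'$ and extend it by zero. Then $G$ is smooth and compactly supported in $S^1\times D'\subset S^1\times D$. Because $\Phi$ is symplectic, $\Phi_*X_{G^*}=X_G$, so $\phi^t_G=\Phi\circ\phi^t_{G^*}\circ\Phi^{-1}$ on $D'$ and $\phi^t_G$ is the identity outside. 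It follows that $\phi^1_G=\psi$, which gives $\psi\in\Ham(\Sigma,\omega;D)$.

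The main obstacle lies in the hidden content of Corollary~\ref{Corollary: Symp(D2) is contractible}, namely that a compactly supported symplectic isotopy of the disk is Hamiltonian through compactly supported Hamiltonians. Contractibility supplies an isotopy $\psi_t$ inside $\Symp_c$. Its generating vector field $X_t$ satisfies that $\iota_{X_t}\omega$ is closed and compactly supported in the open disk. The condition needed for a compactly supported primitive is that the compactly supported cohomology $H^1_c$ of the open disk vanishes, which holds because $H^1_c(\RR^2)=0$. If the corollary is taken as given this is already covered; otherwise this is the step I would verify. One must also check that the isotopy from contractibility can be taken with supports in a fixed compact set, so that the resulting $G^*$ is compactly supported.

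A secondary point is the choice of $\overline{D'}$, which needs $D$ to be a smooth open disk. If $D$ is merely an open topological disk, I would instead exhaust it by smooth closed disks, using the fact that $K$ is compact.
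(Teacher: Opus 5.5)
Your proposal is correct and follows essentially the same route as the paper: you shrink to a closed disk $\overline{D'}\subset D$ containing the support, identify it symplectically with a standard disk $\mathbb{D}(A)$ via Lemma~\ref{Lemma: symplectic forms on surfaces}, apply Corollary~\ref{Corollary: Symp(D2) is contractible} to obtain a compactly supported generating Hamiltonian, and push that Hamiltonian forward to $\Sigma$, extending by zero (the paper does the identification in two steps, first getting $c\cdot\omega_\std$ and then composing with a dilation $\lambda$). The technical points you flag, namely a compactly supported primitive via $H^1_c=0$ and keeping the supports of the isotopy in a fixed compact set, are exactly the content the paper leaves to that corollary without further comment.
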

This proposition also implies that the group $\Symp(\Sigma,\omega;D)$ is connected, which is not trivial. For example, if $U$ is a neighborhood of a non-contractible loop $\gamma$, there is no continuous path in $\Symp(\Sigma,\omega;U)$ from the identity map to the Dehn twist along $\gamma$.

We conclude by recalling the fundamental groups of diffeomorphism groups, based at the identity map.
\begin{prop}
	\label{Proposition: Fundamental group of Hamiltonian diffeomorphism groups}
	For a closed symplectic surface $(\Sigma, \omega)$, the fundamental group of $\Ham(\Sigma,\omega)$ is given by
	\[\pi_1\left(\Ham(\Sigma,\omega), \id_\Sigma \right)=
	\begin{cases}
	\ZZ_2, & \Sigma=S^2\\
		\set{e}, & \Sigma \ne S^2
	\end{cases}.\]
	Moreover, the fundamental group of $\Symp_{0}(\Sigma,\omega)$ is given by
	\[\pi_1\left(\Symp_0(\Sigma,\omega), \id_\Sigma \right)=
	\begin{cases}
		\ZZ_2,		& \Sigma=S^2\\
		\ZZ^2			& \Sigma=T^2\\
		\set{e},		& \Sigma \ne S^2, T^2
	\end{cases}.\]
\end{prop}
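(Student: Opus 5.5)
The plan is to reduce everything to the classical identification of these fundamental groups through homotopy equivalences with diffeomorphism groups, and then pass from $\Symp_0$ to $\Ham$ via the flux homomorphism.

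\textbf{Step 1: $\Symp_0$.} By Proposition~\ref{Proposition: Symp and Diff+ of symplectic surfaces}, $\Symp_0(\Sigma,\omega)$ is homotopy equivalent to $\Diff_0(\Sigma)$. So it suffices to know $\pi_1(\Diff_0(\Sigma))$, which I will quote from the classical results of Smale and Earle--Eells:
\begin{itemize}
\item $\Diff_0(S^2)\simeq SO(3)$, giving $\pi_1=\ZZ_2$;
\item $\Diff_0(T^2)\simeq T^2$ (translations), giving $\pi_1=\ZZ^2$;
\item $\Diff_0(\Sigma)$ is contractible for genus $\ge 2$, giving trivial $\pi_1$.
\end{itemize}
This yields the second formula directly.

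\textbf{Step 2: $\Ham$ for $\Sigma\neq T^2$.} For $\Sigma=S^2$ I use Corollary~\ref{Corollary: Symp(S2) is connected}, which gives $\Ham(S^2)=\Symp_0(S^2)$, so $\pi_1=\ZZ_2$. For genus $\ge2$, I use the flux homomorphism
\[
\mathrm{Flux}:\widetilde{\Symp}_0(\Sigma,\omega)\to H^1(\Sigma;\R),
\]
whose kernel is $\widetilde{\Ham}$ and which descends to $\Symp_0\to H^1/\Gamma$, where $\Gamma$ is the flux group (the image of $\pi_1(\Symp_0)$). Since $\pi_1(\Symp_0)$ is trivial, $\Gamma=0$. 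Thus $\Ham=\ker(\mathrm{Flux}:\Symp_0\to H^1(\Sigma;\R))$, and this gives a fibration
\[
\Ham\to\Symp_0\to H^1(\Sigma;\R).
\]
Its base is contractible, so $\Ham\simeq\Symp_0$, which is contractible; hence $\pi_1(\Ham)$ is trivial.

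\textbf{Step 3: $T^2$ (the main obstacle).} Here $\Gamma=\mathrm{Flux}(\pi_1(\Symp_0))$ is a rank-two lattice, since the translation loops have fluxes forming a basis of $H^1(T^2;\R)$ after normalising area to $1$. The map $\Symp_0\to H^1/\Gamma=T^2$ is a fibration with fibre $\Ham$. I will use the long exact sequence
\[
\pi_2(T^2)=0\to\pi_1(\Ham)\to\pi_1(\Symp_0)=\ZZ^2\xrightarrow{\ \mathrm{Flux}\ }\pi_1(T^2)=\ZZ^2\to\pi_0(\Ham).
\]
The map $\mathrm{Flux}:\ZZ^2\to\ZZ^2$ is an isomorphism because it sends translation loops to generators. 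Exactness then forces $\pi_1(\Ham)=0$.

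The delicate points I need to check are:
\begin{itemize}
\item that the flux map really is a locally trivial fibration (I will use Banyaga's theorem that $\Ham$ is the kernel of the flux and that $\Symp_0\to H^1/\Gamma$ admits local sections built from translation-type symplectic isotopies);
\item the computation of the flux of translation loops, which is a direct integral $\int_{S^1\times\gamma}\omega$.
\end{itemize}
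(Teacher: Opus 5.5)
Your proof is correct. For $S^2$, for $\Symp_0$, and for genus $\ge 2$ you follow essentially the paper's route. The paper simply cites the injectivity of $\pi_1(\Ham)\to\pi_1(\Symp_0)$ (McDuff--Salamon, Prop.~10.2.13). That is exactly what your flux fibration over the contractible base $H^1(\Sigma;\R)$ delivers.

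Your worry about local triviality is easily settled. With $\Gamma=0$, the map $a\mapsto\phi^1_{X_a}$, where $\iota_{X_a}\omega$ is the harmonic representative of $a$, is a global continuous section. Hence $\Symp_0\cong\Ham\times H^1(\Sigma;\R)$ as spaces.

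The genuine difference is on $T^2$. The paper also uses the cited injectivity. It then identifies $\pi_1(\Symp_0(T^2))$ with $\pi_1(T^2,q)$ via the evaluation map $f\mapsto f(q)$. It kills the image of a Hamiltonian loop with a Floer-theoretic input: the Arnold conjecture yields a contractible $1$-periodic orbit, and all orbit loops $t\mapsto\phi^t_H(p)$ are freely homotopic to one another, so their common class is trivial.

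You instead note that the flux map $\pi_1(\Symp_0(T^2))\to\Gamma$ is already injective. The two translation loops generate $\pi_1(\Symp_0(T^2))$, because the inclusion of translations $T^2\hookrightarrow\Diff_0(T^2)$ is a homotopy equivalence that factors through $\Symp_0$. For $\omega=dx\wedge dy$ their fluxes are $[dy]$ and $-[dx]$, which form a basis of the lattice. Exactness with $\pi_2(T^2)=0$ then forces $\pi_1(\Ham)=0$.

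Your route is purely topological and treats $T^2$ and higher genus uniformly. It gives slightly more: the translation subgroup is a global section of $\Symp_0(T^2)\to H^1/\Gamma$, so $\Symp_0(T^2)\cong\Ham(T^2)\times T^2$ and $\Ham(T^2)$ is weakly contractible.

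The paper's route swaps the explicit flux computation for the Arnold conjecture. That is natural in a Floer-theoretic paper, but it is heavier than necessary, since the injectivity it cites already rests on the same flux machinery you use.
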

	Note that, by Proposition \ref{Proposition: Symp and Diff+ of symplectic surfaces}, the fundamental group of $\Diff_0(\Sigma, \omega)$ is equal to $\pi_1\left(\Symp_0(\Sigma,\omega), \id_\Sigma \right)$.
	For the disk, all of the following diffeomorphism groups
	\[\Ham_c(\mathbb{D}(A),\omega_\std), (\Symp_{0,c}(\mathbb{D}(A),\omega_\std), \text{ and } \Diff_{0,c}(\mathbb{D}(A)\]
	are contractible following by Proposition \ref{Proposition: Symp and Diff+ of symplectic surfaces} and Corollary \ref{Corollary: Symp(D2) is contractible}. This implies their fundamental groups are trivial.


\medskip

\noindent \textbf{Proof for Appendix.} Most of the proofs can be found in \cite[Chapter 7.2]{leonidsbook}. For completeness, we include detailed arguments, particularly for the cases where $\Sigma$ has non-empty boundary. These cases requires slight adaptations of the proofs in \cite{leonidsbook}.

\begin{proof}[Proof of Proposition 	\ref{Proposition: Symp and Diff+ of symplectic surfaces}]
	Denote by $\Omega(\Sigma, \omega)$ the set of symplectic forms on $\Sigma$ with symplectic area $A:=\Area(\Sigma,\omega)$. When $\Sigma$ has non-empty boundary, we additionally require that the forms agree with $\omega$ near the boundary. If $\Sigma$ is closed, then by definition \[\Omega(\Sigma,\omega)=\Omega(\Sigma, A)\]

	Since $\Sigma$ is a surface, the set $\Omega(\Sigma,\omega)$ is a convex subset of the linear space $\Omega^2(\Sigma)$, and is therefore contractible.
	
	Consider the map
	\[\pi:\Diff_c^+(\Sigma)\to \Omega(\Sigma, \omega), \qquad \pi(\Phi)=\Phi^*\omega.\]
	By the Moser's trick, this map is a surjection. More precisely, for any $\omega^\prime \in \Omega(\Sigma, \omega)$, there exists a $1$-form $\lambda$ such that $\omega-\omega^\prime = d\lambda$. When $\Sigma$ has non-empty boundary, we can further assume
	\[d \lambda = 0\quad \text{ near }\partial \Sigma.\]
	Applying the Moser's trick along the linear isotopy from $\omega^\prime$ to $\omega$ using $\lambda$, we obtain a compactly supported diffeomoprhism $\Phi$ isotopic to the identity such that
	\[\Phi^*\omega = \omega^\prime.\]
	
	{Moreover, the Moser's trick ensure that $\pi:\Diff_c^+(\Sigma)\to \Omega(\Sigma, \omega)$ is a Serre fibration.} Since the base $\Omega(\Sigma, \omega)$ is contractible, the total space $\Diff_c^+(\Sigma)$ is homotopy equivalent to any fiber $\pi^{-1}(\omega^\prime)$, with the homootpy equivalence given by the inclusion map.
	
	Since the fiber at $\omega$ precisely
	\[\pi^{-1}({\omega}) = \Symp_c(\Sigma, \omega),\]
	we conclude that 
	\[\Symp_c(\Sigma, \omega) \simeq \Diff_c^+(\Sigma).\]
	This also implies a homotopy equivalence for the identity components:
	\[\Symp_{0,c}(\Sigma, \omega) \simeq \Diff_{0,c}(\Sigma).\]
	This completes the proof.
\end{proof}
\begin{proof}[Proof of Corollary \ref{Corollary: Symp(S2) is connected}]
	Since $\Diff^+(S^2)$ has a strong deformation retract onto the rotation group $SO(3)$ (see \cite{Smale59}), it follows that $\Diff^+(S^2)$ is path-connected. By Proposition \ref{Proposition: Symp and Diff+ of symplectic surfaces}, this implies that $\Symp(S^2,\omega)$ is also path-connected. In particular, every symplectomorphism on $S^2$ is isotopic to the identity, i.e.
	\[\Symp_0(S^2,\omega)=\Symp(S^2, \omega).\]
	
	For $\phi \in \Symp(S^2,\omega)$, let $\alpha$ be a smooth path of symplectorphisms from $\id_{S^2}$ to $\phi$. Let $\set{X_t}_{t\in[0,1]}$ be time dependent vector fields generating the path $\alpha$. Each $X_t$ is a symplectic, i.e. $\mathcal{L}_{X_t}\omega=0$. By Cartan's formula,
	\[0=\mathcal{L}_{X_t}\omega=d(\iota_{X_t}\omega)+\iota_{X_t}d\omega=d(\iota_{X_t}\omega).\]
	Since every closed $1$-form on $S^2$ is exact, there exists a Hamiltonian $H_t$ satisfying \[\iota_{X_t}\omega=-dH_t.\]
	Therefore, $H$ generates $\phi$, proving that every symplectomorphism on $S^2$ is a Hamiltonian diffeomorphism.
\end{proof}

\begin{proof}[Proof of Corollary \ref{Corollary: Symp(D2) is contractible}]
	By the Alexander's trick, $\Diff^+_c(\mathbb{D}(A))$ has a strong deformation retract onto the identity map. The rest of the proof follows the same argument as in Corollary \ref{Corollary: Symp(S2) is connected}, using the fact that every compactly supported closed $2$-form on $\mathbb{D}(A)$ can be written as the exterior differential of a compactly supported $1$-form.
\end{proof}

\begin{proof}[Proof of Proposition \ref{Proposition: Symplectomorphism supported on a disk}]
	Let $D$ be the image of $i: \Int(\mathbb{D}^2) \to \Sigma$. Then, any symplectomorphism $\phi \in \Symp(\Sigma,\omega;D)$ defines a symplectomorphism $\phi^* \in \Symp_c(\Int(\mathbb{D}^2),i^*\omega)$ by conjugation with $i$. We may assume that the support is contained in the interior of smaller closed disk $D^\prime$ in $\mathbb{D}^2$.
	
	By Lemma \ref{Lemma: symplectic forms on surfaces}, there exists a diffeomorphism
	$\Phi: D^\prime \to D^\prime$ satisfying \[\Phi^*(i^*\omega)=c\cdot \omega_{\std}\]
	for some $c>0$. Furthermore, there exists a dilation symplectomorphism
	\[\lambda: (\mathbb{D}(A), \omega_{\std}) \to (D^\prime, c \cdot \omega_{\std}).\]

	Then, the composition $i \circ \Phi \circ \lambda : (\mathbb{D}(A),\omega_{\std}) \to (\Sigma,\omega)$ gives a symplectic embedding, and $\phi$ induces a compactly supported symplectomorphism on $\mathbb{D}(A)$.		
	By Corollary \ref{Corollary: Symp(D2) is contractible}, there exists a Hamiltonian $H_{\mathbb{D}}$ that genreates the symplectomorphism $(i \circ \Phi \circ \lambda)^*(\phi)$ compactly supported on $\mathbb{D}(A)$.
	
	The Hamiltonian $H_{\mathbb{D}}$ defines a Hamiltonain $H_\Sigma$ on $\Sigma$, defined by
\[H_{\Sigma}(p)=\begin{cases}
		H_{\mathbb{D}}(\lambda^{-1}\circ\Phi^{-1}\circ i^{-1}(p)),			& p \in D\\
		0,		& p \notin D
	\end{cases}.\]
	From the construction, the $H_\Sigma$ generates $\phi$ and is compactly supported on $D$. Hence, every symplectomorphism compactly supported on $D$ is a Hamiltonian diffeomorphism.
\end{proof}

\begin{proof}[Proof of Proposition \ref{Proposition: Fundamental group of Hamiltonian diffeomorphism groups}]
When $\Sigma=S^2$, since $\Diff^+(S^2)$ has a strong deformation retract onto the rotation group $SO(3)$, we have
\[\pi_1(\Diff^+(S^2),\id)=\ZZ_2.\]
Then, by Proposition \ref{Proposition: Symp and Diff+ of symplectic surfaces} and \ref{Corollary: Symp(S2) is connected}, we obtain
\[\pi_1\left(\Ham(S^2,\omega), \id_{S^2} \right)=
\pi_1\left(\Symp_0(S^2,\omega), \id_{S^2} \right)=
\pi_1(\Diff^+(S^2),\id_{S^2})=\ZZ_2.\]

Next, applying Proposition \ref{Proposition: Symp and Diff+ of symplectic surfaces} and known results on $\pi_1(\Diff_0(\Sigma),\id_\Sigma)$, we conclude that
\[\pi_1\left(\Symp_0(\Sigma,\omega), \id_\Sigma \right)=
\begin{cases}
	\ZZ^2,			& \Sigma=T^2\\
	\set{e},		& \Sigma \ne S^2, T^2
\end{cases}.\]

Since the inclusion-induced map $j_*:\pi_1(\Ham(\Sigma,\omega))\to\pi_1(\Symp_0(\Sigma,\omega))$ is injective (see \cite[Proposition 10.2.13]{McDuff-Salamon17}), we deduce
\[\pi_1\left(\Ham(\Sigma,\omega), \id_\Sigma\right) = 0,\]
for $\Sigma$ of genus at least $2$.

Now, for $\Sigma=T^2$, fix a point $q\in T^2$ and consider the evaluation map
\[ev_x:\Diff_0(T^2)\to T^2,\quad f\mapsto f(q).\]
This induces a homomorphism
\[e_D:\pi_1(\Diff_0(T^2),\id_{T^2})\to \pi_1(T^2,q).\]
It is known that $e_D$ is an isomorphism. Consequently, the map
\[e_H:\pi_1(\Ham(T^2,\omega),\id_{T^2})\to \pi_1(T^2,q)\]
obtained by composing the inclusion-indcued map is injective.
 
Let $\gamma$ be a smooth loop in $\Ham(T^2,\omega)$ based at the identity. For the Hamiltonian $H$ generating the loop $\gamma$, every point in $T^2$ is a fixed point of $\phi^1_H=\id_{T^2}$. Moreover, the loops
\[\gamma_p : S^1\to \Sigma, \quad \gamma_p(t)=\phi_H^t(p),\]
are freely homotopic each other, since any path $\set{p_t}_{t\in[0,1]}$ connecting two points $p_0,p_1 \in T^2$ induces a free homotopy $\set{\gamma_{p_t}}_{t\in[0,1]}$ between $\gamma_{p_0},\gamma_{p_1}$.

By Floer theory, we know that for any Hamiltonian $H$, there exists at least one contractible $1$-periodic orbit. Hence, the common free homotopy class of the loops $\gamma_p$ is trivial in $\pi_1(T^2,q)$. Since $e_H$ is injective, we conclude that
\[\pi_1(\Ham(T^2,\omega),\id_{T^2})=\set{e}.\]
This completes the proof.
\end{proof}

\bibliographystyle{plain}
\bibliography{ReferencesS5}
\end{document}